\renewcommand{\\}{\vspace{3mm}}
\newenvironment{edit}{\color{red}}{\color{black}}
\newcommand{\aed}{\begin{edit}}
\newcommand{\zed}{\end{edit}}
\newenvironment{change}{\color{blue}}{\color{black}}
\newcommand{\ach}{\begin{change}}
\newcommand{\zch}{\end{change}}
\providecommand{\aut}{\mathop{\rm Aut \,}\nolimits}
\providecommand{\sym}{\mathop{\rm Sym \,}\nolimits}
\providecommand{\Wr}{\mathop{\rm Wr}\nolimits}
\newcommand{\core}{\mathop{\rm Core}\nolimits}
\newcommand{\sd}{\mathop{\rm sd}\nolimits}
\newcommand{\N}{\mathbb{N}}
\newcommand{\Z}{\mathbb{Z}}
\newcommand{\MinClosedNormal}{B}
\newcommand{\DirectProd}{M}
\newcommand{\sslash}{\mathbin{/\mkern-6mu/}}
\theoremstyle{plain}
\newtheorem{theorem}{Theorem}
\newtheorem*{theorem*}{Theorem}
\newtheorem{lemma}[theorem]{Lemma}
\newtheorem{corollary}[theorem]{Corollary}
\newtheorem*{corollary*}{Corollary}
\newtheorem{proposition}[theorem]{Proposition}
\newtheorem{conjecture}[theorem]{Conjecture}
\theoremstyle{definition}
\newtheorem{definition}[theorem]{Definition}
\newtheorem*{definition*}{Definition}
\newtheorem{remark}[theorem]{Remark}
\newtheorem{example}[theorem]{Example}
\newtheorem*{example*}{Example}
\newtheorem*{remark*}{Remark}
\title{\bf The structure of primitive permutation groups with finite suborbits and t.d.l.c. groups admitting a compact open subgroup that is maximal}
\author{Simon M.~Smith}
\address{School of Mathematics and Physics, University of Lincoln, Lincoln, U.K.
}
\date{\today}
\email{sismith@lincoln.ac.uk}
\thanks{An early part of this research was supported by an Australian Research Council Discovery Early Career Researcher Award (project number DE130101521)}
\begin{document}
\maketitle

\markboth{\textsc{Primitive groups with finite subdegrees}}{\textsc{Primitive groups with finite subdegrees}}

\begin{abstract} This paper is about the structure of infinite primitive permutation groups and totally disconnected locally compact groups (``tdlc groups'').
The permutation groups we investigate are {\em subdegree-finite}, that is, all orbits of point stabilisers are finite. Automorphism groups of connected, locally finite graphs are examples of subdegree-finite permutation groups.

The tdlc groups we investigate all have a maximal subgroup that is compact and open. Tdlc groups {\em with few open subgroups}, recently studied by Pierre-Emmanuel Caprace and Timoth\'{e}e Marquis, are examples of such groups.




We prove a classification result, and use it to show that every closed, subdegree-finite primitive group is a primitive subgroup of a product $H \Wr F_1 \boxtimes F_2 \Wr \cdots \boxtimes F_{m-1} \Wr F_m$, where $H$ is a closed, subdegree-finite and primitive group that is either finite or one-ended and almost topologically simple. The groups $F_i$ are transitive and finite (and subject to some constraints) and $m$ is finite. The product $\boxtimes$ here denotes the recently discovered {\em box product}, and all wreath products here act via their product action.

We apply this permutational result to tdlc groups. If $G$ is a tdlc group then it contains a compact open subgroup $V$. The permutation group induced by the action of $G$ on the coset space $G / V$ is called the {\em Schlichting completion} of the pair $(G,V)$, and is denoted $G \sslash V$. Knowledge of this action of $G$ on $G/V$ underpins many recent influential results in tdlc theory. We show that if $V$ is a maximal subgroup of $G$, and $G$ is non-compact, then $G \sslash V$ is subject to a topological interpretation of our result for primitive groups. 
We use this to show that in this case, if $G \sslash V$ is nondiscrete, preserves no nontrivial homogenous cartesian decomposition on $G / V$, and does not split nontrivially over a compact open subgroup, then the monolith of $G \sslash V$ is a nondiscrete, one-ended, topologically simple, compactly generated tdlc group.
\end{abstract}

\section{Introduction}

\subsection{Local compactness and subdegree-finiteness} \label{subsec:localComp}
When studying infinite objects, it is natural to focus on those with some local finiteness property. For topological groups, this might be that the group topology is {\em locally compact} (that is, every group element lies in a compact neighbourhood); for permutation groups, this might be that the group is {\em subdegree-finite} (that is, all orbits of point stabilisers are finite). In fact these two types of groups are closely related, and their structural properties are intertwined. The results we describe in this paper pertain to both locally compact groups, and to subdegree-finite permutation groups.

Let us look first at locally compact groups. If $G$ is such a group, then it is well-known that the connected component $C$ of the identity is a closed normal subgroup of $G$, and so $G/C$ is a totally disconnected 
and locally compact group (henceforth referred to as a {\em tdlc} group), while of course $C$ is a connected locally compact group. In this way, the structure theory of locally compact groups decomposes into two cases: the connected locally compact case and the tdlc case.
A consequence of the solution of Hilbert's 5th Problem (by Gleason, Montgomery \& Zippin, and Yamabe) is that the broad structure of connected locally compact groups is known: such groups are pro-Lie. For many years the tdlc case was treated as the degenerate case, but following breakthrough work by Willis (see \cite{willis94} and \cite{willis01})
in the 1990s and early 2000s,
it became clear that the structure theory of tdlc groups might be more tractable than first assumed. Since then, there has been a growing body of work in this area.

The arguments contained in some of these cited works use, as a fundamental tool, the following natural permutation representations of tdlc groups (described in more detail in Section~\ref{section:prelims}). If $G$ is a tdlc group then it is known by an old result of van Dantzig (see \cite{vanDantzig} or \cite[Theorem 7.7]{hewitt_ross}) that $G$ always contains a compact open subgroup $V$. There is an obvious action (by multiplication) of $G$ on the set $G/V$ of cosets of $V$ in $G$. The kernel of this action is closed and compact in $G$.
The action induces a group of permutations of $G/V$, and we denote this permutation group by $G \sslash V$. This group $G \sslash V$, viewed as a topological group under the topology of pointwise convergence, is sometimes called the {\em Schlichting completion} for the pair $(G,V)$ (see \cite{reid_wesolek:homomorphisms}). The group $G \sslash V$ is a closed and tdlc subgroup of the group 
of all permutations of $G/V$. As a permutation group, $G \sslash V$ is subdegree-finite.
Obviously, if one understands the structure and permutational behaviour of $G \sslash V$, then one gains a deep understanding of the structure of the topological group $G$.

In this paper we obtain structural results for $G \sslash V$ in the case where $V$ is a maximal subgroup of $G$. The class of tdlc groups with this property includes tdlc groups with few open subgroups, studied by P.~E.~Caprace and T.~Marquis in \cite{caprace:marquis}.
Precise statements of our results are given below, but intuitively we show that the group $G \sslash V$ is a ``large'' subgroup of a group obtained by applying the wreath product (in its product action) and the box product (see Section~\ref{section:boxproduct}) finitely many times to some ``seed'' group, where this seed group is either a finite primitive permutation group or an infinite primitive permutation group that is contained in the automorphism group of a one-ended topologically simple, compactly generated tdlc group. This result can be combined with techniques outlined in the recent book  \cite{PraegerSchneider}, by C.~E.~Praeger and C.~Schneider, to detect one-ended, compactly generated, topologically simple tdlc groups inside infinite primitive permutation groups.

Let us now turn our attention to permutation groups. Let $\Omega$ be a set, and denote the group of all permutations of $\Omega$ by $\sym(\Omega)$ (when $\Omega = \{1,\ldots, m\} \subseteq \N$ we write $S_m$ for brevity). There is a natural topology on the group $\sym(\Omega)$, called the {\em permutation topology}. In this topology, a neighbourhood basis of the identity consists of all pointwise stabilisers of finite subsets of $\Omega$. This topology is equal to the topology of pointwise convergence and the compact-open topology.
Intuitively, in this topology two permutations are close if they agree on a large finite subset of $\Omega$. 
Note that a group $G \leq \sym(\Omega)$ and its closure $\overline{G}$ cannot be distinguished by their action on finite subsets of $\Omega$. We will typically be concerned with closed permutation groups in this paper. To apply our results to an arbitrary permutation group $G$, one must first take its closure.

We are interested in permutation groups $G \leq \sym(\Omega)$ that are  subdegree-finite. Orbits of point stabilisers are called {\em suborbits}, and the cardinality of a suborbit is called a {\em subdegree}. Thus, a subdegree-finite permutation group is a group with only finite suborbits.
These permutation groups arise naturally: any automorphism group of a connected, locally finite graph is closed and subdegree-finite; the natural permutation representations of tdlc groups described above are closed and subdegree-finite. On the other hand, any closed, subdegree-finite permutation group is (under the permutation topology) a tdlc group, with compact open point stabilisers.
In this way, the study of tdlc groups and the study of closed subdegree-finite permutation groups are essentially the same.

Permutation groups can be decomposed in a natural way. This is obvious in the case of intransitive groups (that is, groups with more than one orbit), while for 
transitive group the decomposition is based on the notion of primitivity. A nontrivial transitive group $G$ of permutations of a set $\Omega$ is called {\em imprimitive} if $\Omega$ admits a $G$-invariant equivalence relation whose equivalence classes are neither singletons, nor all of $\Omega$. The action of an imprimitive group $G$ decomposes into two distinct actions: the group on the one hand induces a transitive group $F$ of permutations on the set $\Sigma$ of equivalence classes, and on the other hand the setwise stabiliser of an equivalence class $\Delta$ induces a transitive group $H$ of permutations on $\Delta$. Famously, $G$ can be embedded as a permutation group into the wreath product $H \wr F$ acting via its imprimitive action on $\Delta \times \Sigma$.
If $G$ is not imprimitive, then it is {\em primitive}. Thus, we see that imprimitive permutation groups decompose naturally, whereas primitive permutation groups are in this sense indecomposable.  
Thus, to understand the general structure of permutation groups, one must first start by understanding those that are primitive.

Let $\mathcal{P}$ denote the class of nonregular, closed, primitive, subdegree-finite permutation groups. We exclude regular groups (that is, transitive permutation groups whose point stabilisers are trivial) from this class because they are not interesting: the primitive regular groups are cyclic of prime order. 
Structural information about some groups in $\mathcal{P}$ is already known, for example finite groups (see \cite{liebeck&praeger&saxl:finite_onan_scott}), infinite discrete groups (see \cite{smith:discrete_prim}), and groups whose socle has the FCR property (see \cite{PraegerSchneider}). We discuss these results further in Section~\ref{into:comparisons}.

In this paper we obtain structural information about all infinite groups in $\mathcal{P}$. A precise statement is given below, but intuitively, we show that any infinite group $G \in \mathcal{P}$ is a ``large'' subgroup of a group obtained by applying the wreath product (in its product action) and the box product (see Section~\ref{section:boxproduct} below) finitely many times to either a finite group in $\mathcal{P}$, or to a one-ended group in $\mathcal{P}$ that is almost topologically simple. It is from this result that we derive our corresponding result about tdlc groups.

Note that all groups in $\mathcal{P}$ have degree at least three. For each $G \in \mathcal{P}$, we define
$\sd(G) \in \N$ to be the minimal nontrivial subdegree of $G$; we track this number through our arguments, to show that any iterated sequences of products must terminate after finitely many steps.
 
\subsection{One-ended almost topologically simple groups} \label{subsection:one_ended}

A connected graph $\Gamma$ with bounded valency on which $G$ acts transitively as a group of automorphisms with compact open vertex stabilisers is called a {\em Cayley--Abels graph} for $G$. This notion appears (with different names) in \cite[Beispiel 5.2]{abels-74} and \cite[Section 2]{KronMoller}.  A tdlc group $G$ has a Cayley--Abels graph if and only if $G$ is compactly generated, and any two Cayley--Abels graphs for $G$ are quasi-isometric 
(see \cite{KronMoller}).
Since quasi-isometric graphs have the same ends, it makes sense to talk about the {\em ends} of a compactly generated tdlc group as being the ends of any of its Cayley--Abels graphs. This is a standard tool for working with compactly generated tdlc groups.

For $G \leq \sym(\Omega)$, we have another standard tool: an {\em orbital graph} of $G$ has vertex set $\Omega$ with its edge set consisting of the orbit $\{\alpha, \beta\}^G$, for some $\alpha, \beta \in \Omega$. 
If $G$ is transitive, subdegree-finite and closed, and some finite union $\Gamma$ of orbital graphs of $G$ is connected, then $\Gamma$ is a Cayley--Abels graph for $G$ (under the permutation topology), and so we can talk about the {\em ends} of $G$ as being the ends of $\Gamma$. Of course any orbital graph of a group is an orbital graph of its closure, so we can speak of the ends of permutation groups that are not necessarily closed. A group is {\em one-ended} if it has only one end.

If a group $G \leq \sym(\Omega)$ has a nonabelian, closed normal subgroup $\MinClosedNormal$ that is topologically simple (that is, $\MinClosedNormal$ has no closed normal subgroups) and $\MinClosedNormal \leq G \leq \aut \MinClosedNormal$, we say that $G$ is {\em almost topologically simple}. Note that this definition does not involve the $\aut \MinClosedNormal$ topology; the topology on $\MinClosedNormal$ and $G$ is the permutation topology. In the special case when $G$ (and therefore $\MinClosedNormal$) has precisely one end, we say that $G$ is {\em one-ended almost topologically simple} and abbreviate this to {\em OAS}. Groups of type OAS in $\mathcal{P}$ can be discrete or nondiscrete (see Section~\ref{section:examples}).

\subsection{Box products and fibrelobes} \label{intro:box_prods}
There are two products that play a fundamental role in our structure theory for groups in $\mathcal{P}$: the wreath product in its product action and the box product. These two products are in some sense duals of each other; to see this relationship clearly, we must first recall the fibres of a wreath product, define the lobes of a box product, and introduce the combined notion of a {\em fibrelobe}.

Let $m \geq 2$ and let $X$ be a set containing at least three elements. Suppose
 $G_1 \leq \sym(X)$ and $G_2 \leq S_m$ are two permutation groups. Their wreath product famously has two important actions, one on the set $X \times \{1,\ldots,m\}$ (which we will call the {\em imprimitive action}) and the other on the set $X^{m}$ of functions from $\{1,\ldots,m\}$ to $X$ (which is called the {\em product action}). 
We use both resulting permutation groups frequently, so adopt different notations for them: in this paper $G_1 \wr G_2$ refers to the imprimitive action, and $G_1 \Wr G_2$ to the product action. 
Recall that a {\em fibre} of $G_1 \Wr G_2$ is any subset of $X^m$ of the form $\{(x_1, \ldots, x_{i-1}, y, x_{i+1}, \ldots, x_m) : y \in X\}$. The {\em points} of $G_1 \Wr G_2$ are the  elements of $X^m$.

The {\em box product} of $G_1$ and $G_2$, denoted $G_1 \boxtimes G_2$, was first introduced in \cite{smith:product} and
is defined fully in Section~\ref{section:boxproduct}; here we give only an intuitive sense of it. This intuitive picture is accurate when $G_1$ and $G_2$ are transitive, so let us assume this to be the case. (Although here we have assumed that $G_2$ is a finite group, the box product construction is valid also when both $G_1$ and $G_2$ are infinite.)

We begin by constructing a tree-like graph. Let $\Delta$ be a transitive graph with vertex set $X_1$ such that the connectivity of $\Delta$ is at least two and $G_1 \leq \aut \Delta $ (for example, $\Delta$ can always be taken to be the complete graph on $X_1$ although often a smaller graph will suffice).
Now ``glue'' $m$ copies of $\Delta$ together so they meet only at a single vertex. Now to all vertices $v$ in the resulting graph that currently lie in only one copy of $\Delta$, ``glue'' $m-1$ additional copies of $\Delta$ to $v$, so that $v$ lies in $m$ copies of $\Delta$ that meet only at $v$. Continue this process until we obtain a connected, tree-like graph $\Gamma(\Delta, m)$ in which every vertex lies in precisely $m$ copies of $\Delta$, and any two copies of $\Delta$ in $\Gamma(\Delta, m)$ are either disjoint or intersect at a single vertex. In the left hand part of Figure~\ref{fig:Gamma_D8_S3} we see a depiction of $\Gamma(K_4, 3)$. 

The vertex sets of the copies of $\Delta$ in $\Gamma(\Delta, m)$ are called {\em lobes}.
We say a subgroup $G \leq \aut \Gamma(\Delta, m)$ is {\em locally-$(G_1, G_2)$} if the setwise stabiliser in $G$ of any lobe of $\Gamma(\Delta, m)$ induces $G_1$ on that lobe, and the stabiliser in $G$ of any vertex $\alpha$ of $\Gamma(\Delta, m)$ induces $G_2$ on the set  of lobes of $\Gamma(\Delta, m)$ that contain $\alpha$.
The box product $G_1 \boxtimes G_2$ is a subgroup of $\aut \Gamma(\Delta, m)$ that is maximal subject to being locally-$(G_1, G_2)$. The {\em lobes} and {\em points} of $G_1 \boxtimes G_2$ are, respectively, the lobes and vertices of $\Gamma(\Delta, m)$.
For a precise description of the construction of the box product, see Section~\ref{section:boxproduct}.

\begin{remark} \label{rem:WrAndBox}
The fibres of $G_1 \Wr G_2$ and the lobes of $G_1 \boxtimes G_2$ have much in common. To explore this, let $\boxdot$ denote either $\Wr$ or $\boxtimes$, and call the lobes or fibres of $G_1 \boxdot G_2$ the {\em fibrelobes} of $G_1 \boxdot G_2$. If $\alpha$ is a point of $G_1 \boxdot G_2$ then $\mathcal{F}(\alpha)$ denotes the set of fibrelobes of $G_1 \boxdot G_2$ that contain $\alpha$.

The following statements are famous properties of the wreath product in its product action when $G_1$ and $G_2$ are transitive. Using the concept of a fibrelobe, we see they apply equally to the box product (see Section~\ref{section:boxproduct}).
\begin{itemize}
\item
	$G_1 \boxdot G_2$ permutes its points transitively, and permutes its fibrelobes transitively.
\item
	If $\Delta$ is a fibrelobe of $G_1 \boxdot G_2$, then its setwise stabiliser $(G_1 \boxdot G_2)_{\{\Delta\}}$ induces $G_1$ on $\Delta$.
\item
	If $\alpha$ is a point of $G_1 \boxdot G_2$, then its stabiliser $(G_1 \boxdot G_2)_\alpha$ induces $G_2$ on $\mathcal{F}(\alpha)$.
\item
	If $G_2$ is finite, then $G_1 \boxdot G_2$ is a primitive permutation group if and only if $G_1$ is primitive and not regular.
\end{itemize}

Thus, we can see the ``local'' behaviour of $G_1 \boxdot G_2$ in the induced actions of $(G_1 \boxdot G_2)_{\alpha}$ and $(G_1 \boxdot G_2)_{\{\Delta\}}$. We also see primitivity here as a ``local-to-global'' condition. The following definition captures the notion of a subgroup of $G_1 \boxdot G_2$ being large enough to induce (perhaps as a dense subgroup) the ``local'' behaviour of $G_1 \boxdot G_2$.
\end{remark}

\begin{definition} \label{def:fibrelobe_full}
Let $\boxdot$ denote either $\Wr$ or $\boxtimes$. The the lobes or fibres of $G_1 \boxdot G_2$ are the {\em fibrelobes} of $G_1 \boxdot G_2$.
We say that a subgroup $G \leq G_1 \boxdot G_2$ is {\em fibrelobe-full} if the following hold: 
\begin{enumerate}
\item
	$G$ is transitively permutes the points, and transitively permutes the fibrelobes, of $G_1 \boxdot G_2$.
\item
	If $\Delta$ is a fibrelobe of $G_1 \boxdot G_2$, then the setwise stabiliser $G_{\{\Delta\}}$ induces a group on $\Delta$ whose closure in $\sym(\Delta)$ is $G_1$.
\item \label{fibrelobe:item}
	If $\alpha$ is a point of $G_1 \boxdot G_2$, then the stabiliser $G_\alpha$ induces a group on $\mathcal{F}(\alpha)$ whose closure in $\sym(\mathcal{F}(\alpha))$ is $G_2$.
\end{enumerate}
\end{definition}

\begin{remark} \label{rem:Basic_itteration_of_products}
One can of course apply the products $\Wr$ and $\boxtimes$ repeatedly in an alternating sequence, to obtain ever-larger permutation groups. For example, we might construct a group $((G_1 \Wr G_2) \boxtimes G_3) \Wr G_4$, whose fibrelobes are the fibres of $H \Wr G_4$, where $H = (G_1 \Wr G_2) \boxtimes G_3$. If $G$ is a fibrelobe-full subgroup of $((G_1 \Wr G_2) \boxtimes G_3) \Wr G_4$ and $\Delta$ is a fibrelobe, notice that the closure in $\sym(\Delta)$ of the group induced by $G_{\{\Delta\}}$ on $\Delta$ is $H = (G_1 \Wr G_2) \boxtimes G_3$, and $H$ is obviously fibrelobe-full in $(G_1 \Wr G_2) \boxtimes G_3$. Thus, being fibrelobe-full is in some sense a hereditary property when the products $\boxtimes$ and $\Wr$ are applied repeatedly.
\end{remark}

\begin{remark} Suppose $G_1 \boxdot G_2$ is transitive, closed, with all suborbits finite (for example, $G_1$ and $G_2$ could be closed and transitive, with $G_1$ subdegree-finite and $G_2$ finite). If $G \leq G_1 \boxdot G_2$ is fibrelobe-full, then $G$ is cocompact in $G_1 \boxdot G_2$.
\end{remark}

\subsection{Theorems for permutation groups} Here we present our main results as they apply to permutation groups.

Recall that $\mathcal{P}$ is the class of nonregular, closed, primitive permutation groups whose suborbits are all finite. 
Our first theorem is a complete classification of infinite groups in $\mathcal{P}$.

\begin{theorem} \label{thm:MainTheorem}
If $G \in \mathcal{P}$, then $G$ falls into precisely one of the following categories:
\begin{enumerate}
\item[(FIN)]
	$G$ is a nonregular finite primitive permutation group;
\item[(OAS)] \label{type:topo_simple}
	$G$ is one-ended and almost topologically simple;
\item[(PA)] \label{type:PA}
	$G$ is a one-ended, primitive, fibrelobe-full subgroup of a wreath product $H \Wr F$ acting with its product action, where $F$ is some nontrivial finite transitive group, and $H \in \mathcal{P}$ is infinite of type OAS or BP, and $\sd(H) \leq \sd(G)$, with $G$ discrete if and only if $H$ discrete; 
\item[(BP)] \label{type:BP}
	$G$ is a nondiscrete, $2^{\aleph_0}$-ended, primitive, fibrelobe-full subgroup of a box product $H \boxtimes F$, where $F$ is  some nontrivial finite transitive group, and $H \in \mathcal{P}$ is of type FIN, OAS or PA, and $\sd(H) < \sd(G)$
\end{enumerate}
\end{theorem}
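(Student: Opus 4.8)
The plan is to prove the theorem by strong induction on the least nontrivial subdegree $\sd(G) \in \N$, which is at least $2$ because a point stabiliser of a nonregular primitive group fixes only one point (its fixed-point set being a block). If $G$ is finite it is of type (FIN), so assume $G$ is infinite. Since $G$ is primitive, some orbital graph of $G$ is connected, and as $G$ is closed and subdegree-finite this graph is a Cayley--Abels graph for $G$; so the number of ends of $G$ is well defined and lies in $\{1,2,2^{\aleph_0}\}$. The two-ended possibility is ruled out first: up to a subgroup of index $\le 2$ fixing both ends, a two-ended group maps onto $\Z$ with compact --- hence, by primitivity, trivial --- kernel, so $G$ is virtually infinite cyclic; but then its infinite cyclic normal subgroup must act regularly on $\Omega$ by primitivity, embedding $G$ into the infinite dihedral group acting on $\Z$, which is imprimitive --- a contradiction. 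Hence $G$ has one end or $2^{\aleph_0}$ ends, giving types (OAS)/(PA) and (BP) respectively; when $\sd(G) = 2$ the recursive alternatives below are vacuous and force type (OAS).

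Suppose $G$ has one end. I would analyse a minimal closed normal subgroup $M$ of $G$ along the lines of the O'Nan--Scott theorem, using subdegree-finiteness to discard possibilities. If $M$ is topologically simple then it is nonabelian (an abelian minimal closed normal subgroup forces $G$ regular) and $C_G(M) = 1$ (otherwise $C_G(M)$ contains a second minimal closed normal subgroup centralising $M$, a ``diagonal'' configuration that is not subdegree-finite when infinite), so $M \le G \le \aut M$ and, being one-ended, $G$ is of type (OAS). Otherwise $M$ is not topologically simple; the diagonal and twisted-wreath configurations again being excluded by subdegree-finiteness and infinitude, the remaining case is that $G$ preserves a nontrivial cartesian decomposition $\Omega \cong X^m$ with $m \ge 2$ and induces a transitive finite group $F \le S_m$ on the $m$ factors. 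Passing to the component $H$ --- the closure in $\sym(X)$ of the group induced by a factor stabiliser on that factor --- one verifies that $H \in \mathcal{P}$, that $H$ is infinite (else $\Omega$ is finite), and, using primitivity of $G$, that $G$ is fibrelobe-full in $H \Wr F$. Choosing the cartesian decomposition as coarse as possible, $H$ preserves no nontrivial cartesian decomposition, so $H$ is not of type (PA); since $\sd(H) < \sd(G)$ (shown below), the inductive hypothesis makes $H$ of type (OAS) or (BP). Finally $G$ is discrete if and only if $H$ is, a point stabiliser of $G$ being built, up to finite index, from $m$ point stabilisers of $H$. Hence $G$ is of type (PA).

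Now suppose $G$ has $2^{\aleph_0}$ ends; this step is, I expect, the main obstacle. Using the accessibility and structure-tree theory for connected, locally finite graphs (Dunwoody, Dicks--Dunwoody, Thomassen--Woess, Kr\"{o}n--M\"{o}ller), I would cut a connected orbital graph of $G$ along a $G$-invariant nested family of finite cuts to obtain a $G$-invariant tree-like graph of the shape $\Gamma(\Delta, m)$ of Section~\ref{section:boxproduct}, in which the lobes carry no further cut structure. Primitivity of $G$ is used to arrange that the lobes are the vertex sets of one transitive graph $\Delta$ of connectivity at least two, with $G$ inducing a group $H$ on each lobe and a finite group $F$ on the lobes through a point; the primitivity criterion of Remark~\ref{rem:WrAndBox} then forces $H$ to be primitive and nonregular (so $H \in \mathcal{P}$) and $G$ to be a fibrelobe-full subgroup of $H \boxtimes F$. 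Since the lobes carry no cut structure, $H$ has at most one end and so is not of type (BP); as $\sd(H) < \sd(G)$, the inductive hypothesis makes $H$ of type (FIN), (OAS) or (PA). Moreover in $\Gamma(\Delta,m)$ with $m \ge 2$ every point is a cut vertex lying in $m$ lobes that its stabiliser permutes transitively, so every nontrivial suborbit of $G$ meets each such lobe in a set of size at least $\sd(H)$, whence $\sd(G) \ge m\,\sd(H) > \sd(H)$; and $G$ is nondiscrete because $\Gamma(\Delta,m)$ has nontrivial automorphisms fixing arbitrarily large balls and $G$, being fibrelobe-full, contains enough of them. Hence $G$ is of type (BP).

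Finally, mutual exclusivity: (FIN) is the only type with finite members; (OAS) and (PA) are one-ended while (BP) is $2^{\aleph_0}$-ended; and (OAS) and (PA) are disjoint because a group of type (PA) preserves a nontrivial cartesian decomposition and hence, by the minimal-normal-subgroup analysis above, is not almost topologically simple, whereas one of type (OAS) is. The induction is well founded since each recursive passage from $G$ to $H$ strictly decreases $\sd$, which also proves that any iterated sequence of box and wreath products terminates after finitely many steps. The bulk of the technical effort will be in the $2^{\aleph_0}$-ended case --- manufacturing a genuinely $G$-invariant $\Gamma(\Delta,m)$ with a primitive induced action on lobes out of the structure-tree machinery --- and in the lemmas controlling fibrelobe-fullness, the number of ends, $\sd$, and discreteness under a single application of $\Wr$ or $\boxtimes$.
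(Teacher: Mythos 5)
Your overall architecture (trichotomy by number of ends, an O'Nan--Scott style analysis of a minimal closed normal subgroup in the one-ended case, structure-tree/connectivity-one machinery in the infinitely-ended case, recursion controlled by $\sd$) matches the paper's, but the induction as you have set it up does not close. In the (PA) step you write ``since $\sd(H) < \sd(G)$ (shown below), the inductive hypothesis makes $H$ of type (OAS) or (BP)'', yet the only strict-decrease computation you give is for the box product; for the wreath product one only obtains $\sd(H) \leq \sd(G)$ (this is all Proposition~\ref{prop:sdH_leq_sdG} proves, and all the theorem itself asserts for type (PA) --- indeed for $G = H \Wr F$ the component has the same or smaller subdegrees, never provably smaller). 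So strong induction on $\sd(G)$ alone cannot be applied to $H$ at the wreath step. The paper's proof avoids this by proving \emph{directly} (Proposition~\ref{prop:H_is_G_on_fibres}(\ref{item:prop:H_closed_min_normal})) that the component $H$ is almost topologically simple, with unique minimal closed normal subgroup $K_1$; hence $H$ is either one-ended (type OAS, done) or infinitely-ended, and only in the latter case does one descend again --- via the box product, where $\sd$ \emph{does} strictly decrease --- before invoking minimality. Your alternative device, ``choosing the cartesian decomposition as coarse as possible'' so that $H$ is not of type (PA), is not available here: the well-foundedness of the lattice of cartesian decompositions is part of the Praeger--Schneider theory for FCR socles, and the paper is explicit that groups in $\mathcal{P}$ need not have the FCR property.

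A second, related gap: you begin the one-ended analysis with ``a minimal closed normal subgroup $M$ of $G$'', but for infinite topological groups the existence of such a subgroup (and of a minimal closed normal subgroup of $B$ inside it, which is what actually produces the simple factors $K_i$) is not automatic and is one of the main technical points of the paper. Theorem~\ref{thm:min_normal_subgroups} obtains it from a Zorn's lemma argument using compactness of point stabilisers, triviality of the quasi-centre (Lemma~\ref{lemma:TrivialQuasiCenter}), and Evans' theorem to show $K^{G_\alpha}$ is finite; none of this is ``along the lines of the finite O'Nan--Scott theorem''. Your exclusion of the diagonal configuration likewise needs the specific argument that $\pi_1(M_\alpha) = K_1$ would force $K_1 \cap M_\alpha \unlhd M$, hence $K_1$ countable, contradicting the absence of countable normal subgroups. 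The remaining ingredients of your sketch --- the exclusion of two ends, the passage to a connectivity-one tree-like orbital (di)graph in the infinitely-ended case, the strict decrease of $\sd$ under $\boxtimes$, and the mutual exclusivity of the types --- are essentially the paper's route and are sound in outline.
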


The class OAS can be further subdivided into OAS$_d$, which contains those OAS groups that are discrete, and OAS$_n$, which contains those OAS groups that are nondiscrete. The same subdivision can also be made for the class PA, into PA$_d$ and PA$_n$.
Groups of type BP are always nondiscrete. The discrete groups in $\mathcal{P}$ have already been classified: finite groups in $\mathcal{P}$ were famously classified by the O'Nan--Scott Theorem (see \cite{liebeck&praeger&saxl:finite_onan_scott}); infinite discrete groups in $\mathcal{P}$ are classified in \cite[Theorem 1.1]{smith:discrete_prim}. These results can be used to further refine the statement of Theorem~\ref{thm:MainTheorem}. We discuss this further in Section~\ref{into:comparisons}.\\
 
 Examples of  groups in $\mathcal{P}$ exist for all types: FIN, OAS$_d$, OAS$_n$, PA$_d$, PA$_n$ and BP (see Section~\ref{section:examples}). \\
 
As noted in the introduction, it is known precisely when groups of the form $G_1 \Wr G_2$ and $G_1 \boxtimes G_2$ are primitive. No such knowledge exists for one-ended almost topologically simple groups.

Theorem~\ref{thm:MainTheorem} can be combined with the following theorem and its corollary to give an O'Nan--Scott style theorem for infinite groups in $\mathcal{P}$. 

\begin{theorem} \label{thm:min_normal_subgroups}
Let $G \leq \sym(\Omega)$ be an infinite group in $\mathcal{P}$, and suppose $\alpha \in \Omega$. Then
$G$ has a unique (nontrivial) minimal closed normal subgroup $\MinClosedNormal$ which is nonabelian, and there exist finitely many topologically simple, pairwise isomorphic, nonabelian, infinite permutation groups $K_1, \ldots, K_m$ such that each $K_i$ is closed and normal in $\MinClosedNormal$ and $\DirectProd := K_1 \times \cdots \times K_m$ is a dense subgroup of $\MinClosedNormal$ that is normal in $G$. The group $B$ has an infinite point stabiliser if and only if $G$ has an infinite point stabiliser.
Furthermore, $G_\alpha$ acts on the components of $M$ by conjugation, inducing a transitive subgroup $F \leq S_m$.
\end{theorem}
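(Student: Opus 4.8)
The plan is to derive Theorem~\ref{thm:min_normal_subgroups} from the classification in Theorem~\ref{thm:MainTheorem} by induction on $\sd(G)$. An infinite $G\in\mathcal{P}$ is of type (OAS), (PA) or (BP); in the latter two it is fibrelobe-full in a product $H\Wr F_0$ or $H\boxtimes F_0$ whose seed $H$ lies in $\mathcal{P}$ with $\sd(H)\le\sd(G)$, the inequality being strict in the (BP) case, and since a (PA)-step lies over an (OAS) or (BP) group and a (BP)-step over a (FIN), (OAS) or (PA) group, the induction is well founded. Throughout, two elementary facts are used repeatedly: a nontrivial closed normal subgroup $N$ of a primitive group is transitive, so $G=G_\alpha N$, whence $G/N\cong G_\alpha/(N\cap G_\alpha)$ is compact (point stabilisers being compact and open) and $N\cap G_\alpha\trianglelefteq G_\alpha$; and the centraliser of a transitive permutation group is semiregular. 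In the base case $G$ is of type (OAS), with topologically simple nonabelian closed normal subgroup $\MinClosedNormal$ and $\MinClosedNormal\le G\le\aut\MinClosedNormal$, and we simply take $m=1$, $K_1=\DirectProd=\MinClosedNormal$, and $F$ trivial.

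Suppose next that $G$ is of type (PA), fibrelobe-full in $H\Wr F_0$ with $F_0\le S_k$ finite transitive and $H$ infinite of type (OAS) or (BP). Applying the inductive hypothesis to $H$ gives its monolith $\MinClosedNormal_H=\overline{\DirectProd_H}$ with $\DirectProd_H=L_1\times\cdots\times L_r$ an internal direct product of topologically simple nonabelian infinite groups on which $H_\alpha$ acts transitively. Inside $H\Wr F_0$ (product action) the $k$ coordinates carry pairwise-commuting copies $\DirectProd_H^{(1)},\dots,\DirectProd_H^{(k)}$ of $\DirectProd_H$; put $\DirectProd=\prod_{j=1}^k\DirectProd_H^{(j)}$, an internal direct product of $m:=kr$ topologically simple nonabelian infinite groups. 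Using fibrelobe-fullness (Definition~\ref{def:fibrelobe_full}) I would check that $\overline{\DirectProd}=:\MinClosedNormal$ is a minimal closed normal subgroup of $G$, that $\DirectProd$ is normal in $G$ and dense in $\MinClosedNormal$, and that $G_\alpha$ --- which surjects onto $F_0$ on the $k$ fibres through $\alpha$ and on each fibre induces a dense subgroup of a conjugate of $H_\alpha$ --- permutes the $m$ direct factors transitively, giving the required $F\le S_m$.

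The crux is the (BP) case: $G$ is fibrelobe-full in a box product $H\boxtimes F_0$ on the tree-like graph $\Gamma$ underlying the construction, with $F_0\le S_k$ finite transitive, $H\in\mathcal{P}$ of type (FIN), (OAS) or (PA), and $\sd(H)<\sd(G)$. Here $\MinClosedNormal$ has to be assembled from the ``local'' contributions, across the lobes of $\Gamma$, of the socle of $H$ (the classical socle when $H$ is finite, and $\overline{\DirectProd_H}$ when $H$ is infinite). What must be shown is that these contributions generate a minimal closed normal subgroup $\MinClosedNormal$ of $G$, that $\MinClosedNormal$ contains a $G$-invariant dense subgroup which is an internal direct product $K_1\times\cdots\times K_m$ of finitely many topologically simple nonabelian infinite groups, and that $G_\alpha$ permutes $\{K_1,\dots,K_m\}$ transitively. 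I expect the topological simplicity of the $K_i$ --- and, when the socle of $H$ is itself a nontrivial product, their mutual commutation --- to be the main obstacle; both should be obtained from the genericity and independence properties of the box-product construction established in \cite{smith:product}, which are the box-product analogue of Tits' independence property for group actions on trees and which allow a nontrivial closed normal subgroup to be propagated from a single lobe throughout $\Gamma$.

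Finally I would prove the uniqueness of $\MinClosedNormal$ and the point-stabiliser clause uniformly. Since $\MinClosedNormal$ is nonabelian, $C_G(\MinClosedNormal)\cap\MinClosedNormal\le Z(\MinClosedNormal)=1$; and $C_G(\MinClosedNormal)$, being the centraliser of the transitive group $\MinClosedNormal$, is semiregular, so were it nontrivial it would --- as a nontrivial closed normal subgroup of a primitive group --- be transitive, hence regular, which would force $\MinClosedNormal$ itself to be regular. This is impossible for a group in $\mathcal{P}$: the exclusion of a regular minimal closed normal subgroup is exactly where subdegree-finiteness and non-regularity enter, ruling out holomorph-type configurations (in the abelian case $\MinClosedNormal=\overline{\langle b^{G_\alpha}\rangle}$ would be finitely generated, hence finite, and in the nonabelian case one invokes that an infinite topologically (characteristically) simple group admits no compact automorphism group acting with a single fixed point). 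Hence $C_G(\MinClosedNormal)=1$, and any minimal closed normal subgroup $\MinClosedNormal'$ then satisfies $[\MinClosedNormal',\MinClosedNormal]\le\MinClosedNormal'\cap\MinClosedNormal=1$, so $\MinClosedNormal'\le C_G(\MinClosedNormal)=1$, a contradiction; thus $\MinClosedNormal$ is unique. For the last sentence, $\MinClosedNormal\cap G_\alpha$ is open in $\MinClosedNormal$ and normal in $G_\alpha$; by the refinement of Theorem~\ref{thm:MainTheorem}, $G$ is discrete exactly in the types (FIN), (OAS$_d$), (PA$_d$), where $G_\alpha$, hence $\MinClosedNormal\cap G_\alpha$, is finite, whereas in the remaining types $\MinClosedNormal$ is nondiscrete (as $\DirectProd$ is then a direct product of nondiscrete topologically simple groups), so that $\MinClosedNormal\cap G_\alpha$ and $G_\alpha$ are both infinite. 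In either case $\MinClosedNormal$ has an infinite point stabiliser precisely when $G$ does.
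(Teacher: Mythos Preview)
Your approach has a fundamental circularity problem. You propose to derive Theorem~\ref{thm:min_normal_subgroups} from Theorem~\ref{thm:MainTheorem}, but in the paper's logical structure the dependence runs the other way: Theorem~\ref{thm:MainTheorem} is proved (in Section~\ref{section:proof_part_4}) via Propositions~\ref{prop:Alternative_to_prop:PA_case} and~\ref{prop:Alternative_to_prop:m_is_1_case_and_inf_ended}, and the PA proposition relies throughout on the objects $\MinClosedNormal$, $\DirectProd = K_1\times\cdots\times K_m$, and the transitive action of $G_\alpha$ on the $K_i$ that are supplied precisely by Theorem~\ref{thm:min_normal_subgroups}. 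So invoking the classification to prove the normal-subgroup theorem is not available to you.

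The paper instead proves Theorem~\ref{thm:min_normal_subgroups} directly and first. The discrete case is handled by Proposition~\ref{prop:discrete_normal_structure}. In the nondiscrete case the key device is the quasi-centre: Lemma~\ref{lemma:TrivialQuasiCenter} gives $QZ(N)=1$ for every nontrivial closed $N\unlhd G$, so $N$ has no countable normal subgroups. Existence of a minimal closed normal subgroup of $N$ then follows from a Zorn's-Lemma argument that uses compactness of $N_\alpha$ and the finiteness of its action on the neighbours of $\alpha$ in an orbital graph (this is where subdegree-finiteness is genuinely used). Uniqueness of $\MinClosedNormal$ comes from the semiregularity of centralisers of transitive groups together with $QZ(G)=1$. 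To obtain the $K_i$, one takes a minimal closed normal subgroup $K$ of $\MinClosedNormal$, shows via a countable dense subgroup $V\le G$ that $\overline{\langle K^V\rangle}=\MinClosedNormal$, and then proves $K^V=K^{G_\alpha}$ is finite using Evans' Theorem~\ref{theorem:Evans}. Topological simplicity of $K$ and the direct-product decomposition are then short arguments.

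Even setting the circularity aside, your (BP) paragraph is not a proof: you do not say what the $K_i$ actually are (they cannot be assembled ``lobe by lobe'' from the socle of $H$, since each $K_i$ must be a single topologically simple group acting on all of $\Omega$), and the appeal to independence properties from \cite{smith:product} is a hope rather than an argument. Likewise, your exclusion of a regular $\MinClosedNormal$ in the nonabelian case invokes an unproved assertion about compact automorphism groups; the paper avoids this entirely by showing that nondiscrete $G$ has no countable normal subgroups, so $\MinClosedNormal$ cannot be regular.
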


\begin{remark} The group $F$ in the PA case of Theorem~\ref{thm:MainTheorem} and the group $F$ in Theorem~\ref{thm:min_normal_subgroups}, are the same permutation group. This is obviously not the case for groups of BP type. 
\end{remark}

\begin{corollary} \label{cor:min_normal_subgroups_cor}
In the statement of Theorem~\ref{thm:min_normal_subgroups}, we have that $m=1$ if and only if $G$ is of type OAS or BP, and in this case $M = B = K_1$. When $m > 1$, our group $G$ is of type PA and the minimal closed normal subgroup of $H$ is $K_1$.
\end{corollary}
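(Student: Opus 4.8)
The plan is to deduce the corollary from Theorems~\ref{thm:MainTheorem} and~\ref{thm:min_normal_subgroups} by first isolating a purely algebraic chain of equivalences and then matching the four types of Theorem~\ref{thm:MainTheorem} against it. Throughout, write $B$, $M = K_1 \times \cdots \times K_m$ and $F \leq S_m$ for the data furnished by Theorem~\ref{thm:min_normal_subgroups}; since $G$ is infinite, Theorem~\ref{thm:MainTheorem} places it in exactly one of the types OAS, PA, BP.

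I would first show that the following are equivalent: (a) $m = 1$; (b) $M = B = K_1$; (c) $B$ is topologically simple. For (a) $\Rightarrow$ (b): if $m = 1$ then $M = K_1$, and since $K_1$ is closed in $B$ while $M$ is dense in $B$ (both by Theorem~\ref{thm:min_normal_subgroups}), we get $K_1 = M = B$. The implication (b) $\Rightarrow$ (a) is immediate, as $M$ is the internal direct product of the nontrivial subgroups $K_1, \ldots, K_m$. Implication (b) $\Rightarrow$ (c) holds because $K_1$ is topologically simple. For (c) $\Rightarrow$ (a) I would argue the contrapositive: if $m \geq 2$ then $K_1$ is a nontrivial (indeed infinite) closed normal subgroup of $B$; it is proper, since $K_1 = B$ would force $K_2 \subseteq M \subseteq B = K_1$ and hence $K_2 = K_1 \cap K_2 = 1$, contradicting that $K_2$ is infinite; thus $B$ has a proper nontrivial closed normal subgroup and is not topologically simple.

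Next I would match the types. If $G$ is of type OAS then $B$ is topologically simple by definition, so (c) holds and hence $m = 1$ and $M = B = K_1$. If $G$ is of type BP, say a fibrelobe-full subgroup of $H \boxtimes F_0$ with $H \in \mathcal{P}$ and $F_0$ a nontrivial finite transitive group, then, using the analysis of the box product acting on its underlying tree-like graph developed in Section~\ref{section:boxproduct}, one shows that $B$ is again topologically simple, so once more $m = 1$ and $M = B = K_1$. Conversely, $m = 1$ forces $G$ to be of type OAS or BP: it cannot be of type PA, because by the Remark following Theorem~\ref{thm:min_normal_subgroups} the group $F$ there coincides with the nontrivial finite transitive group appearing in the PA clause of Theorem~\ref{thm:MainTheorem}, so that $m = \deg F \geq 2$. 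Hence $m = 1$ if and only if $G$ is of type OAS or BP, with $M = B = K_1$ in that case; equivalently, $m > 1$ if and only if $G$ is of type PA. Finally, suppose $m > 1$, so that $G \leq H \Wr F$ is a fibrelobe-full group of type PA with $H$ of type OAS or BP. By the OAS and BP cases already treated, $H$ has a topologically simple monolith $B_H$; locating $M$ inside the base group $H^m$ of $H \Wr F$ identifies the $m = \deg F$ direct factors $K_1, \ldots, K_m$ of $M$ with the coordinate copies of $B_H$ permuted by $F$, so that $K_1 \cong B_H$ is the minimal closed normal subgroup of $H$.

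The one genuinely nontrivial input is the assertion used in the BP case: a fibrelobe-full subgroup of a box product $H \boxtimes F_0$, with $H$ primitive and nonregular and $F_0$ a nontrivial finite transitive group, has topologically simple monolith. Whereas the minimal closed normal subgroups of a wreath product in product action can be read off directly from its base group, the box product acts on its tree-like graph, and pinning down its closed normal subgroups requires the Tits-style independence and (quasi-)primitivity properties of that action; this is precisely the content that the box-product analysis of Section~\ref{section:boxproduct} must supply, and it is the main obstacle. The parallel identification of the $K_i$ with coordinate copies of $B_H$ in the PA case is a much milder version of the same bookkeeping, and is delivered by the explicit description of $M$ constructed in the course of proving Theorem~\ref{thm:min_normal_subgroups}.
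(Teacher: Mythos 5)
Your chain of equivalences $m=1 \iff M=B=K_1 \iff B$ topologically simple is correct and matches the paper's Remark~\ref{remark:Pg13.5}, and your treatment of the OAS and PA cases is essentially sound (though the identification of $K_1$ with the monolith of $H$ is delivered by Proposition~\ref{prop:H_is_G_on_fibres} in Section~\ref{section:proof_part_Not_AS}, not by the proof of Theorem~\ref{thm:min_normal_subgroups} itself). The genuine gap is the BP case. You propose to show directly that a fibrelobe-full primitive subgroup of $H\boxtimes F_0$ has topologically simple monolith by appealing to ``the box-product analysis of Section~\ref{section:boxproduct}'', but no such result exists there: Theorem~\ref{thm:box_product_summary} addresses simplicity of the full group $\mathcal{U}(G_1,G_2)$ under generation hypotheses, not the closed normal subgroup structure of an arbitrary dense-in-the-local-actions subgroup. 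Worse, the Tits-style independence you invoke is exactly what the paper cannot establish: Section~\ref{section:proof_part_AS} shows that \emph{if} $G$ had property H then $B=G^{++}$ would be topologically simple by M\"{o}ller--Vonk, and then explicitly leaves property H as an open Conjecture. So the step you flag as ``the main obstacle'' really is an obstacle, and your proposed route through it is not available.

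The paper closes this case in the opposite direction, and you should too: Corollary~\ref{cor:m_bigger_than_1_implies_one_ended} (equivalently, Proposition~\ref{prop:Alternative_to_prop:PA_case}) shows that $m>1$ forces $G$ to be one-ended of type PA. Since the types in Theorem~\ref{thm:MainTheorem} are mutually exclusive and a BP group has $2^{\aleph_0}$ ends, type BP immediately implies $m=1$, whence $B=K_1$ is topologically simple as a \emph{consequence} rather than as an input. This also makes your appeal to the Remark identifying the two groups $F$ (to rule out type PA when $m=1$) unnecessary: $m>1\Rightarrow\text{PA}$ plus exclusivity already gives $m=1\Rightarrow\text{OAS or BP}$. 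With the BP step rerouted in this way the rest of your argument goes through.
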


\begin{remark} \label{rem:RepeatedApplicationOfMainTheorem}
The structural decomposition of an infinite group $G \in \mathcal{P}$ described in Theorem~\ref{thm:MainTheorem} can be repeated. For example, if $G$ is of type (PA), with $G$ a fibrelobe-full subgroup of $H \Wr F$ for some $H \in \mathcal{P}$, then one can apply Theorem~\ref{thm:MainTheorem} now to $H$ and obtain a further decomposition. Iterating the decomposition in this way, we note that products $\Wr$ and $\boxtimes$ must alternate with $\sd$ non-increasing, and each time $\boxtimes$ occurs the value of $\sd$ must strictly decrease. Hence we can be certain that such an iterated decomposition must terminate after finitely many steps in an OAS or finite group in $\mathcal{P}$.

On the other hand, one can start with a group in $H \in \mathcal{P}$ and obtain every-larger groups in $\mathcal{P}$ by repeatedly applying the wreath and box products. Indeed, if $F_1, \ldots, F_n$ are finite transitive groups, then $G:=(((H \Wr F_1 ) \boxtimes F_{2} ) \Wr \cdots \boxtimes F_{n-1}) \Wr F_n$ lies in $\mathcal{P}$. Notice that, as long as some $F_i$ is nontrivial, the resulting group $G$ is never of type OAS, since the application of a box product results in a group with infinitely many ends, and the application of the wreath product results in a group with a non-topologically simple socle.
\end{remark}

Intuitively, we can now see all infinite groups in $\mathcal{P}$ as ``large'' subgroups of groups built from finite (noncyclic) primitive permutation groups or OAS groups in $\mathcal{P}$, together with the repeated application of $\Wr$ and $\boxtimes$ finitely many times. 
We formalise this intuitive view of groups in $\mathcal{P}$ in the following result. 

\begin{theorem} \label{thm:main_product_statement}
If $G \in \mathcal{P}$, then there is a finite sequence $F_1, \ldots, F_n$ of finite transitive groups, with $F_i$ nontrivial for all $1 < i < n$, and a group $H \in \mathcal{P}$ that is of type FIN or OAS, such that $G$ is a primitive 
subgroup of
\[(((H \Wr F_1 ) \boxtimes F_{2} ) \Wr \cdots \boxtimes F_{n-1}) \Wr F_n.\]
\end{theorem}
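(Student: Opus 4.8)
The plan is to argue by induction, using Theorem~\ref{thm:MainTheorem} to drive the recursion and monotonicity of the wreath and box products in their first argument to carry an embedding through an outer product. If $G$ is of type FIN or OAS there is nothing to prove: take the seed $H := G$ and no genuine products, padding with trivial wreath factors to match the shape in the statement (this is permitted, since the hypothesis constrains only the factors $F_i$ with $1 < i < n$). So suppose $G$ is of type PA or BP, and write $\boxdot$ for $\Wr$ in the former case and for $\boxtimes$ in the latter. Theorem~\ref{thm:MainTheorem} then provides a nontrivial finite transitive group $F$ and a group $H \in \mathcal{P}$ with $G$ a fibrelobe-full, hence in particular a, subgroup of $H \boxdot F$. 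The group $H$ is strictly ``simpler'' than $G$: if $G$ is of type BP then $\sd(H) < \sd(G)$, while if $G$ is of type PA then $H$ is of type OAS or BP, so either the recursion stops at $H$ or one more application of Theorem~\ref{thm:MainTheorem} strictly decreases $\sd$. Since $\sd$ takes values in $\N$, drops at every box-product step, and box- and wreath-product steps necessarily alternate --- Theorem~\ref{thm:MainTheorem} sends a PA group to an OAS or BP group and a BP group to a FIN, OAS or PA group, so two consecutive reductions are never of the same kind --- the recursion terminates, which is exactly the phenomenon recorded in Remark~\ref{rem:RepeatedApplicationOfMainTheorem}. By induction, $H$ is a primitive subgroup of an iterated product $P$ of the required form, built from a seed $H_0 \in \mathcal{P}$ of type FIN or OAS by alternately applying $\Wr$ and $\boxtimes$ with nontrivial interior factors; moreover, by the alternation just noted, the outermost genuine product used in building $P$ is $\boxtimes$ if $H$ is of type BP, is $\Wr$ if $H$ is of type PA, and there is no genuine product at all when $H$ is of type FIN or OAS.

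It remains to lift the inclusion $H \leq P$ to an inclusion $H \boxdot F \leq P \boxdot F$. Granting this, $G \leq H \boxdot F \leq P \boxdot F$; the group $P \boxdot F$ is again an iterated product of the required form, got by appending the single factor ``$\boxdot F$'' to the tail of $P$. By the previous paragraph this new factor is of the kind opposite to the outermost genuine product of $P$, so the alternation pattern is preserved, and one then inserts trivial wreath factors at the two ends if necessary so that the product literally begins and ends with a wreath product as in the statement --- this adds trivial factors only in positions $1$ and $n$, where the hypothesis imposes no condition, while every factor strictly in the interior of $P \boxdot F$ is either an interior factor of $P$ or was produced by a genuine application of Theorem~\ref{thm:MainTheorem} and hence is nontrivial. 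Since $G$ is primitive and contained in $P \boxdot F$, it is a primitive subgroup of $P \boxdot F$, and the induction closes. The lifting itself is immediate for $\boxdot = \Wr$: as $H$ and $P$ act on the same set $X$, the product action on $X^{m}$ gives $H \Wr F \leq P \Wr F$. For $\boxdot = \boxtimes$ one forms $H \boxtimes F$ and $P \boxtimes F$ over a common tree-like graph $\Gamma(\Delta, m)$; this is legitimate because $H$ is primitive on $X$ and $H \leq P$, so $P$ is primitive on $X$ too, $|X| \geq 3$ (every group in $\mathcal{P}$ has degree at least three), and one may take $\Delta$ to be the complete graph on $X$, which both $H$ and $P$ preserve. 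A subgroup of $\aut \Gamma(\Delta, m)$ that is locally-$(H, F)$ is then, a fortiori, a subgroup whose lobe-stabilisers induce subgroups of $P$ and whose vertex-stabilisers induce subgroups of $F$, hence is contained in the maximal locally-$(P, F)$ subgroup $P \boxtimes F$.

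The single genuine obstacle I anticipate is this last monotonicity of the box product in its first argument, together with the care forced by the fact that $\boxtimes$ is defined via an auxiliary graph $\Delta$: one must confirm that the box product can be formed over the complete graph, so that a common $\Delta$ serves both $H$ and $P$, and that ``maximal subject to being locally-$(G_1, G_2)$'' really does absorb every subgroup of the ambient graph's automorphism group that is merely \emph{locally contained in} $(G_1, G_2)$. Both should be routine consequences of the explicit construction in Section~\ref{section:boxproduct}. Everything else --- the alternation of the two products, the automatic nontriviality of the interior finite factors, and the placement of trivial padding factors only at the ends --- follows directly from the shape of the categories in Theorem~\ref{thm:MainTheorem} and needs no new idea.
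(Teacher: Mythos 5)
Your argument is correct and follows essentially the same route as the paper: recurse through Theorem~\ref{thm:MainTheorem}, use the strict decrease of $\sd$ at box-product steps (together with the forced alternation of the two products) for termination, and push the embedding $H \leq P$ through the outer product using monotonicity of $\Wr$ and $\boxtimes$ in their first argument. The one obstacle you flag --- monotonicity of the box product in its first argument over a common underlying structure --- is already available as part (v) of Theorem~\ref{thm:box_product_summary}, which is exactly what the paper invokes at the corresponding step, so no new verification is needed.
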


In the above theorem, the groups $F_1$ or $F_n$ (or both) could be trivial, allowing $G \leq (((H \boxtimes F_{2} ) \Wr \cdots \boxtimes F_{n-1})$ for example as a possible conclusion.

There is a partial converse to Theorem~\ref{thm:main_product_statement}: if $H \in \mathcal{P}$ and $F_1, \ldots, F_n$ are finite transitive groups with $F_i$ nontrivial for all $1 < i < n$, then the group $(((H \Wr F_n ) \boxtimes F_{n-1} ) \Wr \cdots \boxtimes F_{2}) \Wr F_1$ always lies in $\mathcal{P}$. This follows easily from Theorems~\ref{thm:box_prim}, \ref{prop:wr_prim} and
\ref{thm:box_product_summary}.\\


\subsection{Theorems for topological groups} Here we give our main results, as they apply to topological groups.
The types OAS, PA and BP in the following theorems are the topological interpretations of the types of the same name given in Theorem~\ref{thm:MainTheorem}. 
 
Recall that $\mathscr{S}$ is the class of nondiscrete, compactly generated tldc groups that are topologically simple, and $\mathcal{P}$ is the class of nonregular, closed, primitive permutation groups whose suborbits are all finite. Our results relate aspects of these two classes.

Suppose $G$ is a non-compact tdlc group and $V$ a compact open subgroup.  
There will be a number of topological groups at play, so let us state clearly their relationship to one another. Firstly, let $\mathfrak{t}$ denote the topology of $G$, so $G$ as a topological group can be written $(G, \mathfrak{t})$. We have an action $\lambda$ of $G$ on $G/V$ which gives rise another topology for $G$, namely the permutation topology $\mathfrak{p}$ (described for permutation groups in Section~\ref{subsec:localComp}, and more generally in Section~\ref{subsection:topo_groups}). Finally let $\mathfrak{q}$ denote the quotient topology.
Thus we have topological groups $(G, \mathfrak{t})$ and $(G, \mathfrak{p})$ and $(G \sslash V, \mathfrak{p})$ and $((G, \mathfrak{p})/\ker(\lambda), \mathfrak{q})$. These topological groups enjoy the following relationships, which the reader may easily verify:
\begin{enumerate}
\item
	For $(G, \mathfrak{t})$ and $(G, \mathfrak{p})$ we have $\mathfrak{p} \subseteq \mathfrak{t}$.
\item
	$((G, \mathfrak{p}) / \ker(\lambda), \mathfrak{q})$ and $(G \sslash V, \mathfrak{p})$ are isomorphic as topological groups via the map $\ker(\lambda)g \mapsto \lambda(g)$.
\end{enumerate}

From these observations we have the relationship between $(G, \mathfrak{t})$ and $(G \sslash V, \mathfrak{p})$.

Our permutational theorems apply to the infinite permutation group $G \sslash V$ whenever $G \sslash V \in \mathcal{P}$. This occurs precisely when $G \sslash V$ is primitive, since it is always closed and subdegree-finite. Moreover, $G \sslash V$ is primitive if and only if $V$ is maximal in $G$ as an abstract group (that is, there is no subgroup $W$ such that $V < W < G$). Thus, our results apply to non-compact tdlc groups $G$
 possessing a compact open subgroup $V$ that is maximal. 

Suppose $G$ and $V$ are such that $G \sslash V \in \mathcal{P}$. Since point stabilisers in $G \sslash V$ are compact, open and maximal, $G \sslash V$ is compactly generated. Nontrivial normal subgroups of primitive permutation groups must be transitive, since their orbits form systems of imprimitivity. Thus, all nontrivial normal subgroups of $G \sslash V$ are transitive and therefore cocompact (see Proposition~\ref{Proposition_A}). In other words, all proper quotients of $G \sslash V$ are compact. 

Recall the following. If the intersection $L$ of all non-trivial closed normal subgroups of $G$ is nontrivial, then we call $L$ the {\em monolith} of $G$ and say that $G$ is {\em monolithic}. If $G$ has closed normal subgroups $N_1, \ldots, N_m$ such that the multiplication map $N_1 \times \cdots \times N_m \rightarrow G$ is injective with dense image, then we say that $G$ is a {\em quasi-product} with {\em quasi-factors} $N_1, \ldots, N_m$. 

A compactly generated, non-compact, locally compact group whose proper quotients are all compact is called {\em just non-compact}. Discrete groups that are just non-compact are called {\em just infinite} and were fist examined in detail in the classical article \cite{Wilson:JustInfinite}. Nondiscrete just non-compact groups were studied in \cite{cm}, and the conclusions of this investigation are epitomised by the authors as being:
\begin{quote}
{\em A just non-compact group is either discrete or monolithic.}
\end{quote}

In our less general situation, where $V$ is a maximal subgroup of $G$, we are able to go much further. We determine, in all but the one-ended almost topologically simple case, precise information about the multiplication action of $G$ on $G/V$. We can moreover show that $G \sslash V$ is a cocompact, primitive, subgroup of a group obtained from either a finite nonregular primitive permutation group, or a one-ended group whose monolith lies in $\mathscr{S}$, and the repeated application of the wreath and box products, finitely many times. 

\begin{theorem} \label{thm:MainTheoremTDLC} Suppose $G$ non-compact and tdlc, and $V$ is a proper compact open subgroup. If $V$ is maximal in $G$, then $G \sslash V$ is monolithic and there exists a finite nontrivial transitive permutation group $F \leq S_m$ and a tdlc group $H$ that itself has a proper compact open subgroup $W$ that is maximal in $H$, such that $H = H \sslash W$ and precisely one of the following holds for the completion $G \sslash V$.
\begin{enumerate}
\item[(OAS)]
	The group $G \sslash V$ is one-ended and has a nonabelian cocompact monolith $B$ that is one-ended, topologically simple and compactly generated. Moreover, as abstract groups we have $B \leq G \sslash V \leq \aut(B)$. Note that $H$ plays no role in this case, and $G$ can be discrete or nondiscrete. The monolith $B$ is discrete if and only if $G$ is discrete.
\item[(PA)]
	The group $G \sslash V$ has precisely one end and $G \sslash V$ is a fibrelobe-full, primitive (and therefore cocompact) subgroup of $H \Wr F$, where $H \Wr F$ is acting via its product action and its topology is the permutation topology derived from this action. Moreover, $H = H \sslash W$ is infinite and is therefore subject to this classification theorem, with $H$ of type OAS or BP, and $\sd(H) \leq \sd(G \sslash V)$. The group $H$ has a nonabelian cocompact topologically simple and compactly generated monolith $K$ and the monolith of $G \sslash V$ (which is one-ended, compactly generated, nonabelian and cocompact in $G \sslash V$) is a quasiproduct of $m$ copies of $K$. Moreover, point stabiliser in $G \sslash V$ permute the components of this quasiproduct transitively by conjugation, with this action inducing $F$ on the set of components. The group $G \sslash V$ is discrete if and only if $H$ is discrete.
\item[(BP)]
	The group $G \sslash V$ has $2^{\aleph_0}$ ends and is a fibrelobe-full, primitive (and therefore cocompact) subgroup of $H \boxtimes F$, where the topology on $H \boxtimes F$ is the permutation topology. Moreover, $H = H \sslash W$ is either a finite nonregular primitive permutation group, or $H$ is infinite and therefore subject to this classification theorem with $H$ of type  OAS or PA. In all cases, $\sd(H) < \sd(G \sslash V)$. The group $G \sslash V$ has a nonabelian cocompact monolith that is topologically simple and compactly generated with $2^{\aleph_0}$ ends. The group $G \sslash V$ is never discrete.
\end{enumerate}
\end{theorem}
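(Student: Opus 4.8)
The plan is to deduce Theorem~\ref{thm:MainTheoremTDLC} from the permutational classification in Theorems~\ref{thm:MainTheorem}, \ref{thm:min_normal_subgroups} and Corollary~\ref{cor:min_normal_subgroups_cor} by translating each conclusion into topological language via the dictionary set up in Section~\ref{subsec:localComp}: since $V$ is a proper compact open subgroup that is maximal in $G$, the completion $G \sslash V$ acts primitively on the coset space $G/V$, it is closed in $\sym(G/V)$, and all its suborbits are finite; hence $G \sslash V$ is a nonregular (as $G$ is non-compact, point stabilisers are nontrivial) closed primitive subdegree-finite permutation group, i.e.\ $G \sslash V \in \mathcal{P}$. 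It is moreover infinite precisely because $G$ is non-compact and $V$ is open, so $G/V$ is infinite. Thus $G \sslash V$ is an infinite group in $\mathcal{P}$ and Theorem~\ref{thm:MainTheorem} applies, placing it in exactly one of the types FIN, OAS, PA, BP; since it is infinite it cannot be FIN, leaving OAS, PA, or BP.

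First I would handle the general assertions. Monolithicity of $G \sslash V$: by Theorem~\ref{thm:min_normal_subgroups} it has a unique minimal \emph{closed} normal subgroup $\MinClosedNormal$, which is exactly the monolith; and $\MinClosedNormal$ is transitive (nontrivial normal subgroups of primitive groups are transitive, their orbits forming a block system), hence cocompact by Proposition~\ref{Proposition_A}, and it is nonabelian and compactly generated. The ``there exists $H$ with a maximal proper compact open $W$ and $H = H\sslash W$'' clause is obtained by taking $H$ to be the abstract group already produced in the PA/BP cases of Theorem~\ref{thm:MainTheorem} (the closure in $\sym(\Delta)$ of the group induced on a fibrelobe $\Delta$), equipped with its permutation topology: $H$ is itself closed, subdegree-finite and primitive, its point stabilisers are compact open and (by primitivity) maximal, so $H$ already \emph{is} its own Schlichting completion $H \sslash W$ for $W$ any point stabiliser. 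In the OAS case one simply sets $H$ to be anything permissible (it plays no role) and reads the monolith statement straight off the definition of OAS together with Corollary~\ref{cor:min_normal_subgroups_cor} ($m=1$, $M = B = K_1$ topologically simple).

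For the three cases I would then transcribe the permutational content. In case OAS: $G\sslash V \leq \aut(B)$ with $B$ topologically simple, nonabelian, one-ended; compact generation of $B$ follows since $B$ is cocompact in the compactly generated $G\sslash V$ (point stabilisers being maximal open); the discreteness equivalence comes from the OAS$_d$/OAS$_n$ dichotomy noted after Theorem~\ref{thm:MainTheorem} and the fact that a transitive cocompact normal subgroup is discrete iff the ambient group is. In case PA: Theorem~\ref{thm:MainTheorem}(PA) gives the fibrelobe-full embedding into $H \Wr F$ with $H \in \mathcal{P}$ infinite of type OAS or BP and $\sd(H) \le \sd(G\sslash V)$; fibrelobe-full plus subdegree-finiteness yields primitivity (Remark after Definition~\ref{def:fibrelobe_full}, using the last bullet of Remark~\ref{rem:WrAndBox}) and hence cocompactness; Theorem~\ref{thm:min_normal_subgroups} together with Corollary~\ref{cor:min_normal_subgroups_cor} identifies the monolith $\MinClosedNormal$ as the closure of $K_1 \times \cdots \times K_m$, with the $K_i$ the components, $K_1$ the monolith $K$ of $H$, and $G_\alpha$ inducing $F$ on the $m$ components — which is exactly the quasi-product statement with quasi-factors isomorphic copies of $K$. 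Compact generation and one-endedness of the monolith follow as before (cocompact in the one-ended compactly generated $G\sslash V$, where one-endedness of $G\sslash V$ is part of type PA). Case BP is parallel, using Theorem~\ref{thm:MainTheorem}(BP): the $2^{\aleph_0}$-ended fibrelobe-full embedding into $H \boxtimes F$, $H$ of type FIN, OAS or PA, $\sd(H) < \sd(G\sslash V)$, the monolith topologically simple ($m=1$ by Corollary~\ref{cor:min_normal_subgroups_cor}) and $2^{\aleph_0}$-ended, never discrete.

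The main obstacle I expect is not any single case but the \emph{topological} bookkeeping: verifying that the abstract-group statements of Theorem~\ref{thm:MainTheorem} upgrade correctly to the permutation topology throughout — in particular that the topology on $H \Wr F$ (resp.\ $H \boxtimes F$) appearing here really is the permutation topology derived from the product (resp.\ box) action and agrees with the subspace topology inherited by $G\sslash V$, that the closure operations in Definition~\ref{def:fibrelobe_full}(ii)–(iii) are taken in the right symmetric groups, and that ``cocompact'', ``compactly generated'' and ``monolith'' are all computed with respect to $\mathfrak{p}$ and are preserved under the relevant inclusions. Establishing once and for all that for a closed subdegree-finite primitive $H$ we have $H = H \sslash W$ as topological groups, and that cocompact closed normal subgroups inherit compact generation and the end count, will carry most of the weight; after that, each of OAS/PA/BP is a direct readout of the earlier theorems.
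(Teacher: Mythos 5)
Your reduction of the theorem to the permutational results is the right overall strategy and matches the paper's: $G \sslash V$ is an infinite member of $\mathcal{P}$; Theorem~\ref{thm:min_normal_subgroups} supplies the monolith together with its cocompactness (via Proposition~\ref{Proposition_A}) and its compact generation and end count (via a shared Cayley--Abels graph); and Theorem~\ref{thm:MainTheorem} supplies the case division. Your observation that $H = H \sslash W$ because $H$ is closed, primitive and subdegree-finite with compact open (hence maximal) point stabilisers is also exactly what the paper uses.

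The gap is in your claim that each of OAS/PA/BP is then ``a direct readout of the earlier theorems.'' It is not, because the types in Theorem~\ref{thm:MainTheoremTDLC} are defined by mutual recursion: the PA conclusion asserts that the inner group $H$ is of type OAS or BP \emph{in the sense of Theorem~\ref{thm:MainTheoremTDLC}} (so the full topological BP description must hold for $H$, including the structure of \emph{its} inner group), and the BP conclusion likewise asserts that its inner group is of type OAS or PA in that sense. Proving that ``permutational type PA'' implies ``topological type PA'' therefore already requires knowing that ``permutational type BP'' implies ``topological type BP'', and vice versa --- a circle that a direct transcription does not break. The paper flags this explicitly (``the circular nature of our decomposition theorem makes a direct proof that reconciles the types\dots difficult'') and resolves it with a minimal-counterexample argument on $\sd(G \sslash V)$, which terminates because $\sd$ strictly decreases across every box-product layer. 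You have the ingredient ($\sd(H_0) < \sd(G)$) but never invoke it, so as written your PA and BP cases do not close; you also identify the main obstacle as topological bookkeeping, whereas that part transfers routinely. Two smaller points: primitivity of $G\sslash V$ inside $H \Wr F$ is inherited from its primitivity on $G/V$ (i.e.\ from maximality of $V$), not deduced from fibrelobe-fullness; and the assertion that a transitive cocompact closed normal subgroup is discrete if and only if the ambient group is needs the quasi-centre argument (Lemma~\ref{lemma:TrivialQuasiCenter}) in the nondiscrete direction --- it is not formal.
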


Nondiscrete examples exist for all types (see Section~\ref{section:examples}).

\begin{remark} As noted in Remark~\ref{rem:RepeatedApplicationOfMainTheorem}, once the structure of $G \sslash V$ has been determined using Theorem~\ref{thm:MainTheoremTDLC}, the theorem can then be applied to $H$, and since $H = H \sslash W$ we obtain a structural decomposition for $H$ itself. Similarly, the theorem can then be applied again to the groups that constitute $H$, and so on. The value of $\sd$ is a natural number, which guarantees that this iterated decomposition process must eventually terminate after finitely many steps, and it must terminate in a finite group in $\mathcal{P}$ or a group in $\mathcal{P}$ of type OAS.

On the other hand, as in Remark~\ref{rem:RepeatedApplicationOfMainTheorem} one can use the wreath product and box product to construct ever-larger compactly generated tdlc groups containing compact open subgroups that are maximal.
\end{remark}

The above remark gives us the following topological interpretation of Theorem~\ref{thm:main_product_statement}.

\begin{theorem} \label{thm:topo_main_product_statement}
Suppose $G$ is non-compact and tdlc, and $V$ is a proper compact open subgroup. If $V$ is maximal in $G$, then $G \sslash V \in \mathcal{P}$ is infinite, and
there is a finite sequence $F_1, \ldots, F_n$ of finite transitive groups, with $F_i$ nontrivial for all $1 < i < n$, and a
tdlc group $H$ that itself has a proper compact open subgroup $W$ that is maximal in $H$, such that $H = H \sslash W$  is a finite nonregular primitive permutation group, or it is infinite of type OAS and 
$G \sslash V$ is a primitive (and therefore cocompact) subgroup of 
\[(((H \Wr F_1 ) \boxtimes F_{2} ) \Wr \cdots \boxtimes F_{n-1}) \Wr F_n.\]
The group $(((H \Wr F_1 ) \boxtimes F_{2} ) \Wr \cdots \boxtimes F_{n-1}) \Wr F_n$ is tdlc, compactly generated and has a compact open subgroup that is maximal.
\end{theorem}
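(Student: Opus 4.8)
The plan is to derive this as the topological repackaging of Theorem~\ref{thm:main_product_statement}, by unwinding the dictionary between tdlc groups with a maximal compact open subgroup and groups in $\mathcal{P}$. First I would verify that $G \sslash V$ is an infinite member of $\mathcal{P}$. The Schlichting completion $G \sslash V$ is always closed in $\sym(G/V)$ and subdegree-finite; it is primitive precisely when $V$ is maximal in $G$ as an abstract group, as recorded above; and it cannot be finite, since $\ker(\lambda)$ is compact and a finite quotient $(G,\mathfrak p)/\ker(\lambda)$ would force $(G,\mathfrak t)$ to be compact, contrary to hypothesis. An infinite primitive permutation group is nonregular (a primitive regular group is cyclic of prime order), so $G \sslash V \in \mathcal{P}$ and is infinite.

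Next I would apply Theorem~\ref{thm:main_product_statement} to $G \sslash V$. This produces a finite sequence $F_1,\dots,F_n$ of finite transitive groups with $F_i$ nontrivial for $1 < i < n$, and a group $H_0 \in \mathcal{P}$ of type FIN or OAS, such that $G \sslash V$ is a primitive subgroup of $P := (((H_0 \Wr F_1) \boxtimes F_2) \Wr \cdots \boxtimes F_{n-1}) \Wr F_n$. To restate this topologically, put $H := H_0$ and let $W := (H_0)_\alpha$ be a point stabiliser. Since $H_0$ is primitive and nonregular, $W$ is a proper maximal subgroup of $H_0$, and it is compact and open in the permutation topology. The $H_0$-equivariant bijection $H_0/W \to \alpha^{H_0}$ shows that the action of $H_0$ on $H_0/W$ is permutationally equivalent to its faithful action on its point set, so $H = H \sslash W$, and $H$ is either a finite nonregular primitive permutation group (type FIN) or an infinite group of type OAS, as required by the statement.

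It then remains to establish the two topological claims about $P$ and the cocompactness of $G \sslash V$. For $P$, I would invoke the partial converse to Theorem~\ref{thm:main_product_statement} noted above, which follows from Theorems~\ref{thm:box_prim}, \ref{prop:wr_prim} and \ref{thm:box_product_summary}: after the harmless relabelling $F_i \leftrightarrow F_{n+1-i}$ (which preserves the condition that $F_i$ is nontrivial for $1<i<n$), the group $P$ lies in $\mathcal{P}$. Consequently $P$, in its permutation topology, is a closed subdegree-finite permutation group, hence a tdlc group with compact open point stabilisers; these stabilisers are maximal by primitivity and proper since $P$ is nonregular, so $P$ has a maximal compact open subgroup, and choosing any $g \in P$ outside a point stabiliser $P_\alpha$ gives $P = \langle P_\alpha \cup \{g\} \rangle$, so $P$ is compactly generated. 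Finally, $G \sslash V$ is primitive, hence transitive on the point set of $P$, so $P = (G \sslash V)\,P_\alpha$; the restriction of the quotient map $P \to (G \sslash V)\backslash P$ to $P_\alpha$ is then a continuous surjection from a compact set, so $(G \sslash V)\backslash P$ is compact and $G \sslash V$ is cocompact in $P$.

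The proof is essentially bookkeeping: all the genuine content is already contained in Theorem~\ref{thm:main_product_statement} and in the primitivity criteria for $\Wr$ and $\boxtimes$. The only point that demands care is the repeated translation between the abstract-group statement ``$V$ is maximal in $G$'' and the permutation-theoretic statement ``$G \sslash V$ is primitive'', together with the accompanying identification $H = H \sslash W$ for the seed group; I expect no serious obstacle beyond stating these equivalences precisely and checking that the conditions on the $F_i$ survive the index reversal.
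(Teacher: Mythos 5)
Your proposal is correct and follows the same route the paper intends: the paper explicitly states that Theorem~\ref{thm:topo_main_product_statement} ``requires no proof, since it is simply a restating of Theorem~\ref{thm:main_product_statement}'', and your argument is precisely the bookkeeping that justifies that restatement (the dictionary between maximality of $V$ and primitivity of $G \sslash V$, the identification $H = H \sslash W$ for the seed group, the partial converse giving that the iterated product lies in $\mathcal{P}$, and cocompactness via Proposition~\ref{Proposition_A}). No gaps.
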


\begin{remark} In Theorems~\ref{thm:main_product_statement} and \ref{thm:topo_main_product_statement}, the group $H$ is either finite, or its monolith is a one-ended group in $\mathscr{S}$. Such groups are the subject of much interest (see \cite{smith:product}, for example).
Thus the class $\mathcal{P}$, and the class of tdlc groups admitting a maximal subgroup that is compact and open, are rich sources for examples of one-ended, compactly generated, topologically simple tdlc groups.
\end{remark}

If we combine Theorem~\ref{thm:MainTheoremTDLC} with known techniques for detecting box products and wreath products, then we obtain Theorem~\ref{thm:detecting_simple_tdlc_groups} below.

Before stating Theorem~\ref{thm:detecting_simple_tdlc_groups}, let us first outline a technique for detecting wreath products.
Following \cite{PraegerSchneider}, a {\em nontrivial homogeneous cartesian decomposition}, $\mathcal{E}$, of a set $\Omega$, is a finite set of partitions $\{\Gamma_1, \ldots, \Gamma_m\}$ of $\Omega$ such that $m > 1$, each partition has at least two parts, all partitions have the same cardinality and
\[|\gamma_1 \cap \cdots \cap \gamma_m| = 1 \quad \text{ for each } \quad \gamma_1 \in \Gamma_1, \ldots, \gamma_m \in \Gamma_m.\]
For $G \leq \sym(\Omega)$ we say that $\mathcal{E}$ is {\em $G$-invariant} if the partitions in $\mathcal{E}$ are permuted by $G$. It is easy to see that if $G$ is permutationally isomorphic to a subgroup of $\sym(X) \Wr S_m$, with $|X|, m > 1$, then there is a nontrivial homogeneous cartesian decomposition on $\Omega$ that is $G$-invariant (see, for example, the proof of Theorem~\ref{thm:detecting_simple_tdlc_groups}). On the other hand, \cite[Theorem 5.13]{PraegerSchneider} shows that if $G \leq \sym(\Omega)$ leaves invariant a nontrivial homogeneous cartesian decomposition $\{\Gamma_1, \ldots, \Gamma_m\}$ on $\Omega$, then $G$ is permutationally isomorphic to a subgroup of $\sym(\Gamma_1) \Wr S_m$.

\begin{theorem} \label{thm:detecting_simple_tdlc_groups}
Suppose that $G$ is non-compact tdlc and $V$ is a  compact open subgroup. Suppose further that $V$ is maximal in $G$ and $G \sslash V$ is nondiscrete. (For example, one could take $G \in \mathcal{P}$ with all point stabilisers infinite.)

If $G \sslash V$ preserves no nontrivial homogeneous cartesian decomposition on $G / V$, and $G \sslash V$ does not split nontrivially as an amalgamated free product over a compact open subgroup, then the monolith of $G \sslash V$ is nondiscrete, one-ended, topologically simple, compactly generated and tdlc.
\end{theorem}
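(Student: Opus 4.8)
\emph{Plan.} The plan is to apply Theorem~\ref{thm:MainTheoremTDLC} to the pair $(G,V)$ and then use the two extra hypotheses to eliminate the types PA and BP, leaving only type OAS, after which the conclusion follows once we also check that the monolith is nondiscrete. First I would note that the hypotheses of Theorem~\ref{thm:MainTheoremTDLC} are satisfied, so $G\sslash V$ is monolithic and is of exactly one of the types OAS, PA, BP; moreover $G\sslash V$ is infinite, since it is transitive on the infinite set $G/V$, and it is nondiscrete by assumption.

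\emph{Eliminating PA.} Suppose $G\sslash V$ were of type PA. Then it is a fibrelobe-full subgroup of $H\Wr F$ acting with its product action on the point set $X^{m}$, where $|X|\ge 3$ (every group in $\mathcal P$ has degree at least three) and $m>1$ (as $F$ is a nontrivial transitive group), and the action of $G\sslash V$ on $G/V$ is permutationally isomorphic to this action on $X^{m}$. The $m$ coordinate partitions of $X^{m}$ form a nontrivial homogeneous cartesian decomposition that is invariant under $\sym(X)\Wr S_{m}$, hence under $H\Wr F$, hence under $G\sslash V$. Transporting this along the permutational isomorphism yields a nontrivial homogeneous cartesian decomposition on $G/V$ invariant under $G\sslash V$, contradicting the hypothesis.

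\emph{Eliminating BP.} Suppose $G\sslash V$ were of type BP. Then it is a fibrelobe-full subgroup of a box product $H\boxtimes F$ with $F$ nontrivial finite transitive, so it acts on the tree-like graph $\Gamma(\Delta,m)$ with $m\ge 2$, and hence on the associated tree $T$ whose vertices are the lobes and points of $\Gamma(\Delta,m)$ and whose edges are the incident lobe--point pairs. Since $G\sslash V$ preserves the bipartition of $T$ it acts without inversions, and fibrelobe-fullness gives two vertex orbits (lobes and points) and one edge orbit; also $G\sslash V$ fixes no vertex of $T$, being transitive on the infinitely many lobes and on the infinitely many points. Fix an incident pair $\Delta\ni\alpha$ and set $C=(G\sslash V)_{\{\Delta\}}\cap(G\sslash V)_{\alpha}$. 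Because $(G\sslash V)_{\{\Delta\}}$ acts on $\Delta$ with dense image in the transitive group $H$ (and $\Delta$ has at least three points), and $(G\sslash V)_{\alpha}$ acts on the $m\ge 2$ lobes through $\alpha$ with dense image in the transitive group $F$, both of these actions are in fact transitive, so $C$ is properly contained in each of $(G\sslash V)_{\{\Delta\}}$ and $(G\sslash V)_{\alpha}$. Further, $C\le(G\sslash V)_{\alpha}$ is compact, and $C$ is open: it contains the pointwise stabiliser of the finite set consisting of $\alpha$ together with one further vertex chosen in each lobe through $\alpha$, since two points lie in at most one common lobe, so fixing $\alpha$ and these vertices fixes every lobe through $\alpha$. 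By Bass--Serre theory, $G\sslash V=(G\sslash V)_{\{\Delta\}}*_{C}(G\sslash V)_{\alpha}$ is a nontrivial amalgamated free product over the compact open subgroup $C$, contradicting the second hypothesis.

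\emph{Conclusion, and the main obstacle.} Hence $G\sslash V$ is of type OAS, so by Theorem~\ref{thm:MainTheoremTDLC} its monolith $B$ is one-ended, topologically simple and compactly generated, and being a closed subgroup of the tdlc group $G\sslash V$ it is tdlc. It remains to show $B$ is nondiscrete. The centraliser $C_{G\sslash V}(B)$ is closed and normal; if it were nontrivial it would contain the monolith $B$, forcing $B$ abelian, which is false, so $C_{G\sslash V}(B)=1$. If $B$ were discrete then, being compactly generated, it would be finitely generated, say $B=\langle S\rangle$ with $S$ finite; for a compact open subgroup $U\le G\sslash V$ and $s\in S$ the continuous map $u\mapsto usu^{-1}$ carries $U$ into the discrete normal subgroup $B$, so $\{u\in U: usu^{-1}=s\}$ is open in $U$, and intersecting over $S$ shows $U\cap C_{G\sslash V}(B)=\{1\}$ is open in $U$; then $U$ is finite and $G\sslash V$ is discrete, contrary to hypothesis. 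Therefore $B$ is nondiscrete. The step I expect to require the most care is the BP case: correctly extracting the tree $T$ from the construction of $\Gamma(\Delta,m)$ in Section~\ref{section:boxproduct}, verifying the orbit structure of the action and the absence of inversions and fixed vertices, and — crucially for matching the hypothesis — checking both compactness and openness of the edge group $C$ and the nontriviality of the amalgam, the latter relying on the density clauses built into the definition of fibrelobe-full.
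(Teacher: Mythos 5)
Your proposal is correct, and the overall architecture (apply Theorem~\ref{thm:MainTheoremTDLC}, then use the two hypotheses to kill types PA and BP) is exactly the paper's. The PA elimination via the coordinate partitions of $X^m$ is the same argument the paper gives. Where you genuinely diverge is the BP case: the paper simply observes that a group of type BP has infinitely many ends and then invokes the general Stallings-type theorem for compactly generated tdlc groups (Abels; Kr\"{o}n--M\"{o}ller, Theorem 3.18) to obtain a nontrivial splitting over a compact open subgroup, whereas you construct the splitting explicitly: you take the structure tree of $H \boxtimes F$, check that the fibrelobe-full action has two vertex orbits, one edge orbit, no inversions and no fixed vertex, and apply Bass--Serre theory to get $G \sslash V = (G\sslash V)_{\{\Delta\}} *_C (G\sslash V)_\alpha$ with $C$ compact open (your openness argument via the pointwise stabiliser of $\alpha$ and one extra point per lobe is needed precisely because the lobes may be infinite, and it is correct) and with both inclusions proper by the density clauses in the definition of fibrelobe-full. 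Your route is longer but self-contained modulo Serre's structure theorem and produces the amalgam explicitly; the paper's is a one-line citation. Finally, for nondiscreteness of the monolith the paper just reads this off the OAS clause of Theorem~\ref{thm:MainTheoremTDLC} (``$B$ is discrete if and only if $G$ is discrete''), while you give an independent and valid argument via triviality of $C_{G\sslash V}(B)$; this is harmless redundancy.
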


This theorem can be used to find compactly generated tdlc groups that are one-ended and topologically simple.

\subsection{Comparisons with O'Nan--Scott style results}
\label{into:comparisons}

The finite primitive permutation groups are classified by the so-called O'Nan--Scott theorem. The genesis of this theorem was due, independently, to O'Nan and Scott (see \cite{Scott:Onan_Scott}). Over time, O'Nan and Scott's original result was extended and refined (see \cite{aschbacher_scott}, \cite{kovacs1989} and \cite{liebeck&praeger&saxl:finite_onan_scott}) into its modern formulation. The theorem divides primitive groups into classes. In Peter Cameron's book (\cite{cameron:permutation_groups}) some of these classes are called {\em basic}, and the theorem can then be neatly summarised as: 
\begin{quote}
{\em Every finite primitive permutation group is either basic or a primitive subgroup of a wreath product $H \Wr S_m$ in product action on $X^m$, where $H$ is a basic primitive group on $X$.} 
\end{quote}
In Cameron's nomenclature, a basic group is one that cannot be embedded in a wreath product in its product action (there is a slight inaccuracy caused by adopting this definition, as noted in \cite[Section 1.4]{PraegerSchneider}). The idea is that basic groups are somehow intrinsically primitive, whereas all non-basic groups are ``built'' from basic groups using wreath products and embeddings.

The O'Nan--Scott Theorem for finite primitive permutation groups has been extended to infinite groups in some cases. In \cite{gelander_and_glasner:onan_scott}, an O'Nan--Scott style classification is developed for countable linear groups.
In \cite{infinitary_versions}, Cheryl Praeger and Dugald Macpherson showed that most of the arguments from the finite case can be carried over to infinite primitive permutation groups that posses a minimal closed normal subgroup that itself contains a minimal closed normal subgroup. At the time of the paper's publication, there were no natural classes of infinite permutation group that satisfied this condition on minimal normal subgroups.
In \cite{smith:discrete_prim} it was shown that this condition holds in a very natural situation: infinite primitive permutation groups with finite point stabilisers. From this observation, an O'Nan--Scott type theorem for infinite primitive permutation groups with finite point stabilisers was developed, and it inspired an investigation into other classes of permutation groups whose stabilisers are finite (\cite{PraegerNeumannSmith}). We show in Theorem~\ref{thm:min_normal_subgroups} that all groups in $\mathcal{P}$ also satisfy this condition on minimal normal subgroups. However, as noted below, the discovery of the box product means that applying results in \cite{infinitary_versions} to groups in $\mathcal{P}$ does not give a satisfactory classification, since much more can be said.

In the recent book \cite{PraegerSchneider}, a single O'Nan--Scott theory is developed for all quasiprimitive (and therefore primitive) permutation groups $G$ with a {\it socle that is FCR} (that is, the subgroup of $G$ generated by all the minimal normal subgroups of $G$ is a direct product of abstractly simple groups). Henceforth we shall say that $G$ {\em has the FCR property} if the socle of $G$ is FCR. The groups considered in \cite{smith:discrete_prim} have the FCR property, but groups in $\mathcal{P}$ may not have this property. In fact, whether or not groups in $\mathcal{P}$ have the FCR property is directly related to an important and well-known open problem from topological group theory: {\em Is there a compactly generated, locally compact group that is topologically simple, but not abstractly simple?} This is Problem 19.73 in the Kourovka Notebook, \cite{kourovka19}.

Although they might not use Cameron's terminology, all of the O'Nan--Scott type results we described above have the same broad structure: they divide primitive groups into classes, with some classes akin to basic groups, and a group is either basic or a subgroup of a wreath product $H \Wr S_m$ in product action on $X^m$, where $H$ is a basic primitive group on $X$.

In \cite{smith:product}, the box product was introduced, described as a new product for permutation groups. In the paper, it was shown that
 the wreath product in its product action is not the only product to preserve primitivity. As noted previously, for a finite permutation group $F$ and another permutation group $H$,
both $H \Wr S_m$ and $H \boxtimes S_m$ are primitive if and only if $H$ is nonregular and primitive, and $F$ is transitive.

In light of this discovery,  the intuitive idea of a basic group can be revised so that it again agrees with the spirit of Cameron's original idea: basic primitive group are somehow intrinsically primitive and all non-basic primitive groups are ``built'' from basic groups using wreath or box products, and embeddings. This revision doesn't affect finite primitive permutation groups, nor does it affect the groups considered in \cite{smith:discrete_prim}, because $H \boxtimes S_m$ is never discrete when it is primitive. Of course the problem with this revision is that for a non-basic primitive group $G$ lying in $H \Wr S_m$ or $H \boxtimes S_m$, we may not be able to conclude that $H$ is basic. 

In our main results, we see that for the class $\mathcal{P}$, we can 
incorporate this revised notion of a basic primitive permutation group, and still retain an elegant decomposition theorem. By taking  the basic groups in $\mathcal{P}$ to be those that are either finite, or one-ended almost topologically simple, we can neatly summarise our results as:
\begin{quote}
{\em Every group in $\mathcal{P}$ is either basic or a primitive subgroup of a product $((H \Wr S_{m_1}) \boxtimes S_{m_2}) \Wr \cdots \boxtimes S_{m_{n-1}}) \Wr S_{m_n}$, where $H \in \mathcal{P}$ is basic and $n, m_1, \ldots, m_n \in \N$.}
\end{quote}

As mentioned above, groups in $\mathcal{P}$ might not have the FCR property. Nevertheless, it might be useful to some readers for us to relate the types (OAS, BP and PA) for groups in $\mathcal{P}$ that occur in Theorem~\ref{thm:MainTheorem}, with the primitive types (HA, AS, AS$_{\rm{reg}}$, HS, HC, TW, SD, CD and PA) used in \cite{PraegerSchneider}. Table 7.1 and Figure 7.1 from  \cite{PraegerSchneider} are useful guides here.

Firstly, the results in this paper show that a group $G \in \mathcal{P}$ is never of type HA, HS, HC, TW, SD or CD. Type PA in this paper is the same as it is in \cite{PraegerSchneider}.

A group $G$ of type AS$_{\rm{reg}}$ has a minimum normal subgroup that is regular and therefore countable. So by Lemma~\ref{lemma:G_discrete_implies_M_a_infinite} point stabilisers in $G$ are all finite. Theorem 2.5 in \cite{smith:prim_digraphs} implies that a subdegree-finite primitive permutation group with more than one end always has infinite point stabilisers (see \cite[Appendix B]{moller_vonk} for a detailed proof). Hence a group $G \in \mathcal{P}$ of type AS$_{\rm{reg}}$ is discrete and must lie in OAS. We noted already that we could subdivide OAS further into OAS$_d$ and OAS$_{n}$; if we were to follow the conventions of \cite{PraegerSchneider} we should subdivide OAS$_d$ further into two cases, one in which the minimal normal subgroup is regular and the other in which it is not.

The class AS consists of groups with a minimal normal subgroup that is transitive, nonabelian, simple and non-regular. If a group $G \in \mathcal{P}$ has a minimal normal subgroup with these properties, then either $G$ has precisely one end (in which case $G$ lies in the class OAS) or $G$ has infinitely many ends and is contained in a box product structure (and is thus of type BP).

\section{Preliminaries}
\label{section:prelims}

Many of our preliminaries were covered in the introduction. We give the remainder here.

\subsection{Graphs and digraphs} \label{subsection:connectivity_of_graphs}

Let $V$ be a set, let $E$ be a set of two-element subsets of $V$, and let $A$ be a set of ordered pairs of distinct elements of $V$. We call the pair $(V, E)$ a {\em graph} on $V$, and the elements of $V$ and $E$ are respectively its {\em vertices} and {\em edges}. We call the pair $(V, A)$ a {\em digraph} on $V$, and the elements of $A$ are its {\em arcs}. If $\Gamma$ is a graph or digraph, we will write $V\Gamma$, $E\Gamma$ or $A\Gamma$ to refer to its vertex, edge or arc set.
If $a \in A$ then we denote by $o(a)$ (called the {\em origin vertex}) and $t(a)$ (called the {\em terminal vertex}) the vertices such that $a = (o(a), t(a))$. The arc $(t(a), o(a))$, called the {\em reversal} of $a$, is denoted by $\overline{a}$. Associated with a digraph $(V, A)$ is the graph $(V, \{\{o(a), t(a)\} : a \in A\})$, and associated with a graph $(V, E)$ is the digraph $(V, \{(u, v), (v, u) :  \{u,v\} \in E\})$. Thus, we can think of a graph as being a digraph in which there are two arcs (one in each direction) between any two adjacent vertices. 

If $\Gamma$ is a graph (resp. digraph) we say that two vertices $u,v$ are {\em adjacent} if there is an edge  (resp. an arc in any direction) between them. Notice that our graphs and digraphs are simple, without loops or multiple edges. A {\em path} in $\Gamma$ is a series of distinct vertices $v_0, v_1, \ldots, v_n$ such that $v_i, v_{i+1}$ are adjacent for all $0 \leq i < n$; the {\em length} of this path is $n$. A {\em cycle} of length $n+1$ is a series of vertices $v_0, v_1, \ldots, v_{n}, v_{n+1}$ such that $v_{n+1}=v_0$ and $v_0, v_1, \ldots, v_{n}$ is a path and $v_{n}, v_{n+1}$ are adjacent. A {\em directed cycle} in a digraph $(V,A)$ is a cycle $v_0, v_1, \ldots, v_{n}, v_{n+1}$ such that $(v_i, v_{i+1}) \in A$ for all $0 \leq i < n+1$. A {\em ray} is a one-way infinite path $v_0, v_1, v_2, \ldots$, and a {\em double ray} is a two-way infinite path $\ldots, v_{-2}, v_{-1}, v_0, v_1, v_2, \ldots$.

Two vertices are {\em connected} in $\Gamma$ if there exists a path between them. The {\em distance} between two connected vertices $u$ and $v$, denoted $d(u,v)$, is the length of the shortest path between them; the distance between two vertices that are not connected is infinite. We say $\Gamma$ is {\em connected} if any two vertices in $\Gamma$ are connected. 
A {\em tree} is a connected graph without cycles. As mentioned previously, we will typically think of a tree $T$ as being a digraph, with two arcs between any two adjacent vertices, one in each direction. Between any two distinct vertices $v, w$ in a tree, there is a unique path which we denote by $[v,w]_T$

The {\em valency} of a vertex $v \in V\Gamma$ is equal to the cardinality of the set of vertices adjacent to $v$. If all vertices in $\Gamma$ have the same valency $m$, we say that $\Gamma$ is {\em regular} or sometimes {\em $m$-regular}. If all vertices in $\Gamma$ have finite valency, we say that $\Gamma$ is locally finite.

If $T$ is a tree, there is a natural bipartition of $VT$ into sets $V_1$ and $V_2$, in which any two distinct vertices in $V_i$ are an even distance from each other, for $i = 1,2$. If all vertices in $V_1$ have valency $m_1$ and all vertices in $V_2$ have valency $m_2$, then we say that $T$ is an $(m_1, m_2)$-biregular tree.

The automorphism group of a graph or digraph $\Gamma$ is denoted $\aut(\Gamma)$. We say that $\Gamma$ is {\em vertex-transitive} (resp. {\em vertex-primitive}) if $\aut(\Gamma)$ acts transitively (resp. primitively) on the vertices of $\Gamma$. We make similar definitions for the edges or arcs of $\Gamma$. Sometimes we will say $\Gamma$ is transitive; when we do, we always mean vertex transitive.

In an infinite graph or digraph $\Gamma$, one can form an equivalence relation on the set of rays: two rays are related if there is a third ray that intersects them both infinitely often. For a locally finite graph or digraph $\Gamma$ the equivalence classes of this relation are called the {\em ends} of $\Gamma$. It is well-known that all connected, locally finite, vertex-transitive graphs have $0, 1, 2$ or $2^{\aleph_0}$ many ends. Representative examples for each type are as follows: the $3$-regular tree has infinitely many ends, a two-way infinite ``ladder'' has two ends, the graph $\Z \times \Z$ has one end and finite graphs have no ends.

A (possibly empty) set $W$ of vertices of a graph or digraph $\Gamma$ is called a {\em cut set} if
the induced (di)graph $\Gamma \setminus W$ is not connected. If $W$ is a cut set and consists of a single vertex, that vertex is called a {\em cut vertex}. We say that $\Gamma$ is {\em $k$-connected} if 
$|V\Gamma| > k$ and every vertex set $W$ satisfying $|W| < k$ is not a cut set. If there exists some integer $k$ such that $\Gamma$ is $k$-connected but not $(k+1)$-connected, then $\Gamma$ has {\em connectivity $k$}. Thus $\Gamma$ has connectivity zero if and only if $\Gamma$ is disconnected or consists of a single vertex. An infinite, connected and vertex transitive graph or digraph has connectivity one if and only every vertex is a cut vertex.\\

Suppose $\Gamma$ is a graph (resp.~digraph) with connectivity one. The {\em lobes} of $\Gamma$ are connected subgraphs (resp.~subdigraphs) that are maximal subject to the condition that they have connectivity at least two. Let $\mathbb{L}$ be the set of lobes of $\Gamma$, and for a vertex $\gamma \in V\Gamma$, let $\mathbb{L}(\gamma)$ be the set of lobes that contain $\gamma$.

The {\em block-cut-vertex tree} $T$ of $\Gamma$ has vertex set $VT := V\Gamma \cup \mathbb{L}$ and edge set $ET := \{\{\gamma, \Lambda\} : \gamma \in V\Gamma \text{ \& } \Lambda \in \mathbb{L}(\gamma)\}$. It is important to notice that each element in $\mathbb{L}$ is represented by a single vertex in $T$, and that $V\Gamma$ and $\mathbb{L}$ form the two parts of the natural bipartition of $T$. 

If every vertex in $V\Gamma$ lies in at least two lobes, and every lobe in $\Gamma$ contains at least two vertices, then any group $G \leq \aut \Gamma$ induces a faithful action on $T$. This observation will play an important role in our analysis.

Suppose $m \geq 2$ is a cardinal number and $\Lambda$ is a vertex-transitive digraph with at least three vertices and connectivity at least two. Recall from Section~\ref{intro:box_prods} the graph $\Gamma(\Lambda, m)$. This graph has connectivity one, and every vertex lies in precisely $m$ lobes and every lobe is isomorphic (as a digraph) to $\Lambda$. The block-cut-vertex tree of $\Gamma(\Lambda, m)$ is the $(|V\Lambda|, m)$-biregular tree.

\begin{example} In Figure~\ref{fig:Gamma_D8_S3}, an infinite digraph with connectivity one is pictured (left) with its block-cut-vertex tree (right). The infinite digraph is $\Gamma(K_4, 3)$, where $K_4$ denotes the complete digraph on $4$ vertices.
\end{example}

A locally finite, connected graph $\Theta$ has more than one end if and only if there exists a finite set $F$ of vertices such that $\Theta$ has two distinct components that both contain rays.

Recall that the Cartesian product of two graphs, $\Gamma$ and $\Sigma$, is denoted $\Gamma \square \Sigma$. The vertex set of $\Gamma \square \Sigma$ is the Cartesian product $V\Gamma \times V\Sigma$, with vertices $(\alpha_1, \alpha_2)$ and $(\beta_1, \beta_2)$ adjacent in $\Gamma \square \Sigma$ if and only if either (i) $\alpha_1 = \beta_1$ and $\alpha_2$ and  $\beta_2$ are adjacent in $\Sigma$; or (ii) $\alpha_2 = \beta_2$ and $\alpha_1$ and $\beta_1$ are adjacent in $\Gamma$. An equivalent definition of the edge relation is the following:  vertices $(\alpha_1, \alpha_2)$ and $(\beta_1, \beta_2)$ are adjacent in $\Gamma \square \Sigma$ if and only if $d_\Gamma(\alpha_1, \beta_1) + d_\Sigma(\alpha_2, \beta_2) = 1$.
It is well-known (and easy to prove) that the Cartesian product of two infinite, connected, locally finite graphs has precisely one end.\\

Now we consider the relationship between the ends of two connected, locally finite graphs that are quasi-isometric. Here we follow \cite[Section 3.1.1]{KronMoller}. Let $\Gamma$ be a connected and locally finite graph.
If $E$ is the set of ends of $\Gamma$, then one can turn $\Gamma \cup E$ into a compact topological space as follows. Call a subset $C \subseteq V\Gamma$ a {\em cut} if the number of vertices in $\Gamma$ that are adjacent to some vertex in $C$ is finite. If a cut $C$ contains a ray belonging to some end $\epsilon$, then we say that $\epsilon$ is {\em contained} in $C$. The set of ends contained in $C$ is denoted $E(C)$.
A basis for the topology on $\Gamma \cup E$ is then $\{C \cup E(C) : C \text{ is a cut}\}$. Under this topology we can think of $E$ as a topological space, called the {\em end space} of $\Gamma$.

Recall that two metric spaces $(X, d_X)$ and $(Y, d_Y)$ are {\em quasi-isometric} if there exists a function $f: X \rightarrow Y$ and constants $a \geq 1$ and $b \geq 0$ such that for all points $x, x' \in X$ and all points $y \in Y$, the following holds:
\begin{enumerate}
\item
	$\frac{1}{a}d_X(x, x') - \frac{b}{a} \leq d_Y(f(x), f(x')) \leq ad_X(x, x') + ab$
\item
	$d_Y(y, f(X)) \leq b$.
\end{enumerate}

We can consider $\Gamma$ to be a metric space under its distance metric.
Two locally finite connected graphs that are quasi-isometric have homeomorphic end spaces.

\subsection{Permutation groups} \label{prelims:PermGroups}

Let $\Omega$ be a set. We write $\sym(\Omega)$ to denote the group of all permutations of $\Omega$. If $G \leq \sym(\Omega)$ we say that $G$ is a {\em permutation group}. At times, we will write a permutation group as $(G, \Omega)$.

Let $G \leq \sym(\Omega)$ be nontrivial and transitive. The {\em degree} of $G$ is the cardinal $|\Omega|$. For $\alpha \in \Omega$ and $g \in G$ we denote the image of $\alpha$ under $g$ by $\alpha^g$, following the convention that our permutations act from the right. This notation extends to sets, so for a subset $\Phi \subseteq \Omega$ we have $\Phi^g = \{\alpha^g : \alpha \in \Phi\}$. The set $\alpha^G = \{\alpha^g : g \in G\}$ denotes an {\em orbit} of $G$, and if $\Omega$ consists of a single orbit we say that $G$ is {\em transitive}.
We define the {\em pointwise stabiliser} 
of $\Phi$ in $G$ by
\[G_{(\Phi)} := \{g \in G : \alpha^g = \alpha, \text{ for all } \alpha \in \Phi\},\]
and the {\em setwise stabiliser} of $\Phi$ in $G$ by
\[G_{\{\Phi\}} := \{g \in G : \alpha^g \in \Phi \iff \alpha \in \Phi\}.\]
In the special case where $\Phi$ contains only a single element $\alpha$, we write $G_\alpha$ instead of $G_{(\{\alpha\})}$ and call $G_\alpha$ the {\em stabiliser of $\alpha$ in $G$.} A permutation group whose stabilisers are all trivial is said to be {\em semi-regular}; a transitive and semi-regular group is said to be {\em regular}.
The orbits of any point stabiliser $G_\alpha$ are called {\em suborbits} of $G$, and the cardinality of any suborbit is called a {\em subdegree} of $G$. A suborbit of cardinality one is called {\em trivial}. If $\beta^{G_{\alpha}}$ is any suborbit, and $\alpha^g = \beta$, then the {\em pair} of $\beta^{G_{\alpha}}$ is $(\alpha^{g^{-1}})^{G_\alpha}$.

If $H \leq \sym(\Lambda)$ is another permutation group, a {\em permutation isomorphism} from $G$ to $H$ is a pair $(\phi, \theta)$ such that $\phi: G \rightarrow H$ is an isomorphism and $\theta : \Omega \rightarrow \Lambda$ is a bijection with $\theta(\alpha^g) = \theta(\alpha)^{\phi(g)}$ for all $\alpha \in \Omega$ and all $g \in G$.
If we wish to restrict the domain of a function $\phi$ to some subset $Y$, then we write $\phi \big|_{Y}$. We extend this notation to permutation groups in the following way: if $G$ leaves invariant some subset $\Lambda \subseteq \Omega$ then the action of $G$ on $\Lambda$ induces a subgroup of $\sym(\Lambda)$, and we denote this group by $G \big|_{\Lambda}$.

Recall that $G$ is primitive if it preserves no equivalence relation on $\Omega$ except for the two obvious relations: the {\em trivial relation} (in which everything in $\Omega$ is related only to itself), and the {\em universal relation} (in which everything in $\Omega$ is related to everything else). It is well-known that the following are equivalent:
\begin{enumerate}
\item
	$G$ is primitive on $\Omega$;
\item
	For all $\alpha \in \Omega$ the stabiliser $G_\alpha$ is a maximal subgroup of $G$;
\item
	$\Omega$ is the only nontrivial block of $G$ (where a {\em nontrivial block} of $G$ is a subset $\Delta \subseteq \Omega$ containing at least two elements that satisfies the condition that for all $g \in G$ either $\Delta \cap \Delta^g = \Delta$ or $\emptyset$).
\item \label{item:higman}
	For all distinct $\alpha, \beta \in \Omega$ the digraph $(\Omega, \{\alpha, \beta\}^G)$ is connected.
\end{enumerate}

The orbits of a normal subgroup $N \unlhd G$ form a $G$-invariant equivalence relation. Therefore, all nontrivial normal subgroups of primitive permutation groups are transitive.

Condition~\ref{item:higman} above is called Higman's criterion (see \cite[1.12]{d_g_higman}). From it, one can easily deduce the following: if $G$ is primitive and any suborbit and its pair are finite, then $\Omega$ is countable and $G$ is subdegree-finite.\\

Let us now turn our attention to wreath products. In this paper, we will always be taking the wreath product of an arbitrary permutation group $G \leq \sym(\Omega)$ and a finite permutation group $H \leq S_m$, for some $m \geq 2$, and our notation will reflect this.
Recall that the wreath product of $G$ and $H$ has two natural actions: one on $\Omega \times \{1, \ldots, m\}$ that we call the {\em imprimitive action} and the other on $\Omega^{m}$ that we call the {\em product action}. We denote the permutation group resulting from the imprimitive action by $G \wr H$, and the permutation group resulting from the product action by $G \Wr H$. The two actions are given explicitly below.

Fix $(g_1, \ldots, g_m; h)$ in the wreath product of $G$ and $H$, fix $(\alpha, i) \in \Omega \times \{1, \ldots, m\}$ and fix $(\alpha_1, \ldots, \alpha_m) \in \Omega^m$. Then:
\begin{itemize}
\item
	in $G \wr H$ we have $(\alpha, i)^{(g_1, \ldots, g_m; h)} = (\alpha^{g_i}, i^h)$
\item
	in $G \Wr H$ we have $(\alpha_1, \ldots, \alpha_m)^{(g_1, \ldots, g_m; h)} = \left (\alpha^{g_{1^{h^{-1}}}}_{1^{h^{-1}}}, \ldots, \alpha^{g_{m^{h^{-1}}}}_{m^{h^{-1}}} \right )$
\end{itemize}\\

\subsection{Topological groups}
\label{subsection:topo_groups}

Let $G$ be a topological group. It is {\em totally disconnected} and {\em locally compact} if it is both of these things as a topological space. In this paper we don't explicitly assume that our topological groups are Hausdorff, but totally disconnected groups are always Hausdorff.

A subgroup $H \leq G$ is {\em cocompact} if the quotient space $G/H$ is compact under the quotient topology. ßA group is {\em topologically simple} if it contains no proper nontrivial closed normal subgroups. Similarly, a subgroup $H \leq G$ is {\em topologically characteristic} if 
$H$ is a closed subgroup which is preserved by every topological group automorphism of $G$.
If there exists a compact subset $S \subseteq G$ such that $G$ can be generated (as an abstract group) by $S$, then we say that $G$ is {\em compactly generated}.

In the introduction we defined the permutation topology for $\sym(\Omega)$, but usually it is defined more generally. We do this here. 
Let $G$ be an abstract group acting on a set $\Omega$. This action gives rise to a topology on $G$. The {\em permutation topology} takes as a neighbourhood basis of the identity the pointwise stabilisers in $G$ of finite subsets of $\Omega$. That is, a neighbourhood basis of the identity is,
\[\{G_{(\Phi)} : \Phi \subseteq \Omega, \, |\Phi| \in \mathbb{N}\}.\]

It is easy to see that the following are true for $G$ as a topological group under this topology: (i) a subgroup $H \leq G$ is open if and only if $H$ contains $G_{(\Phi)}$ for some finite $\Phi \subseteq \Omega$; (ii) $G$ is Hausdorff if and only if the action of $G$ is faithful; and (iii) $G$ is totally disconnected if and only if $G$ is faithful.
We say that $G$ is {\em closed} if its image in $\sym(\Omega)$ is closed under the permutation topology.

Cocompactness has the following natural interpretation.

\begin{proposition}[{\cite[Lemma 7.5]{moller:tdlc_via_graphs_and_perms}}] \label{Proposition_A}
Let $G$ be a topological group. If $G$ is acting transitively on a set $\Omega$, and the stabiliser of a point $G_\alpha$ for $\alpha \in \Omega$ is compact and open (for example $G$ could be a closed subdegree-finite permutation group), then $H \leq G$ is cocompact if and only if it has finitely many orbits on $\Omega$.
\end{proposition}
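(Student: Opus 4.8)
The plan is to route both conditions through a single intermediate statement: that $G = KH$ for some compact $K \subseteq G$. First I would fix $\alpha$ and identify $\Omega$ with the space of right cosets $G_\alpha\backslash G$ via $G_\alpha g \mapsto \alpha^g$ (our permutations act on the right). Since $G_\alpha$ is open, every right coset $G_\alpha g$ is open, so $\Omega$ is \emph{discrete}, and the $H$-orbit of $G_\alpha g$ consists precisely of the right cosets contained in the double coset $G_\alpha g H$; hence $H$ has finitely many orbits on $\Omega$ if and only if $G = \bigcup_{i=1}^n G_\alpha g_i H$ for some $g_1,\ldots,g_n \in G$. I would also record two elementary facts used throughout: the quotient map $\pi\colon G \to G/H$ is open (because $\pi^{-1}(\pi(U)) = UH$ is open whenever $U$ is), and for any $g$ the set $G_\alpha g H$ is open (finite-or-infinite union of the open sets $G_\alpha g h$), so $\pi(G_\alpha g)$ is an open subset of $G/H$.

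For the implication ``finitely many orbits $\Rightarrow$ cocompact'', I would write $G = \bigcup_{i=1}^n G_\alpha g_i H$ and set $K := \bigcup_{i=1}^n G_\alpha g_i$. Each $G_\alpha g_i$ is the image of the compact set $G_\alpha$ under a right translation, hence compact, so $K$ is compact (a finite union of compacta, no Hausdorff hypothesis needed). Since $KH = \bigcup_i G_\alpha g_i H = G$, we get $\pi(K) = KH/H = G/H$, so $G/H$ is a continuous image of a compact set and therefore compact; that is, $H$ is cocompact.

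For the implication ``cocompact $\Rightarrow$ finitely many orbits'', I would build a continuous surjection from $G/H$ onto the orbit space $\Omega/H$, which is discrete (a quotient of the discrete space $\Omega$). The map $gH \mapsto (\alpha^g)^H$ is well defined since $(\alpha^{gh})^H = (\alpha^g)^H$ for $h \in H$, and it is onto because $G$ is transitive. Its fibre over the orbit of $\alpha^{g_0}$ is $\{gH : \alpha^g \in (\alpha^{g_0})^H\} = \pi(G_\alpha g_0 H)$, which is open by the remark above; a map into a discrete space all of whose point-preimages are open is continuous, so the map is continuous. If $G/H$ is compact, then so is its image $\Omega/H$, and a compact discrete space is finite, giving only finitely many $H$-orbits on $\Omega$.

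The argument is essentially bookkeeping, so I do not anticipate a genuine obstacle; the two points that need care are (a) consistently using right cosets $G_\alpha g$ rather than left cosets $gG_\alpha$, so that $H$-orbits correspond to double cosets and the saturated sets $G_\alpha g H$ are genuinely open, and (b) not invoking Hausdorffness of $G/H$ anywhere, since bare compactness suffices. The single hypothesis carrying the whole proof is that $G_\alpha$ is \emph{compact and open}: openness makes $\Omega$ discrete and the sets $G_\alpha g H$ open, while compactness is exactly what allows a finite union of translates $G_\alpha g_i$ to serve as the compact set $K$.
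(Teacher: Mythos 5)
Your proof is correct. The paper gives no argument of its own for this proposition --- it is quoted directly from M\"{o}ller's work (\cite[Lemma 7.5]{moller:tdlc_via_graphs_and_perms}) --- so there is nothing internal to compare against; your double-coset bookkeeping is the standard proof of this fact, and both directions check out, including the two points you flag (the correspondence between $H$-orbits and the open saturated sets $G_\alpha g H$, and the fact that compactness of $G/H$ is used only via continuous images, so no Hausdorff hypothesis is needed).
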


Let $G$ be a topological group. Recall that a connected graph $\Gamma$ of bounded valency is said to be a {\em Cayley--Abels graph} (also called a {\em rough Cayley graph}) for $G$ if $G$ acts vertex transitively on $\Gamma$ and the stabilisers of vertices are compact and open subgroups of $G$ (see \cite[Definition 19.6]{NewDirectionsBook} and \cite[Definition 2.1]{KronMoller}). 

\begin{theorem}[{\cite[Theorem 2.2]{KronMoller}}] Let $G$ be a tdlc group. Then $G$ has a Cayley--Abels graph if and only if $G$ is compactly generated.
\end{theorem}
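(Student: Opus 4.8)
The plan is to prove the two implications separately, using van Dantzig's theorem to pass freely between compact open subgroups and coset graphs.

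For the direction ``$G$ compactly generated $\Rightarrow$ $G$ has a Cayley--Abels graph'', I would begin with a compact generating set $S$ and a compact open subgroup $V \le G$ provided by van Dantzig. Since $V$ is open and $S$ is compact, $S$ meets only finitely many cosets of $V$, so after replacing $S$ by $V(S \cup S^{-1} \cup V)V$ I may assume $S$ is a symmetric compact open set which is a finite union of double cosets of $V$, contains $V$, and still generates $G$. I would then define a graph $\Gamma$ on the coset space of $V$ in $G$ by joining two distinct cosets $xV$, $yV$ exactly when $x^{-1}y \in S$; the relations $VSV = S$ and $S^{-1} = S$ make this well-defined and symmetric, the translation action of $G$ is then vertex-transitive by automorphisms with vertex stabilisers conjugate to $V$ (hence compact and open), the graph is locally finite of bounded valency because $S$ is a union of finitely many cosets of $V$, and expressing a group element as a word in $S$ produces a walk in $\Gamma$, so $\Gamma$ is connected. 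Hence $\Gamma$ is a Cayley--Abels graph for $G$.

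For the converse I would fix a vertex $v$ of a given Cayley--Abels graph $\Gamma$ and set $V := G_v$, which is compact and open by hypothesis. Since $\Gamma$ has bounded valency the neighbour set of $v$ is finite; picking for each neighbour $w$ an element $g_w$ with $v^{g_w} = w$, I would show that the compact set $K := V \cup \bigcup_w (Vg_wV \cup Vg_w^{-1}V)$ generates $G$. The crux is that $H := \langle K \rangle$ is transitive on the vertices of $\Gamma$: the orbit $v^H$ contains $v$ and all its neighbours, and since every element of $H$ is a graph automorphism $v^H$ is closed under passing to neighbours, so by connectedness of $\Gamma$ it is all of $V\Gamma$; together with $V = G_v \le H$ this gives $H = G$ by orbit--stabiliser. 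So $G$ is compactly generated.

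I expect the only point needing real care to be the first direction, namely arranging the generating set to be a symmetric finite union of double cosets of $V$ so that the adjacency relation on the coset space is at once well-defined, symmetric, translation-invariant and locally finite; once this bookkeeping is done, both halves are short connectedness and orbit arguments.
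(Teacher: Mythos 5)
Your proposal is correct, and it is essentially the standard argument: the paper itself gives no proof of this statement, citing it as \cite[Theorem 2.2]{KronMoller}, and your two implications (building the coset graph on $G/V$ from a symmetrised generating set $S=VSV$ that is a finite union of double cosets, and conversely generating $G$ by $G_v$ together with finitely many elements carrying $v$ to its neighbours) are exactly the argument in that reference, going back to Abels. The only bookkeeping worth making explicit is that each double coset $VxV$ is a \emph{finite} union of left cosets because $|V : V \cap xVx^{-1}|$ is finite by compactness of $V$ and openness of $V \cap xVx^{-1}$, which is what gives bounded valency.
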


Recall from Section~\ref{subsection:connectivity_of_graphs} that two connected, locally finite graphs that are quasi-isometric have the same end space (up to homeomorphism). 

\begin{theorem}[{\cite[Theorem 2.7]{KronMoller}}] Let $G$ be a compactly generated tdlc group. Any two Cayley--Abels graphs of $G$ are quasi-isometric.
\end{theorem}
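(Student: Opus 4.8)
The plan is to reduce the statement to the standard fact that any two compact generating sets of a compactly generated locally compact group induce bi-Lipschitz-equivalent word metrics, and to encode each Cayley--Abels graph as such a word metric on $G$ itself.

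First I would turn a single Cayley--Abels graph $\Gamma$ into a word metric on $G$. Fix a base vertex $x$ and put $V = G_x$, which is compact and open by hypothesis. Since $\Gamma$ is locally finite, $x$ has only finitely many neighbours, so the set $A := \{g \in G : x^g = x \text{ or } x^g \text{ adjacent to } x\}$ is the union of $V$ with finitely many double cosets $VgV$; each such double coset is open and is the continuous image of the compact set $V \times V$ under multiplication, so $A$ is a compact open subset that is symmetric (adjacency is symmetric) and satisfies $VAV = A$, hence is bi-$V$-invariant. Connectivity of $\Gamma$ together with transitivity of the $G$-action shows that $A$ generates $G$, and a path-lifting argument gives $d_\Gamma(x^g, x^h) = \ell_A(hg^{-1})$, where $\ell_A(g) := \min\{n : g \in A^n\}$; this is well defined on vertices precisely because $VAV = A$ forces $\ell_A$ to be bi-$V$-invariant (here is where $V \subseteq A$ is used). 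Consequently the orbit map $g \mapsto x^g$ is a surjective isometry from $(G, \rho_A)$ onto $\Gamma$, where $\rho_A(g,h) := \ell_A(hg^{-1})$, and in particular a quasi-isometry.

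Next I would compare the word metrics coming from two Cayley--Abels graphs $\Gamma_1, \Gamma_2$, with associated compact open generating sets $A_1, A_2$ constructed as above. Because $A_2$ is open, contains the identity and generates $G$, the sets $A_2^n$ form an open cover of $G$; compactness of $A_1$ then yields $A_1 \subseteq A_2^N$ for some $N$, whence $\ell_{A_2} \le N\,\ell_{A_1}$ on all of $G$. By symmetry $\ell_{A_1} \le M\,\ell_{A_2}$ for some $M$, so the identity map is a bi-Lipschitz equivalence $(G, \rho_{A_1}) \to (G, \rho_{A_2})$, in particular a quasi-isometry. This compactness argument is the conceptual heart of why the choice of graph does not matter. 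Finally I would compose: choosing a quasi-inverse of the orbit isometry $(G, \rho_{A_1}) \to \Gamma_1$, then the identity $(G, \rho_{A_1}) \to (G, \rho_{A_2})$, then the orbit isometry $(G, \rho_{A_2}) \to \Gamma_2$, exhibits a quasi-isometry $\Gamma_1 \to \Gamma_2$, since quasi-isometries compose.

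The main obstacle is the bookkeeping in the first step: verifying that $A$ is compact and open (this is exactly where the bounded-valency hypothesis of a Cayley--Abels graph enters), checking that graph distance coincides with $\ell_A$ via path-lifting, and confirming that $\ell_A$ is bi-$V$-invariant so that the orbit map descends to a genuine isometry rather than merely a coarse map. Care must also be taken with the left/right conventions of Section~\ref{prelims:PermGroups} when writing $d_\Gamma(x^g,x^h) = \ell_A(hg^{-1})$; I would fix a right-invariant word metric $\rho_A$ so that the orbit map is distance-preserving on the nose. Once these identifications are in place, the second step is short and purely topological, and the conclusion is immediate.
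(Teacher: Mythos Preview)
The paper does not prove this theorem at all: it is quoted verbatim as \cite[Theorem 2.7]{KronMoller} and immediately used to justify the definition of the end space of a compactly generated tdlc group. So there is no ``paper's own proof'' to compare your proposal against.

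That said, your sketch is the standard argument and is essentially what one finds in Kr\"on--M\"oller. One small correction: the identity $d_\Gamma(x^g,x^h)=\ell_A(hg^{-1})$ is not literally true. If $hg^{-1}\in V\setminus\{1\}$ then the left side is $0$ while the right side is $1$, since $\ell_A$ vanishes only at the identity. What you actually get is $d_\Gamma(x^g,x^h)\le \ell_A(hg^{-1})\le d_\Gamma(x^g,x^h)+1$ (the extra $+1$ absorbs the possible ``stationary'' step coming from $V\subseteq A$). So the orbit map is a $(1,1)$-quasi-isometry rather than an isometry. This is harmless for your argument, and you already flag the bookkeeping as the main obstacle, but the sentence ``a surjective isometry from $(G,\rho_A)$ onto $\Gamma$'' should be weakened accordingly. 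With that adjustment the proof goes through exactly as you outline: the compactness covering argument $A_1\subseteq A_2^N$ is indeed the heart of the matter.
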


Thus the {\em space of ends of a compactly generated tdlc group} is defined to be the space of ends of one of its Cayley--Abels graphs.

\subsection{Permutation groups as topological groups}
\label{section:PermGpsAsTopoGroups}

Readers interested in the interplay between permutation groups and topological groups are referred to \cite{moller:tdlc_via_graphs_and_perms}.

Recall that a permutation group $G \leq \sym(\Omega)$ is a topological group under the permutation topology. One can easily show that $G$ is closed if and only if some point stabiliser $G_\alpha$ is closed, and
a subgroup $H$ of $G$ is open in $G$ if and only if it contains the pointwise stabiliser $G_{(\Phi)}$ for some finite subset $\Phi \subseteq \Omega$. Thus $G$ is discrete if and only if the pointwise stabiliser in $G$ of some finite set is trivial and
 if $G$ is subdegree-finite, then $G$ is discrete if and only if all point stabilisers in $G$ are finite.

If $G$ is subdegree-finite and closed, then it is not difficult to show that any point stabiliser $G_\alpha$ is compact (see \cite[Proof of Lemma 1]{Woess} for example). Since $G_\alpha$ is open by definition, it follows then that $G$ is tdlc with compact open point stabilisers. Hence if $G$ is closed, transitive and has a connected and locally finite orbital graph $\Gamma$, then  $\Gamma$ is a Cayley--Abels graph for $G$ and $G$ is compactly generated.

\begin{remark} \label{remark:CountableInterior}
Suppose $\Omega$ is a countable set. For any group $G \leq \sym(\Omega)$ it is well-known that $G$ contains a countable subgroup $H$ such that, in the permutation topology on $\sym(\Omega)$, the closure of $H$ is equal to the closure of $G$. Indeed, note that the set of all finite tuples of elements in $\Omega$ is countable, and 
so in particular the set of all pairs of tuples that lie in the same $G$ orbit is countable. For each pair $(\underline{\alpha}, \underline{\beta})$ of finite tuples that lie in the same $G$ orbit, choose an element in $G$ that maps $\underline{\alpha}$ to $\underline{\beta}$, and let $H$ be the group generated by these elements. Now $H$ is countable because it is countably generated, and $\overline{G} = \overline{H}$ because for all finite tuples $\underline{\alpha}$ of $\Omega$, the orbits $\underline{\alpha}^G$ and $\underline{\alpha}^H$ are equal.
\end{remark}

The following is widely known, but we include the proof for completeness.

\begin{lemma} \label{lemma:NormalizersAreClosed} Suppose $\Omega$ is countable and $H \leq G \leq \sym(\Omega)$, with $G$ closed. Then for all $g \in G$ the centralizer $C_G(g)$ is closed, $C_G(H)$  is closed, and if $H$ is closed then $N_{G}(H)$ is closed.
\end{lemma}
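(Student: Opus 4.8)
The plan is to prove each of the three closure statements by showing that the relevant subgroup is an intersection of closed sets, exploiting that $\Omega$ is countable so that pointwise stabilisers of finite subsets (equivalently of finite tuples) form a countable neighbourhood basis of the identity, and that a subgroup of a closed group is closed in it precisely when it is closed in $\sym(\Omega)$.

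First I would record the elementary fact that for a fixed $\alpha \in \Omega$ and a fixed $\beta \in \Omega$, the set $\{g \in \sym(\Omega) : \alpha^g = \beta\}$ is both open and closed in the permutation topology (its complement is the union over $\gamma \neq \beta$ of the analogous sets, each of which is a coset of the open subgroup $\sym(\Omega)_\alpha$). Consequently, for any $h \in \sym(\Omega)$ the ``agrees with $h$ on $\alpha$'' condition $\{g : \alpha^g = \alpha^h\}$ is closed. For the centralizer of a single element $g \in G$: we have $x \in C_G(g)$ if and only if $\alpha^{xg} = \alpha^{gx}$ for every $\alpha \in \Omega$, i.e. $(\alpha^x)^g = (\alpha^g)^x$ for all $\alpha$. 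For each fixed $\alpha$, the map $x \mapsto \alpha^x$ is continuous (preimages of points are open cosets), and $\alpha^g$ is a fixed point, so $\{x \in G : (\alpha^g)^x = (\alpha^x)^g\}$ is closed in $G$ — this is the preimage under the continuous map $x \mapsto (\alpha^x, (\alpha^g)^x) \in \Omega \times \Omega$ of the closed ``diagonal after applying $g$'' set, or more simply one checks directly that membership is determined by where $x$ sends the two points $\alpha$ and $\alpha^g$, a clopen condition. Intersecting over all $\alpha \in \Omega$ gives $C_G(g)$ as an intersection of closed sets, hence closed. Then $C_G(H) = \bigcap_{h \in H} C_G(h)$ is closed as an intersection of closed sets (countability of $\Omega$ is not even needed here, only that $G$ is closed).

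For the normalizer, suppose $H$ is closed. The key point is that $N_G(H) = \{g \in G : H^g = H\} = \{g \in G : H^g \subseteq H \text{ and } H^{g^{-1}} \subseteq H\}$, so it suffices to show $\{g \in G : H^g \subseteq H\}$ is closed (the condition $H^{g^{-1}} \subseteq H$ is handled symmetrically, using that inversion is a homeomorphism). Fix $h \in H$; I claim $\{g \in G : g^{-1}hg \in H\}$ is closed. Here is where countability enters: since $H$ is closed in the (second countable) group $G$, one can use that the conjugation map $G \to G$, $g \mapsto g^{-1}hg$, is continuous, so $\{g : g^{-1}hg \in H\}$ is the preimage of the closed set $H$ under a continuous map, hence closed. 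Intersecting over all $h \in H$ yields that $\{g \in G : H^g \subseteq H\}$ is closed, and combining with the symmetric statement gives that $N_G(H)$ is closed.

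The only place I would be slightly careful — and what I expect to be the main (though mild) obstacle — is justifying continuity of the conjugation map $g \mapsto g^{-1}hg$ and of $x \mapsto \alpha^x$ in the permutation topology without circularity: this follows from the general fact that a topological group has continuous multiplication and inversion, together with the observation that evaluation $g \mapsto \alpha^g$ is continuous because $(\alpha^g)$ as a function of $g$ is locally constant on cosets of the open stabiliser $G_\alpha$. Given these continuity facts, all three statements reduce to the observation that an intersection of closed sets (or a preimage of a closed set under a continuous map) is closed, and countability of $\Omega$ is used only to guarantee that the relevant index sets behave well and that $G$ is metrizable/second countable, which is convenient but can be dispensed with by arguing pointwise as above. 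I would present the argument in the order: (1) clopen-ness of point-preimage conditions; (2) $C_G(g)$ closed; (3) $C_G(H)$ closed; (4) $N_G(H)$ closed when $H$ is closed.
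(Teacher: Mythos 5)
Your proof is correct. It reaches the same conclusion as the paper but by a slightly different mechanism: the paper's proof is sequential --- it takes a convergent sequence $(g_n)$ in $C_G(g)$ (resp.\ $C_G(H)$, $N_G(H)$) with limit $\ell \in G$, and uses continuity of the group operations to pass identities like $g_n^{-1}g^{-1}g_n g = 1$ or memberships like $g_n^{-1}h^{-1}g_n h \in H$ to the limit. That argument implicitly uses the countability of $\Omega$ to ensure $\sym(\Omega)$ is first countable, so that sequential closedness suffices. You instead exhibit each subgroup directly as an intersection of closed sets: clopen ``where does $x$ send $\alpha$ and $\alpha^g$'' conditions for the centralizers, and preimages of the closed set $H$ under the continuous conjugation maps $g \mapsto g^{-1}hg$ (together with the symmetric condition for $g^{-1}$, which you rightly note is needed for the normalizer of an infinite subgroup --- the paper is terser on this point). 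Both arguments rest on the same fact, namely that $\sym(\Omega)$ is a topological group under the permutation topology; your version has the mild advantage of not needing metrizability at all, so countability of $\Omega$ plays no role, as you observe.
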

\begin{proof} 
Enumerate the elements of $\Omega$ as $\{\alpha_1, \alpha_2, \ldots\}$. Suppose $(g_n)$ is some convergent sequence of permutations, with each $g_i \in G$. Since $G$ is closed, the limit $\ell$ of the sequence $(g_n)$ lies in $G$. 
If $g \in G$ and each $g_i$ is in $C_G(g)$, then $g_i^{-1}g^{-1} g_i g= 1$ and therefore $\ell^{-1} g^{-1} \ell g = 1$. Hence $C_G(g)$ is closed. Similarly, if each $g_i$ is in $C_G(H)$ and $h \in H$, then $\ell^{-1}h^{-1}\ell h = 1$. Hence $C_G(H)$ is closed. Finally, suppose that $H$ is closed, each $g_i$ lies in $N_G(H)$, and $h \in H$. Then $g_i^{-1}h^{-1} g_i h \in H$ for all $i$, and therefore $\ell^{-1}h^{-1} \ell h \in H$. Hence $N_G(H)$ is closed.
\end{proof}

The following useful theorem of David M.~Evans does not require the continuum hypothesis.

\begin{theorem}[{\cite[Theorem 1.1]{Evans87}}] \label{theorem:Evans} Suppose $\Omega$ is countable. If $G$ and $H$ are closed subgroups of $\sym(\Omega)$ with $H \leq G$, then either $|G:H| = 2^{\aleph_0}$ or $H$ contains the pointwise stabiliser in $G$ of some finite subset of $\Omega$. \qed
\end{theorem}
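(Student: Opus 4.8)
The plan is to prove the single nontrivial implication: \emph{if $H$ does not contain $G_{(\Phi)}$ for any finite $\Phi \subseteq \Omega$, then $|G:H| = 2^{\aleph_0}$.} (If $H$ does contain such a $G_{(\Phi)}$ the second alternative of the statement already holds, and one always has $|G:H| \le |G| \le |\sym(\Omega)| = 2^{\aleph_0}$ since $\Omega$ is countable.) Because $\Omega$ is countable, $\sym(\Omega)$ is a Polish group, so its closed subgroup $G$ is a Polish space under a complete metric; and as $H$ is closed, every coset $gH$ is closed in $G$. The hypothesis that $H$ contains no $G_{(\Phi)}$ is exactly the statement that $H$ is \emph{not open} in $G$, since the subgroups $G_{(\Phi)}$ form a neighbourhood basis of the identity. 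First I would record two consequences of non-openness: (i) $G$ is not discrete (a discrete $G$ would have $G_{(\Phi)} = \{1\} \subseteq H$ for some $\Phi$), and hence, being a topological group, has no isolated points, so $G$ is a \emph{perfect} Polish space; and (ii) no coset $gH$ has nonempty interior, for otherwise $H = g^{-1}(gH)$ would be an open subgroup.

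Next I would pass to the closed equivalence relation $E := \{(g,g') \in G \times G : g^{-1}g' \in H\}$, which is closed because it is the preimage of the closed set $H$ under the continuous map $(g,g') \mapsto g^{-1}g'$, and whose classes are precisely the cosets of $H$. The crucial point is that $E$ has empty interior in $G \times G$: any basic open box $O_1 \times O_2 \subseteq E$ would force $O_2 \subseteq g_1 H$ for a fixed $g_1 \in O_1$, contradicting (ii). Being closed with empty interior, $E$ is nowhere dense, so its complement is open and dense. I would then run a Cantor scheme (fusion): build nonempty open sets $U_s \subseteq G$ indexed by finite binary strings $s$, with $\overline{U_{s0}}, \overline{U_{s1}}$ disjoint subsets of $U_s$ (possible since $G$ is perfect) and diameters tending to $0$, and arrange at each level that $\overline{U_s} \times \overline{U_t} \subseteq (G \times G) \setminus E$ whenever $s \neq t$ have the same length. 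This last requirement can always be met by shrinking the finitely many boxes present at a given level, using the density of the open set $(G \times G) \setminus E$.

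Completeness of $G$ then gives, for each branch $x \in 2^{\omega}$, a unique point $g_x \in \bigcap_n \overline{U_{x|_n}}$ (where $x|_n$ denotes the length-$n$ initial segment of $x$), and the separation condition guarantees $(g_x, g_y) \notin E$, i.e.\ $g_x H \neq g_y H$, for all $x \neq y$. Hence $x \mapsto g_x H$ is an injection of $2^{\omega}$ into $G/H$, giving $|G:H| \ge 2^{\aleph_0}$ and therefore equality. Conceptually this is the perfect-set (Mycielski) phenomenon for the nowhere-dense closed relation $E$, and it is exactly what lets us reach the value $2^{\aleph_0}$ without invoking the Continuum Hypothesis. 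I expect the main obstacle to be the bookkeeping in the fusion: one must maintain the pairwise box-avoidance $\overline{U_s} \times \overline{U_t} \subseteq (G \times G) \setminus E$ for \emph{all} same-length pairs simultaneously while remaining free to split and shrink, which is precisely where the density of the complement of $E$ (equivalently, the non-openness of $H$) does the real work. Everything else---Polishness and perfectness of $G$, closedness of the cosets, and the reduction to the non-open case---is routine.
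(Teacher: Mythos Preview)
The paper does not prove this statement: it is quoted from Evans' 1987 paper and marked with \qed, serving purely as a cited tool. There is therefore no ``paper's own proof'' to compare against.

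That said, your argument is correct. The reduction to the case where $H$ is not open in $G$ is exactly right, and your observation that non-discreteness of $G$ (forced by the failure of the second alternative) makes $G$ a perfect Polish space is sound. The coset equivalence relation $E$ is indeed closed, and your argument that it has empty interior is valid; since $E$ contains the diagonal, the box-avoidance condition $\overline{U_{s0}}\times\overline{U_{s1}}\cap E=\emptyset$ already forces disjointness of the sibling sets, so the splitting goes through. The Cantor--Mycielski scheme then produces a continuum of pairwise $E$-inequivalent points, hence $2^{\aleph_0}$ cosets. This is a standard and clean route to Evans' result; Evans' original 1987 argument is in the same spirit (exploiting Baire category / a tree of shrinking neighbourhoods in the Polish group $G$), though phrased somewhat more directly in terms of building elements of $G$ by successive approximation rather than via the relation $E$ on $G\times G$.
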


\subsection{The quasi-center of a permutation group}

The {\em quasi-centre} plays an important role in determining the structure of tdlc groups. Here we give an interpretation for permutation groups, and prove some known properties using short permutational arguments. The advantage for us of this approach is that we will obtain a permutational statement that doesn't require topological conditions. This is something we will need later in the paper.

In \cite{BurgerMozes}, the {\em quasi-center} of a topological group $H$ is defined as $QZ(H) := \{ h \in H : \text{$C_H(h)$ is open}  \}$, where $C_H(h)$ denotes the centralizer in $H$ of $h$.
There is a natural interpretation of this definition for permutation groups: for a countable set $\Omega$ and a permutation group $G \leq \sym(\Omega)$, the quasi-center of $G$ is,
\[QZ(G) := \{g \in G : G_{(\Phi)} \leq C_G(g) \text{ for some finite } \Phi \subset \Omega \}.\]

The following is known for tdlc groups.

\begin{proposition} \label{prop:QZContainsAllCountableNormalSubgroups} Suppose $\Omega$ is countable and $G \leq \sym(\Omega)$. If $G$ is closed, then $QZ(G)$ is the union of all countable normal subgroups of $G$.
\end{proposition}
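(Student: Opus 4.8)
The plan is to prove the two inclusions separately. For the easier one, suppose $N \unlhd G$ is countable; I want to show $N \subseteq QZ(G)$. Fix $g \in N$. The conjugation action of $G$ on $N$ gives a homomorphism $G \to \sym(N)$, and since $G$ is closed and $\Omega$ is countable, by Remark~\ref{remark:CountableInterior} there is a countable subgroup $H \leq G$ with $\overline{H} = G$. For each element $n$ in the (countable) set $N$, the orbit $n^G$ equals $n^H$, so there is a countable subset of $G$ realising all conjugates occurring in $N$; more to the point, the stabiliser in $G$ of the finite set $\{g\}$ under this conjugation action is $C_G(g)$, which is closed by Lemma~\ref{lemma:NormalizersAreClosed}. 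I then need that $C_G(g)$ is in fact \emph{open}. This is where Evans' theorem (Theorem~\ref{theorem:Evans}) enters: $C_G(g)$ is a closed subgroup of the closed group $G$, so either $|G : C_G(g)| = 2^{\aleph_0}$ or $C_G(g)$ contains the pointwise stabiliser of a finite set, i.e.\ is open. But $|G : C_G(g)|$ is the cardinality of the conjugacy class $g^G$, which is contained in $N$ and hence at most countable; so the first alternative is impossible and $C_G(g)$ is open. Thus $g \in QZ(G)$.

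For the reverse inclusion I must show $QZ(G)$ is itself a countable normal subgroup of $G$ (whence it is contained in the union of all such). Normality is immediate from the definition: if $C_G(g) \supseteq G_{(\Phi)}$ then $C_G(g^x) = C_G(g)^x \supseteq G_{(\Phi)}^x = G_{(\Phi^x)}$ for $x \in G$, so $g^x \in QZ(G)$; that $QZ(G)$ is a subgroup is a routine check using $C_G(g) \cap C_G(h) \subseteq C_G(gh^{-1})$ and intersecting the two finite sets $\Phi$. The real content is countability. For each $g \in QZ(G)$ pick a finite $\Phi_g$ with $G_{(\Phi_g)} \leq C_G(g)$, so every element of $G_{(\Phi_g)}$ commutes with $g$; equivalently $g$ is fixed by $G_{(\Phi_g)}$ acting by conjugation, i.e.\ $g$ centralises $G_{(\Phi_g)}$. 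The key observation is that, since the action is faithful and $\Phi_g$ is finite, $g$ must permute $\Phi_g$ among themselves together with the finitely many orbits... more carefully: I claim $g$ fixes the set $\Phi_g$ setwise and its action is determined by its action on the finite set $\Omega \setminus \Fix(G_{(\Phi_g)})$. Indeed, since $\Omega$ is countable and $G$ is subdegree-finite... but here we are not assuming subdegree-finiteness, only that $G$ is closed. So instead: for $\alpha \notin \Phi_g$, pick $x \in G_{(\Phi_g)}$ (if it moves $\alpha$); then $\alpha^g$ is determined by compatibility $\alpha^{gx} = \alpha^{xg}$, forcing $\alpha^g$ to lie in a specific coset-like constraint. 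The cleanest route is: the map $QZ(G) \to \bigcup \{$finite subsets of $\Omega\}$, $g \mapsto$ (the support of $g$ restricted appropriately) need not be finite-to-one, so I instead argue directly that $\overline{QZ(G)}$ meets each $G_{(\Phi)}$ in its centraliser, and use that $C_G(G_{(\Phi)})$ acts faithfully on the finite set of points not fixed by $G_{(\Phi)}$ — hence is finite — whence $QZ(G) = \bigcup_{\Phi} C_G(G_{(\Phi)})$ is a countable union of finite groups, so countable.

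The step I expect to be the main obstacle is precisely this last point: showing $C_G(G_{(\Phi)})$ is finite, or at worst countable, for each finite $\Phi$. The intended argument is that an element $c$ centralising the open subgroup $G_{(\Phi)}$ must fix setwise each orbit of $G_{(\Phi)}$ and, because it commutes with everything in $G_{(\Phi)}$, is completely determined by its restriction to the (finite, since $G_{(\Phi)}$ has finite orbits when $G$ is subdegree-finite — and in the applications of this Proposition $G$ will be subdegree-finite; in general one argues $c$ restricted to any infinite $G_{(\Phi)}$-orbit is forced to be a "central" element of that transitive action, hence lies in a small group) complement of $\Fix(G_{(\Phi)})$. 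One must be slightly careful about whether the hypotheses here include subdegree-finiteness; reading the statement, only "closed" is assumed, so I would either (a) note that $C_G(G_{(\Phi)})$ embeds in the group of permutations of $\Omega$ commuting with a transitive-on-each-orbit group, which on each orbit $\Gamma$ is isomorphic to a subquotient of $\sym(\Gamma)$ bounded by $|\Gamma|$ when the orbit is finite and is in any case determined by countably much data since $\Omega$ is countable, giving at most $2^{\aleph_0}$ a priori but then re-invoking Evans' dichotomy to collapse it, or (b) cite the standard fact that the centraliser of an open subgroup in a topological group acting faithfully and with compact-open stabilisers is discrete and countable. I would present route (b), since in every place this Proposition is later used the ambient group is a closed subdegree-finite permutation group, making $G_{(\Phi)}$ compact with finite orbits and the centraliser argument completely elementary.
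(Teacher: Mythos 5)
Your first inclusion (every countable normal subgroup lies in $QZ(G)$) is correct and is exactly the paper's argument: $C_G(g)$ is closed by Lemma~\ref{lemma:NormalizersAreClosed}, Evans' dichotomy rules out index $2^{\aleph_0}$ because $|G:C_G(g)|=|g^G|\leq|N|\leq\aleph_0$, so $C_G(g)$ is open.

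The second inclusion, however, contains a genuine gap: you set out to prove that $QZ(G)$ is \emph{itself} a countable normal subgroup, and this is false in general. Take $G=\prod_{i\in\N}C_2$ acting on $\Omega=\N\times\{0,1\}$ with the $i$-th factor swapping $(i,0)$ and $(i,1)$: this is a closed (indeed compact), faithful, uncountable abelian subgroup of $\sym(\Omega)$, so $QZ(G)=G$ is uncountable. The same example kills the supporting claims you lean on: $C_G(G_{(\Phi)})=G$ is neither finite nor countable nor discrete, even though point stabilisers here are compact and open and the action is faithful, so the ``standard fact'' invoked in your route (b) is not true as stated (the genuine statement, Lemma~\ref{lemma:BurgerMozesInterpretation}(\ref{BurgerMozesInterpretation:case2}), requires the subgroup being centralised to have \emph{finitely many orbits}, which $G_{(\Phi)}$ need not have). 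The proposition only asserts that $QZ(G)$ is a \emph{union} of countable normal subgroups, and the repair is short: if $g\in QZ(G)$ with $G_{(\Phi)}\leq C_G(g)$, then $|G:G_{(\Phi)}|$ equals the size of the orbit of the tuple $\Phi$, which is at most $|\Omega|^{|\Phi|}=\aleph_0$; hence $|g^G|=|G:C_G(g)|\leq|G:G_{(\Phi)}|\leq\aleph_0$, so $\langle g^G\rangle$ is a countable normal subgroup of $G$ containing $g$. This is what the paper does, and it requires no subdegree-finiteness and no analysis of $C_G(G_{(\Phi)})$.
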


\begin{proof} Suppose $G$ is closed. By Lemma~\ref{lemma:NormalizersAreClosed}, $C_G(g)$ is closed for all $g \in G$. Hence, by Theorem~\ref{theorem:Evans},
\[g \in QZ(G) \iff |G : C_G(g)| \leq \aleph_0.\]

Now $G$ acts on itself by conjugation. The stabiliser of $g \in G$ under this action is $C_G(g)$, and the orbit $g^G$ has cardinality $|G : C_G(g)|$ and is contained in the normal subgroup $\langle g^G \rangle \unlhd G$.

If $g \in G$ lies in some countable normal subgroup $N$ of $G$, then we have $|G : C_G(g)| = |g^G| \leq |N| \leq \aleph_0$, and hence $g \in QZ(G)$. On the other hand, if $g \in QZ(G)$ then 
by definition there is a finite subset $\Phi$ of the countable set $\Omega$ such that $G_{(\Phi)} \leq C_G(g)$. Hence for $g \in QZ(G)$ we have $\aleph_0 \geq |G:G_{(\Phi)}| = |G : C_G(g)| |C_G(g) : G_{(\Phi)}| \geq |G :C_G(g)| = |g^G|$.
Since $g^G$ is countable we know $\langle g^G \rangle$ is countable.
Hence $g$ is contained in the countable normal subgroup $\langle g^G \rangle \unlhd G$.
\end{proof}

For tdlc groups $H$, it is well known that the quasi-center $QZ(H)$ is a topologically characteristic subgroup of $H$. For permutation groups, we have the following.

\begin{proposition} \label{prop:QZ_topo_characteristic} Suppose $\Omega$ is countable and $G \leq \sym(\Omega)$. Then $QZ(G)$ is a normal subgroup of $N_{\sym(\Omega)}(G)$.
\end{proposition}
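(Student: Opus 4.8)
The plan is to show directly that $QZ(G)$ is invariant under conjugation by any element of $N := N_{\sym(\Omega)}(G)$, using the ``internal'' characterisation of the quasi-centre supplied by Proposition~\ref{prop:QZContainsAllCountableNormalSubgroups} together with the fact that conjugation by an element of $N$ is an automorphism of $G$. First I would recall that for $n \in N$ the map $c_n : g \mapsto n^{-1} g n$ restricts to a group automorphism of $G$ (since $n$ normalises $G$), and that a group automorphism of $G$ carries countable normal subgroups of $G$ to countable normal subgroups of $G$. Indeed, if $K \unlhd G$ is countable, then $c_n(K) = n^{-1} K n$ is again a subgroup of $G$ of the same cardinality $|K| \le \aleph_0$, and it is normal in $G$ because for any $g \in G$ we have $g^{-1}(n^{-1}Kn)g = n^{-1}\bigl((ngn^{-1})^{-1} K (ngn^{-1})\bigr)n = n^{-1} K n$, using $ngn^{-1} \in G$ and $K \unlhd G$.

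Next I would invoke Proposition~\ref{prop:QZContainsAllCountableNormalSubgroups}: since $G$ is closed (this is a hypothesis we may add, or is already in force; if $G$ is not assumed closed one argues with $\overline{G}$, noting $N_{\sym(\Omega)}(G) \le N_{\sym(\Omega)}(\overline{G})$ and $QZ(G) = QZ(\overline{G}) \cap G$, but I expect the intended reading carries the closedness hypothesis through from the preceding propositions), we have that $QZ(G)$ equals the union of all countable normal subgroups of $G$. Therefore
\[
n^{-1} QZ(G)\, n = n^{-1}\Bigl(\bigcup_{K} K\Bigr) n = \bigcup_{K} n^{-1} K n,
\]
where $K$ ranges over all countable normal subgroups of $G$. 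By the previous paragraph, $\{n^{-1}Kn : K \text{ countable} \unlhd G\}$ is precisely the set of all countable normal subgroups of $G$ again (the map $K \mapsto n^{-1}Kn$ is a bijection on this set, with inverse $K \mapsto nKn^{-1}$). Hence the right-hand union is again the union of all countable normal subgroups of $G$, which is $QZ(G)$. So $n^{-1} QZ(G) n = QZ(G)$ for every $n \in N$, i.e.\ $QZ(G) \unlhd N$.

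The only genuine subtlety, and the step I would be most careful about, is whether Proposition~\ref{prop:QZContainsAllCountableNormalSubgroups} is available — it requires $G$ closed, and the present statement of Proposition~\ref{prop:QZ_topo_characteristic} as typeset does not repeat that hypothesis. I would resolve this either by adding ``with $G$ closed'' to the hypotheses (matching the neighbouring results, where closedness is standing), or, if one wants the statement for general $G$, by the brief reduction to $\overline{G}$ indicated above: a permutation automorphism of $G$ extends to one of $\overline{G}$, $QZ(G) = QZ(\overline{G}) \cap G$ because membership in $QZ$ is a statement about pointwise stabilisers of finite sets which is insensitive to passing to the closure, and then normality of $QZ(\overline G)$ in $N_{\sym(\Omega)}(\overline G) \supseteq N_{\sym(\Omega)}(G)$ plus $G \unlhd N_{\sym(\Omega)}(G)$ gives the claim. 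Everything else is a routine bijection-of-normal-subgroups argument, so I do not anticipate any real obstacle beyond bookkeeping the hypotheses correctly.
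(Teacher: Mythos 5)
Your argument is correct, but it takes a genuinely different and heavier route than the paper. The paper's proof works directly from the permutational definition of the quasi-centre: if $g \in QZ(G)$ is witnessed by a finite set $\Phi$ with $G_{(\Phi)} \leq C_G(g)$, and $h \in N_{\sym(\Omega)}(G)$, then conjugating the witness gives $G_{(\Phi^h)} = h^{-1}G_{(\Phi)}h \leq h^{-1}C_G(g)h = C_G(g^h)$, so $g^h \in QZ(G)$ --- two lines, no closedness hypothesis, and in fact no use of the countability of $\Omega$. Your route instead passes through Proposition~\ref{prop:QZContainsAllCountableNormalSubgroups}, identifying $QZ(G)$ with the union of the countable normal subgroups and observing that conjugation by a normaliser permutes these. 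That works, and your bookkeeping (conjugation by $n$ is a bijection on the set of countable normal subgroups of $G$) is fine, but it imports two hypotheses the statement does not carry: $G$ closed (needed for Proposition~\ref{prop:QZContainsAllCountableNormalSubgroups}, which itself rests on Evans' theorem) and, implicitly, the full strength of that characterisation. You correctly flag this and your patch via $\overline{G}$ is sound --- $QZ(G) = QZ(\overline{G}) \cap G$ does hold, since $\overline{G}_{(\Phi)} = \overline{G_{(\Phi)}}$ for finite $\Phi$ and centralisers are closed, and $N_{\sym(\Omega)}(G) \leq N_{\sym(\Omega)}(\overline{G})$ --- but this patch is itself longer than the paper's entire proof. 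The one thing your approach buys is a conceptual restatement (the quasi-centre is a union of countable normal subgroups, and normalisers permute those), which is a useful picture elsewhere in the paper; for this particular proposition, though, the direct conjugation of the witnessing finite set is both shorter and strictly more general.
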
 
\begin{proof}
Suppose $g \in QZ(G)$ and $h \in N_{\sym(\Omega)}(G)$. By definition, there exists a finite set $\Phi \subseteq \Omega$ such that $G_{(\Phi)} \leq C_G(g)$.
Now
$g^h \in G$ and $G_{(\Phi^h)} = h^{-1} G_{(\Phi)} h \leq h^{-1} C_G(g) h = C_G(g^h)$. Hence $g^h \in QZ(G)$.
\end{proof}

The following results can be found in \cite{BurgerMozes}. The proofs given in \cite{BurgerMozes} are topological. For completeness here we give short permutational arguments.

\begin{lemma}[{\cite[1.3.2--1.3.6]{BurgerMozes}}] \label{lemma:BurgerMozesInterpretation} Let $\Gamma$ be a connected locally finite graph and $H, G \leq \aut \Gamma$. If $H$ has only finitely many orbits on the vertex set $V\Gamma$ then,
\begin{enumerate}
\item \label{BurgerMozesInterpretation:case1}
	$H$ contains a finitely generated subgroup which also has only finitely many orbits on $V\Gamma$;
\item \label{BurgerMozesInterpretation:case2}
	the centralizer $C_{\aut \Gamma}(H)$ has no infinite point stabiliser;
\item \label{BurgerMozesInterpretation:caseN}
	if $H$ has no infinite point stabiliser, then  $N_{\aut \Gamma}(H)$ has no infinite point stabiliser; and
\item \label{BurgerMozesInterpretation:case3}
	if $[H, G]$ has no infinite point stabiliser, then $G$ has no infinite point stabiliser;
\item \label{BurgerMozesInterpretation:caseQZ}
	if $G$ has an infinite point stabiliser, then $QZ(G)$ has infinitely many orbits on $V\Gamma$.
\end{enumerate} 
\end{lemma}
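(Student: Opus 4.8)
The plan is to prove the five statements in the order (i), (ii), (iv), (iii), (v): statement (iii) will be a formal consequence of (iv), and (v) will use (i) and (ii). Throughout I use that, since $\Gamma$ is connected and locally finite, every ball $B(v,r)$ is finite and every point stabiliser in $\aut\Gamma$ is a compact (profinite) group, in which open subgroups have finite index. For \emph{part (i)}, I would mimic the construction of a Cayley--Abels graph: pick a finite set of vertices meeting every $H$-orbit and, using connectedness of $\Gamma$, enlarge it to the vertex set of a finite connected subgraph $\Delta$, so that $(V\Delta)^{H}=V\Gamma$. For each vertex $u$ of $\Delta$ and each of its finitely many neighbours $w$ in $\Gamma$, pick $h_{u,w}\in H$ with $w^{h_{u,w}^{-1}}\in V\Delta$, and let $H_{0}$ be the finitely generated subgroup they generate. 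The set $(V\Delta)^{H_{0}}$ is $H_{0}$-invariant, contains the connected set $V\Delta$, and is closed under taking neighbours in $\Gamma$: if $x=u^{h}$ with $u\in V\Delta$, $h\in H_{0}$, and $y$ is adjacent to $x$, then $w:=y^{h^{-1}}$ is adjacent to $u$, so $y^{h^{-1}}\in(V\Delta)^{h_{u,w}}\subseteq(V\Delta)^{H_0}$ and hence $y\in(V\Delta)^{H_{0}}$. By connectedness $(V\Delta)^{H_{0}}=V\Gamma$, so $H_{0}$ has at most $|V\Delta|$ orbits.

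For \emph{part (ii)}, put $C=C_{\aut\Gamma}(H)$, fix a vertex $v$, and set $O=v^{H}$. If $c\in C_{v}$ and $h\in H$ then $(v^{h})^{c}=(v^{c})^{h}=v^{h}$, because $c$ commutes with $h$ and fixes $v$; so $c$ fixes $O$ pointwise. Since $H$ has finitely many orbits and $\Gamma$ is connected, $x\mapsto d(x,O)$ is bounded on $V\Gamma$, say by $R$ (it is constant on each $H$-orbit because $O$ is $H$-invariant). Given any vertex $x$, pick $h\in H$ with $d(x,v^{h})\le R$; then $x^{h^{-1}}\in B(v,R)$ and $x^{c}=\bigl((x^{h^{-1}})^{c}\bigr)^{h}$ since $c$ commutes with $h$. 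Thus $c$ is determined by its restriction to the finite set $B(v,R)$, which it preserves, so $C_{v}$ embeds into $\sym(B(v,R))$ and is finite.

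\emph{Part (iv) is the crux.} Let $D=[H,G]$; by hypothesis $D_{w}$ is finite for every vertex $w$. Using (i), fix a finitely generated $H_{0}=\langle h_{1},\dots,h_{k}\rangle\le H$ with finitely many orbits (note $[H_0,G]\le D$, so $H_0$ has finite point stabilisers too is not needed, only that $D$ does). Fix $v$ and consider $\psi\colon G_{v}\to D^{k}$, $g\mapsto([h_{1},g],\dots,[h_{k},g])$, which is well defined since $[h_{i},g]\in[H,G]=D$. If $\psi(g)=\psi(g')$ then $g'g^{-1}$ centralises each $h_{i}$, hence lies in $C_{\aut\Gamma}(H_{0})_{v}$, which is finite by (ii) applied to $H_0$; so the fibres of $\psi$ are finite. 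Next, a direct distance count shows that for $g\in G_{v}$ the element $[h_{i},g]=h_{i}^{-1}g^{-1}h_{i}g$ moves $v$ a distance at most $2\,d(v,v^{h_{i}})$: applying $g^{\pm1}$ does not change the distance to $v$ (because $g$ fixes $v$), while each application of $h_{i}^{\pm1}$ changes it by at most $d(v,v^{h_{i}})$. Since $D_{v}$ is finite, $\{d\in D:\ v^{d}\in B(v,2\,d(v,v^{h_{i}}))\}$ is a finite union of cosets of $D_{v}$, hence finite; therefore $\psi$ has finite image and $G_{v}$ is finite. I expect this to be the main obstacle: it is the one place where local finiteness of $\Gamma$, the discreteness of $[H,G]$, and parts (i) and (ii) must all be combined, and the distance bookkeeping has to be carried out exactly.

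Finally, for \emph{parts (iii) and (v)}: statement (iii) follows from (iv) applied with $G$ replaced by $N=N_{\aut\Gamma}(H)$, since $N$ normalises $H$ and hence $[H,N]\le H$, so if $H$ has no infinite point stabiliser then neither does $[H,N]$. For (v), argue contrapositively: suppose $Q:=QZ(G)$ has finitely many orbits. By (i) there is a finitely generated $Q_{0}=\langle q_{1},\dots,q_{\ell}\rangle\le Q$ with finitely many orbits; choosing finite $\Phi_{i}$ with $G_{(\Phi_{i})}\le C_{G}(q_{i})$ (possible as $q_i\in QZ(G)$) and setting $\Phi=\bigcup_{i}\Phi_{i}$ gives $G_{(\Phi)}\le C_{G}(Q_{0})$. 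By (ii) applied to $Q_{0}$, $C_{\aut\Gamma}(Q_{0})$ has no infinite point stabiliser, so for each vertex $v$ the group $G_{(\Phi\cup\{v\})}=(G_{(\Phi)})_{v}$ is finite; but $(\aut\Gamma)_{(\Phi\cup\{v\})}$ is open, hence of finite index, in the compact group $(\aut\Gamma)_{v}$, so $G_{(\Phi\cup\{v\})}$ has finite index in $G_{v}$ and therefore $G_{v}$ is finite for every $v$. Contrapositively, if $G$ has an infinite point stabiliser then $QZ(G)$ has infinitely many orbits on $V\Gamma$.
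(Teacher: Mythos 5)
Your proof is correct, and for parts (i), (iii) and (v) it follows essentially the same route as the paper (the paper simply declares (i) ``obviously true''; your Cayley--Abels-style argument is a correct way to fill that in, and your (iii) and (v) match the paper's derivations from (iv), (i) and (ii)). The genuine divergence is in parts (ii) and (iv). For (ii) the paper takes a full finite set $\Phi$ of $H$-orbit representatives, observes that $C_{(\Phi)}$ is trivial and that $|C_\alpha : C_\alpha \cap C_{(\Phi)}|$ is finite; you instead show that an element of $C_v$ fixes the whole orbit $v^{H}$ pointwise and is therefore determined by its restriction to a finite ball $B(v,R)$ --- the same idea packaged slightly differently, and equally valid. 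For (iv), the paper uses that $[H,G]$ has finite point stabilisers to find a finite set $\Delta'$ with $[H,G]_{(\Delta')}$ trivial, enlarges it to $\Delta = \{\delta^{s} : s \in S,\ \delta \in \Delta'\}$, deduces $G_{(\Delta)} \leq C_{\aut \Gamma}(\langle S \rangle)$, and then finishes with the (ii)-type observation; you instead build the commutator map $\psi : G_v \rightarrow [H,G]^{k}$ and show it has finite image (via the displacement bound $d(v, v^{[h_i,g]}) \leq 2d(v,v^{h_i})$ together with finiteness of $[H,G]_v$ and of balls) and finite fibres (via (ii) applied to $H_0$). Both are standard Burger--Mozes-style arguments; yours avoids having to produce a finite set on which $[H,G]$ acts faithfully, at the cost of the distance bookkeeping you rightly flag as the delicate point, which you carry out correctly.
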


\begin{proof} Suppose $H$ has only finitely many orbits on the vertices of $\Gamma$.
Part (\ref{BurgerMozesInterpretation:case1}) of the lemma is obviously true. 

Let $\Phi$ be a set of orbit representatives for $H$ on the vertices of $\Gamma$. Notice that $\Phi$ is finite.
Let $C:=C_{\aut \Gamma}(H)$.
The pointwise stabiliser $C_{(\Phi)}$ commutes with $H$ and so it fixes $V = \Phi^H$ pointwise; hence $C_{(\Phi)}$ is trivial.
Since $\Phi$ is finite and $\Gamma$ is locally finite and connected, we know that for all $\alpha \in V\Gamma$ the orbit $\Phi^{C_\alpha}$ must be finite. Hence $|C_\alpha : C_\alpha \cap C_{(\Phi)}|$ is finite. Since $C_{(\Phi)}$ is trivial, $C_\alpha$ is finite.
Hence part (\ref{BurgerMozesInterpretation:case2}) is true.

Note that if $K \leq \aut(\Gamma)$ has only finitely many orbits on $V\Gamma$, then (\ref{BurgerMozesInterpretation:case2}) implies that $G$ has no infinite point stabilisers whenever the following condition holds: $G_{(\Delta)} \leq C_{\aut \Gamma}(K)$ for some finite subset $\Delta \subseteq V\Gamma$. 

For now we ignore part (\ref{BurgerMozesInterpretation:caseN}) 
and instead prove part~(\ref{BurgerMozesInterpretation:case3}).
Let $S$ be some finite subset of $H$ such that $\langle S \rangle$ has only finitely many orbits, and if necessary extend $S$ so that it is symmetric (i.e. for every $s\in S, s^{-1} \in S$) and contains the identity. Suppose that all point stabilisers in $[H, G]$ are finite.
There exists a finite set $\Delta' \subseteq V\Gamma$ such that $[H, G]_{(\Delta')}$ is trivial. Let $\Delta := \{ \delta^s : s \in S, \delta \in \Delta'\}$.
For any $s \in S$, elements in $[s, G_{(\Delta)}]$ fix $\Delta'$ pointwise and therefore lie in $[H,G]_{(\Delta')}$, which is trivial.
Hence $G_{(\Delta)} \leq C_{\aut \Gamma}(\langle S \rangle)$. As noted earlier in the proof, this implies that $G$ has no infinite point stabilisers.
Hence part (\ref{BurgerMozesInterpretation:case3}) is true.

We now prove part~(\ref{BurgerMozesInterpretation:caseN}). Suppose $H$ has no infinite point stabiliser. If $N := N_{\aut \Gamma}(H)$, then $[H,N] \leq H$ has no infinite point stabiliser, and so by (\ref{BurgerMozesInterpretation:case3}) it follows that $N$ has no infinite point stabiliser. Hence (\ref{BurgerMozesInterpretation:caseN}) is true.

Finally we prove part~(\ref{BurgerMozesInterpretation:caseQZ}). Suppose $QZ(G)$ has only finitely many orbits on $V\Gamma$. By (\ref{BurgerMozesInterpretation:case1}), there is a finite subset $S := \{s_1, \ldots, s_m\} \subseteq QZ(G)$ such that $\langle S \rangle$ has only finitely many orbits on $V\Gamma$. Each $s_i \in S$ lies in $QZ(G)$, so there exist finite sets of vertices $\Delta_i \subseteq V\Gamma$ such that $G_{(\Delta_i)} \leq C_{\aut \Gamma}(s_i)$ for all $i$. The set $\Delta := \bigcup_{i=1}^m \Delta_i$ is finite and $G_{(\Delta)} \leq C_{\aut \Gamma}(\langle S \rangle)$. As noted earlier in the proof, this implies that $G$ has no infinite point stabiliser. Hence (\ref{BurgerMozesInterpretation:caseQZ}) is true.
\end{proof}

We conclude with a small but useful lemma.

\begin{lemma} \label{lemma:TrivialQuasiCenter}
 Let $\Gamma$ be a connected locally finite graph and suppose $G \leq \aut \Gamma$ has an infinite point stabiliser. If every nontrivial normal subgroup of $G$ has finitely many orbits on $V\Gamma$, then the quasi-center of any $N \unlhd G$ is trivial. Moreover, if $N$ is closed in $\aut \Gamma$, then $N$ has no nontrivial countable normal subgroups.
\end{lemma}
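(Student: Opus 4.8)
The plan is to prove that $QZ(N)$ is trivial by contradiction, leveraging the Burger--Mozes-type statements in Lemma~\ref{lemma:BurgerMozesInterpretation} together with the hypothesis that every nontrivial normal subgroup of $G$ has only finitely many orbits on $V\Gamma$. Note first that $V\Gamma$ is countable, since $\Gamma$ is connected and locally finite, so the material on quasi-centres from the preceding subsection is available; and we may assume $N \neq \{1\}$, the statement being vacuous otherwise.

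The first step is to set $Q := QZ(N)$ and record two facts about it. It is routine that $Q$ is a subgroup of $N$: if the pointwise stabilisers $N_{(\Phi_1)}$ and $N_{(\Phi_2)}$ centralise $n_1$ and $n_2$ respectively, then $N_{(\Phi_1 \cup \Phi_2)}$ centralises $n_1 n_2$ and $n_1^{-1}$. More importantly, $Q$ is normal in $G$: since $N \unlhd G$ we have $G \leq N_{\sym(V\Gamma)}(N)$, and Proposition~\ref{prop:QZ_topo_characteristic} says $QZ(N) \unlhd N_{\sym(V\Gamma)}(N)$. Consequently, were $Q$ nontrivial it would be a nontrivial normal subgroup of $G$ and so, by hypothesis, would have only finitely many orbits on $V\Gamma$.

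Now assume for contradiction that $Q \neq \{1\}$. Applying Lemma~\ref{lemma:BurgerMozesInterpretation}(\ref{BurgerMozesInterpretation:case1}) to $Q$ produces a finite set $S \subseteq Q$ with $\langle S \rangle$ having only finitely many orbits on $V\Gamma$. For each $s \in S$ the definition of $QZ(N)$ supplies a finite $\Phi_s \subseteq V\Gamma$ with $N_{(\Phi_s)} \leq C_N(s)$; taking $\Delta := \bigcup_{s \in S} \Phi_s$, which is finite, gives $N_{(\Delta)} \leq C_{\aut \Gamma}(\langle S \rangle)$. The remark following Lemma~\ref{lemma:BurgerMozesInterpretation}(\ref{BurgerMozesInterpretation:case2}), applied with $N$ in the role of the ambient automorphism group and $\langle S \rangle$ in the role of $K$, then forces $N$ to have no infinite point stabiliser. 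Since $N$ is itself a nontrivial normal subgroup of $G$ it has finitely many orbits on $V\Gamma$, so Lemma~\ref{lemma:BurgerMozesInterpretation}(\ref{BurgerMozesInterpretation:caseN}) applies to $N$; as $G \leq N_{\aut \Gamma}(N)$, it follows that $G$ has no infinite point stabiliser, contradicting the hypothesis. Hence $QZ(N) = \{1\}$, and since $N \unlhd G$ was arbitrary the first assertion follows. For the ``moreover'', if $N$ is closed in $\aut \Gamma$ then it is closed in $\sym(V\Gamma)$ (as $\aut \Gamma$ is), so Proposition~\ref{prop:QZContainsAllCountableNormalSubgroups} identifies $QZ(N)$ with the union of all countable normal subgroups of $N$; triviality of $QZ(N)$ then shows $N$ has no nontrivial countable normal subgroup.

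I do not expect a genuine obstacle here: the content lies entirely in correctly marshalling Lemma~\ref{lemma:BurgerMozesInterpretation}. The two points that require care are (i) observing that $QZ(N)$ is normal in $G$ and not merely in $N$, which is what makes the ``finitely many orbits'' hypothesis bite on $QZ(N)$ itself, and (ii) checking that the remark after Lemma~\ref{lemma:BurgerMozesInterpretation}(\ref{BurgerMozesInterpretation:case2}) is applied legitimately with $N$ substituted for the generic group there --- which is fine, since that remark is formulated for an arbitrary subgroup of $\aut \Gamma$.
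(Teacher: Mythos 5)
Your proof is correct and follows essentially the same route as the paper: both rest on Proposition~\ref{prop:QZ_topo_characteristic} to get $QZ(N)\unlhd G$, on Lemma~\ref{lemma:BurgerMozesInterpretation}, and on Proposition~\ref{prop:QZContainsAllCountableNormalSubgroups} for the ``moreover''. The only difference is organisational: where the paper first deduces that $N$ has an infinite point stabiliser and then invokes Lemma~\ref{lemma:BurgerMozesInterpretation}(\ref{BurgerMozesInterpretation:caseQZ}) directly, you re-derive that part (\ref{BurgerMozesInterpretation:caseQZ}) argument by hand from parts (\ref{BurgerMozesInterpretation:case1}) and (\ref{BurgerMozesInterpretation:case2}) and run the contradiction the other way round --- you could simply cite part (\ref{BurgerMozesInterpretation:caseQZ}) applied to $N$.
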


\begin{proof} Suppose every nontrivial normal subgroup of $G$ has finitely many orbits on $V\Gamma$, and let $N$ be a nontrivial normal subgroup of $G$. By Lemma~\ref{lemma:BurgerMozesInterpretation}(\ref{BurgerMozesInterpretation:caseN}) it follows that $N$ has an infinite point stabiliser. Hence by Lemma~\ref{lemma:BurgerMozesInterpretation}(\ref{BurgerMozesInterpretation:caseQZ}), $QZ(N)$ has infinitely many orbits on $V\Gamma$.

By Proposition~\ref{prop:QZ_topo_characteristic} (which applies because $V\Gamma$ is countable), $QZ(N) \unlhd N_{\sym(V\Gamma)}(N)$. Since $QZ(N) \leq N \leq G \leq N_{\sym(V\Gamma)}(N)$, it follows that $QZ(N) \unlhd G$. The quasi-center $QZ(N)$ is thus a normal subgroup of $G$ with infinitely many orbits and consequently $QZ(N)$ must be trivial.

If $N$ is closed in $\aut \Gamma$ then it is closed in $\sym(V\Gamma)$, and therefore by Proposition~\ref{prop:QZContainsAllCountableNormalSubgroups}, $QZ(N)$ (which we have shown to be trivial) is the union of all countable normal subgroups of $N$.
\end{proof}

\subsection{Box products and fibrelobes}
\label{section:boxproduct}

In \cite{smith:product} a new product for permutation groups was introduced, called the {\em box product}, denoted by the symbol $\boxtimes$. This new product will play an important role in our classification, primarily because the box product and the unrestricted wreath product in its product action preserve primitivity under strikingly similar conditions.

\begin{theorem}[{\cite[Theorem 26]{smith:product}}] \label{thm:box_prim}
Given permutation groups $G_1$ and $G_2$ of nontrivial degree, the permutation group $G_1 \boxtimes G_2$ is primitive if and only if $G_1$ is primitive but not regular, and $G_2$ is transitive.
\end{theorem}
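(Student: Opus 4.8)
The plan is to realise $G := G_1 \boxtimes G_2$ as a group of automorphisms of the connectivity-one graph $\Gamma = \Gamma(\Delta, m)$, where $m$ is the degree of $G_2$ and $\Delta$ a suitable vertex-transitive graph on $X_1$ of connectivity at least two with $G_1 \leq \aut\Delta$, so that $G$ is locally-$(G_1,G_2)$ and maximal with that property, and to work throughout with the block-cut-vertex tree $T$ of $\Gamma$, on which $G$ acts faithfully, with $V\Gamma$ and the set $\mathbb{L}$ of lobes forming the two parts of the bipartition of $T$. I would prove the two implications separately: for the forward direction I would argue contrapositively, exhibiting an explicit nontrivial $G$-congruence on $V\Gamma$ whenever one of the three hypotheses fails; for the reverse direction I would verify Higman's criterion (condition~(iv) in the list of equivalents of primitivity in Section~\ref{prelims:PermGroups}), namely that every nontrivial orbital digraph of $G$ is connected.

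\emph{Forward direction.} Suppose $G$ is primitive, hence transitive. Using the local description of $G$ (the setwise stabiliser of a fibrelobe induces $G_1$ on it, and a point stabiliser induces $G_2$ on the fibrelobes through that point; Remark~\ref{rem:WrAndBox}) together with connectedness of $T$, transitivity of $G$ forces both $G_1$ and $G_2$ to be transitive. I would then treat three cases.
\begin{enumerate}
\item If $G_2$ were intransitive, its orbits on the set $\mathbb{L}(\gamma)$ of fibrelobes at a vertex $\gamma$ give a $G$-invariant colouring of the edges of $T$ at each $V\Gamma$-vertex; propagating this colouring along $T$ produces a nontrivial $G$-invariant partition of $V\Gamma$.
\item If $G_1$ were imprimitive, a block system $\mathcal{B}$ of $G_1$ on $X_1$ lifts consistently to each lobe of $\Gamma$ (lobe stabilisers induce $G_1$, which preserves $\mathcal{B}$), and gluing these partitions through $T$ yields a nontrivial $G$-congruence on $V\Gamma$.
\item If $G_1$ were transitive but regular, so that $X_1$ has prime size $p$ and $G_1 \cong \Z/p\Z$, I would adapt the classical fact that an abelian regular normal subgroup of a primitive group is minimal normal: the ``base-like'' normal subgroup of $G$ built from the regular $\Z/p\Z$-actions on the lobes is not minimal, and a proper nontrivial normal subgroup of it gives a $G$-congruence.
\end{enumerate}
Each case contradicts primitivity of $G$, so $G_1$ is primitive and not regular and $G_2$ is transitive.

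\emph{Reverse direction.} Assume $G_1$ is primitive and not regular and $G_2$ is transitive, fix a nontrivial orbital $\{\alpha,\beta\}^G$, and let $\Sigma$ be the associated orbital digraph on $V\Gamma$; I would show $\Sigma$ is connected. If $\alpha,\beta$ lie in a common lobe then, since two lobes of $\Gamma$ meet in at most one vertex, every pair in $\{\alpha,\beta\}^G$ lies in a common lobe, so the restriction of $\Sigma$ to any lobe $\Lambda$ is a copy of the orbital digraph of $G_1$ for the corresponding orbital; this is connected on $\Lambda$ because $G_1$ is primitive (condition~(iv) again), and since $T$ is connected, $G$ is transitive on lobes, and every vertex lies in a lobe, $\Sigma$ is connected on all of $V\Gamma$. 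If $\alpha,\beta$ lie in no common lobe, take the path $\alpha=w_0,\Lambda^{(1)},w_1,\dots,\Lambda^{(k)},w_k=\beta$ in $T$, with $k\geq 2$. The decisive point is that $G_1$ primitive and not regular implies that no point stabiliser of $G_1$ fixes a further point (its fixed-point set would be a nontrivial block), so the stabiliser in $G_1$ of any point moves every other point; lifting this, $G_{w_1}$ contains ``elementary'' automorphisms supported on the single lobe $\Lambda^{(1)}$ that move $\alpha$ over its $G_1$-suborbit at $w_1$ while fixing $w_1$, fixing the lobe $\Lambda^{(2)}$ pointwise, and fixing everything on the $\beta$-side of $w_1$, hence fixing $\beta$. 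Such an element sends $\{\alpha,\beta\}$ to $\{\alpha',\beta\}$, placing $\alpha$ and $\alpha'$ in one component of $\Sigma$; ranging over the $G_1$-suborbits at $w_1$ (whose orbital digraphs cover $\Lambda^{(1)}$ by primitivity of $G_1$) and iterating along the path, and invoking transitivity of $G$ on $V\Gamma$, forces $\Sigma$ to be connected. Verifying Higman's criterion in this way would complete the proof that $G$ is primitive.

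\emph{Main obstacle.} The conceptual heart is the second subcase of the reverse direction: it is precisely the non-regularity of $G_1$ that furnishes non-identity ``local'' elements at a cut vertex with which to slide one endpoint of a long orbital while pinning the other, and organising these slides into a genuine connectivity argument is the delicate step. The other technical burden, shared by both directions, is justifying the abundance of the ``elementary'' automorphisms used above — automorphisms of $\Gamma$ supported on a single lobe or on a single branch at a cut vertex — and the precise form of the block systems in the forward direction; both rely on the detailed construction of $\boxtimes$ as a group maximal subject to being locally-$(G_1,G_2)$, set up in Section~\ref{section:boxproduct}.
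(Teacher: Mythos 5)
First, a point of reference: the paper does not prove this statement — Theorem~\ref{thm:box_prim} is quoted verbatim from \cite[Theorem 26]{smith:product}, so there is no in-paper proof to compare against. Judged on its own terms, your overall architecture is the right one: realise $G_1\boxtimes G_2$ on the connectivity-one graph and its block-cut-vertex tree $T$, build explicit congruences for necessity, and verify Higman's criterion for sufficiency. Your reverse direction is essentially sound, provided you finish it by observing that the components of the orbital graph form a $G$-congruence: any congruence class meeting a lobe in two points swallows the whole lobe (primitivity of $G_1$ on the lobe) and then propagates through each cut vertex (transitivity of $G_2$ on $\mathbb{L}(\gamma)$), and non-regularity of $G_1$ supplies, for $\alpha,\beta$ with no common lobe, an element of $G$ fixing the whole branch of $T$ containing $\beta$ while moving $\alpha$ inside its first lobe — this is a branch-supported element, and it is these (not lobe-supported ones) that exist in $\mathcal{U}_{\mathcal{L}}(G_1,G_2)$.

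The genuine gap is case (iii) of your forward direction. There is no ``base-like normal subgroup of $G$ built from the regular $\Z/p\Z$-actions on the lobes'': unlike $G_1\Wr G_2$, the box product has no base group, because an automorphism of $\Gamma(\Delta,m)$ supported on a single lobe is trivial (moving a vertex of a lobe would force it to move the branches of $T$ hanging off that vertex, which by hypothesis it fixes). So the proposed adaptation of the ``abelian regular normal subgroup is minimal normal'' argument has no object to apply to. The mechanism that actually kills the regular case is different: if $G_1\cong\Z/p\Z$ is regular, each lobe carries a $G$-invariant directed $p$-cycle structure read off the legal colouring (cf.\ Theorems~\ref{thm:structure_of_con_one_digraphs} and \ref{thm:smith_locally_prim}, which exclude exactly directed-cycle lobes and regular lobe actions), and since every cycle of $\Gamma$ lies inside a single lobe, summing displacements along paths gives a well-defined map $V\Gamma\to\Z/p\Z$ whose fibres form a proper nontrivial $G$-invariant partition into $p$ classes. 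Two smaller points: your opening claim that transitivity of $G$ forces $G_2$ to be transitive contradicts the paper's own Theorem~\ref{thm:box_product_summary} (transitivity of $G_1\boxtimes G_2$ depends only on $G_1$), so your case (i) is genuinely needed rather than redundant; and in case (ii) the ``gluing'' of the lobe block systems must pass to the generated equivalence relation, whose properness needs the tree structure (project every point onto a fixed lobe along $T$ and check the induced block is constant on each class).
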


Compare this with the well-known primitivity conditions for the unrestricted wreath product in its product action (see \cite[Lemma 2.7A]{dixon&mortimer} for example).

\begin{proposition}
\label{prop:wr_prim}
Given permutation groups $G_1$ and $G_2$ of nontrivial degree, the permutation group $G_1 \Wr G_2$ is primitive in its product action if and only if $G_1$ is primitive but not regular, and $G_2$ is transitive and finite.
\end{proposition}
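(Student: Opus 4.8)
The plan is to argue directly from the definition of primitivity. Write $G_1\leq\sym(X)$ and $G_2\leq\sym(I)$, so that $W:=G_1\Wr G_2$ acts on $\Omega:=X^I$ with base group $B:=\prod_{i\in I}G_1$ acting coordinatewise and top group $G_2$ permuting coordinates; since the degrees are nontrivial we have $|X|\geq 2$ and $|I|\geq 2$, and we may assume $G_1$ and $G_2$ are nontrivial as the remaining cases are immediate. (This is essentially \cite[Lemma~2.7A]{dixon&mortimer}, but it is quicker to give the argument than to translate conventions.)

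For the forward implication I would assume $W$ primitive and, for each of the four asserted properties, exhibit a proper nontrivial $W$-invariant equivalence relation on $\Omega$ whenever it fails. If $G_2$ is intransitive, with a nontrivial invariant partition $I=I_1\sqcup I_2$, declare $\phi\sim\psi$ when $\phi|_{I_1}=\psi|_{I_1}$. If $I$ is infinite, declare $\phi\sim\psi$ when $\phi$ and $\psi$ agree on all but finitely many coordinates (here one uses that each coordinate of $B$ acts on $X$ by a bijection, and that $G_2$ carries finite coordinate sets to finite coordinate sets). If $G_1$ is not primitive, fix a nontrivial proper $G_1$-invariant equivalence relation $\approx$ on $X$ and declare $\phi\sim\psi$ when $\phi(i)\approx\psi(i)$ for all $i$. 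Finally, if $G_1$ is regular, identify $X$ with $G_1$ so that $B$ acts regularly on $\Omega$ while $G_2$ acts on $B$ by permuting its coordinates; then the right cosets in $B$ of the diagonal subgroup $D$ (the elements of $B$ all of whose coordinates are equal) form a $W$-invariant partition into blocks of size $|G_1|\geq 2$ and index $|G_1|^{|I|-1}\geq 2$. In every case one contradicts primitivity, so $W$ primitive forces $G_1$ primitive and not regular, $G_2$ transitive, and $I$ finite.

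For the converse I would assume $G_1$ primitive and not regular, $G_2$ transitive, and $m:=|I|$ finite, and show that every nontrivial $W$-invariant equivalence relation $\sim$ on $\Omega$ is universal. The first step is to record the consequence of primitivity and non-regularity that a point stabiliser $(G_1)_{x_0}$ fixes no point of $X$ besides $x_0$: the relation on $X$ relating $x$ and $y$ when $(G_1)_x=(G_1)_y$ is $G_1$-invariant, so were it nontrivial it would be universal, forcing $(G_1)_{x_0}=\bigcap_x(G_1)_x$ trivial and $G_1$ regular. Now, given any $\phi\sim\psi$ with $\phi\neq\psi$, translate by an element of $B$ so that $\phi$ becomes the constant function $\phi_0$ with value $x_0$; applying the pointwise stabiliser $B_{\phi_0}=\prod_{i\in I}(G_1)_{x_0}$ to a coordinate at which $\psi$ differs from $\phi_0$, and invoking the fixed-point-free property, produces a $\chi\sim\phi_0$ differing from $\phi_0$ in exactly one coordinate. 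The subgroup of $W$ acting as $G_1$ on that coordinate alone (fixing the others, all of value $x_0$) preserves the corresponding fibre of $\Omega$, so $\sim$ restricts there to a nontrivial, hence universal, $G_1$-invariant equivalence relation; transitivity of $G_2$ then yields $\phi_0\sim\psi$ whenever $\psi$ differs from $\phi_0$ in at most one coordinate. A final induction on the number of coordinates in which $\psi$ differs from $\phi_0$ — enlarging this set one coordinate at a time, using transitivity of $\sim$ and translation by base-group elements — gives $\phi_0\sim\psi$ for all $\psi\in\Omega$; finiteness of $I$ is precisely what makes this induction terminate.

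I expect the forward direction to be routine once the invariant partitions are identified. The real work is in the converse, and in particular in the role of the non-regularity hypothesis, which enters at exactly one point: the fixed-point-freeness of $(G_1)_{x_0}$ on $X\setminus\{x_0\}$. This is what converts a single $\sim$-related pair of distinct points into a pair differing in just one coordinate, and so lets primitivity of $G_1$ be applied one fibre at a time. Keeping the bookkeeping of this propagation correct, and checking its compatibility with the coordinate-permutation action of $G_2$, is the delicate part.
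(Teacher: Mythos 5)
The paper does not prove this proposition at all: it records it as the ``well-known primitivity conditions for the unrestricted wreath product in its product action'' and cites \cite[Lemma 2.7A]{dixon&mortimer}. Your proposal supplies the standard textbook argument in full, and it is essentially correct: the four invariant relations in the forward direction (restriction to an invariant coordinate set, the almost-equality relation for infinite $I$, the coordinatewise lift of a $G_1$-congruence, and the right cosets of the diagonal in the regular case) are exactly the right obstructions, and the converse correctly reduces an arbitrary nontrivial $W$-congruence to a nontrivial $G_1$-congruence on a single fibre and then propagates it coordinate by coordinate, with finiteness of $I$ terminating the induction.

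One step is stated with a genuine (though easily repaired) gap. Your argument that the relation ``$(G_1)_x=(G_1)_y$'' is either trivial or forces regularity only shows that \emph{distinct points have distinct stabilisers}; the property you actually use later is the stronger statement that $(G_1)_{x_0}$ fixes no point $y\neq x_0$. If $(G_1)_{x_0}$ fixes $y$ you only get the containment $(G_1)_{x_0}\leq (G_1)_y$, not equality, and for infinite groups a containment of conjugate subgroups need not be an equality. The missing line is the maximality of $(G_1)_{x_0}$ (equivalent to primitivity of $G_1$): since $G_1$ is transitive of degree at least two, $(G_1)_y\neq G_1$, so $(G_1)_{x_0}\leq (G_1)_y<G_1$ forces $(G_1)_{x_0}=(G_1)_y$, and then your relation argument applies and yields $y=x_0$. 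With that sentence inserted, the rest of the propagation (one-coordinate moves via $B_{\phi_0}$, universality on a fibre, transitivity of $G_2$ across fibres, induction on the size of the disagreement set) goes through as you describe.
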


Although $\boxtimes$ and $\Wr$ preserve primitivity under similar conditions, they are thoroughly dissimilar. For example, $S_3 \boxtimes S_2$ has cardinality $2^{\aleph_0}$, while $S_3 \Wr S_2$ is finite.\\

We give a brief description of the construction of the box product here and summarise some of its relevant properties; for more details see \cite{smith:product}.

\begin{definition}
Suppose $G_1 \leq \sym(X_1)$ and $G_2 \leq \sym(X_2)$ are  permutation groups of nontrivial degree, where $X_1$ and $X_2$ are disjoint and may be finite or infinite. The box product $G_1 \boxtimes G_2$ is constructed by colouring the arcs of a biregular tree. 

Let $T$ denote the $(|X_1|, |X_2|)$-biregular tree, with $V_{1}$ and $V_2$ denoting the two parts of the natural bipartition of the vertices of $T$, such that all vertices in $V_i$ have valency $|X_i|$, for $i=1,2$. Recall that between any two adjacent vertices of $T$ there are two arcs, one in each direction.
Let $A(v)$ (resp. $\overline{A}(v)$) denote the set of arcs of $T$ whose origin (resp. terminal) vertex is $v$, and let $B(v)$ denote the set of vertices at distance one from $v$ in $T$.

An {\em arc colouring} is a function $\mathcal{L} : AT \rightarrow X_1 \cup X_2$ such that for all $i \in \{1,2\}$ and for all $v \in V_i$, the restriction $\mathcal{L} \big|_{A(v)} : A(v) \rightarrow X_i$ is a bijection. An arc colouring is {\em legal} if the map $\mathcal{L} \big|_{\overline{A}(v)}$ is constant for all $v \in VT$. We will call a legal arc colouring a {\em legal colouring}.
It is a simple matter to check that one may always find a legal colouring $\mathcal{L}$ for the biregular tree $T$.
For an arc colouring $\mathcal{L}$, a vertex $v \in VT$ and an automorphism $g \in \aut T$ that setwise stabilises $V_1$ and $V_2$, we define,
\[\theta(g, v, \mathcal{L}):= \mathcal{L} \big|^{-1}_{A(v)} \circ g \big|_{A(v)} \circ \mathcal{L} \big|_{A(gv)}.\]
Our convention in this paper is that groups act on the right, and (to simplify our notation) this applies also to arc colourings and to $\theta(g, v, \mathcal{L})$. Note that in \cite{smith:product} it is the convention that groups and colourings act on the left, and so the definition of $\theta$ given in \cite{smith:product} is the reverse of that given here.

Choose a legal colouring $\mathcal{L}$.
For $i = 1,2$ define,
\[
U_i := \{g \in (\aut T)_{\{V_i\}} : \theta(g, v, \mathcal{L}) \in G_i, \quad \forall v \in V_i
\},
\]
and set $\mathcal{U}_{\mathcal{L}}(G_1, G_2) := U_1 \cap U_2$. The definition of $\mathcal{U}_{\mathcal{L}}(G_1, G_2)$ is a generalisation of the Burger--Mozes universal group (see \cite{BurgerMozes}).
\end{definition}

It is shown in \cite[Proposition 11]{smith:product} that a different choice of legal colouring gives rise to a permutation group that is conjugate  to $\mathcal{U}_{\mathcal{L}}(G_1, G_2)$ in $\aut T$. In fact something slightly stronger is established in the proof, which we state here.

\begin{proposition}[{\cite[Proposition 11 \& proof]{smith:product}}] \label{prop:col_doesnt_matter}
If $\mathcal{L}$ and $\mathcal{L'}$ are legal colourings, then $\mathcal{U}_{\mathcal{L}}(G_1, G_2)$ and $\mathcal{U}_{\mathcal{L'}}(G_1, G_2)$ are conjugate in the setwise stabiliser $(\aut T)_{\{V_1\}}$.
\end{proposition}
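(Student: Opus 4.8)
The plan is to show that any two legal colourings differ by an automorphism of $T$ preserving the bipartition $\{V_1,V_2\}$, and that conjugation by such an automorphism carries $\mathcal{U}_{\mathcal{L}}(G_1,G_2)$ onto $\mathcal{U}_{\mathcal{L}'}(G_1,G_2)$. The first step is to repackage the data of a legal colouring. For a legal colouring $\mathcal{L}$ and a vertex $v$, let $c_{\mathcal{L}}(v)$ denote the common colour of the arcs in $\overline{A}(v)$, which is well defined precisely because $\mathcal{L}$ is legal. Legality forces $\mathcal{L}(a)=c_{\mathcal{L}}(t(a))$ for every arc $a$, so $\mathcal{L}$ is entirely determined by $c_{\mathcal{L}}$. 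Conversely, $c_{\mathcal{L}}$ sends $V_i$ into $X_{3-i}$, and the requirement that each $\mathcal{L}\big|_{A(v)}$ be a bijection is equivalent to: for every $v\in V_i$, the restriction of $c_{\mathcal{L}}$ to the neighbour set $N(v)$ is a bijection onto $X_i$. Any function $c\colon VT\to X_1\cup X_2$ with these two properties arises from a unique legal colouring via $\mathcal{L}(a):=c(t(a))$. This yields a bijective dictionary between legal colourings and such ``vertex-colourings'', in which ``legal'' becomes a purely local bijectivity condition.

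Second, working through this dictionary, I would show that $(\aut T)_{\{V_1\}}$ acts transitively on legal colourings. Given vertex-colourings $c,c'$ as above, I would build $g\in(\aut T)_{\{V_1\}}$ with $c'(v)=c(v^g)$ for all $v$, by an inductive construction outward from a chosen root $v_0\in V_1$. Since $c'(v_0)\in X_2$ lies in the image of $c$ restricted to the neighbours of any fixed $V_2$-vertex, there is $v_0'\in V_1$ with $c(v_0')=c'(v_0)$; set $v_0^g:=v_0'$. Inductively, suppose $g$ has been defined on a vertex $u$ with $u^g=u'$ and on the vertex $p$ adjacent to $u$ on the way back to $v_0$, with $p^g=p'$; by the inductive invariant $c'(p)=c(p')$, and since $c'$ and $c$ restrict to bijections from $N(u)$ and $N(u')$ onto the same set $X_i$, the rule ``send $w$ to the unique $w'$ with $c(w')=c'(w)$'' is a well-defined bijection $N(u)\setminus\{p\}\to N(u')\setminus\{p'\}$; use it to extend $g$. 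The resulting map is injective (paths back to $v_0$ witness this), surjective, a graph automorphism preserving $V_1$, and satisfies $c'=c\circ g$, hence $\mathcal{L}'(a)=\mathcal{L}(a^g)$ for every arc $a$.

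Third, I would compute the effect of the change $\mathcal{L}\mapsto\mathcal{L}'$ on $\theta$. From $\mathcal{L}'\big|_{A(v)}=g\big|_{A(v)}\circ\mathcal{L}\big|_{A(v^g)}$ and the fact that $g$ is an automorphism, a short manipulation (inserting $g\big|^{-1}_{A(v^g)}\circ g\big|_{A(v^g)}$ and using $a^{g^{-1}hg}=((a^{g^{-1}})^h)^g$) gives $\theta(h,v,\mathcal{L}')=\theta(g^{-1}hg,\,v^g,\,\mathcal{L})$ for all $h\in(\aut T)_{\{V_1\}}$ and all $v$, with the composition and left/right conventions handled exactly as in the paper. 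Since $v\mapsto v^g$ is a bijection of each $V_i$, it follows that $\theta(h,v,\mathcal{L}')\in G_i$ for all $v\in V_i$ if and only if $\theta(g^{-1}hg,u,\mathcal{L})\in G_i$ for all $u\in V_i$; that is, the subgroup $U_i$ built from $\mathcal{L}'$ equals $g^{-1}U_ig$ for the $U_i$ built from $\mathcal{L}$. Intersecting over $i=1,2$ gives $\mathcal{U}_{\mathcal{L}'}(G_1,G_2)=g^{-1}\,\mathcal{U}_{\mathcal{L}}(G_1,G_2)\,g$ with $g\in(\aut T)_{\{V_1\}}$, which is the assertion.

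I expect the main obstacle to be the careful bookkeeping in the second step: verifying that the inductively defined $g$ really is a well-defined bijective automorphism of $T$ fixing the bipartition, and that the invariant $c'=c\circ g$ genuinely propagates through the construction; together with keeping the left/right conventions for $\aut T$, arc colourings and $\theta$ straight throughout the third step. The first step, once noticed, is routine, and the algebraic identity in the third step is short.
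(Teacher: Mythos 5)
Your argument is correct and follows essentially the same route as the cited proof of \cite[Proposition 11]{smith:product}: encode a legal colouring by the induced vertex-colouring, construct a bipartition-preserving tree automorphism $g$ carrying one colouring to the other by induction outward from a root, and then verify the identity $\theta(h,v,\mathcal{L}')=\theta(g^{-1}hg,v^g,\mathcal{L})$ to obtain the conjugacy. The only quibble is the direction of the resulting conjugation (your computation gives $\mathcal{U}_{\mathcal{L}'}(G_1,G_2)=g\,\mathcal{U}_{\mathcal{L}}(G_1,G_2)\,g^{-1}$ rather than $g^{-1}\,\mathcal{U}_{\mathcal{L}}(G_1,G_2)\,g$), which is immaterial to the statement.
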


We shall say that a group $H \leq \aut T$ is {\em locally-$(G_1, G_2)$} if $H$ fixes setwise the parts $V_1$ and $V_2$, and for all vertices $v$ of $T$ the stabiliser $H_v$ induces a permutation group on $B(v)$ that is permutationally isomorphic to $G_1$ if $v \in V_1$, and $G_2$ if $v \in V_2$.

\begin{proposition}[{\cite[Proposition 12]{smith:product}}] \label{prop:locally_MN}
If $G_1$ and $G_2$ are transitive and $H \leq \aut T$ is locally-$(G_1, G_2)$ then $H \leq \mathcal{U}_{\mathcal{L'}}(G_1, G_2)$ for some legal colouring $\mathcal{L'}$.
\end{proposition}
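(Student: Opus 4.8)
The plan is to build an explicit legal colouring $\mathcal{L}'$ that is ``spread out from base vertices by $H$'' and then to read off $H\le\mathcal{U}_{\mathcal{L}'}(G_1,G_2)$ from a telescoping computation of the maps $\theta(g,v,\mathcal{L}')$. First I would note that, since $G_1$ and $G_2$ are transitive, $H$ is \emph{locally transitive}: $H_v$ acts transitively on $B(v)$ for every vertex $v$. A standard geodesic-folding argument then shows $H$ acts transitively on $V_1$ and on $V_2$ (given two vertices of the same part, walk along the geodesic joining them, using the transitivity of the stabiliser of each vertex met to shorten it two steps at a time). Fix once and for all base vertices $v_0\in V_1$, $w_0\in V_2$ together with permutation isomorphisms $(\phi_{v_0},\psi_{v_0})$ and $(\phi_{w_0},\psi_{w_0})$ witnessing $H_{v_0}|_{B(v_0)}\cong(G_1,X_1)$ and $H_{w_0}|_{B(w_0)}\cong(G_2,X_2)$; for each vertex $v$ fix some $g_v\in H$ carrying the base vertex of its part onto $v$, with $g_{v_0}=g_{w_0}=\mathrm{id}$.

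The structural point is that, writing $b_v$ for the base vertex of the part of $v$, $\psi^{\mathrm{b}}_v$ for the corresponding base permutation isomorphism, and $i$ for the index with $v\in V_i$, the set $\Psi_v:=\{\psi^{\mathrm{b}}_v\circ g_v^{-1}|_{B(v)}\;:\;g_v\in H,\ b_v^{\,g_v}=v\}$ of bijections $B(v)\to X_i$ is a single coset of $G_i$ acting on $X_i$ by post-composition (and, incidentally, every member of it is a permutation isomorphism $H_v|_{B(v)}\to(G_i,X_i)$): if $g_v,g_v'$ both carry $b_v$ onto $v$ then $g_v^{-1}g_v'$ fixes $b_v$, so the permutation-isomorphism property at $b_v$ shows $\psi^{\mathrm{b}}_v\circ g_v^{-1}$ and $\psi^{\mathrm{b}}_v\circ(g_v')^{-1}$ differ by post-composition with an element of $G_i$, and conversely transitivity of $H_{b_v}$ on $B(b_v)$ realises the full $G_i$-coset. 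I would then construct $\mathcal{L}'$ by a breadth-first sweep outward from $v_0$: at a vertex $v$ with parent $u$, choose a representative $\psi_v\in\Psi_v$ with $\psi_v(u)$ equal to the colour $\tau(u)$ already committed for the arcs pointing into $u$ --- possible exactly because $\Psi_v$ is a $G_i$-coset and $G_i$ is transitive on $X_i$ --- then commit $\tau(v'):=\psi_v(v')$ for the children $v'$ of $v$; finally set $\mathcal{L}'(a):=\psi_{o(a)}(t(a))$ for every arc $a$. Then each $\mathcal{L}'|_{A(v)}$ is a bijection onto the relevant $X_i$ and all arcs with a common terminal vertex receive a common colour, so $\mathcal{L}'$ is legal.

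To verify $\theta(g,v,\mathcal{L}')\in G_i$ for all $g\in H$ and $v\in V_i$, observe that $g_v g$ carries $b_v$ onto $v^g$, so it is an admissible choice of $g_{v^g}$; hence, as $\Psi_{v^g}$ is a $G_i$-coset, the representative $\psi_{v^g}$ produced by the sweep differs from $\psi^{\mathrm{b}}_v\circ(g_v g)^{-1}|_{B(v^g)}$ by post-composition with some $c_2\in G_i$, while $\psi_v$ differs from $\psi^{\mathrm{b}}_v\circ g_v^{-1}|_{B(v)}$ by post-composition with some $c_1\in G_i$. Substituting both into the defining expression for $\theta(g,v,\mathcal{L}')$, the automorphisms $g$ and $g_v$ cancel in pairs (the relevant maps on $B(v)$ compose to $g_v(g g^{-1})g_v^{-1}=\mathrm{id}$), leaving precisely the action on $X_i$ of $c_1^{-1}c_2\in G_i$; this is unaffected by the composition order dictated by the paper's right-action convention for colourings. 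Hence $H\le\mathcal{U}_{\mathcal{L}'}(G_1,G_2)$.

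The step I expect to be the real obstacle is the coordination in the second paragraph: the representatives $\psi_v\in\Psi_v$ must be chosen so that the resulting colouring is simultaneously \emph{legal} and \emph{transported from the base vertices by $H$} (that is, each $\psi_v$ genuinely lies in $\Psi_v$). Either property is easy on its own, but both are needed --- without the transport property the maps $\theta(g,v,\mathcal{L}')$ are forced only into $N_{\sym(X_i)}(G_i)$, not into $G_i$. The two demands are compatible because $T$ is a tree: propagating the family $(\psi_v)$ outward along a contractible graph meets no monodromy obstruction, and the transitivity of $G_i$ supplies exactly the freedom, vertex by vertex, needed to meet in addition the legality constraint imposed by the parent.
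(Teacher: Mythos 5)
Your construction is correct: the identification of $\Psi_v$ as a single $G_i$-coset under post-composition, the tree-propagated choice of representatives meeting the legality constraint at each parent, and the telescoping computation giving $\theta(g,v,\mathcal{L}')=c_1^{-1}c_2\in G_i$ all go through (the only omission is that the incoming colour $\tau(v_0)$ at the root has no parent to determine it and must be fixed arbitrarily before the root's children are processed, which is trivial). The paper does not reprove this statement --- it simply cites \cite[Proposition 12]{smith:product} --- and your argument is essentially the one used there: first deduce transitivity of $H$ on each part of the bipartition, then propagate a base identification outward along the tree, using transitivity of $G_i$ on $X_i$ to absorb the legality constraint vertex by vertex.
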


\begin{definition}
The {\em box product} $G_1 \boxtimes_{\mathcal{L}} G_2$ is defined to be the subgroup of $\sym(V_2)$ induced by the action of $\mathcal{U}_{\mathcal{L}}(G_1, G_2)$ on $V_2$.
\end{definition}

In light of Proposition~\ref{prop:locally_MN}, we will usually write  $\mathcal{U}(G_1, G_2)$ and $G_1 \boxtimes G_2$ instead of $\mathcal{U}_{\mathcal{L}}(G_1, G_2)$ and $G_1 \boxtimes_{\mathcal{L}} G_2$. Note that $\mathcal{U}(G_1, G_2)$ acts faithfully on $V_2$, and the action of $G_1 \boxtimes G_2$ on $V_2$ induces a faithful action of $G_1 \boxtimes G_2$ on $T$. As noted in \cite[Remark 4]{smith:product}, it is easy to see that  $\mathcal{U}(G_1, G_2)$ and $G_1 \boxtimes G_2$ are isomorphic as topological groups, although they of course are not isomorphic as permutation groups.

\begin{theorem}[{\cite[Theorem 1, Remark 5, Theorem 26 \& Proposition 27]{smith:product}}]
\label{thm:box_product_summary}
Suppose that $G_1 \leq \sym(X_1)$ and $G_2 \leq \sym(X_2)$ are permutation groups of nontrivial degree, with at least one of $G_1$ or $G_2$ nontrivial. Then the following hold:
\begin{enumerate}
\item
	$\mathcal{U}(G_1, G_2) \leq \aut T$ is locally-$(G_1, G_2)$;
\item
	If $G_1$ and $G_2$ are closed, then $\mathcal{U}(G_1, G_2)$ is closed (and hence $G_1 \boxtimes G_2$ is also closed);
\item
	If $G_1$ and $G_2$ are generated by point stabilisers, then $\mathcal{U}(G_1, G_2)$ is simple if and only if $G_1$ or $G_2$ is transitive;
\item
	$\mathcal{U}(G_1, G_2)$ is discrete if and only if $G_1$ and $G_2$ are semiregular;
\item
	If $H_1 \leq G_1$ and $H_2 \leq G_2$, then $\mathcal{U}(H_1, H_2) \leq \mathcal{U}(G_1, G_2)$;
\item
	$G_1 \boxtimes G_2$ is transitive if and only if $G_1$ is transitive; and
\item
	$G_1 \boxtimes G_2$ is subdegree-finite if and only if $G_1$ is subdegree-finite and all orbits of $G_2$ are finite.
\end{enumerate}
\end{theorem}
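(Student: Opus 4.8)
The statement collects seven facts about the universal group $\mathcal{U}(G_1,G_2)$ and the box product $G_1\boxtimes G_2$; I would prove them roughly in the order (1), (5), (6), (2), (4), (7), (3), since the later items lean on the earlier ones and part (3) — topological simplicity — is by far the hardest. The central tool throughout is the following \emph{extension gadget}: given a vertex $v$ and a target local permutation $\sigma\in G_i$ (for $v\in V_i$), legality of the colouring $\mathcal{L}$ (that $\mathcal{L}\big|_{\overline{A}(w)}$ is constant for every $w$) lets one build, by a straightforward recursion down the tree, an automorphism $g\in\aut T$ that acts through $\mathcal{L}$ by $\sigma$ at $v$ and by the identity local permutation at every other vertex; legality forces $\theta(g,w,\mathcal{L})=1\in G_j$ for all $w\neq v$, so $g\in\mathcal{U}(G_1,G_2)$. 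Part (1) is then immediate: one inclusion is the definition of the $U_i$ (the permutation $\theta(g,v,\mathcal{L})$ is exactly the local action of $g$ at $v$ read through $\mathcal{L}\big|_{A(v)}$, and membership in $U_i$ forces it into $G_i$), and the reverse inclusion — that the stabiliser of $v$ realises all of $G_i$ on $B(v)$ — is exactly the gadget. Part (5) is trivial since legality of $\mathcal{L}$ does not involve the groups, so $\theta(g,v,\mathcal{L})\in H_i\subseteq G_i$ gives $\mathcal{U}_{\mathcal{L}}(H_1,H_2)\leq\mathcal{U}_{\mathcal{L}}(G_1,G_2)$, and Proposition~\ref{prop:col_doesnt_matter} transfers this to the colouring-free notation.

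For (6), legality assigns each $v\in V_2$ a well-defined colour $c(v)\in X_1$, the common $\mathcal{L}$-value of all arcs into $v$, and a short computation with the definition of $\theta$ shows $c(v^g)\in c(v)^{G_1}$ for $g\in\mathcal{U}(G_1,G_2)$, so $G_1$ intransitive forces $G_1\boxtimes G_2$ intransitive; conversely, if $G_1$ is transitive and $v_0,v_1,\dots,v_{2k}$ is a path in $T$ joining two $V_2$-vertices, then at each odd vertex $v_{2i-1}$ the gadget produces $g_i\in\mathcal{U}(G_1,G_2)$ fixing $v_{2i-1}$ and sending $v_{2i-2}$ to $v_{2i}$ (only transitivity of $G_1$ on $X_1$ is needed), so $g_1g_2\cdots g_k$ carries $v_0$ to $v_{2k}$. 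Part (2): $\aut T$ is closed in $\sym(VT)$ and each requirement ``$\theta(\cdot,v,\mathcal{L})\in G_i$'' is a closed condition on $(\aut T)_{\{V_i\}}$ when $G_i$ is closed, so $\mathcal{U}(G_1,G_2)$ is an intersection of closed sets; and the restriction map $\mathcal{U}(G_1,G_2)\to\sym(V_2)$ is a homeomorphism onto its image (a $V_1$-vertex is determined by its set of $V_2$-neighbours, so convergence on $V_2$ forces convergence on all of $VT$), whence $G_1\boxtimes G_2$ is closed. Part (4) is the standard local-action criterion for discreteness of subgroups of $\aut T$: the pointwise stabiliser of a ball is trivial iff fixing a vertex and its neighbours propagates to fixing everything, which holds iff every $G_i$ is semiregular, while the gadget produces nontrivial elements fixing arbitrarily large balls as soon as some $G_i$ is not semiregular. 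Part (7) is an induction on the distance from a fixed $u\in V_2$: the $\mathcal{U}(G_1,G_2)_u$-orbit of a $V_2$-vertex at distance $2j$ is governed by the sizes of the $G_2$-orbits at the $V_2$-vertices and of the $G_1$-suborbits at the $V_1$-vertices encountered along the way, so every suborbit is finite iff $G_1$ is subdegree-finite and all $G_2$-orbits are finite; both failures are already visible at distance two.

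\textbf{The main obstacle — part (3).} For the simplicity statement I would invoke Tits' simplicity theorem for groups acting on trees. The plan is: check that $\mathcal{U}(G_1,G_2)$ acts on $T$ non-degenerately (without inversions, with no invariant proper subtree and no fixed end — the degenerate cases collapse under the standing hypotheses), and then verify Tits' independence property, namely that for every edge the pointwise stabiliser of that edge is the direct product of the pointwise stabilisers of the two half-trees it separates. This independence is precisely what the universal-group construction builds in, since the gadget can be applied independently within each half-tree; this is the one point that must be written out carefully from the definition of $\mathcal{U}_{\mathcal{L}}(G_1,G_2)$. Tits' theorem then yields that the subgroup $\mathcal{U}(G_1,G_2)^{+}$ generated by all edge-fixators is simple (or trivial), and it remains to pin down when $\mathcal{U}(G_1,G_2)^{+}=\mathcal{U}(G_1,G_2)$: one shows that the edge-fixators together with the hypothesis ``$G_1$ and $G_2$ are generated by point stabilisers'' generate every vertex-stabiliser, and that transitivity of $G_1$ or of $G_2$ then lets the vertex-stabilisers generate the whole group; conversely, if neither $G_i$ is transitive, then $\mathcal{U}(G_1,G_2)$ is itself intransitive on $V_1$ or on $V_2$ (by part (6) and its mirror image), hence not simple. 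The delicate points are thus the clean verification of the independence property from the definition and the exact generation statement $\mathcal{U}(G_1,G_2)^{+}=\mathcal{U}(G_1,G_2)$; the remaining parts are bookkeeping with the tree and the legal colouring.
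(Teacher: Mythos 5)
First, a point of orientation: the paper does not prove Theorem~\ref{thm:box_product_summary} at all --- it is imported verbatim from \cite{smith:product} (Theorem 1, Remark 5, Theorem 26 and Proposition 27 there), so there is no in-paper argument to compare yours against. Judged on its own terms, your outline of parts (1), (2), (4), (5), (6) and (7) is sound: the ``extension gadget'' (prescribe a local permutation at one vertex and the identity elsewhere, using legality of the colouring to propagate) is exactly the right engine, and your treatments of closedness, discreteness, transitivity via vertex colours, and suborbit sizes along paths all go through, modulo routine bookkeeping (e.g.\ in (6) the elements $g_i$ must be built at the images of the $v_{2i-1}$ under the partial product, and in (2) the claim that a $V_1$-vertex is determined by its $V_2$-neighbours uses $|X_1|\geq 2$, which the nontrivial-degree hypothesis supplies). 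Your strategy for the ``if'' direction of (3) --- Tits' independence property, ruling out invariant subtrees and fixed ends via transitivity on one part of the bipartition, and then showing $\mathcal{U}^{+}=\mathcal{U}$ from ``generated by point stabilisers'' plus transitivity of one factor --- is also the standard and correct route, and matches the spirit of the proof in the cited source.

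The genuine gap is the ``only if'' half of (3). You write that if neither $G_1$ nor $G_2$ is transitive then $\mathcal{U}(G_1,G_2)$ is intransitive on $V_1$ or $V_2$, ``hence not simple''. That inference is invalid: an intransitive permutation group can perfectly well be abstractly simple, and in this setting the obvious candidate normal subgroups (kernels of the actions on individual orbits, pointwise stabilisers of colour classes) can be trivial, so intransitivity alone produces nothing. The correct argument is the one you already need for the generation statement $\mathcal{U}^{+}=\mathcal{U}$, run in reverse: the quotient graph $\mathcal{U}\backslash T$ is the complete bipartite graph $K_{m,n}$, where $m$ and $n$ are the numbers of orbits of $G_1$ on $X_1$ and of $G_2$ on $X_2$ (every colour pair is realised on some edge, and $\mathcal{U}$ is transitive on edges of each colour pair by the gadget). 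If both factors are intransitive then $m,n\geq 2$, so $K_{m,n}$ contains a cycle, the quotient graph is not a tree, and Bass--Serre theory gives a surjection from $\mathcal{U}(G_1,G_2)$ onto the nontrivial free group $\pi_1(K_{m,n})$; equivalently, the subgroup generated by all vertex stabilisers is a proper normal subgroup, and it is nontrivial because some $G_i$ is nontrivial and generated by point stabilisers. Either formulation shows $\mathcal{U}(G_1,G_2)$ is not simple. With that repair the proposal is complete.
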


\begin{definition} The tree $T$ is called the {\em structure tree} of $G_1 \boxtimes G_2$ and for each vertex $v \in V_1$, the set $B(v)$ is called a {\em lobe} of $G_1 \boxtimes G_2$. The elements of $V_2$ are called the {\em points} of $G_1 \boxtimes G_2$.
The lobes of a box product and the fibres of a wreath product in its product action play similar roles.
\end{definition}

\begin{remark} \label{rem:intuitive_box}
There is a natural and intuitive way of picturing the box product of two nontrivial transitive permutation groups, $G_1$ and $G_2$.

Let $n_i$ be the (not necessarily finite) degree of $G_i$, for $i = 1, 2$. 
Recall the definition of the tree-like graph $\Gamma(K_{n_1}, n_2)$ from Section~\ref{intro:box_prods}.
Write $\Theta := \Gamma(K_{n_1}, n_2)$, so $\Theta$ is a connectivity one graph whose lobes are complete graphs. For $\gamma \in V\Theta$ define
 $\mathbb{L}(\gamma)$ to be the set of lobes of $\Theta$ containing $\gamma$.
 Let $\mathcal{H}$ be the set of subgroups $H$ of $\aut \Theta$ that satisfy both of the following conditions:
\begin{enumerate}
\item \label{case2:rem:intuitive_box}
	for all lobes $\Lambda$ of $\Theta$ the induced permutation group $H_{\{V\Lambda\}} \big|_{V\Lambda}$ is permutation isomorphic to $G_1$; and
\item \label{case1:rem:intuitive_box}
	for all vertices $\gamma$ in $\Theta$ we have that $H_\gamma \big|_{\mathbb{L(\gamma)}}$ is permutation isomorphic to $G_2$.
\end{enumerate}
Up to permutation isomorphism, $G_1 \boxtimes G_2$ is the unique maximal element in $\mathcal{H}$. Moreover, each lobe of $G_1 \boxtimes G_2$ is the vertex set of a lobe of $\Gamma(K_{n_1}, n_2)$, and the structure tree of $G_1 \boxtimes G_2$ is the block-cut-vertex tree of $\Gamma(K_{n_1}, n_2)$.
\end{remark}

The above remark is a formalisation of the intuitive sense of the box product given in the introduction, Section~\ref{intro:box_prods}.

\begin{example} Let $D_{8}$ be the dihedral group acting on four points and consider $G := D_8 \boxtimes S_3$. The group $G$ acts as a group of automorphisms of the graph $\Gamma(K_4, 3)$ pictured in Figure~\ref{fig:Gamma_D8_S3} (left) alongside the structure tree of $G$ (right). Notice that $G$ is the largest subgroup of $\aut \Gamma(K_4, 3)$ that satisfies conditions (\ref{case2:rem:intuitive_box}) and (\ref{case1:rem:intuitive_box}) of Remark~\ref{rem:intuitive_box}.

One could use the same graph, $\Gamma(K_4, 3)$, to understand $D_8 \boxtimes C_3$ and $S_8 \boxtimes S_3$, for example.
\end{example} 

\begin{figure}[ht!]
    \centering
    \subfloat{
	\includegraphics[width=.4\linewidth]{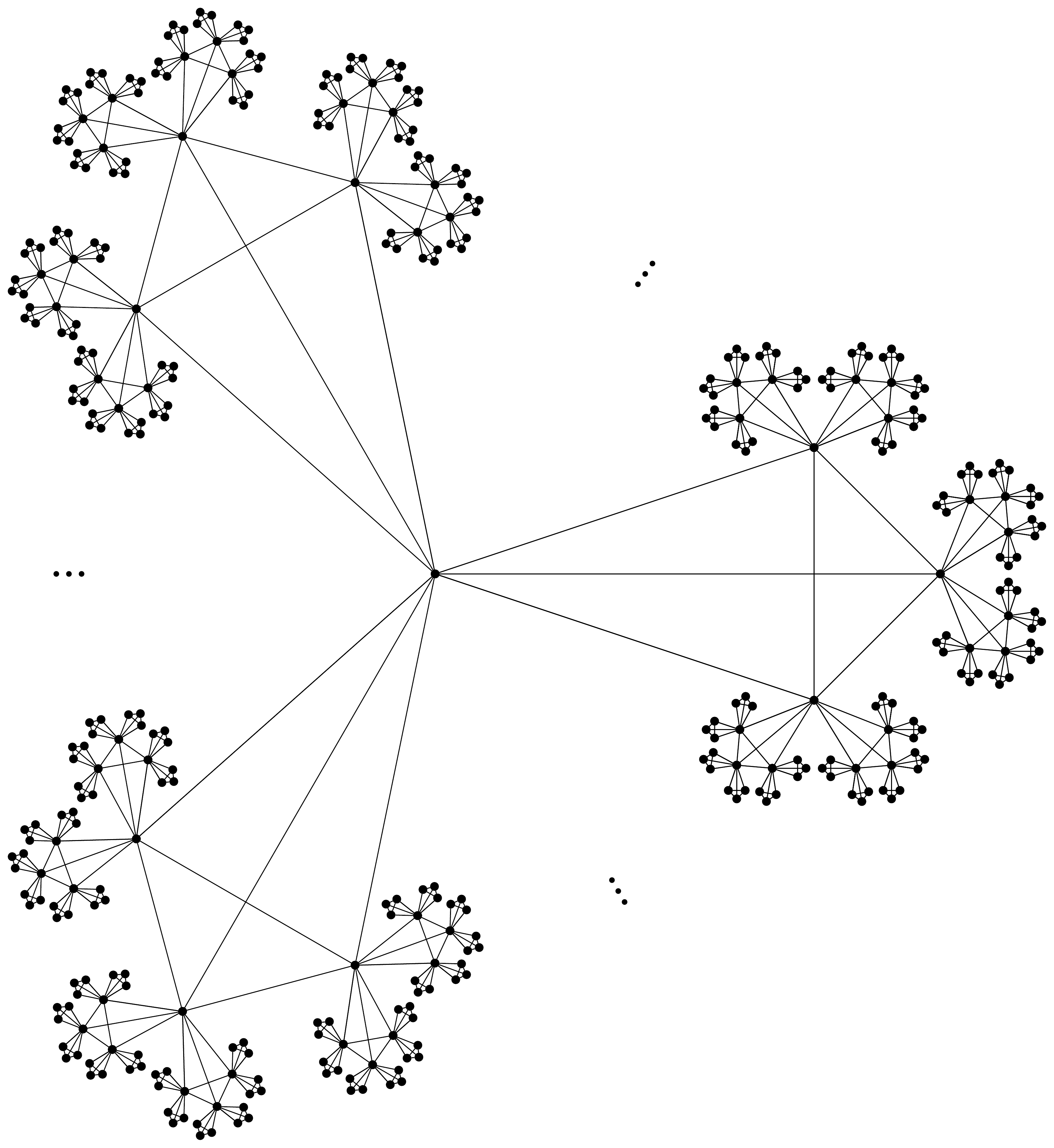}
    }
    \hspace{3em}
    \subfloat{
	\includegraphics[width=.4\linewidth]{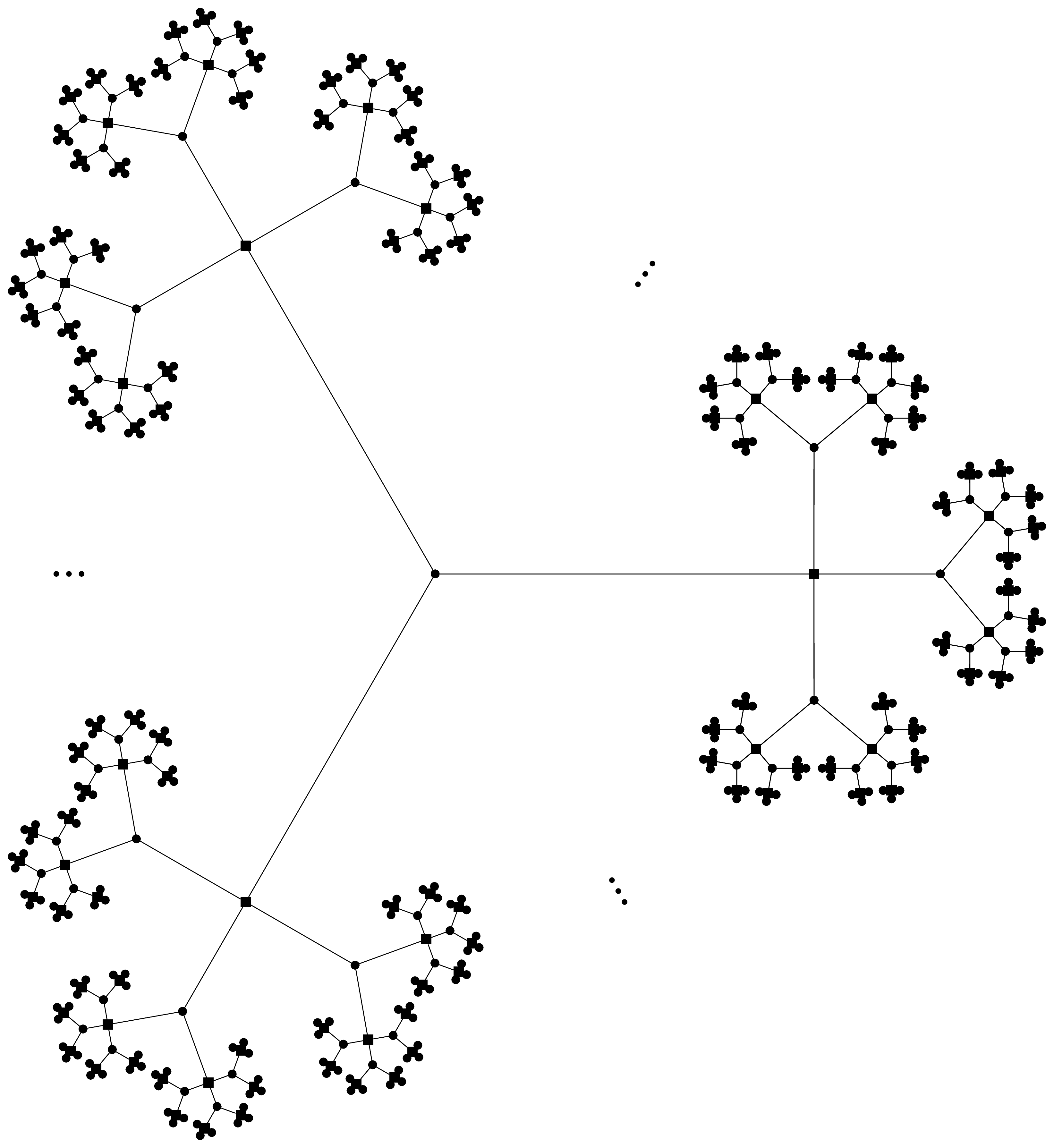}
    }
    \caption{$\Gamma(K_4, 3)$ (left) and its block-cut-vertex tree (right)}
    \label{fig:Gamma_D8_S3}
\end{figure}

\begin{example} Let $\Lambda$ be a vertex- and edge-transitive connected graph with connectivity at least two. Then $(\aut \Lambda) \boxtimes S_n = \aut \Gamma(\Lambda, n)$.
\end{example}

\section{The proof part I: Normal subgroups}
\label{section:proof_part_1}

In this section we investigate the normal subgroup structure of subdegree-finite primitive permutation groups, culminating in the proof of Theorem~\ref{thm:min_normal_subgroups}.

Henceforth, fix an infinite set $\Omega$ and a primitive permutation group $G \leq \sym(\Omega)$ that is closed in the permutation topology. Let $\alpha, \beta \in \Omega$ be distinct and suppose $\beta^{G_\alpha}$ is a finite nontrivial suborbit whose pair is also finite. 
Let $\Gamma$ be the orbital graph with vertex set $\Omega$ and edge-set $\{\alpha, \beta\}^G$.

\begin{remark} \label{rem:set_is_countable} It is well-known that under these conditions $\Omega$ must be countably infinite, and all suborbits of $G$ must be finite, so $G \in \mathcal{P}$.
Indeed, it is not difficult to see that the valency of $\alpha$ in $\Gamma$ is finite, and because $\Gamma$ is vertex transitive it follows that $\Gamma$ is locally finite. 
There is a natural $G$-invariant equivalence relation on the vertices of $\Gamma$, whereby two vertices are related if and only if they are connected. Since $G$ is primitive, this relation must be universal and $\Gamma$ is therefore connected.
The observation that $\Omega$ is countably infinite and all suborbits of $G$ are finite now follows immediately. Since all suborbits of $G$ are finite, for any pair of distinct vertices $\alpha', \beta' \in \Omega$, the orbital graph $(\Omega, \{\alpha', \beta'\}^G)$ is connected and locally finite.
\end{remark}

The normal subgroup structure of infinite primitive permutation groups with finite point stabilisers was determined in \cite{smith:discrete_prim}.

\begin{proposition}[{\cite[Theorems 2.2 \& 2.3]{smith:discrete_prim}}] \label{prop:discrete_normal_structure}
An infinite primitive permutation group with a finite point stabiliser has a unique (nontrivial) minimal normal subgroup that is the direct product of finitely many infinite nonabelian simple groups. These simple groups are pairwise isomorphic and are permuted transitively under conjugation by the finite stabiliser.
\end{proposition}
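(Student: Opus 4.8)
My plan is to reduce everything to an orbital graph. Since $G_\alpha$ is finite, every suborbit of $G$ is finite, so the running hypotheses hold and Remark~\ref{rem:set_is_countable} applies: $\Omega$ is countably infinite and every orbital graph $\Gamma$ of $G$ is connected and locally finite, so $G\leq\aut\Gamma$ is vertex-primitive with finite vertex stabilisers. Two observations to record first. (i) By the standard Cayley--Abels argument --- $G$ is generated by a point stabiliser together with one element carrying $\alpha$ to each of its finitely many $\Gamma$-neighbours, using connectedness --- the group $G$, and likewise every transitive subgroup of $G$, is finitely generated. (ii) Every nontrivial normal subgroup $N\unlhd G$ is transitive, its orbits being a $G$-invariant block system; hence such $N$ is infinite and countable with $N_\alpha=N\cap G_\alpha$ finite.

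\emph{Step 1: $G$ has a minimal normal subgroup, and it is nonabelian.} First I would prove a stability lemma: the intersection of any chain of transitive normal subgroups of $G$ is transitive. Fix a $\Gamma$-neighbour $\beta$ of $\alpha$ and some $h_0\in G$ with $\alpha^{h_0}=\beta$. Each member $N_i$ of the chain is transitive, so it contains an element carrying $\alpha$ to $\beta$, and every such element lies in the \emph{finite} set $G_\alpha h_0$; a pigeonhole argument over this finite set, together with the nesting of the chain, produces a single $g_0\in\bigcap_i N_i$ with $\alpha^{g_0}=\beta$. Doing this for every neighbour of $\alpha$ and using that a block containing $\alpha$ together with all its neighbours must, by connectedness, be all of $\Omega$, we conclude $\bigcap_i N_i$ is transitive, hence nontrivial. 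Zorn's lemma now gives a minimal normal subgroup $M$ of $G$ (and shows every nontrivial normal subgroup contains one). Next, $M$ cannot be abelian: an abelian transitive group is regular, so an abelian $M$ would be a regular normal subgroup, $G=M\rtimes G_\alpha$ with the finite group $G_\alpha$ acting on $M$ by conjugation, and primitivity would force $M$ to have no proper nontrivial $G_\alpha$-invariant subgroup; but an infinite abelian group always has a proper nontrivial subgroup invariant under \emph{all} of its automorphisms --- a $p$-primary component, a divisibility layer such as $pM$, or, in the elementary abelian case (viewing $M$ as an infinite-dimensional vector space), the span of a single finite $G_\alpha$-orbit --- a contradiction. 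So $M$ is nonabelian.

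\emph{Step 2: the structure of $M$, and uniqueness.} Since $M$ is a minimal normal subgroup of $G$, every characteristic subgroup of $M$ is normal in $G$, hence trivial or $M$; in particular $[M,M]=M$ and $Z(M)=1$. Assume (this is the crux; see below) that $M$ has a minimal normal subgroup $S$. Distinct $G$-conjugates of $S$ are distinct minimal normal subgroups of $M$, so they intersect trivially and hence commute; the subgroup they generate is $G$-invariant and nontrivial inside $M$, so it equals $M$. One checks this exhibits $M$ as an internal restricted direct product of finitely many (by (i)) of these conjugates, $M=S_1\times\cdots\times S_k$, where each $S_i$ is a minimal normal subgroup of $M$. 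Each $S_i$ is then \emph{simple}: a normal subgroup of $S_i$ is centralised by the other factors, hence normal in $M$, hence trivial by minimality of $S_i$. Each $S_i$ is nonabelian (otherwise $M$, a product of pairwise-commuting conjugates of the abelian $S_1$, would be abelian) and, since the $S_i$ are pairwise isomorphic and $M$ is infinite, each is infinite. Finally $G$ permutes $\{S_1,\dots,S_k\}$ by conjugation with $M$ acting trivially (it normalises each factor), so this action is realised by $G_\alpha$; a $G_\alpha$-orbit of factors spans a $G$-invariant subgroup of $M$, which must be all of $M$, so $G_\alpha$ is transitive on the factors. For uniqueness, suppose $M'$ is another minimal normal subgroup. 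Then $M\cap M'$ is a normal subgroup of $G$ contained in the minimal $M$, and it is not $M$ (else $M\leq M'$, forcing $M=M'$), so $M\cap M'=1$ and $M'\leq C_G(M)$. But $C_G(M)$ is semiregular, centralising the transitive group $M$; hence $M'$, being transitive, is regular, and $(MM')_\alpha$, a complement to $M'$ in the transitive group $MM'$, is isomorphic to $MM'/M'\cong M/(M\cap M')=M$ --- an infinite subgroup of the finite group $G_\alpha$, a contradiction. So $M$ is the unique minimal normal subgroup.

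\emph{Main obstacle.} The one genuinely delicate point is the hypothesis invoked in Step 2 that $M$ has a minimal normal subgroup (equivalently, that its socle is nontrivial): for a general infinite finitely generated group this can fail, and it is precisely here that the finite-point-stabiliser hypothesis --- the faithful action of $M$ on a connected locally finite graph with finite vertex stabilisers --- must be exploited, e.g. via the analysis of the number of ends of $M$ (one or infinitely many, with Stallings' theorem in the latter case) together with the structure of such actions, or via the Macpherson--Praeger infinitary O'Nan--Scott framework once the double-minimality condition on normal subgroups is verified. Once a minimal normal subgroup of $M$ is secured, the passage to ``finite direct product of pairwise isomorphic infinite nonabelian simple groups permuted transitively by $G_\alpha$'' is formal, as are the existence and uniqueness of $M$.
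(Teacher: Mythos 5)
A point of orientation first: the paper does not prove this proposition at all --- it is imported verbatim from \cite{smith:discrete_prim} and used as a black box to dispose of the discrete case of Theorem~\ref{thm:min_normal_subgroups} --- so there is no in-paper argument to measure yours against; you are attempting the cited external theorem. Most of your scaffolding is correct and is the standard route: the pigeonhole/chain argument for the existence of a minimal normal subgroup $M$ of $G$ (every element mapping $\alpha$ to $\beta$ lies in the finite coset $G_\alpha h_0$, so a chain of transitive normal subgroups has transitive intersection) is sound, as are the non-abelianness of $M$, the decomposition of $M$ as a direct product of finitely many $G$-conjugates of a minimal normal subgroup $S$ of $M$ (with finiteness of the number of factors coming from finite generation of $M$ and $Z(M)=1$), the simplicity and non-abelianness of the factors, the transitivity of the $G_\alpha$-action on them, and the uniqueness of $M$ via semiregularity of $C_{\sym(\Omega)}(M)$. (A minor quibble: your list of invariant subgroups of an infinite abelian $M$ omits the torsion-free divisible case; the uniform fix is that the subgroup generated by a single finite $G_\alpha$-orbit is finitely generated, hence proper whenever $M$ is not finitely generated, and $2M$ handles the finitely generated case.)

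The genuine gap is exactly the one you flag, and it is not a routine omission: producing a minimal normal subgroup of $M$ is the substantive content of \cite[Theorems 2.2 \& 2.3]{smith:discrete_prim}, so without it you have proved a different (and much easier) statement. Your Step 1 chain argument does not transfer to $M$: a nontrivial normal subgroup of $M$ need not be transitive (its orbits are blocks for $M$, not for $G$), so there is no finite coset to pigeonhole over and a descending chain of normal subgroups of $M$ could a priori have trivial intersection. Neither of your proposed rescues closes this. Appealing to \cite{infinitary_versions} is circular, since Macpherson and Praeger \emph{assume} that the minimal normal subgroup itself has a minimal normal subgroup --- verifying that hypothesis is precisely what is at stake. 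Nor can the present paper's argument for the nondiscrete case of Theorem~\ref{thm:min_normal_subgroups} be borrowed: there the chain condition is extracted by combining compactness of the infinite stabiliser $N_\alpha$ with the fact that a normal subgroup with trivial point stabiliser is countable and hence lies in the trivial quasi-centre (Proposition~\ref{prop:QZContainsAllCountableNormalSubgroups}); when $G_\alpha$ is finite the whole group is countable, $QZ(N)=N$ is not trivial, and that argument collapses --- which is exactly why the paper quarantines the discrete case into this citation. A complete proof must exploit finiteness of $M_\alpha$ directly at the level of chains of normal subgroups of $M$ (for instance by stabilising the chain of finite stabilisers $(N_i)_\alpha$ and then separately excluding a tail of semiregular normal subgroups), and that analysis is absent from your write-up.
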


Since all subgroups in a discrete group are closed, Proposition~\ref{prop:discrete_normal_structure} implies that Theorem~\ref{thm:min_normal_subgroups} holds when $G$ is discrete. Theorem~\ref{thm:min_normal_subgroups} for nondiscrete groups can be established in a number of ways. The proof we give below is permutational and mirrors, as far as possible, the standard argument for finite groups; we exploit the topology of $G$ only where strictly necessary.

There is an alternative approach that is topological, in which most of Theorem~\ref{thm:min_normal_subgroups} (for nondiscrete groups) can be deduced from a theorem of P.~E.~Caprace and N.~Monod (\cite[Theorem E]{cm}). We briefly sketch this topological approach here. Caprace and Monod's theorem states that if $H$ is a compactly generated non-compact locally compact group in which every nontrivial closed normal subgroup is cocompact, then $H$ is either discrete or monolithic with monolith $\MinClosedNormal$, and in the latter case $\MinClosedNormal$ is isomorphic to ${\bf R}^n$ or $\MinClosedNormal$ is a quasi-product with finitely many isomorphic topologically simple groups as quasi-factors.
As a topological group, our permutation group $G$ is totally disconnected and locally compact. The point stabilisers $G_\alpha$ and $G_\beta$ are compact (they have only orbits of finite length) and maximal (because $G$ is primitive) and so $G$ is generated by the compact set $G_\alpha \cup G_\beta$. Because $G$ is primitive, all its closed nontrivial normal subgroups are transitive and therefore cocompact. Because $G_\alpha$ is open and $|G:G_\alpha|$ is infinite, $G$ cannot be compact. Hence Caprace and Monod's theorem applies to $G$, and one can see that if $G$ is not discrete then it is monolithic and the monolith of $G$ is a quasi-product with finitely many isomorphic topologically simple groups as quasi-factors.

\begin{proof}[Proof of Theorem~\ref{thm:min_normal_subgroups}]
Recall that $G \leq \aut \Gamma$, where $\Gamma$ is infinite, connected and locally finite with $V\Gamma = \Omega$. Let $X$ denote the set of neighbours of $\alpha$ in $\Gamma$. 

First, suppose $N \unlhd G$ is nontrivial. Then $N$ is transitive. If $G$ has no infinite point stabiliser, then $N$ has no infinite point stabiliser. On the other hand, if $N$ has no infinite point stabiliser, then $G$ has no infinite point stabiliser by Lemma~\ref{lemma:BurgerMozesInterpretation}(\ref{BurgerMozesInterpretation:caseN}). Hence $G$ has an infinite point stabiliser if and only if $N$ has an infinite point stabiliser.

If $G$ is discrete, then we have already seen that the result holds, so  assume $G$ is nondiscrete. We claim that if $N \unlhd G$ is closed and nontrivial, then $QZ(N) = \langle 1 \rangle$ and $N$ has a nontrivial minimal closed normal subgroup. Our proof of this claim is similar to that of \cite[Lemma 1.4.1]{BurgerMozes}. Indeed, suppose $N \unlhd G$ is closed and nontrivial. Because $G$ is primitive, $N$ is transitive. By assumption, $G$ is not discrete and therefore has no finite point stabilisers. Hence, by Lemma~\ref{lemma:BurgerMozesInterpretation}(\ref{BurgerMozesInterpretation:caseN}) it follows that $N$ also has no finite point stabilisers and by Lemma~\ref{lemma:TrivialQuasiCenter} we have $QZ(N) = \langle 1 \rangle$. Let $\mathcal{N}$ denote the set of all nontrivial closed normal subgroups of $N$. Suppose $N_1 \geq N_2 \geq \cdots$ is a chain in $\mathcal{N}$. For each $N_i$ the stabiliser $(N_i)_\alpha$ induces a permutation group $F_i$ on the finite set $X$. If some $F_i$ is trivial then (because $N$ is transitive, $\Gamma$ is connected, and $N_i \unlhd N$) the stabiliser $(N_i)_\alpha$ must fix every vertex of $\Gamma$, so it must be trivial;
hence $N_i$ is countable and is thus contained in $QZ(N)$ by Proposition~\ref{prop:QZContainsAllCountableNormalSubgroups}, but $QZ(N)$ is trivial. Hence each $F_i$ is finite and nontrivial, so there exists some nontrivial $f \in F$ which is contained in all of them. For each $i \geq 1$, let $P_i$ be the preimage of $f$ in $(N_i)_\alpha$. It is easy to see that each $P_i$ is closed and contained in the compact group $N_\alpha$;
since $\{P_i\}_{i \in \N}$ enjoys the finite intersection property, it follows that $P := \bigcap_{i \geq 1} P_i$ is nonempty. Because $f$ is nontrivial, $P$ is nontrivial. One can now easily verify that $\bigcap_{i \geq 1} N_i$ is closed, nontrivial and normal in $N$, and so it lies in $\mathcal{N}$. Hence, by Zorn's Lemma, the set $\mathcal{N}$ contains a minimal element, and our claim is established.

By our claim, we have that $G$ has a nontrivial closed minimal normal subgroup $\MinClosedNormal$ and $QZ(G) = \langle 1 \rangle$; applying the claim again, this time to $\MinClosedNormal$, we have that $\MinClosedNormal$ also has a nontrivial closed minimal normal subgroup $K$ and $QZ(\MinClosedNormal) = \langle 1 \rangle$. By Proposition~\ref{prop:QZContainsAllCountableNormalSubgroups} we have that $G$ and $B$ have no nontrivial countable normal subgroups, and so in particular point stabilisers in $K$ are infinite.

Note that $\MinClosedNormal$ is the unique minimal closed normal subgroup of $G$. Indeed, if $H \not = \MinClosedNormal$ is a minimal closed normal subgroup of $G$ then $\MinClosedNormal \cap H$ is closed and normal in $G$ and therefore $\MinClosedNormal \cap H$ is trivial. This implies that $H \leq C_{\sym(\Omega)}(\MinClosedNormal)$.
Because $\MinClosedNormal$ is transitive, it is well-known (see \cite[Theorem 4.2A]{dixon&mortimer} for example) that $C_{\sym(\Omega)}(\MinClosedNormal)$ must be semiregular. Hence $H$ is a countable normal subgroup of $G$, and so by Proposition~\ref{prop:QZContainsAllCountableNormalSubgroups} we have $H \leq QZ(G)$ and thus $H$ is trivial.

Now choose a countable subgroup $V \leq G$ such that the closure $\overline{V}$ of $V$ is equal to $G$ (see Remark~\ref{remark:CountableInterior}). Let $K^V := \{K^v : v \in V\}$ and $U := \langle K^h : h \in V \rangle$. Note that $\overline{U} \leq \MinClosedNormal$. Clearly $U$ is normalised
by $V$ and so
$\overline{U}$ is normalised by $\overline{V} = G$. Therefore
$\overline{U}$ is a closed nontrivial normal subgroup of $G$ and (because $\MinClosedNormal$ is unique) we have $\MinClosedNormal \leq \overline{U}$. Hence $\overline{U} = \MinClosedNormal$. 

We claim that $K^{G_{\alpha}}$ is finite and contains every nontrivial minimal closed normal subgroup of $\MinClosedNormal$. Our proof of this claim was inspired by the proof of \cite[Proposition 1.5.1]{BurgerMozes}. Suppose that $J$ is a minimal closed normal subgroup of $\MinClosedNormal$ and $J \not \in K^V$. For any $v \in V$ the group $J \cap K^v$ is a closed normal subgroup of $\MinClosedNormal$ and so it is trivial. Hence $J \leq C_{\sym(\Omega)}(U)$. Now $U$ is transitive because $\overline{U} = \MinClosedNormal$ is transitive and so again we have that $C_{\sym(\Omega)}(U)$ is semiregular. Therefore $J$ is a countable normal subgroup of $\MinClosedNormal$ and so, by Proposition~\ref{prop:QZContainsAllCountableNormalSubgroups}, $J \leq QZ(\MinClosedNormal)$. Since $QZ(\MinClosedNormal)$ is trivial, $J$ is trivial, and thus $K^V$ contains every nontrivial minimal closed normal subgroup of $\MinClosedNormal$. 
Now $\MinClosedNormal$ is transitive on $\Omega$, so $G = BG_\alpha$. Hence $K^V = K^G = K^{B G_{\alpha}} = K^{G_\alpha}$. Therefore $K^{G_\alpha}$ contains every nontrivial minimal closed normal subgroup of $\MinClosedNormal$. Since $K^{G_\alpha}$, being equal to $K^V$, is countable, $|G_{\alpha}: N_{G_{\alpha}}(K)| \leq \aleph_0$. Since $G_{\alpha}$ is closed, $N_{G_{\alpha}(K)}$ is closed (by Lemma~\ref{lemma:NormalizersAreClosed}) and we may apply Theorem~\ref{theorem:Evans} to deduce that there exists some finite set $\Phi \subseteq \Omega$ whose pointwise stabiliser in $G_\alpha$ is contained in $N_{G_{\alpha}}(K)$. Since $|G_\alpha : G_\alpha \cap G_{(\Phi)}|$ is finite, $|G_\alpha : N_{G_\alpha}(K)|$ is finite, and therefore $K^{G_\alpha}$ is finite. Hence our claim is true.

Write $K^{G_\alpha}$ as $\{K_1, \ldots, K_m\}$, with $m \in \N$ and $K = K_1$, and let $M := \langle K_1, \ldots, K_m \rangle$. Note that $M \leq B$ and $M \unlhd BG_\alpha = G$. Hence $\overline{M} = B$. Note also that $K_i \cap K_j$ is trivial for $i \not = j$.

It is now easy to see that $K$ is topologically simple. Indeed, if $K$ has a nontrivial closed normal subgroup $N$, then $N$ is normalised by $K_1$ and centralised by $K_2, \ldots, K_m$, and so $N \unlhd \overline{M} = B$. Since $K$ is a minimal closed normal subgroup of $B$ it follows that $N = K$.

If $K$ is abelian then $M$ is an abelian normal subgroup of $G$, and so in particular $M$ is abelian and transitive. Hence any element in $\Omega$ can be written $\alpha^h$ for some $h \in M$, and we have $(\alpha^h)^{M_\alpha} = \{\alpha^h\}$. Therefore $M_\alpha$ is trivial, which is impossible because $G$ has no countable nontrivial normal subgroups. Hence $K$ must be nonabelian. Moreover, by Lemma~\ref{lemma:NormalizersAreClosed}, $Z(K)$ is a closed normal subgroup of $K$ and so it must be trivial.

Finally, we claim that $M =  K_1 \times \cdots \times K_m$. Indeed, suppose the claim is false. Let $j$ be minimal such that $\langle K_1, \ldots, K_j \rangle \not = K_1 \times \cdots \times K_j$, and note that $j > 1$. Then there exists
some nontrivial $x \in K_j \cap \left ( K_1 \times \cdots \times K_{j-1} \right )$. 
For $1 \leq i \leq j-1$ let $k_i$ be the projection of $x$ into $K_i$. Since $x \in K_j$ we have that $x$ centralises each $K_i$, and thus $k_i \in Z(K_i)$ for each $i$. However, this is absurd because each $Z(K_i)$ is trivial. Our claim stands.
\end{proof}

Henceforth, $\MinClosedNormal$ is the unique minimal closed normal subgroup of $G$, where $\MinClosedNormal$ is the closure of $\DirectProd = K_1 \times \cdots \times K_m$, for some pairwise isomorphic closed infinite nonabelian topologically simple groups $K_1, \ldots, K_m$, and some finite $m \geq 1$. Since $\MinClosedNormal$ is a nontrivial normal subgroup of the primitive permutation group $G$, it follows that $\MinClosedNormal$, and hence $\DirectProd$, must act transitively on $\Omega$. 

Let $K := K_1$. Because $G_\alpha$ acts transitively on $\{K_1, \ldots, K_m\}$, we can choose $T := \{g_1, \ldots, g_m\} \subseteq G_\alpha$ such that $g_1 = 1$ and $K_i = K^{g_i}$ for $1 \leq i \leq m$. Note that $T$ is a right transversal of $N_{G}(K)$ in $G$.\\

We conclude this section by noting that $G$ acts faithfully as a group of automorphisms of $\DirectProd$ and $\MinClosedNormal$.

\begin{lemma} \label{lemma:G_discrete_implies_M_a_infinite} 
If $G_\alpha$ is not finite, then $QZ(G)$ is trivial, $\DirectProd$ is not countable and $\DirectProd_\alpha$ and $\MinClosedNormal_\alpha$ are infinite, with $|\MinClosedNormal_\alpha| = 2^{\aleph_0}$. 
\end{lemma}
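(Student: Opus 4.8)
The strategy is to extract all four assertions from the apparatus already in place, the only subtle point being to pin the cardinality of $\MinClosedNormal_\alpha$ to exactly $2^{\aleph_0}$. We work with $G \leq \aut\Gamma$, where $\Gamma$ is the connected, locally finite orbital graph on $\Omega = V\Gamma$, and we use the hypothesis only in the form ``$G_\alpha$ is infinite''. Since $G$ is primitive, every nontrivial normal subgroup of $G$ is transitive and hence has a single orbit on $V\Gamma$; thus $G$ satisfies the hypotheses of Lemma~\ref{lemma:TrivialQuasiCenter}, which immediately gives $QZ(N) = \langle 1 \rangle$ for every $N \unlhd G$, in particular $QZ(G) = \langle 1 \rangle$. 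Next, $\DirectProd \unlhd G$ is nontrivial (being transitive on the infinite set $\Omega$), so if $\DirectProd$ were countable, Proposition~\ref{prop:QZContainsAllCountableNormalSubgroups} would place $\DirectProd \leq QZ(G) = \langle 1 \rangle$, which is absurd; hence $\DirectProd$ is uncountable. As $\DirectProd$ acts transitively on the countable set $\Omega$ we have $|\DirectProd| = \aleph_0 \cdot |\DirectProd_\alpha|$, so $\DirectProd_\alpha$ is uncountable, in particular infinite; and $\DirectProd \leq \MinClosedNormal$ gives $\DirectProd_\alpha \leq \MinClosedNormal_\alpha$, so $\MinClosedNormal_\alpha$ is infinite too. (One can also deduce ``$\DirectProd_\alpha$ infinite'' straight from the contrapositive of Lemma~\ref{lemma:BurgerMozesInterpretation}(\ref{BurgerMozesInterpretation:caseN}), exactly as in the opening paragraph of the proof of Theorem~\ref{thm:min_normal_subgroups}.)

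It remains to show $|\MinClosedNormal_\alpha| = 2^{\aleph_0}$. The bound $|\MinClosedNormal_\alpha| \leq 2^{\aleph_0}$ is immediate, since $\MinClosedNormal_\alpha \leq \sym(\Omega)$ and $|\sym(\Omega)| = 2^{\aleph_0}$ (as $\Omega$ is countable). For the reverse bound I would apply Evans's Theorem~\ref{theorem:Evans} to the pair of closed subgroups $\langle 1 \rangle \leq \MinClosedNormal_\alpha$ (both are closed in $\sym(\Omega)$, because $\MinClosedNormal$ is closed): either $|\MinClosedNormal_\alpha| = |\MinClosedNormal_\alpha : \langle 1 \rangle| = 2^{\aleph_0}$, or $\langle 1 \rangle$ contains the pointwise stabiliser in $\MinClosedNormal_\alpha$ of some finite set $\Phi \subseteq \Omega$. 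But that pointwise stabiliser equals $\MinClosedNormal_{(\Phi \cup \{\alpha\})}$, which has finite index in $\MinClosedNormal_\alpha$ (all $\MinClosedNormal_\alpha$-orbits are finite, since $\MinClosedNormal$ is subdegree-finite) and is therefore nontrivial because $\MinClosedNormal_\alpha$ is infinite --- ruling out the second alternative. Hence $|\MinClosedNormal_\alpha| = 2^{\aleph_0}$.

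The one place calling for care is precisely this last step: showing $\MinClosedNormal_\alpha$ is uncountable is easy, but identifying its cardinality as exactly $2^{\aleph_0}$ without invoking the continuum hypothesis needs the Evans-theorem trick above (equivalently, one could observe that an infinite second-countable profinite group is a perfect Polish space and so has size $2^{\aleph_0}$). Everything else is bookkeeping with results already established; note in particular that $\DirectProd$ need not be closed for the argument, since Proposition~\ref{prop:QZContainsAllCountableNormalSubgroups} applies to all countable normal subgroups of the closed group $G$.
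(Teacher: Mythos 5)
Your proposal is correct and follows essentially the same route as the paper: Lemma~\ref{lemma:TrivialQuasiCenter} (equivalently Proposition~\ref{prop:QZContainsAllCountableNormalSubgroups}) to kill $QZ(G)$ and rule out countability of $\DirectProd$, transitivity of $\DirectProd$ to get $\DirectProd_\alpha$ infinite, and Evans's Theorem~\ref{theorem:Evans} applied to the closed pair $\langle 1 \rangle \leq \MinClosedNormal_\alpha$ for the exact cardinality. The paper states the Evans step without detail; your explicit use of subdegree-finiteness to exclude the ``finite pointwise stabiliser'' alternative is precisely the intended argument.
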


\begin{proof} Suppose $G_\alpha$ is not finite. Since $G$ is primitive, every nontrivial normal subgroup of $G$ is transitive on $\Omega = V\Gamma$. Hence $QZ(G)$ is trivial and $G$ has no nontrivial countable normal subgroups by Lemma~\ref{lemma:TrivialQuasiCenter}. Hence $\DirectProd$ is not countable. Now $M$ is transitive on $\Omega$, so $|M : M_\alpha| = \aleph_0$. Thus $\DirectProd_\alpha$ and $\MinClosedNormal_\alpha$ are infinite. Since $\MinClosedNormal$ is closed, it follows from Theorem~\ref{theorem:Evans} that $|\MinClosedNormal_\alpha| = 2^{\aleph_0}$.
\end{proof}

\begin{proposition} $\DirectProd \leq G \leq \aut \DirectProd$ and $\MinClosedNormal \leq G \leq \aut \MinClosedNormal$. \label{prop:ActsAsAutL}
\end{proposition}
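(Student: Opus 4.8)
The plan is to show that $G$ embeds into $\aut(\DirectProd)$ and into $\aut(\MinClosedNormal)$ via the conjugation action, the only real content being that this action is \emph{faithful}. Since $\DirectProd$ is normal in $G$, conjugation gives a homomorphism $c : G \to \aut(\DirectProd)$ with image containing the inner automorphisms of $\DirectProd$ (because $\DirectProd \leq G$), so $\DirectProd/Z(\DirectProd) \cong \operatorname{Inn}(\DirectProd) \leq c(G) \leq \aut(\DirectProd)$; the same discussion applies verbatim to $\MinClosedNormal$. So the statement $\DirectProd \leq G \leq \aut \DirectProd$ really means: $Z(\DirectProd) = \langle 1\rangle$ (so that $\DirectProd$ itself, not just $\DirectProd/Z(\DirectProd)$, sits inside $\aut\DirectProd$), and $\ker c = C_G(\DirectProd) = \langle 1\rangle$; and likewise for $\MinClosedNormal$.

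First I would dispose of the centres. We established in the proof of Theorem~\ref{thm:min_normal_subgroups} that $K = K_1$ is nonabelian with $Z(K) = \langle 1\rangle$, and that $\DirectProd = K_1 \times \cdots \times K_m$, so $Z(\DirectProd) = Z(K_1)\times\cdots\times Z(K_m) = \langle 1 \rangle$. For $\MinClosedNormal$: by Lemma~\ref{lemma:NormalizersAreClosed}, $Z(\MinClosedNormal) = C_{\MinClosedNormal}(\MinClosedNormal)$ is closed and normal in $G$ (it is even topologically characteristic), hence by Proposition~\ref{prop:QZContainsAllCountableNormalSubgroups} either it is trivial or it is an uncountable closed normal subgroup of $G$; but a nontrivial abelian normal subgroup of the primitive group $G$ would be transitive and abelian, forcing regularity of $\MinClosedNormal$ and hence $G$ regular, contradicting $G \in \mathcal{P}$. (Alternatively: $Z(\MinClosedNormal) \supseteq Z(\DirectProd)$ need not hold, but $Z(\MinClosedNormal)$ centralises the dense subgroup $\DirectProd$ and hence centralises $\MinClosedNormal$; being also contained in $\MinClosedNormal$ and closed, if nontrivial it contains a minimal closed normal subgroup of $\MinClosedNormal$, which by the proof of Theorem~\ref{thm:min_normal_subgroups} equals some $K_i$ up to the $G_\alpha$-action — absurd since each $K_i$ is nonabelian with trivial centre.) Either way $Z(\MinClosedNormal) = \langle 1\rangle$.

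Next, faithfulness of the conjugation action. It suffices to show $C_G(\DirectProd) = \langle 1\rangle$ and $C_G(\MinClosedNormal) = \langle 1\rangle$; since $\DirectProd \leq \MinClosedNormal$ we have $C_G(\MinClosedNormal) \leq C_G(\DirectProd)$, so it is enough to kill the latter. Now $\DirectProd$ is transitive on $\Omega$, so by the standard fact about centralisers of transitive groups (\cite[Theorem 4.2A]{dixon&mortimer}), $C_{\sym(\Omega)}(\DirectProd)$ is semiregular, hence $C_G(\DirectProd)$ is semiregular; moreover it is normal in $G$ (as $\DirectProd \unlhd G$) and, by Lemma~\ref{lemma:NormalizersAreClosed}, closed. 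A nontrivial semiregular group acting on the countable set $\Omega$ is countable, so by Proposition~\ref{prop:QZContainsAllCountableNormalSubgroups} we would get $C_G(\DirectProd) \leq QZ(G)$; but $G \in \mathcal{P}$ is primitive and nontrivial, so every nontrivial normal subgroup is transitive, while $QZ(G)$, being a union of countable normal subgroups, can contain no transitive (infinite) one — concretely, either $QZ(G) = \langle 1\rangle$ outright (when $G$ is nondiscrete, as shown in the proof of Theorem~\ref{thm:min_normal_subgroups}), or $G$ is discrete with finite point stabilisers in which case a countable normal subgroup is regular, again forcing $G$ regular, contradiction. Hence $C_G(\DirectProd) = \langle 1\rangle$, so conjugation embeds $G$ into $\aut\DirectProd$ and into $\aut\MinClosedNormal$, and combined with $\operatorname{Inn}\DirectProd \cong \DirectProd$ and $\operatorname{Inn}\MinClosedNormal \cong \MinClosedNormal$ (trivial centres) we conclude $\DirectProd \leq G \leq \aut\DirectProd$ and $\MinClosedNormal \leq G \leq \aut\MinClosedNormal$, as required.

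The main obstacle is purely bookkeeping: making sure the centraliser argument is uniform across the discrete and nondiscrete cases. In the nondiscrete case everything flows from $QZ(G) = \langle 1\rangle$ (already proved) plus semiregularity of centralisers of transitive groups; in the discrete case one must instead invoke the classification of discrete groups in $\mathcal{P}$ (or simply Proposition~\ref{prop:discrete_normal_structure} together with primitivity forcing regularity of countable normal subgroups to get a contradiction). Neither is deep, but the proof should state the dichotomy cleanly rather than appeal vaguely to "$QZ$".
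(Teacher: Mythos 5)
Your overall strategy is the same as the paper's: everything reduces to showing $C_G(\DirectProd)=\langle 1\rangle$, using that $\DirectProd$ is transitive (so $C_{\sym(\Omega)}(\DirectProd)$ is semiregular by \cite[Theorem 4.2A]{dixon&mortimer}) and that $C_G(\DirectProd)$ is normal in the primitive group $G$, hence trivial or transitive. Your nondiscrete branch (nontrivial $\Rightarrow$ transitive and semiregular $\Rightarrow$ regular $\Rightarrow$ countable $\Rightarrow$ contained in $QZ(G)=\langle 1\rangle$) is sound. Your separate treatment of $Z(\DirectProd)$ and $Z(\MinClosedNormal)$ is harmless but redundant: both are contained in $C_G(\DirectProd)$, so they vanish the moment the centraliser does.

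There is, however, a genuine gap in the discrete case. Twice you pass from ``$G$ has a nontrivial regular (or transitive abelian) normal subgroup'' to ``$G$ is regular''; that inference is false. For example $\mathrm{AGL}(1,p)$, or any finite primitive group of affine type, is nonregular and primitive yet has an abelian regular normal subgroup. So when $G$ is discrete and $C:=C_G(\DirectProd)$ is assumed nontrivial, your argument only produces a regular normal subgroup $C$ and then asserts a contradiction that does not follow. The paper closes this case as follows: if $C$ is transitive then $C_{\sym(\Omega)}(\DirectProd)$ is transitive, so by the \emph{converse} direction of \cite[Theorem 4.2A]{dixon&mortimer} it is $\DirectProd$ that is regular; then $\DirectProd_\alpha$ is trivial, so Lemma~\ref{lemma:G_discrete_implies_M_a_infinite} forces $G_\alpha$ finite and $G$ discrete, whence $\DirectProd=\overline{\DirectProd}=\MinClosedNormal$ is the unique minimal normal subgroup of $G$ and therefore lies in every nontrivial normal subgroup, in particular $\DirectProd\leq C=C_G(\DirectProd)$, making $\DirectProd$ abelian --- absurd. (Alternatively: $\DirectProd$ and $C$ are then two regular normal subgroups with $\DirectProd\cap C=Z(\DirectProd)=\langle 1\rangle$, so $(\DirectProd C)_\alpha$ is countably infinite, contradicting finiteness of $G_\alpha$.) Some such step is needed; ``$G$ regular'' is not a consequence of what you have established.
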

\begin{proof} Since $\DirectProd$ is nontrivial and normal in $G$, it is transitive on $\Omega$. Now $C:=C_{G}(\DirectProd) \unlhd N_{G}(\DirectProd) = G$, and so $C$ is either trivial or transitive.

Let us first suppose that $C$ is transitive. Then $C_{\sym(\Omega)}(\DirectProd)$ is transitive, which implies $\DirectProd$ is regular (see \cite[Theorem 4.2A]{dixon&mortimer} for example). Since point stabilisers in $\DirectProd$ are trivial, it follows from Lemma~\ref{lemma:G_discrete_implies_M_a_infinite} that $G_\alpha$ is finite and thus that $G$ is discrete. Hence $\DirectProd$ is closed, and therefore $\DirectProd = \MinClosedNormal$ is the unique minimal normal subgroup of $G$. Since $C$ is a nontrivial normal subgroup of $G$, it must therefore be the case that $\DirectProd \leq C$. This is absurd, however, because $\DirectProd$ is nonabelian.

Hence $C$ is trivial. Since $G$ acts on $\DirectProd$ by conjugation, and the kernel of this action is $C$, the result follows. Because $C_G(\MinClosedNormal) \leq C_G(\DirectProd)$, it also follows that $G \leq \aut \MinClosedNormal$.
\end{proof}

\begin{remark} \label{remark:Pg13.5} Note that $m = 1$ if and only if $G$ is almost topologically simple. Indeed, if $m = 1$ then $B = M = K_1$ and $G$ is almost topologically simple. Conversely, if $G$ is almost topologically simple then there is a closed normal nonabelian topologically simple group $N \unlhd G$ such that $N \leq G \leq \aut N$. As noted above, $B$ is the unique minimal closed normal subgroup of $G$, so in this case $B = N$ is topologically simple and $m = 1$.
\end{remark}

\section{The proof part II: $G$ is not almost topologically simple}
\label{section:proof_part_Not_AS}

In this section we investigate the structure of $G$ when $m > 1$. We prove the following result, which establishes most attributes of the PA case of Theorem~\ref{thm:MainTheorem}.

\begin{proposition} \label{prop:PA_case} \label{prop:Alternative_to_prop:PA_case}
Suppose $G \in \mathcal{P}$ is infinite. If $G$ is not almost topologically simple then $G$ has precisely one end and there exists a nontrivial finite transitive group $F \leq S_m$ and an infinite, almost topologically simple group $H \in \mathcal{P}$ such that (up to permutation isomorphism) $G$ is a primitive, fibrelobe-full subgroup of $H \Wr F$ acting via its product action, with $\sd(H) \leq \sd(G)$, and $K_1$ is the unique closed minimal normal subgroup of $H$. Furthermore, $G$ is discrete if and only if $H$ is discrete.
\end{proposition}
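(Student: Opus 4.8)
The plan is to realise $\Omega$ as a Cartesian power of one of the fibres $\alpha^{K_i}$ and then read off the wreath‑product structure, exactly as in the finite O'Nan–Scott analysis of product‑action primitive groups; the only genuinely new ingredient is that subdegree‑finiteness is what forces us into the product (rather than a diagonal) case. First I would dispose of the easy points. Since $G$ is not almost topologically simple, Remark~\ref{remark:Pg13.5} gives $m>1$, so the transitive group $F\le S_m$ produced by Theorem~\ref{thm:min_normal_subgroups} is nontrivial. Recall $M=K_1\times\cdots\times K_m$ is dense in $B$, normal in $G$ and transitive on $\Omega$, with each $K_i$ closed, infinite, nonabelian and topologically simple, and $G_\alpha$ permutes $\{K_1,\dots,K_m\}$ inducing $F$; fix $g_i\in G_\alpha$ with $K_1^{g_i}=K_i$ and $g_1=1$, and put $\Delta_i:=\alpha^{K_i}$. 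One checks quickly that $K_i$ acts faithfully on $\Delta_i$ (its kernel is a closed normal subgroup of the topologically simple $K_i$, so is trivial or all of $K_i$; the latter would force $K_i\le G_\alpha$, impossible since $K_i\unlhd B$ and $B$ is transitive), that $g_i$ restricts to a bijection $\Delta_1\to\Delta_i$ (because $\alpha^{g_i}=\alpha$ and $K_1^{g_i}=K_i$), and that $G_\alpha$ permutes $\mathcal F(\alpha)=\{\Delta_1,\dots,\Delta_m\}$ in the same way it permutes $\{K_1,\dots,K_m\}$, hence induces exactly $F$ on $\mathcal F(\alpha)$.

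The technical heart is the coordinate map. Writing a point of $\Omega$ as $\alpha^{k_1\cdots k_m}$ with $k_i\in K_i$, I want to set $c(\alpha^{k_1\cdots k_m}):=(\alpha^{k_1},\dots,\alpha^{k_m})\in\Delta_1\times\cdots\times\Delta_m$ and then identify $\Delta_i$ with $\Delta_1$ via $g_i^{-1}$. This $c$ is well defined and bijective precisely when
\[
M_\alpha=(K_1\cap G_\alpha)\times\cdots\times(K_m\cap G_\alpha),
\]
equivalently when $\alpha^{K_i}\cap\alpha^{\langle K_j:j\ne i\rangle}=\{\alpha\}$ for each $i$; this is the step I expect to be the main obstacle, since it is exactly the exclusion of ``diagonal'' point stabilisers. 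I would prove it by contradiction: if the projection $M\to K_i$ sends $M_\alpha$ onto a subgroup strictly larger than $K_i\cap G_\alpha$ for some $i$, then a short computation with the commuting factors $K_i$ and $\langle K_j:j\ne i\rangle$ (mirroring the simple/compound‑diagonal computation in the finite theory, after reducing to the factors actually involved) shows that some $M_\alpha$‑orbit on $\Omega$ has cardinality equal to that of a nontrivial conjugacy class of $K_i$; but an infinite topologically simple group is never an FC‑group, so $K_i$ has an infinite conjugacy class, contradicting subdegree‑finiteness of $G$. Granting the displayed identity, $c$ is a $G$‑equivariant bijection onto $\Delta_1^m$ with the product action: each $g\in G$ permutes the fibres via $F$ and, after undoing that permutation, fixes every $\Delta_i$ setwise and there acts through $G_{\{\Delta_i\}}\big|_{\Delta_i}$, so $G$ is permutationally isomorphic to a subgroup of $H\Wr F$, where $H:=\overline{(G_{\{\Delta_1\}})\big|_{\Delta_1}}\le\sym(\Delta_1)$ and $H\Wr F$ carries the product action with its permutation topology.

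Next I would verify the properties of $H$ and the fibrelobe‑full conclusion. The group $H$ is closed by construction and subdegree‑finite (each $H_\alpha$‑orbit on $\Delta_1$ embeds as $\delta\mapsto(\delta,\alpha,\dots,\alpha)$ in a finite $G_\alpha$‑orbit on $\Omega$), it is infinite (it contains $K_1$ acting faithfully and transitively on the infinite set $\Delta_1$), it is primitive (an $H$‑congruence on $\Delta_1$ lifts coordinatewise to a proper $G$‑congruence on $\Delta_1^m\cong\Omega$, contradicting primitivity of $G$), and it is not regular (otherwise $H\Wr F$ is imprimitive by Proposition~\ref{prop:wr_prim}, hence so is its subgroup $G$); thus $H\in\mathcal P$ is infinite. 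Moreover $K_1$, identified with its faithful image on $\Delta_1$, is normalised by $G_{\{\Delta_1\}}$ — an element preserving the fibre $\Delta_1$ fixes the position of that coordinate and normalises the socle of $H$ there — so its closure is the unique minimal closed normal subgroup of $H$, and $H$ is almost topologically simple with socle $K_1$. For Definition~\ref{def:fibrelobe_full}: condition (i) holds because $B\le G$ is transitive on $\Omega$ and $G_\alpha$ is transitive on $\{\Delta_1,\dots,\Delta_m\}$; (ii) is the definition of $H$; and (iii) holds because $G_\alpha$ induces the finite group $F$ on $\mathcal F(\alpha)$. Since $G$ is primitive and $H\Wr F$ is itself primitive (Proposition~\ref{prop:wr_prim}, as $H$ is primitive and not regular and $F$ is finite and transitive), $G$ is a primitive, fibrelobe‑full subgroup of $H\Wr F$.

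Finally the remaining numerical claims. For one‑endedness: as $H\in\mathcal P$ is infinite it has, by Higman's criterion, a connected, locally finite orbital graph $\Sigma$ on $\Delta_1$, and the Cartesian power $\Sigma\,\square\cdots\square\,\Sigma$ ($m\ge2$ factors) is a connected, locally finite graph that is a finite union of orbital graphs of $H\Wr F$, hence of $G$; since $G$ acts on its vertex set $\Omega$ transitively with compact open stabilisers, it is a Cayley–Abels graph for $G$, and a finite Cartesian power of an infinite, connected, locally finite graph has exactly one end, so $G$ is one‑ended. The inequality $\sd(H)\le\sd(G)$ follows by comparing suborbits — any nontrivial $G_\alpha$‑orbit on $\Omega$ that meets some fibre $\Delta_i$ has size at least $\sd(H)$, because $(G_{\{\Delta_i\}})_\alpha$ induces on $\Delta_i$ a group with the same (finite) orbits as $H_\alpha$, and an orbit meeting no fibre is handled by the same estimate applied coordinatewise. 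Lastly $G$ is discrete $\iff$ $G_\alpha$ is finite $\iff$ $K_1\cap G_\alpha$ is finite (by the displayed product decomposition together with Lemma~\ref{lemma:G_discrete_implies_M_a_infinite}) $\iff$ $H_\alpha$ is finite $\iff$ $H$ is discrete, where the middle equivalences use that $(G_{\{\Delta_1\}})_\alpha\big|_{\Delta_1}$ is dense in $H_\alpha$. The only part I regard as genuinely hard is the product decomposition $M_\alpha=\prod_i(K_i\cap G_\alpha)$; everything else is bookkeeping with the product action, Higman's criterion, and the wreath‑product primitivity criterion.
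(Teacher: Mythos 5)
Your overall architecture (coordinatise $\Omega$ by the fibres $\alpha^{K_i}$, read off the product action, then do the bookkeeping) parallels the paper's proof, but the step you yourself single out as the technical heart --- the decomposition $M_\alpha=(K_1\cap G_\alpha)\times\cdots\times(K_m\cap G_\alpha)$ --- is exactly where the proposal has a genuine gap, and the argument you sketch for it does not work. The commuting-factor computation only shows that for $c\in K_i$ the $M_\alpha$-orbit of $\alpha^{c}$ is $\{\alpha^{c^{k}}:k\in\pi_i(M_\alpha)\}$, whose cardinality is the number of cosets of $K_i\cap G_\alpha$ met by the $\pi_i(M_\alpha)$-class of $c$ --- not the length of a conjugacy class of $K_i$. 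To upgrade this to a full nontrivial conjugacy class you would need full projections and trivial intersections, which is precisely what is not given, and Scott's lemma (the tool that manufactures diagonals from subdirect subgroups in the finite theory) is unavailable because the $K_i$ are only topologically simple. Moreover the implication you assert cannot be salvaged from subdegree-finiteness and the normal subgroup structure alone, because it is false without primitivity: take a closed, transitive, subdegree-finite, topologically simple $K$ with point stabiliser $U$, a nontrivial finite quotient $U\to Q$ with kernel $N$, and let $K\times K$ act on the cosets of the fibre product $P=\{(u_1,u_2)\in U\times U: u_1N\mapsto u_2N \text{ agree in } Q\}$; all suborbits are finite (since $|U:U\cap U^a|$ and $|Q|$ are finite), yet $\pi_1(P)=U$ strictly contains $(K\times 1)\cap P=N$. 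The group is of course imprimitive (the blocks come from $Q$), which shows that any correct proof of the displayed identity must use maximality of $G_\alpha$ --- and your sketch never invokes primitivity at this point. The paper does exactly this: $R:=\pi_1(M_\alpha)\times\cdots\times\pi_m(M_\alpha)$ is normalised by $G_\alpha$, so by maximality $G_\alpha R$ is $G_\alpha$ or $G$; the latter forces $R=M$ and hence $\pi_1(M_\alpha)=K_1$, which is excluded by Lemma~\ref{lemma:DoesNotProjectOntoLa}, whose proof is a countability/quasi-centre argument (via Lemma~\ref{lemma:TrivialQuasiCenter} and Proposition~\ref{prop:QZContainsAllCountableNormalSubgroups}), not a conjugacy-class count.

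A secondary gap: the auxiliary claim that an infinite topologically simple group is never an FC-group is only immediate in the discrete (abstractly simple) case, via the core-of-the-centraliser argument. In the nondiscrete case a finite conjugacy class gives a closed centraliser of finite index, hence an open one, so FC would merely say $QZ(K_i)=K_i$; the triviality results in the paper apply to normal subgroups of $G$, and $K_i$ is not normal in $G$ when $m>1$, so you would owe a further argument here even if the conjugacy-class reduction could be made. The remaining parts of your proposal (faithfulness of $K_i$ on $\alpha^{K_i}$, the embedding into $H\Wr F$ once the product decomposition is known, primitivity, nonregularity and subdegree-finiteness of $H$, fibrelobe-fullness, the Cartesian-power one-endedness argument, $\sd(H)\le\sd(G)$, and the discreteness equivalence) track the paper's Lemma~\ref{lemma:ProductAction}, Propositions~\ref{prop:H_is_G_on_fibres} and~\ref{prop:sdH_leq_sdG} and Lemma~\ref{lemma:Wr_cartesian_product_is_graph_cartesian_product} closely enough to be repaired, but as written the keystone step is missing.
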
 

The above result should look familiar to readers, since (ignoring ends and topological considerations) it mirrors the $m>1$ case of the finite O'Nan--Scott Theorem (\cite{liebeck&praeger&saxl:finite_onan_scott}) and the same case in the classification of infinite primitive permutation groups with finite point stabilisers (\cite{smith:orbital_digraphs}) and indeed all primitive groups with the FCR property (see \cite{PraegerSchneider}).

For Proposition~\ref{prop:PA_case}, we do not have the FCR property and only have that the minimal {\em closed} normal subgroups of $M$ are {\em topologically} simple. Nevertheless, it is interesting that we obtain a result here that is in line with what one would expect. It seems possible that the theory of cartesian decompositions developed by Praeger and Schneider can be modified slightly to apply to topological groups and then exploited to analyse transitive subdegree-finite permutation groups that are 
contained in a wreath product in product action.

One may wonder at this point why we do not appeal to the results of \cite{infinitary_versions}, since we have established that $G$ has a minimal closed normal subgroup that itself has a minimal closed normal subgroup. The reason is that the result we would obtain with such an approach would be weaker than Proposition~\ref{prop:PA_case}. We would not, for example, be able to conclude that $H$ is subdegree-finite --- a property that is fundamental to our later arguments.
\\

For the remainder of this section, suppose $m > 1$. 
We begin our proof of Proposition~\ref{prop:PA_case} by proving 
that the projection of $M_\alpha$ onto $K_1$ is a proper subgroup of $K_1$.

Let $\pi_i : \DirectProd \rightarrow K_i$ be the natural projection map, and let $R_i := \pi_i(\DirectProd_\alpha)$ and $R := R_1 \times \cdots \times R_m$. Clearly $\DirectProd_\alpha \leq R \leq \DirectProd$ and so $\DirectProd_\alpha = R_\alpha$. As in \cite{smith:discrete_prim}, we have that for all $x \in \DirectProd$ and all $g \in G_\alpha$,
\[\pi_i(x)^g = \pi_{i^{\sigma(g)}}(x^g),\]
where $\sigma(g) \in S_m$ is the permutation induced by the conjugation action of $g$ on $\{K_1, \ldots, K_m\}$. 

Hence $R$ is normalised by $G_\alpha$, so $G_\alpha R$ is a group. Therefore $G_\alpha \leq G_\alpha R \leq G$. Since $G$ is primitive, $G_\alpha$ is maximal, and so $G_\alpha R = G_\alpha$ or $G_\alpha R = G$.

\begin{lemma} \label{lemma:DoesNotProjectOntoLa} The projection of $\DirectProd_\alpha$ onto $K_1$ is a proper subgroup of $K_1$.
\end{lemma}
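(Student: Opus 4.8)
The plan is to argue by contradiction: assume $\pi_1(\DirectProd_\alpha)=K_1$ and derive a contradiction, splitting into the discrete and nondiscrete cases. If $G$ is discrete then $G_\alpha$ is finite (as $G$ is subdegree-finite), so $\DirectProd_\alpha\leq G_\alpha$ is finite and cannot project onto the infinite group $K_1$; that case is immediate. So the work is in the nondiscrete case, where $G_\alpha$ is infinite; here Lemma~\ref{lemma:G_discrete_implies_M_a_infinite} tells us $\DirectProd=K_1\times\cdots\times K_m$ is uncountable, and since $m$ is finite and $K_1,\ldots,K_m$ are pairwise isomorphic, $K_1$ itself is uncountable.

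The next step would be to extract transitivity of the complementary product. Set $L:=K_2\times\cdots\times K_m$, which is nontrivial because $m>1$ and is normal in $\DirectProd$. Since $\ker\pi_1=L$ and $\pi_1(\DirectProd_\alpha)=K_1$, the subgroup $\DirectProd_\alpha L$ contains $L$ and maps onto $K_1$ under $\pi_1$, hence $\DirectProd_\alpha L=\DirectProd$. Now the $L$-orbits on $\Omega$ form a system of imprimitivity for the transitive group $\DirectProd$, and the block $\alpha^{L}$ is stabilised setwise by $\DirectProd_\alpha$ (which fixes $\alpha$) and by $L$, hence by $\DirectProd_\alpha L=\DirectProd$; as $\DirectProd$ is transitive this forces $\alpha^{L}=\Omega$, i.e. $L$ acts transitively on $\Omega$. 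Finally, $K_1$ and $L$ are distinct direct factors of $\DirectProd$, so they commute elementwise as permutations and $K_1\leq C_{\sym(\Omega)}(L)$; since the centraliser of a transitive permutation group is semiregular (see \cite[Theorem 4.2A]{dixon&mortimer}), $K_1$ acts freely on $\Omega$, whence $|K_1|=|\alpha^{K_1}|\leq|\Omega|=\aleph_0$. This contradicts $K_1$ being uncountable and completes the argument.

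The proof is short and, somewhat pleasingly, sidesteps the Goursat-type analysis of subdirect products of the $K_i$ that one might anticipate here; the topological simplicity of $K_1$ is not used at all, the obstruction being purely one of cardinality, fed by Lemma~\ref{lemma:G_discrete_implies_M_a_infinite} and the countability of $\Omega$ from Remark~\ref{rem:set_is_countable}. The one point requiring care — and the only place the argument could go wrong if handled naively — is the deduction that $\DirectProd_\alpha L=\DirectProd$ forces $L$ to be \emph{transitive}: a second-isomorphism-theorem index count only shows that an $L$-orbit has the same cardinality as $\Omega$, which for infinite $\Omega$ does not by itself give transitivity, so one really does need the observation that a block of a transitive group stabilised setwise by the whole group must be everything.
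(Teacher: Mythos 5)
Your proof is correct, and it reaches the contradiction by a genuinely different intermediate step than the paper, even though the overall shape is the same (assume $\pi_1(\DirectProd_\alpha)=K_1$, deduce that $K_1$ is semiregular, hence countable since $\Omega$ is countable, hence $\DirectProd$ is countable, contradicting Lemma~\ref{lemma:G_discrete_implies_M_a_infinite} --- the paper instead invokes Lemma~\ref{lemma:TrivialQuasiCenter} at this point, which amounts to the same thing). The paper works with the intersection $K_1\cap\DirectProd_\alpha=(K_1)_\alpha$: the surjectivity hypothesis transfers normality along $\pi_1$, giving $(K_1)_\alpha\unlhd K_1$, and since $\prod_{i\geq 2}K_i$ centralises $K_1$ this yields $(K_1)_\alpha\unlhd\DirectProd$; a normal subgroup of a transitive group contained in a point stabiliser is trivial, so $K_1$ is semiregular. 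You work instead with the complementary factor $L=K_2\times\cdots\times K_m$: the hypothesis gives $\DirectProd_\alpha L=\DirectProd$, whence $L$ is transitive, and semiregularity of $K_1$ then follows from the fact that the centraliser of a transitive group is semiregular --- a lemma the paper uses elsewhere (e.g.\ in Proposition~\ref{prop:ActsAsAutL}) but not in this proof. The two routes are roughly dual and of equal length; neither uses topological simplicity of $K_1$, as you observe. Your closing caveat is well taken: the index count $|L:L_\alpha|=|\Omega|$ alone does not give transitivity of $L$ on an infinite set, and your block argument handles this correctly (one can also see it in one line: for $h\in\DirectProd_\alpha$ and $l\in L$ one has $\alpha^{hl}=\alpha^l$, so $\Omega=\alpha^{\DirectProd}=\alpha^{\DirectProd_\alpha L}=\alpha^L$).
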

\begin{proof}
If $G_\alpha$ is finite, then $\DirectProd_\alpha$ is finite, and therefore the projection of $\DirectProd_\alpha$ onto the infinite groups $K_1$ must be a proper subgroup of $K_1$.

Now assume that $G_\alpha$ is infinite. Suppose, for a contradiction, that $\pi_1(\DirectProd_\alpha) = K_1$. It follows from this assumption that for any $N \unlhd \DirectProd_\alpha$ we have $\pi_1(N) \unlhd K_1$.
Since $K_1 \unlhd \DirectProd$, we have $K_1 \cap \DirectProd_\alpha \unlhd \DirectProd_\alpha$, and hence $K_1 \cap \DirectProd_\alpha = \pi_1(K_1 \cap \DirectProd_\alpha) \unlhd K_1$.

The direct product $\prod_{i=2}^m K_i$ centralizes $K_1$, so $\DirectProd = K_1 C_\DirectProd(K_1)$. Hence $K_1 \cap \DirectProd_\alpha \unlhd \DirectProd$. But $\DirectProd$ is transitive on $\Omega$ and $K_1 \unlhd \DirectProd$, so $K_1 \cap \DirectProd_\alpha$ must fix $\Omega$ pointwise. Hence $K_1 \cap \DirectProd_\alpha = \langle 1 \rangle$.

Now the point stabiliser of $\alpha$ in $K_1$ is $K_1 \cap \DirectProd_\alpha$, so $|K_1| = |K_1 : (K_1)_\alpha| \leq |\Omega| = \aleph_0$. It follows that $\DirectProd$ is countable and (because $\DirectProd$ is transitive) nontrivial. This is a contradiction, because Lemma~\ref{lemma:TrivialQuasiCenter} guarantees that $G$ contains no nontrivial countable normal subgroups.
\end{proof}

If  $G_\alpha R = G$, then $R$ is transitive. We must therefore have that $R = \DirectProd$ (this is easy to see: given any $\ell \in \DirectProd$ there exists $r \in R$ such that $\alpha^r = \alpha^\ell$, and so $\ell r^{-1} \in \DirectProd_\alpha = R_\alpha$ and thus $\ell \in R$). But this implies that $\pi_1(\DirectProd_\alpha) = K_1$, which contradicts Lemma~\ref{lemma:DoesNotProjectOntoLa}.

Hence, it must be the case that $G_\alpha R = G_\alpha$. Then $R \leq G_\alpha$ and so $R = \DirectProd_\alpha$.\\

For $i \leq m$, define $P_i := \core_{K_i}(R_i) = \bigcap_{k \in K_i} k^{-1} R_i k$ and note that $P_i \leq R_i$ and $P_i \unlhd K_i$. Thus $P := P_1 \times \cdots \times P_m$ is a normal subgroup of $\DirectProd$ and a subgroup of $G_\alpha$. Since $\DirectProd$ is transitive, this implies that $P$ is trivial. In particular, we have that $P_1$ is trivial.

Let $Y$ be the right cosets of $R_1$ in $K_1$. Then $K_1$ acts on $Y$ transitively with kernel $P_1$, so this action is faithful. Writing $\gamma := R_1 \in Y$, we have that the stabiliser $(K_1)_\gamma$ of $\gamma$ in $K_1$ is $R_1 = \pi_1(\DirectProd_\alpha)$.  

We can now apply the following lemma from \cite{smith:discrete_prim} to describe a permutation embedding of $G$ into a wreath product. There are aspects of the proof of this lemma that play a role in our later arguments, so we present the lemma together with an outline of its proof. For the complete argument see  \cite[Proof of Lemma 3.1]{smith:discrete_prim}. Our aim is to obtain a precise description of the permutational embedding of $G$ into our preferred wreath product.

\begin{lemma}[{\cite[Lemma 3.1]{smith:discrete_prim}}] \label{lemma:ProductAction} 
Suppose $\DirectProd = K_1 \times \cdots \times K_m \unlhd G \leq \sym(\Omega)$ for some $m > 1$, such that $\DirectProd$ is transitive and some point stabiliser $G_\alpha$ transitively permutes the components $\{K_1, \ldots, K_m\}$ of $\DirectProd$ by conjugation. Suppose further that $\DirectProd_\alpha = \pi_1(\DirectProd_\alpha) \times \cdots \times \pi_m(\DirectProd_\alpha)$, where each $\pi_i$ is the projection of $\DirectProd$ onto $K_i$. If $K:=K_1$ acts faithfully and transitively on a set $Y$ such that $\pi_1(\DirectProd_\alpha) = K_\gamma$ for some $\gamma \in Y$, then there exists a homomorphism $\psi : N_{G_\alpha}(K) \rightarrow N_{\sym(Y)_\gamma}(K)$ and a permutational embedding $(\hat{\phi}, \theta)$ of $G$ into $\sym(Y^m)$, where $\theta: \Omega \rightarrow Y^m$ is a bijection such that $\theta(\alpha) = (\gamma, \ldots, \gamma)$, and $\hat{\phi}: G \rightarrow \sym(Y^m)$ is a monomorphism such that $\hat{\phi}(\DirectProd) = K^m$ and $\hat{\phi}(G_\alpha) \leq \psi(N_{G_\alpha}(K)) \Wr S_m$ acting with its product action on $Y^m$.
\end{lemma}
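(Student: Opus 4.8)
The plan is to reproduce, in the present infinite/topological setting, the classical product-action embedding from finite O'Nan--Scott theory, being careful that the identifications are genuinely equivariant. Since $\DirectProd$ is transitive so is $G$, and we may identify $\Omega$ with the coset space $\DirectProd/\DirectProd_\alpha$. Write $R_i := \pi_i(\DirectProd_\alpha)$, so that by hypothesis $\DirectProd_\alpha = R_1 \times \cdots \times R_m$ and hence $\DirectProd/\DirectProd_\alpha$ factors as the direct product $\prod_{i=1}^m (K_i/R_i)$ of coset spaces. First I would fix, for each $i$, an element $g_i \in G_\alpha$ with $K_i = K^{g_i}$ (this exists because $G_\alpha$ permutes the components transitively; take $g_1 = 1$), and use conjugation by $g_i$ as the isomorphism $K \to K_i$. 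Using the identity $\pi_i(x)^g = \pi_{i^{\sigma(g)}}(x^g)$ together with $\DirectProd_\alpha^{g_i} = \DirectProd_\alpha$ and $1^{\sigma(g_i)} = i$, one checks $R_1^{g_i} = R_i$; consequently conjugation by $g_i$ induces a $K$-equivariant bijection from $Y = R_1\backslash K$ (with $K$ acting by right multiplication) onto $R_i\backslash K_i$. Assembling these $m$ bijections gives a bijection $\theta : \Omega \to Y^m$ with $\theta(\alpha) = (\gamma, \ldots, \gamma)$.

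Next I would transport the action of $G$ on $\Omega$ across $\theta$ to obtain an action of $G$ on $Y^m$, i.e.\ a homomorphism $\hat\phi : G \to \sym(Y^m)$; since $G$ acts faithfully on $\Omega$ and $\theta$ is a bijection, $\hat\phi$ is a monomorphism and $(\hat\phi, \theta)$ is a permutational embedding. Because the factors $K_i$ of $\DirectProd$ commute and each $K_i$ acts only on its own coordinate, the equivariance of the identifications above shows that $\hat\phi(\DirectProd)$ is exactly the coordinatewise action of $K^m$ on $Y^m$.

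It remains to locate $\hat\phi(G_\alpha)$. I would define $\psi$ on $N_{G_\alpha}(K)$ as follows: if $g \in N_{G_\alpha}(K)$ then $g$ fixes $\alpha$ and normalises $K = K_1$, so $\sigma(g)$ fixes $1$ and $R_1^g = R_1$ (as $g$ normalises $\DirectProd_\alpha$ and fixes the component $K_1$); therefore the automorphism $k \mapsto k^g$ of $K$ preserves $K_\gamma = R_1$ and descends to a permutation $\psi(g)$ of $Y = R_1\backslash K$, namely $R_1 k \mapsto R_1 k^g$. A one-line check gives $\psi(gh) = \psi(g)\psi(h)$, that $\psi(g)$ fixes $\gamma$, and that $\psi(g)^{-1}\rho(k)\psi(g) = \rho(k^g)$ where $\rho : K \to \sym(Y)$ is the given (faithful) action; hence $\psi$ is a homomorphism $N_{G_\alpha}(K) \to N_{\sym(Y)_\gamma}(K)$.

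Finally, the transversal $T = \{g_1, \ldots, g_m\}$ of $N_G(K)$ in $G$ is also a transversal of $N_{G_\alpha}(K)$ in $G_\alpha$ (intersect the cosets $N_G(K)g_i$ with $G_\alpha$). A direct computation with $\pi_i(x)^g = \pi_{i^{\sigma(g)}}(x^g)$ and the chosen identifications shows that, for every $g \in G_\alpha$, the permutation $\hat\phi(g)$ of $Y^m$ permutes the coordinates according to $\sigma(g) \in S_m$ and acts on coordinates through the elements $\psi\big(g_j\, g\, g_{j^{\sigma(g)}}^{-1}\big)$, each of which lies in $\psi(N_{G_\alpha}(K))$ because $g_j\, g\, g_{j^{\sigma(g)}}^{-1}$ fixes $\alpha$ and conjugates $K_1$ to $K_1$. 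Thus $\hat\phi(G_\alpha) \leq \psi(N_{G_\alpha}(K)) \Wr S_m$ acting with its product action on $Y^m$, which is the assertion. The main obstacle is the bookkeeping in this last step: one must pin down the family of identifications $K_i \cong K$ and keep careful track of left versus right cosets so that the induced $G_\alpha$-action emerges as a genuine \emph{product} action rather than merely as a subgroup of $\sym(Y) \Wr S_m$ in imprimitive action --- and it is precisely here that the hypothesis $\DirectProd_\alpha = \pi_1(\DirectProd_\alpha) \times \cdots \times \pi_m(\DirectProd_\alpha)$ is essential.
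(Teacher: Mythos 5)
Your proposal is correct and follows essentially the same route as the paper's own outline: the same transversal $\{g_1,\ldots,g_m\}\subseteq G_\alpha$ with $K_i=K^{g_i}$, the identification of $\Omega$ with $Y^m$ via the conjugation isomorphisms (the paper packages this as $\phi(x)=(\pi_1(x)^{g_1^{-1}},\ldots,\pi_m(x)^{g_m^{-1}})$ and $\theta(\alpha^x)=\underline{\gamma}^{\phi(x)}$), the same map $\psi$ induced by conjugation on $Y\cong K_\gamma\backslash K$, and the same twisted elements $h_j=g_jgg_{j^{\sigma(g)}}^{-1}\in N_{G_\alpha}(K)$ giving $\hat{\phi}(g)=(\psi(h_1),\ldots,\psi(h_m))\sigma(g)$. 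No gaps.
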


\begin{proof}[Outline of proof] 
Recall that we have $T=\{g_1, \ldots, g_m\} \subseteq G_\alpha$ such that $K_i = K^{g_i}$ for $i = 1, \ldots, m$, with $g_1 = 1$. The action of any $g \in G_\alpha$ on $\{K_1, \ldots, K_m\}$ induces a permutation $\sigma(g) \in S_m$ so that $K_i^g = K_{i^{\sigma(g)}}$ for all $i$. For all $i \in \{1, \ldots, m\}$ and $g \in G_\alpha$ define $h_i := g_i gg_{i^{\sigma(g)}}^{-1}$ and observe that each $h_i$ lies in $N:=N_{G_\alpha}(K)$ (note that the proof given in  \cite{smith:discrete_prim} contains a misprint in the first paragraph: $N_{G_\alpha}(T)$ should read $N_{G_\alpha}(K)$).

We consider $K^m \leq \sym(Y^m)$ via the product action. Let $\underline{\gamma}:= (\gamma, \ldots, \gamma) \in Y^m$ and let $\psi : N \rightarrow N_{\sym(Y)_\gamma}(K)$ be the homomorphism which takes $h \in N$ to the permutation of $Y$ sending $\gamma^k$ to $\gamma^{h^{-1}kh}$ for all $k \in K$. Conjugation by $h \in N$ and by $\psi(h)$ induce the same automorphism of $K$.

Define $\phi : M \rightarrow K^m$ by $\phi(x) := (\pi_1(x)^{g^{-1}_1}, \ldots, \pi_m(x)^{g^{-1}_m})$ for each $x \in M$, where the maps $\pi_i$ are the projection maps onto the components of $K_1 \times \cdots \times K_m$. The map $\phi$ is an isomorphism between $M$ and $K^m$
such that $\phi(M_\alpha) = (K_\gamma)^m$. Let $\theta: \Omega \rightarrow Y^m$ be the map with $\theta(\alpha^x) := \underline{\gamma}^{\phi(x)}$ for all $x \in M$. The pair $(\phi, \theta)$ form a permutation isomorphism.

We now extend $(\phi, \theta)$ to a permutation isomorphism $(\hat{\phi}, \theta)$ between $\sym(\Omega)$ and $\sym(Y^m)$ in the following way. Given $f \in \sym(\Omega)$, let $\hat{\phi}(f)$ be the permutation of $Y^m$ which maps each $\underline{\delta} \in Y^m$ to $\theta(\theta^{-1}(\underline{\delta})^f)$. With a little work one can prove that $(\hat{\phi}, \theta)$ is a permutation isomorphism from $\sym(\Omega)$ to $\sym(Y^m)$ such that the restriction of $\hat{\phi}$ to $M$ is $\phi$, and moreover for any $g \in G_\alpha$ we have $\hat{\phi}(g) = (\psi(h_1), \ldots, \psi(h_m))\sigma(g) \in \psi(N) \Wr S_m$, where $h_{i} := g_i g g_{i^\sigma}^{-1}$ for $i = 1, \ldots, m$.

Thus the action of $\hat{\phi}(G)$ on $Y^m$ is as follows. Since $G = G_\alpha M = M G_\alpha$, any element in $g' \in G$ can be written as $g x$ for some $g \in G_\alpha$ and $x \in M$, with $\hat{\phi}(g') = \hat{\phi}(g) \hat{\phi}(x)$, so that,
\begin{equation}
	\label{eq:ActionOfPhiGa}
\underline{\delta}^{\hat{\phi}(g)} =
\left (
	\delta_{1^{\sigma^{-1}}}^{\psi(h_{1^{\sigma^{-1}}})},
	\ldots ,
	\delta_{m^{\sigma^{-1}}}^{\psi(h_{m^{\sigma^{-1}}})}
\right )
\end{equation}
and
\begin{equation}
	\label{eq:ActionOfPhiX}
\underline{\delta}^{\hat{\phi}(x)} =
\left (
	\delta_{1}^{g_1 \pi_1(x) g_1^{-1}},
	\ldots ,
	\delta_{m}^{g_m \pi_m(x) g_m^{-1}}
\right ),
\end{equation}
where where $\sigma:=\sigma(g)$ is the permutation in $S_m$ induced by the action of $g$ on $\{K_1, \ldots, K_m\}$, and $h_{i} := g_i g g_{i^\sigma}^{-1}$ for $i = 1, \ldots, m$.
\end{proof}

We continue to adopt the notation developed in the outline proof throughout this section. Examining the above proof, we have some additional information about the embedding $\hat{\phi}$ that we describe in Lemma~\ref{lem:G_a_acts_nicely_on_gamma_fibres} and Proposition~\ref{prop:H_is_G_on_fibres}. We will use these two results to establish the main result of this section, Proposition~\ref{prop:Alternative_to_prop:PA_case}.

Recall that $g_1 = 1$, and note that if $g \in N_{G_\alpha}(K)$ then $\sigma(g)$ stabilises $1 \in \{1,\ldots,m\}$.

\begin{lemma} 
\label{lem:G_a_acts_nicely_on_gamma_fibres}
\label{lem:KN_induces_H}
\label{lemma:KAndPsiNHasFiniteOrbits} 
Let $k \in K$ and $g \in N_{G_\alpha}(K)$, and write $x := (k, 1, \ldots, 1) \in M$. Then for all $(\delta, \gamma, \ldots, \gamma) \in Y_{\underline{\gamma}}[1]$ and for all $j \in \{1, \ldots, m\}$,
\begin{enumerate}
\item \label{Item:lem:KN_induces_H:One}
	$(\delta, \gamma, \ldots, \gamma)^{\hat{\phi}(g_j)} 
= (\gamma, \ldots, \gamma, \delta, \gamma, \ldots, \gamma) \in Y_{\underline{\gamma}}[j]$
\item \label{Item:lem:KN_induces_H:Two}
	$(\delta, \gamma, \ldots, \gamma)^{\hat{\phi}(xg)} = (\delta^{k \psi(g)}, \gamma, \ldots, \gamma)$
\item \label{Item:lemma:KAndPsiNHasFiniteOrbits}
	$K_\gamma$ and $\psi(N_{G_\alpha}(K))$ have only finite orbits on $Y$
\item \label{Item:lem:KIsClosed} 
	The group $K$ is a closed subgroup of $\sym(Y)$.
\end{enumerate}
\end{lemma}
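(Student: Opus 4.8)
The plan is to prove parts~(\ref{Item:lem:KN_induces_H:One}) and~(\ref{Item:lem:KN_induces_H:Two}) by direct substitution into the action formulas~\eqref{eq:ActionOfPhiGa} and~\eqref{eq:ActionOfPhiX}, then to deduce part~(\ref{Item:lemma:KAndPsiNHasFiniteOrbits}) from the subdegree-finiteness of $G$ transported through $\hat{\phi}$, and part~(\ref{Item:lem:KIsClosed}) from the closedness of $K_1$ in $\sym(\Omega)$. The book-keeping I would do first concerns the elements $h_i = g_i g g_{i^{\sigma(g)}}^{-1}$ that occur in~\eqref{eq:ActionOfPhiGa}, and it rests on $g_1 = 1$. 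When $g = g_j$ we have $1^{\sigma(g_j)} = j$ (because $K^{g_j} = K_j$), so $h_1 = g_1 g_j g_j^{-1} = 1$; when $g \in N := N_{G_\alpha}(K)$ the permutation $\sigma(g)$ fixes $1$ (as recorded just before the lemma), so $h_1 = g_1 g g_1^{-1} = g$. The other fact I would use repeatedly is that every $h_i$ lies in $N$, whence $\psi(h_i) \in N_{\sym(Y)_\gamma}(K) \leq \sym(Y)_\gamma$ fixes $\gamma$; this is what kills all the coordinates away from position $1$ in the computations below.

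For~(\ref{Item:lem:KN_induces_H:One}) I substitute $\underline{\delta} = (\delta, \gamma, \ldots, \gamma)$ and $g = g_j$ into~\eqref{eq:ActionOfPhiGa}: with $\sigma = \sigma(g_j)$, the coordinate in position $t$ of the image is $\delta_{t^{\sigma^{-1}}}^{\psi(h_{t^{\sigma^{-1}}})}$, which is $\delta^{\psi(h_1)} = \delta^{\psi(1)} = \delta$ when $t = 1^{\sigma} = j$, and $\gamma^{\psi(h_{t^{\sigma^{-1}}})} = \gamma$ when $t \neq j$ (then $t^{\sigma^{-1}} \neq 1$, so $\delta_{t^{\sigma^{-1}}} = \gamma$); this is the claimed element of $Y_{\underline{\gamma}}[j]$. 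For~(\ref{Item:lem:KN_induces_H:Two}) I write $\hat{\phi}(xg) = \hat{\phi}(x)\hat{\phi}(g)$. Applying~\eqref{eq:ActionOfPhiX} to $x = (k, 1, \ldots, 1)$ sends $(\delta, \gamma, \ldots, \gamma)$ to $(\delta^{k}, \gamma, \ldots, \gamma)$, since $g_1 = 1$ and $\pi_i(x) = 1$ for $i > 1$; applying~\eqref{eq:ActionOfPhiGa} to $g \in N$ then sends this to $\big((\delta^{k})^{\psi(g)}, \gamma, \ldots, \gamma\big) = (\delta^{k\psi(g)}, \gamma, \ldots, \gamma)$, using that $\sigma(g)$ fixes $1$, that $h_1 = g$, and that every other coordinate is $\gamma^{\psi(h_i)} = \gamma$.

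For part~(\ref{Item:lemma:KAndPsiNHasFiniteOrbits}) I recall that $(\hat{\phi}, \theta)$ is a permutation isomorphism $\sym(\Omega) \to \sym(Y^m)$ with $\theta(\alpha) = \underline{\gamma}$, carrying $G$ onto a closed subdegree-finite group $\hat{\phi}(G)$ with $\hat{\phi}(G)_{\underline{\gamma}} = \hat{\phi}(G_\alpha)$ and $\hat{\phi}(M) = K^m$ in its product action. Setting $k = 1$ in~(\ref{Item:lem:KN_induces_H:Two}) shows that $\hat{\phi}(N)$ stabilises the fibre $F := Y_{\underline{\gamma}}[1]$ and, under the bijection $F \leftrightarrow Y$ given by the first coordinate, acts on it as $\psi(N)$; since $\hat{\phi}(N) \leq \hat{\phi}(G_\alpha)$ and $\hat{\phi}(G)$ is subdegree-finite, all orbits of $\hat{\phi}(N)$ on $Y^m$ — in particular those inside $F$ — are finite, so $\psi(N_{G_\alpha}(K))$ has only finite orbits on $Y$. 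For $K_\gamma$: as $M \leq G$, the group $K^m = \hat{\phi}(M)$ is subdegree-finite in its product action on $Y^m$; its stabiliser of $\underline{\gamma}$ is $(K_\gamma)^m$, and the orbit of $(\delta, \gamma, \ldots, \gamma)$ under $(K_\gamma)^m$ is $\delta^{K_\gamma} \times \{(\gamma, \ldots, \gamma)\}$, so finiteness of this orbit forces $\delta^{K_\gamma}$ to be finite for every $\delta \in Y$.

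Part~(\ref{Item:lem:KIsClosed}) is where the \emph{real work} lies. The plan is to present $K$ acting on $Y$ as the restriction of a closed permutation group to an invariant orbit and run a compactness argument, using that $\Omega$, and hence $Y$ and $F$, is countable. The group $\hat{\phi}(K_1) = K \times \{1\}^{m-1} \leq \sym(Y^m)$ (read off the formula for $\phi$ in the outline proof of Lemma~\ref{lemma:ProductAction}) is closed, being the image of the closed subgroup $K_1 \leq \sym(\Omega)$ under the homeomorphism $\hat{\phi}$; it preserves $F = Y_{\underline{\gamma}}[1]$, on which (via $F \leftrightarrow Y$) it acts exactly as $K$ on $Y$; and every point stabiliser $\hat{\phi}(K_1)_{\underline{\delta}}$ with $\underline{\delta} \in F$ is compact, being a closed subgroup of the compact group $G_{\theta^{-1}(\underline{\delta})}$ transported by $\hat{\phi}$. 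Given $f$ in the closure of $K|_Y$ in $\sym(Y)$, I enumerate $F = \{\underline{\delta}_1, \underline{\delta}_2, \ldots\}$, choose $\kappa_n \in \hat{\phi}(K_1)$ agreeing with $f$ on $\{\underline{\delta}_1, \ldots, \underline{\delta}_n\}$, note that $(\kappa_n)_{n \geq 1}$ lies in the compact coset $\kappa_1 \hat{\phi}(K_1)_{\underline{\delta}_1}$, pass to a subnet converging in $\sym(Y^m)$ to some $\kappa \in \hat{\phi}(K_1)$ (here using that $\hat{\phi}(K_1)$ is closed), and check $\kappa|_F = f$. Hence $K$ is closed in $\sym(Y)$. (Alternatively one can identify $K$ on $Y$ with $K_1$ on its orbit $\alpha^{K_1} \subseteq \Omega$ — both transitive $K_1$-sets whose point stabilisers at $\gamma$ and $\alpha$ equal $R_1$, by Lemma~\ref{lemma:DoesNotProjectOntoLa} and the surrounding analysis — and argue directly inside $\sym(\Omega)$.) The obstacle in~(\ref{Item:lem:KIsClosed}) is precisely to choose the right closed ambient group and to see that agreement on the single orbit $F$ already forces convergence within it; compactness of the point stabilisers — which ultimately comes from $G$ being closed and subdegree-finite — is exactly the input that makes this work, and countability of $\Omega$ is what lets a sequence, rather than a net, suffice in the enumeration.
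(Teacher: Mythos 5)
Your proposal is correct, and for parts (\ref{Item:lem:KN_induces_H:One})--(\ref{Item:lemma:KAndPsiNHasFiniteOrbits}) it is essentially the paper's argument: (\ref{Item:lem:KN_induces_H:One}) and (\ref{Item:lem:KN_induces_H:Two}) are the direct substitutions into (\ref{eq:ActionOfPhiGa}) and (\ref{eq:ActionOfPhiX}) that the paper leaves as ``easily verified'' (your book-keeping with $h_1$, $\sigma(g_j)$ and $\psi$ fixing $\gamma$ is exactly what is needed), and (\ref{Item:lemma:KAndPsiNHasFiniteOrbits}) is the same deduction the paper makes from $N, M_\alpha \leq G_\alpha$, $\hat{\phi}(M_\alpha) = (K_\gamma)^m$, and part (\ref{Item:lem:KN_induces_H:Two}). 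Where you genuinely diverge is (\ref{Item:lem:KIsClosed}), which you call the real work and attack with a compactness argument: enumerating the fibre, choosing approximants $\kappa_n \in \hat{\phi}(K_1)$, trapping them in a compact translate of a point stabiliser, and extracting a convergent subnet. That works (modulo the trivial point that with right actions the relevant coset is $\hat{\phi}(K_1)_{\underline{\delta}_1}\kappa_1$ rather than $\kappa_1\hat{\phi}(K_1)_{\underline{\delta}_1}$), but it imports compactness of point stabilisers and countability of $\Omega$, neither of which is needed. The paper's route is softer and shorter: $K$ is closed in $\sym(\Omega)$ (closed in $\MinClosedNormal$ by Theorem~\ref{thm:min_normal_subgroups}, $\MinClosedNormal$ closed in $G$, $G$ closed), so $\hat{\phi}(K)$ is closed in $\sym(Y^m)$ since $\hat{\phi}$ is a topological group isomorphism; and $\hat{\phi}(K) = K \times \langle 1 \rangle \times \cdots \times \langle 1 \rangle$ sits inside the closed subgroup $\sym(Y) \times \langle 1 \rangle \times \cdots \times \langle 1 \rangle$, whose subspace topology is precisely the permutation topology of $\sym(Y)$ acting in the first coordinate, so closedness of $K$ in $\sym(Y)$ falls out immediately. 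Your argument buys nothing extra here beyond robustness of the method (it would survive without the explicit product description of $\hat{\phi}(K)$), whereas the paper's observation that agreement in the first coordinate governs the topology of the subgroup $\sym(Y) \times \langle 1 \rangle \times \cdots \times \langle 1 \rangle$ makes the step a two-line transport of closedness; you may want to note that the closedness of each $K_i$ in $\sym(\Omega)$, which you assert in passing, is exactly the chain $K$ closed in $\MinClosedNormal$ closed in $G$ closed in $\sym(\Omega)$ from Theorem~\ref{thm:min_normal_subgroups}.
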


\begin{proof} One can easily verify (\ref{Item:lem:KN_induces_H:One}) and (\ref{Item:lem:KN_induces_H:Two}) by using equations (\ref{eq:ActionOfPhiGa}) and (\ref{eq:ActionOfPhiX}) above.

To prove (\ref{Item:lemma:KAndPsiNHasFiniteOrbits}), write $\underline{\delta} := (\delta, \gamma, \ldots, \gamma) \in Y^m$ and $N := N_{G_\alpha}(K)$. 
Now $\underline{\delta}^{\hat{\phi}(N)}$ is finite because $N \leq G_\alpha$ and $G_\alpha$ has only finite orbits. Hence by (\ref{Item:lem:KN_induces_H:Two}) the projection $\pi_1(\underline{\delta}^{\hat{\phi}(N)}) = \delta^{\psi(N)}$ is also finite. 
Similarly, $\underline{\delta}^{\hat{\phi}(M_\alpha)}$ is finite because $M_\alpha \leq G_\alpha$. Recall from the outline of the proof of Lemma~\ref{lemma:ProductAction} that
 $\hat{\phi}(M_\alpha) = \phi(M_\alpha) = (K_\gamma)^m$. Therefore $|\underline{\delta}^{\hat{\phi}(M_\alpha)}| = |\underline{\delta}^{K_\gamma^m}| = |\delta^{K_\gamma}|$, and hence $|\delta^{K_\gamma}|$ is finite.
 
Finally, we prove (\ref{Item:lem:KIsClosed}).
As noted in the outline of the proof of Lemma~\ref{lemma:ProductAction}, the pair $(\hat{\phi}, \theta)$ is a permutation isomorphism from $\sym(\Omega)$ to $\sym(Y^m)$; hence $\hat{\phi}$ is an isomorphism of topological groups. By Theorem~\ref{thm:min_normal_subgroups}, $K$ is closed in $\MinClosedNormal$ and $\MinClosedNormal$ is closed in $G$. Since $G$ is closed in $\sym(\Omega)$, it follows that $K$ is closed in $\sym(\Omega)$. Thus $\hat{\phi}(K)$ is closed in $\sym(Y^m)$. Now $\hat{\phi}(K) = K \times \langle 1 \rangle \times \cdots \times \langle 1 \rangle \leq \sym(Y) \times \langle 1 \rangle \times \cdots \times \langle 1 \rangle$, and the latter group is clearly closed in $\sym(Y^m)$. Hence $K$ is closed in $\sym(Y)$.
\end{proof}

\begin{proposition} \label{prop:H_is_G_on_fibres} \label{prop:H_cannot_be_PA}
Let $H$ be the closure of $K \psi(N_{G_\alpha}(K))$ in $\sym(Y)$, and let $F \leq S_m$ be the permutation group induced by the conjugation action of $G_\alpha$ on $\{K_1, \ldots, K_m\}$. Then the following hold.
\begin{enumerate}
\item \label{Item:stabiliserIsFibreLobe} 
	$\hat{\phi}(G_\alpha) = (\hat{\phi}(G))_{\underline{\gamma}} \leq H_\gamma \Wr F$ and $F$ is transitive.
\item \label{Item:fibrelobeFull} 
	$\hat{\phi}(G)$ is a fibrelobe-full subgroup of $H \Wr F$.
\item \label{Item:stabiliser_of_H} 
	$H_\gamma$ is equal to the closure of $K_\gamma \psi(N_{G_\alpha}(K))$ in $\sym(Y)$.
\item \label{item:prop:H_cannot_be_PA}
	$H \in \mathcal{P}$ and $Y$ is countably infinite.
\item \label{item:prop:H_closed_min_normal}
	$H$ is almost topologically simple, with unique minimal closed normal subgroup $K$.
\item \label{item:prop:H_cannot_be_PA_pt2}
	$G_\alpha$ is finite if and only if $H_\gamma$ is finite.
\end{enumerate}
\end{proposition}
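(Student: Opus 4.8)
The plan is to read off (i) and (iii) from Lemma~\ref{lemma:ProductAction} and the permutation topology, to check the conditions of Definition~\ref{def:fibrelobe_full} for (ii) using Lemma~\ref{lem:G_a_acts_nicely_on_gamma_fibres}, to deduce from (ii) and the primitivity of $G$ that $H$ lies in $\mathcal{P}$ for (iv), to recognise $K$ as the monolith of $H$ for (v), and to obtain (vi) from (i). For (i): since $(\hat\phi,\theta)$ is a permutation isomorphism with $\theta(\alpha)=\underline\gamma$, we have $\hat\phi(G_\alpha)=(\hat\phi(G))_{\underline\gamma}$; the expression $\hat\phi(g)=(\psi(h_1),\dots,\psi(h_m))\sigma(g)$ recorded in the proof of Lemma~\ref{lemma:ProductAction} shows each coordinate lies in $\psi(N_{G_\alpha}(K))\le H_\gamma$ and that $\sigma$ maps $G_\alpha$ onto $F$, whence $\hat\phi(G_\alpha)\le H_\gamma\Wr F$; transitivity of $F$ is part of Theorem~\ref{thm:min_normal_subgroups}. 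For (iii): $\psi(N_{G_\alpha}(K))$ fixes $\gamma$ and normalises $K$, hence normalises $K_\gamma$, so $K_\gamma\psi(N_{G_\alpha}(K))$ is a subgroup of the closed group $H_\gamma$; conversely it is exactly $K\psi(N_{G_\alpha}(K))\cap H_\gamma$, and a dense subgroup of $H$ meets the open subgroup $H_\gamma$ densely, so $H_\gamma=\overline{K_\gamma\psi(N_{G_\alpha}(K))}$.

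For (ii) I would take the fibre $\Delta=\{(\delta,\gamma,\dots,\gamma):\delta\in Y\}$ and the point $\underline\gamma$: since $\hat\phi(G)$ contains $\hat\phi(M)=K^m$ with $K$ transitive on $Y$, it is transitive on the points $Y^m$, and combined with the transitive top group $F$ it is transitive on the fibres of $H\Wr F$, so the three conditions of Definition~\ref{def:fibrelobe_full} need only be checked at this $\Delta$ and this $\underline\gamma$. By Lemma~\ref{lem:G_a_acts_nicely_on_gamma_fibres}(ii) the elements $\hat\phi((k,1,\dots,1)g)$, $k\in K$, $g\in N_{G_\alpha}(K)$, lie in $\hat\phi(G)$, stabilise $\Delta$ setwise, and induce $K\psi(N_{G_\alpha}(K))$ on $\Delta\cong Y$; since this is dense in $H$ while the setwise stabiliser of $\Delta$ in $H\Wr F$ induces only $H$ on $\Delta$, condition (2) holds. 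Condition (3) holds because $\hat\phi(G_\alpha)\le H_\gamma\Wr F$ permutes the $m$ fibres through $\underline\gamma$ exactly as $F$ permutes $\{1,\dots,m\}$, and $\mathcal{F}(\underline\gamma)$ is finite.

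For (iv): $H$ is closed by construction, and as $\theta$ identifies $\Omega$ with $Y^m$ we get $\aleph_0=|\Omega|=|Y|^m$ with $|Y|>1$, forcing $Y$ countably infinite. If $\Sigma$ were an $H$-invariant partition of $Y$ with proper, nonsingleton parts, the partition of $Y^m$ into products of parts of $\Sigma$ would be $(H\Wr F)$-invariant, hence $\hat\phi(G)$-invariant, with proper nonsingleton blocks --- contradicting primitivity of $G\cong\hat\phi(G)$; so $H$ is primitive, and therefore nonregular since $Y$ is infinite. The crucial point is subdegree-finiteness: by (iii), and because a permutation group and its closure have the same orbits, $\delta^{H_\gamma}=\delta^{K_\gamma\psi(N_{G_\alpha}(K))}$; writing each element of the group $K_\gamma\psi(N_{G_\alpha}(K))$ as $uv$ with $u\in K_\gamma$ and $v\in\psi(N_{G_\alpha}(K))$ gives $\delta^{K_\gamma\psi(N_{G_\alpha}(K))}=\bigcup_{\epsilon\in\delta^{K_\gamma}}\epsilon^{\psi(N_{G_\alpha}(K))}$, a finite union of finite sets by Lemma~\ref{lem:G_a_acts_nicely_on_gamma_fibres}(iii). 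Hence $H\in\mathcal{P}$.

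For (v): $K$ is closed in $\sym(Y)$ by Lemma~\ref{lem:G_a_acts_nicely_on_gamma_fibres}(iv), nonabelian and topologically simple by Theorem~\ref{thm:min_normal_subgroups}, and normal in $H$ as $H\le N_{\sym(Y)}(K)$ (closed by Lemma~\ref{lemma:NormalizersAreClosed}). Since $H\in\mathcal{P}$ is infinite, Theorem~\ref{thm:min_normal_subgroups} endows $H$ with a unique minimal closed normal subgroup which, by its proof (via Proposition~\ref{prop:QZContainsAllCountableNormalSubgroups}; in the discrete case one uses Proposition~\ref{prop:discrete_normal_structure} instead), is contained in every nontrivial closed normal subgroup of $H$; it therefore lies in $K$, and topological simplicity of $K$ forces it to be $K$. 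Proposition~\ref{prop:ActsAsAutL} applied to $H$ then gives $K\le H\le\aut K$, and Remark~\ref{remark:Pg13.5} confirms $K$ is the unique minimal closed normal subgroup of $H$; thus $H$ is almost topologically simple. For (vi): if $G_\alpha$ is finite then $K_\gamma$ and $\psi(N_{G_\alpha}(K))$ are finite, so $H_\gamma=\overline{K_\gamma\psi(N_{G_\alpha}(K))}$ is finite; conversely if $H_\gamma$ is finite then so is $H_\gamma\Wr F$, and $\hat\phi(G_\alpha)\le H_\gamma\Wr F$ by (i), so $G_\alpha$ is finite. The main difficulty will be the subdegree-finiteness in (iv): $\psi(N_{G_\alpha}(K))$ may be infinite, so one cannot bound $|H_\gamma:K_\gamma|$, and must instead use the softer facts that $\psi(N_{G_\alpha}(K))$ still has finite orbits on $Y$ and normalises $K_\gamma$; a secondary point requiring care is the identification of $K$ with the monolith of $H$ in (v).
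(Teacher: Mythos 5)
Your proposal is correct and follows essentially the same route as the paper: the same use of Lemmas~\ref{lemma:ProductAction} and \ref{lem:G_a_acts_nicely_on_gamma_fibres} for (i)--(iii), the same density/openness identification of $H_\gamma$, the same finite-orbit argument for subdegree-finiteness, and the same appeal to Theorem~\ref{thm:min_normal_subgroups} and Proposition~\ref{prop:ActsAsAutL} for (v). The only departures are cosmetic: you reprove one direction of Proposition~\ref{prop:wr_prim} via an explicit invariant partition rather than citing it, and your converse in (vi) (finiteness of $H_\gamma\Wr F$ forces $G_\alpha$ finite) is slightly more direct than the paper's countability argument.
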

\begin{proof}
Recall from the outline proof of Lemma~\ref{lemma:ProductAction} that  $\sigma: G_\alpha \rightarrow S_m$ is a homomorphism. We have $F := \sigma(G_\alpha) \leq S_m$.
Because $G_\alpha$ acts transitively on $\{K_1, \ldots, K_m\}$, this group $F$ is transitive.
The permutational embedding $(\hat{\phi}, \theta)$ given in Lemma~\ref{lemma:ProductAction} takes the permutation group $(G, \Omega) = (MG_\alpha, \Omega)$ into $(H \Wr F, Y^m)$. 
Notice that $\hat{\phi}(G_\alpha) = (\hat{\phi}(G))_{\underline{\gamma}}$ and $\hat{\phi}(G_\alpha) \leq H_\gamma \Wr F$. Thus, (\ref{Item:stabiliserIsFibreLobe}) holds.

We now prove (\ref{Item:fibrelobeFull}).
We have already established that $\hat{\phi}(G) \leq H \Wr F$.
Now $\hat{\phi}(M) = K^m$ acts on $Y^m$ via the product action. Since $K$ is transitive on $Y$, we see that $K^m$ is transitive on the points of $H \Wr F$ and, for each $i$, it is transitive  on the set $\{Y_{\underline{\delta}}[i] : \underline{\delta} \in Y^m\}$. Furthermore,  Lemma~\ref{lem:G_a_acts_nicely_on_gamma_fibres}(\ref{Item:lem:KN_induces_H:One}) implies that $Y_{\underline{\gamma}}[1]^{\hat{\phi}(g_i)} = Y_{\underline{\gamma}}[i]$. Hence $\hat{\phi}(G)$ is transitive on the fibres of $H \Wr F$.

Let $J$ denote the setwise stabiliser in $\hat{\phi}(G)$ of the $\underline{\gamma}$-fibre $Y_{\underline{\gamma}}[1]$ and let $L$ be the closure of the subgroup of $\sym(Y)$ induced by $J$ acting on the first component of $Y_{\underline{\gamma}}[1]$.
Note that, since $\hat{\phi}(G) \leq H \Wr S_m$ and $H$ is closed, we have $L \leq H$.
By Lemma~\ref{lem:KN_induces_H}(\ref{Item:lem:KN_induces_H:Two}), we have $K \leq L$ and $\psi(N_{G_\alpha}(K)) \leq L$, and so $\langle \psi(N_{G_\alpha}(K)), K \rangle \leq L$. Since $L$ is closed and $H$ is the closure in $\sym(Y)$ of $K\psi(N_{G_\alpha}(K))$, we have $H \leq L$ and hence $H = L$.

Now $(\hat{\phi}(G))_{\underline{\gamma}} = \hat{\phi}(G_\alpha)$ permutes the set of $\underline{\gamma}$-fibres of $H \Wr F$. From equation (\ref{eq:ActionOfPhiGa}) above we see that $g \in \hat{\phi}(G_\alpha)$ permutes this set according to $\sigma(g)$, and so the permutation group induced by this action is permutationally isomorphic to $F$. Hence (\ref{Item:fibrelobeFull}) is true.

To see why (\ref{Item:stabiliser_of_H}) must be true,
note that $\overline{(K\psi(N_{G_\alpha}(K)))_\gamma} \leq H_\gamma$ and
any element $h \in H_\gamma$ must be the limit of a convergent sequence whose elements lie in $K \psi(N_{G_\alpha}(K))$. All but finitely many elements in the convergent sequence must fix $\gamma$ and so $h$ is in fact the limit of a convergent sequence in $(K\psi(N_{G_\alpha}(K)))_\gamma$. Hence $H_\gamma \leq \overline{(K\psi(N_{G_\alpha}(K)))_\gamma}$. Since $\psi(N_{G_\alpha}(K))$ fixes $\gamma$, it follows that $H_\gamma = \overline{K_\gamma \psi(N_{G_\alpha}(K))}$.

Now we prove (\ref{item:prop:H_cannot_be_PA})
By (\ref{Item:stabiliser_of_H})
the groups $H_\gamma$ and $K_\gamma \psi(N_{G_\alpha}(K))$ have the same orbits on finite tuples of elements in $Y$. By Lemma~\ref{lemma:KAndPsiNHasFiniteOrbits}, $K_\gamma$ and $\psi(N_{G_\alpha}(K))$ have only finite orbits on $Y$. Hence $K_\gamma \psi(N_{G_\alpha}(K))$ has only finite orbits. Since $H$ is transitive on $Y$, it follows that $H$ is subdegree-finite.
Now $G$ is primitive on $\Omega$, so $\hat{\phi}(G)$ is primitive on $Y^m$. Since $\hat{\phi}(G) \leq H \Wr F$, it follows that $H \Wr F$ is also primitive on $Y^m$. By Proposition~\ref{prop:wr_prim}, $H \Wr F$ is primitive in its product action on $Y^m$ so the group $H$ must be primitive but not regular on $Y$. 
The group $H$ is thus an infinite, closed, nonregular, primitive and subdegree-finite permutation group and so (by Remark~\ref{rem:set_is_countable}) the set $Y$ is countably infinite. 

To establish  (\ref{item:prop:H_closed_min_normal}), observe that $H$ satisfies the hypothesis of Theorem~\ref{thm:min_normal_subgroups}. It follows then that $H$ contains a unique (nontrivial) minimal closed normal subgroup. Since $K$ is a topologically simple, closed normal subgroup of $H$, it is the unique minimal closed normal subgroup of $H$. Thus $H$ is almost topologically simple by Proposition~\ref{prop:ActsAsAutL}.

Finally, we prove (\ref{item:prop:H_cannot_be_PA_pt2}).  
Indeed, if $H_\gamma$ is finite, then $H$ must be countable because
$\aleph_0 = |Y| = |H : H_\gamma|$. Hence $\hat{\phi}(G) \leq H \Wr S_m$ must also be countable. As noted previously, $G$ is countable if and only if $G_\alpha$ is finite.
On the other hand, if $G_\alpha$ is finite then $\psi(N_{G_\alpha}(K))$ and $\pi_1(M_\alpha)$ are finite. Now $K_\gamma = \pi_1(M_\alpha)$, therefore $K_\gamma \psi(N_{G_\alpha}(K))$ is finite. The closure of a finite group is  finite, so $H_\gamma$ is finite by (\ref{Item:stabiliser_of_H}) of this proposition.
\end{proof}

\begin{proposition} \label{prop:sdH_leq_sdG}
$\sd(H) \leq \sd(G)$.
\end{proposition}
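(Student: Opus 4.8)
Here is a plan for proving Proposition~\ref{prop:sdH_leq_sdG}.

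The plan is to transport a minimal nontrivial suborbit of $G$ through the embedding $\hat\phi$ of Lemma~\ref{lemma:ProductAction}, project it onto a single coordinate, and check that what comes out is a nontrivial suborbit of $H$ of no larger size. Since $(\hat\phi,\theta)$ is a permutation isomorphism with $\theta(\alpha)=\underline{\gamma}$, the subdegrees of $G$ are exactly the sizes of the orbits of $\hat\phi(G_\alpha)$ on $Y^m$, so there is $\underline{\epsilon}\in Y^m$ with $\underline{\epsilon}\neq\underline{\gamma}$ and $|\underline{\epsilon}^{\hat\phi(G_\alpha)}|=\sd(G)$. First I would arrange that the first coordinate $\epsilon_1$ of $\underline{\epsilon}$ is not $\gamma$: choose a coordinate $i$ with $\epsilon_i\neq\gamma$, use transitivity of $F=\sigma(G_\alpha)$ (Proposition~\ref{prop:H_is_G_on_fibres}(\ref{Item:stabiliserIsFibreLobe})) to find $g_0\in G_\alpha$ with $i^{\sigma(g_0)}=1$, and replace $\underline{\epsilon}$ by $\underline{\epsilon}^{\hat\phi(g_0)}$, which lies in the same $\hat\phi(G_\alpha)$-orbit and is still different from $\underline{\gamma}$; by equation~(\ref{eq:ActionOfPhiGa}) its first coordinate is $\epsilon_i^{\psi(h_i)}$ for some $h_i\in N_{G_\alpha}(K)$, and this is not $\gamma$ because $\psi(h_i)$ fixes $\gamma$.

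Next I would write $N:=N_{G_\alpha}(K)$ and let $p_1\colon Y^m\to Y$ be projection onto the first coordinate. Every $g\in N$ has $\sigma(g)$ fixing $1$, and the corresponding $h_1=g_1gg_1^{-1}$ equals $g$ (this is exactly where the normalisation $g_1=1$ is used), so equation~(\ref{eq:ActionOfPhiGa}) gives $(\underline{\delta}^{\hat\phi(g)})_1=\delta_1^{\psi(g)}$ for every $\underline{\delta}\in Y^m$. Hence $p_1$ carries the orbit $\underline{\epsilon}^{\hat\phi(N)}$ onto $\epsilon_1^{\psi(N)}$, and since $N\leq G_\alpha$,
\[|\epsilon_1^{\psi(N)}|\ \leq\ |\underline{\epsilon}^{\hat\phi(N)}|\ \leq\ |\underline{\epsilon}^{\hat\phi(G_\alpha)}|\ =\ \sd(G).\]
It then remains to identify $\epsilon_1^{\psi(N)}$ with a nontrivial suborbit of $H$. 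Comparing equations~(\ref{eq:ActionOfPhiGa}) and~(\ref{eq:ActionOfPhiX}) on the first coordinate shows that for $x\in M_\alpha$ the permutation $\psi(x)$ of $Y$ agrees with the action of $\pi_1(x)\in K_\gamma$; since $M_\alpha\leq N$ this gives $K_\gamma\leq\psi(N)$, so by Proposition~\ref{prop:H_is_G_on_fibres}(\ref{Item:stabiliser_of_H}) we have $H_\gamma=\overline{K_\gamma\,\psi(N)}=\overline{\psi(N)}$. A dense subgroup of $H_\gamma$ has the same orbits on $Y$ as $H_\gamma$ (the stabiliser of a point is open in the permutation topology), so $\epsilon_1^{\psi(N)}=\epsilon_1^{H_\gamma}$. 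Finally $H$ is primitive and nonregular by Proposition~\ref{prop:H_is_G_on_fibres}(\ref{item:prop:H_cannot_be_PA}), hence $H_\gamma$ fixes no point of $Y$ other than $\gamma$ — otherwise $\{\delta:H_\delta=H_\gamma\}$ would be a block of size at least two, hence all of $Y$, forcing $H$ to be regular — and since $\epsilon_1\neq\gamma$ the orbit $\epsilon_1^{H_\gamma}$ is nontrivial; therefore $\sd(H)\leq|\epsilon_1^{H_\gamma}|\leq\sd(G)$.

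The main obstacle is the bookkeeping in the first two paragraphs: one must verify that the coordinate projection $p_1$ is equivariant for the $N$-action (this is precisely where the shape of $\hat\phi$ and the choice $g_1=1$ enter), and one must first reduce to the case that the projected coordinate is genuinely moved — without that reduction the chain of inequalities collapses, since a suborbit representative supported off the first coordinate projects to a single point. The one place where topology is unavoidable is the identity $H_\gamma=\overline{\psi(N)}$ together with the resulting equality of orbits: it is density, not mere containment, that makes the comparison run in the correct direction.
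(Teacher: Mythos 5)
Your argument is correct and follows essentially the same route as the paper's proof: transport a suborbit of size $\sd(G)$ through $\hat\phi$, move it within its $\hat\phi(G_\alpha)$-orbit so that the first coordinate differs from $\gamma$ (using that $\psi$ maps into the stabiliser of $\gamma$), project onto the first coordinate, identify the projected orbit with $\delta_1^{H_\gamma}$ via Proposition~\ref{prop:H_is_G_on_fibres}(\ref{Item:stabiliser_of_H}) and the fact that a group and its closure have the same orbits, and finish with primitivity and nonregularity of $H$. The only (harmless) deviation is your observation that $\psi(M_\alpha)$ coincides with $K_\gamma$ on $Y$, so that $H_\gamma=\overline{\psi(N_{G_\alpha}(K))}$ and the $K_\gamma$-factor the paper carries along becomes redundant.
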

\begin{proof} 
Now $G$ and $H$ are transitive, so any point stabiliser in $G$ has an orbit of length $\sd(G)$, and similarly for $H$. Hence $\hat{\phi}(G_\alpha)$ has an orbit of length $\sd(G)$.
Let $\underline{\delta} \in Y^m$ be such that the nontrivial suborbit $\underline{\delta}^{\hat{\phi}(G_\alpha)}$ has cardinality $\sd(G)$. Write $\underline{\delta} = (\delta_1, \ldots, \delta_m)$.
Since $|\underline{\delta}^{\hat{\phi}(G_\alpha)}| > 1$, we have that some $\delta_i \not = \gamma$. Writing $\sigma := \sigma(g_i^{-1})$ we have $1^{\sigma^{-1}} = i$. Thus for $\underline{\delta}' := \underline{\delta}^{\hat{\phi}(g_i^{-1})} \in \underline{\delta}^{\hat{\phi}(G_\alpha)}$ we have that the first component of $\underline{\delta}'$ is $\delta_i^{\psi(h_i)} = \delta_i^{\psi(g_i g_i^{-1}g_1)}  = \delta_i \not = \gamma$. Hence, without loss of generality, we can assume that $\delta_1 \not = \gamma$.

Recall that $g_1$ is trivial. Notice that if $\underline{k} = (k_1, \ldots, k_m) \in (K_\gamma)^m$ and $g \in N_{G_\alpha}(K)$, then $\sigma:= \sigma(g) \in S_m$ fixes $1 \in \{1, \ldots, m\}$ and so
the first component of $\underline{\delta}^{\underline{k}\hat{\phi}(g)}$ is $\delta_1^{k_1 \psi(h_1)} = \delta_1^{k_1 \psi(g_1 g g_1^{-1})} = \delta_1^{k_1 \psi(g)}$.

As noted in the outline of the proof of Lemma~\ref{lemma:ProductAction}, $\hat{\phi}(M_\alpha) = \phi(M_\alpha) = (K_\gamma)^m$. Therefore, 
$\sd(G)
= 
	|\underline{\delta}^{\hat{\phi}(G_\alpha)}| 
= 
	|\underline{\delta}^{\hat{\phi}(M_\alpha G_\alpha)}| 
\geq
	|\{\underline{\delta}^{\underline{k} \hat{\phi}(g)} : \underline{k} \in (K_\gamma)^m, \, g \in N_{G_\alpha(K)}\}|
\geq
	|\{\delta_1^{k \psi(g)} : k \in K_\gamma, \, g \in N_{G_\alpha}(K)\}|
$. Recall that a permutation group and its closure have the same orbits on finite tuples.
By Proposition~\ref{prop:H_is_G_on_fibres}(\ref{Item:stabiliser_of_H}) we therefore have that $|\{\delta_1^{k \psi(g)} : k \in K_\gamma, \, g \in N_{G_\alpha}(K)\}| = |\delta_1^{H_\gamma}|$. Hence $\sd(G) \geq |\delta_1^{H_\gamma}|$

By Proposition~\ref{prop:H_cannot_be_PA}(\ref{item:prop:H_cannot_be_PA}), $H$ is primitive but not regular. Hence, since $\delta_1 \not = \gamma$, we have $|\delta_1^{H_\gamma}| > 1$ and therefore $|\delta_1^{H_\gamma}| \geq \sd(H)$.
\end{proof}

Now that we understand the structure of $G$, we can precisely determine the number of ends it has. Our arguments follow the proof of \cite[Theorem 2.4]{smith:prim_subdegrees} with minor changes. 

\begin{lemma} \label{lemma:Wr_cartesian_product_is_graph_cartesian_product} 
Let $\Gamma = (Y, \{\gamma, \delta\}^H)$ be a connected orbital graph of $H$, define $\underline{\delta}:=(\delta, \gamma, \ldots, \gamma) \in Y^m$ and let $\Sigma := (Y^m, \{\underline{\gamma}, \underline{\delta}\}^{\hat{\phi}(G)})$. Then for vertices $\underline{\beta} := (\beta_1, \ldots, \beta_m)$ and $\underline{\beta'} := (\beta_1', \ldots, \beta_m')$ in $\Sigma$ we have $d_{\Sigma}(\underline{\beta}, \underline{\beta}') = 1$ if and only if $\sum_{i = 1}^m d_{\Gamma}(\beta_i, \beta_i') = 1$.
\end{lemma}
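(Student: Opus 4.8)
The plan is to recognise $\Sigma$ as the $m$-fold Cartesian power $\Gamma \square \cdots \square \Gamma$, so that the asserted distance characterisation is exactly the defining edge relation of a Cartesian product of graphs. I would prove the two implications of the stated equivalence separately, using throughout that $\hat\phi(G)$ sits inside $H \Wr F$ acting by the product action on $Y^m$ (Proposition~\ref{prop:H_is_G_on_fibres}(\ref{Item:fibrelobeFull})), that $\hat\phi(G)$ is fibrelobe-full in $H \Wr F$, and that $H \leq \aut\Gamma$ because $\Gamma$ is an orbital graph of $H$.

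For the forward implication, suppose $\{\underline\beta,\underline\beta'\}$ is an edge of $\Sigma$, that is, $\{\underline\beta,\underline\beta'\} = \{\underline\gamma,\underline\delta\}^g$ for some $g = (h_1,\ldots,h_m;\tau) \in \hat\phi(G) \leq H \Wr F$, so each $h_k \in H$ and $\tau \in F$. Since $\underline\gamma$ and $\underline\delta = (\delta,\gamma,\ldots,\gamma)$ agree in every coordinate except the first, the product-action formula recalled in Section~\ref{prelims:PermGroups} shows that $\underline\gamma^g$ and $\underline\delta^g$ agree in every coordinate except coordinate $1^\tau$, where they read $\gamma^{h_1}$ and $\delta^{h_1}$; as $h_1 \in H \leq \aut\Gamma$ and $\gamma$ is adjacent to $\delta$ in $\Gamma$, these are distinct and adjacent in $\Gamma$. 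Hence $\sum_i d_\Gamma(\beta_i,\beta_i') = 1$, so $d_\Sigma(\underline\beta,\underline\beta')=1$ forces $\sum_i d_\Gamma(\beta_i,\beta_i')=1$. Note this argument does not mention the base point $\underline\gamma$; it uses only that the two endpoints of the seed edge differ in exactly one coordinate by a $\Gamma$-edge, and that every coordinate component of an element of $H \Wr F$ lies in $\aut\Gamma$.

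For the reverse implication, suppose $\underline\beta,\underline\beta' \in Y^m$ differ precisely in coordinate $i$, with $\beta_i$ adjacent to $\beta_i'$ in $\Gamma$. Both points lie in the fibre $\Phi := \{(\beta_1,\ldots,\beta_{i-1},y,\beta_{i+1},\ldots,\beta_m) : y \in Y\}$ of $H \Wr F$. Since $\hat\phi(G)$ permutes the fibres (the fibrelobes) of $H \Wr F$ transitively, there is $g \in \hat\phi(G)$ carrying the fibre $Y_{\underline\gamma}[1]$, which contains the edge $\{\underline\gamma,\underline\delta\}$, onto $\Phi$; then $\{\underline\gamma,\underline\delta\}^g$ is an edge of $\Sigma$ whose two endpoints lie in $\Phi$, hence (by the forward-direction computation applied to $g$) differ exactly in coordinate $i$ and project under the $i$-th coordinate map $\Phi \to Y$ to an edge $\{u,v\}$ of $\Gamma$. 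By fibrelobe-fullness the setwise stabiliser of $\Phi$ in $\hat\phi(G)$ induces on $\Phi$, identified with $Y$, a subgroup whose closure in $\sym(Y)$ is $H$, and therefore a subgroup with the same orbits on pairs of points as $H$. As $H$ is transitive on the edge set $\{\gamma,\delta\}^H = E\Gamma$, which contains both $\{u,v\}$ and $\{\beta_i,\beta_i'\}$, there is an element $j$ of that stabiliser moving $\{u,v\}$ to $\{\beta_i,\beta_i'\}$; such a $j$ fixes every coordinate of every point of $\Phi$ except the $i$-th, so $\{\underline\gamma,\underline\delta\}^{gj} = \{\underline\beta,\underline\beta'\}$, which is thus an edge of $\Sigma$. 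Hence $\sum_i d_\Gamma(\beta_i,\beta_i') = 1$ gives $d_\Sigma(\underline\beta,\underline\beta') = 1$.

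The only genuinely delicate step, and the one I expect to need the most care, is the reverse direction: one must invoke Definition~\ref{def:fibrelobe_full} (via Proposition~\ref{prop:H_is_G_on_fibres}) correctly, both for transitivity of $\hat\phi(G)$ on the fibres of $H \Wr F$ and for the density of the induced action of a fibre's setwise stabiliser, and then use the standard fact that a permutation group and its closure have the same orbits on finite tuples to pass from ``dense in $H$'' to ``transitive on $E\Gamma$''. Everything else reduces to bookkeeping with the product-action formula.
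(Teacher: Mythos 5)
Your proof is correct. The forward direction is the same computation as the paper's, except that the paper only treats edges incident to the base vertex $\underline{\gamma}$ (leaving implicit the easy reduction of the general case to this one, using that $\hat{\phi}(G)\leq H\Wr S_m$ acts by coordinate permutations and coordinatewise elements of $\aut\Gamma$ and hence preserves both sides of the equivalence), whereas you handle an arbitrary edge $\{\underline{\gamma},\underline{\delta}\}^g$ directly. The genuine difference is in the reverse direction: the paper argues with explicit elements of $G$ --- it moves the relevant fibre to $Y_{\underline{\gamma}}[1]$ via the transversal elements $\hat{\phi}(g_j^{-1})$, uses the density of $K\psi(N_{G_\alpha}(K))$ in $H$ to write the target edge as $(\gamma,\delta)^{k\psi(g)}$, and realises it inside $\Sigma$ through Lemma~\ref{lem:KN_induces_H} applied to $x=(k,1,\ldots,1)$ --- while you instead invoke the already-established fibrelobe-fullness of $\hat{\phi}(G)$ in $H\Wr F$ (Proposition~\ref{prop:H_is_G_on_fibres}): transitivity on fibres to produce an edge of $\Sigma$ inside the fibre $\Phi$ containing $\underline{\beta}$ and $\underline{\beta}'$, then density of the induced action of $(\hat{\phi}(G))_{\{\Phi\}}$ to slide that edge onto $\{\underline{\beta},\underline{\beta}'\}$ within $\Phi$. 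Since Proposition~\ref{prop:H_is_G_on_fibres} is itself proved from those explicit elements, the mathematical content is the same, but your packaging is cleaner and more general: it proves the statement for any fibrelobe-full subgroup of $H\Wr F$ and any connected orbital graph of $H$, with no further reference to $K$, $\psi$ or the transversal. The only step worth spelling out is the passage from ``same orbits on pairs'' to moving the unordered pair $\{u,v\}$ onto $\{\beta_i,\beta_i'\}$: since $E\Gamma=\{\gamma,\delta\}^H$ is a single $H$-orbit of unordered pairs, some $h\in H$ sends $(u,v)$ to $(\beta_i,\beta_i')$ or to $(\beta_i',\beta_i)$, and the induced stabiliser, having the same orbits on ordered pairs as its closure $H$, realises that same image; either way the unordered pair lands where you need it.
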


\begin{proof} 
If $d_{\Sigma}(\underline{\beta}, \underline{\gamma}) = 1$, then there exists some $g \in G$ such that $(\underline{\gamma}, \underline{\delta})^{\hat{\phi}(g)}$ is equal to $(\underline{\gamma}, \underline{\beta})$ or $(\underline{\beta}, \underline{\gamma})$. Write $\hat{g} := \hat{\phi}(g) = (h_1', \ldots, h_m')\sigma \in H \Wr S_m$.
If $(\underline{\gamma}, \underline{\delta})^{\hat{g}} = (\underline{\gamma}, \underline{\beta})$, then $\hat{g}$ fixes $\underline{\gamma}$, so each $h_i'$ lies in $H_\gamma$. Furthermore, $(\delta, \gamma, \ldots, \gamma)^{\hat{g}} = \underline{\beta}$, so precisely one component of $\underline{\beta}$ is not equal to $\gamma$. If $\beta \in Y \setminus \{\gamma\}$ is this component, then $\beta \in \delta^{H_\gamma}$. Hence, $\sum_{i=1}^m d_{\Gamma}(\gamma, \beta_i) = 1$.
On the other hand, if $(\underline{\gamma}, \underline{\delta})^{\hat{g}} = (\underline{\beta}, \underline{\gamma})$ then $\underline{\delta}^{\hat{g}} = \underline{\gamma}$. Therefore $\delta^{h'_1} = \gamma$ and for all $i > 1$ we have $h_1' \in H_\gamma$.  
Hence precisely one component of $\underline{\beta} = \underline{\delta}^{\hat{g}}$ is not equal to $\gamma$. If $\beta \in Y \setminus \{\gamma\}$ is this component, then $\beta = \gamma^{h_1'}$. Now $(\gamma, \delta)^{h_1'} = (\gamma^{h'_1}, \gamma) = (\beta, \gamma)$, and so $d_{\Gamma}(\gamma, \beta) = 1$. Hence, $\sum_{i=1}^m d_{\Gamma}(\gamma, \beta_i) = 1$.

We now prove the converse. Assume that $\sum_{i=1}^m d_{\Gamma}(\gamma, \beta_i) = 1$. Thus, there is precisely one component of $\underline{\beta}$ that is not equal to $\gamma$,
and if $\beta \in Y \setminus \{\gamma\}$ is this component, then $d_{\Gamma}(\gamma, \beta) = 1$.
In particular, $\underline{\beta}$ lies in $Y_{\underline{\gamma}}[j]$ for some $j \in \{1, \ldots, m\}$. Let $\hat{g}_j:=\hat{\phi}(g_j^{-1})$. 
By Lemma~\ref{lem:G_a_acts_nicely_on_gamma_fibres} we have $(\beta, \gamma, \ldots, \gamma) = \underline{\beta}^{\hat{g}_j}$, and since $\hat{g}_j \in \hat{\phi}(G_\alpha) \leq H_\gamma \Wr S_m$ we have $\underline{\gamma}^{\hat{g}_j} = \underline{\gamma}$.
Now $d_{\Gamma}(\gamma, \beta) = 1$, so there exists $h' \in H$ such that $\{\gamma, \beta\} = \{\gamma, \delta\}^{h'}$. Since $H$ is the closure of $K \psi(N_{G_\alpha}(K))$ in $\sym(Y)$, there exists $k \in K$ and $g \in N_{G_\alpha}(K)$ such that $(\gamma, \delta)^{h'} = (\gamma, \delta)^{k \psi(g)}$. Let $x := (k, 1, \dots, 1) \in M$. By Lemma~\ref{lem:KN_induces_H}, 
$\{\underline{\gamma}, \underline{\delta}\}^{\hat{\phi}(xg)} = \{(\gamma^{k\psi(g)}, \gamma, \ldots, \gamma), (\delta^{k\psi(g)}, \gamma, \ldots, \gamma)\} = \{\underline{\gamma}, \underline{\beta}\}^{\hat{g}_j}$.
Hence $\{\underline{\gamma}, \underline{\beta}\} \in \{\underline{\gamma}, \underline{\delta}\}^{\hat{\phi}(G)}$, so $d_{\Sigma}(\underline{\beta}, \underline{\gamma}) = 1$.
\end{proof}

Suppose $\Phi$ is a finite set of vertices in $\Sigma$, and $\underline{\alpha}$ is a vertex in $\Sigma \setminus \Phi$. Notice that one can create a path in $\Sigma$ from $\underline{\alpha} \in Y^m$ by only varying a single component. Once a desired distance from $\Phi$ has been reached, other components can be altered, and a path back constructed.
From this observation we see that for any finite set $\Phi$ of vertices, one can always find a path in $\Sigma \setminus \Phi$ between any two vertices in $\Sigma \setminus \Phi$. Hence $\Sigma$ has precisely one end.

\begin{corollary}
\label{cor:m_bigger_than_1_implies_one_ended} 
$G$ has precisely one end.
\end{corollary}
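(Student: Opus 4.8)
The plan is to reduce the statement to the well-known fact about Cartesian products of graphs recalled in Section~\ref{subsection:connectivity_of_graphs}, by transporting everything through the permutation isomorphism $(\hat{\phi}, \theta)$ of Lemma~\ref{lemma:ProductAction}. A permutation isomorphism carries orbital graphs to isomorphic orbital graphs, and the ends of a closed, subdegree-finite primitive group are by definition the ends of any of its connected orbital graphs; so it suffices to exhibit one connected orbital graph of $\hat{\phi}(G)$ on $Y^m$ and show that it has exactly one end.

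First I would fix a connected orbital graph of $H$. By Proposition~\ref{prop:H_cannot_be_PA}(\ref{item:prop:H_cannot_be_PA}), $H \in \mathcal{P}$ is infinite and $Y$ is countably infinite, so (via Higman's criterion, as in Remark~\ref{rem:set_is_countable}) any orbital graph $\Gamma = (Y, \{\gamma, \delta\}^H)$ with $\delta \neq \gamma$ is infinite, connected and locally finite. Put $\underline{\delta} := (\delta, \gamma, \ldots, \gamma) \in Y^m$, so that $\Sigma := (Y^m, \{\underline{\gamma}, \underline{\delta}\}^{\hat{\phi}(G)})$ is an orbital graph of $\hat{\phi}(G)$. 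Lemma~\ref{lemma:Wr_cartesian_product_is_graph_cartesian_product} identifies the adjacency relation of $\Sigma$ with that of the Cartesian product of $m$ copies of $\Gamma$. Writing this product as $\Gamma \square \Sigma'$, where $\Sigma'$ is the Cartesian product of the remaining $m - 1$ copies of $\Gamma$, the hypothesis $m > 1$ makes $\Sigma'$ itself infinite, connected and locally finite, so $\Sigma$ is the Cartesian product of two infinite, connected, locally finite graphs; by the fact recalled in Section~\ref{subsection:connectivity_of_graphs}, $\Sigma$ has precisely one end. (Alternatively, one argues directly as sketched just before the statement: for any finite vertex set $\Phi$, any two vertices of $\Sigma \setminus \Phi$ can be joined by a path that alters a single coordinate at a time, first pushing the active coordinate far away from the finitely many coordinate-projections of $\Phi$ before switching coordinates, which shows $\Sigma$ minus any finite set is connected.) Transporting $\Sigma$ back along $\theta$ yields a connected orbital graph of $G$ with one end.

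I do not anticipate any real obstacle: the substantive content is already packaged in Lemma~\ref{lemma:Wr_cartesian_product_is_graph_cartesian_product}, and what remains is bookkeeping — verifying that $\Gamma$ is infinite, connected and locally finite (which is exactly Proposition~\ref{prop:H_cannot_be_PA}(\ref{item:prop:H_cannot_be_PA}) together with Remark~\ref{rem:set_is_countable}) and noting that the standing assumption $m > 1$ of this section is genuinely used, since for $m = 1$ one would have $\Sigma = \Gamma$ and $G$ could inherit the many ends of $H$, which is precisely the almost topologically simple situation excluded here.
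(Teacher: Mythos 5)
Your proposal is correct and takes essentially the same route as the paper: both arguments rest on Lemma~\ref{lemma:Wr_cartesian_product_is_graph_cartesian_product} to identify the orbital graph $\Sigma$ of $\hat{\phi}(G)$ with the Cartesian product of $m>1$ copies of a connected, locally finite, infinite orbital graph of $H$, and then conclude one-endedness. The paper concludes with the direct single-coordinate path argument (which you give as your alternative), while your primary route invokes the fact about Cartesian products of two infinite connected locally finite graphs recalled in Section~\ref{subsection:connectivity_of_graphs}; these are the same argument in substance.
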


The main result of this section, Proposition~\ref{prop:PA_case}, now follows immediately from 
Propositions~\ref{prop:H_is_G_on_fibres} and \ref{prop:sdH_leq_sdG}, and Corollary~\ref{cor:m_bigger_than_1_implies_one_ended}.

\section{The proof part III: $G$ is almost topologically simple}
\label{section:proof_part_AS}

The following is a restatement of Remark~\ref{remark:Pg13.5}.

\begin{proposition} \label{prop:m_is_1_case}
If $m = 1$ then $G$ is almost topologically simple, with $\MinClosedNormal \leq G \leq \aut \MinClosedNormal$ for some infinite, nonabelian, closed, topologically simple, normal subgroup $\MinClosedNormal$ of $G$. 
\end{proposition}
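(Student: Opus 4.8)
The plan is to observe that this statement is a direct consequence of results already established, so the proof amounts to bookkeeping. First I would invoke Theorem~\ref{thm:min_normal_subgroups} to recall that $\MinClosedNormal$ is the closure of $\DirectProd = K_1 \times \cdots \times K_m$, where the $K_i$ are closed, infinite, nonabelian, topologically simple permutation groups and $m \geq 1$. Specialising to $m = 1$ gives $\DirectProd = K_1$, which is already closed; hence $\MinClosedNormal = \overline{\DirectProd} = \overline{K_1} = K_1$. Thus $\MinClosedNormal$ inherits the properties of $K_1$: it is an infinite, nonabelian, closed, topologically simple normal subgroup of $G$. (That $K_1$ is nonabelian and infinite is part of the conclusion of Theorem~\ref{thm:min_normal_subgroups}, established in its proof via the triviality of $QZ(G)$ and of the $Z(K_i)$.)

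Next I would apply Proposition~\ref{prop:ActsAsAutL}, which gives $\MinClosedNormal \leq G \leq \aut \MinClosedNormal$; combined with the fact that $\MinClosedNormal$ is closed, nonabelian and topologically simple, this is precisely the definition of $G$ being almost topologically simple. This is exactly the content of Remark~\ref{remark:Pg13.5}, of which the proposition is a restatement, so there is no genuine obstacle to overcome; the only point worth stating explicitly is that when $m=1$ the group $\DirectProd$ is automatically closed, so that $\MinClosedNormal$ and $\DirectProd$ coincide and one does not need to pass to a proper closure.
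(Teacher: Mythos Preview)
Your proposal is correct and matches the paper's approach exactly: the paper does not give a separate proof but simply notes that the proposition is a restatement of Remark~\ref{remark:Pg13.5}, which is precisely the bookkeeping you outline (when $m=1$ one has $\MinClosedNormal = \DirectProd = K_1$, and Proposition~\ref{prop:ActsAsAutL} supplies $\MinClosedNormal \leq G \leq \aut \MinClosedNormal$).
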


Thus, if $m=1$ and $G$ has precisely one end, then our group $G$ is of type OAS in Theorem~\ref{thm:MainTheorem}. 

On the other hand, if $G$ has more than one end we can combine Corollary~\ref{cor:m_bigger_than_1_implies_one_ended} and Proposition~\ref{prop:m_is_1_case} to obtain the following.

\begin{theorem} If $G$ is a closed and subdegree-finite primitive permutation group with more than one end, then $G$ is almost topologically simple.
\end{theorem}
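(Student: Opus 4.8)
The plan is to obtain this statement as an assembly of Theorem~\ref{thm:min_normal_subgroups}, Corollary~\ref{cor:m_bigger_than_1_implies_one_ended} and Proposition~\ref{prop:m_is_1_case}, so the work is mostly a matter of checking that $G$ meets the standing hypotheses under which those results were established. First I would record that $G$ is infinite and nonregular. It is infinite because a finite transitive group acts on a finite set, whose orbital graphs are finite and hence have no ends, whereas $G$ has more than one end; in particular $\Omega$ is infinite. It is nonregular because a primitive regular permutation group is cyclic of prime order and hence finite. (Alternatively, by \cite[Theorem 2.5]{smith:prim_digraphs} a subdegree-finite primitive group with more than one end has infinite point stabilisers and so is certainly not regular.) Being nonregular, $G$ has a nontrivial suborbit; being subdegree-finite, that suborbit and its pair are finite. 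Thus $G$ satisfies the running hypotheses of Section~\ref{section:proof_part_1}, and moreover every orbital graph of $G$ is connected (Higman's criterion) and locally finite, so the number of ends of $G$ is a well-defined invariant.

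Next I would apply Theorem~\ref{thm:min_normal_subgroups} to $G$: it yields a unique minimal closed normal subgroup $\MinClosedNormal$, equal to the closure of a direct product $\DirectProd = K_1 \times \cdots \times K_m$ of finitely many pairwise isomorphic, infinite, nonabelian, topologically simple permutation groups, for some finite integer $m \geq 1$. The proof then splits on the value of $m$. If $m = 1$, Proposition~\ref{prop:m_is_1_case} (equivalently Remark~\ref{remark:Pg13.5}) asserts outright that $G$ is almost topologically simple, with $\MinClosedNormal \leq G \leq \aut \MinClosedNormal$, and we are finished. If instead $m > 1$, then $G$ satisfies all the hypotheses of Section~\ref{section:proof_part_Not_AS}, so Corollary~\ref{cor:m_bigger_than_1_implies_one_ended} applies and tells us that $G$ has precisely one end --- contradicting our assumption that $G$ has more than one end. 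Hence the case $m > 1$ cannot occur, so $m = 1$ and $G$ is almost topologically simple.

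There is no genuine obstacle here: the substantive content lives in Theorem~\ref{thm:min_normal_subgroups} and in the Section~\ref{section:proof_part_Not_AS} analysis culminating in Corollary~\ref{cor:m_bigger_than_1_implies_one_ended}. The only care needed is the bookkeeping in the first paragraph --- confirming infiniteness and nonregularity so that a finite nontrivial suborbit with finite pair is available and the earlier machinery genuinely applies --- together with the observation (via Higman's criterion and subdegree-finiteness, as in Remark~\ref{rem:set_is_countable}) that ``more than one end'' is even a meaningful hypothesis for such a $G$.
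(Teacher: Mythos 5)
Your proposal is correct and follows exactly the paper's own route: the paper likewise obtains this theorem by combining Proposition~\ref{prop:m_is_1_case} (the $m=1$ case) with Corollary~\ref{cor:m_bigger_than_1_implies_one_ended} (which rules out $m>1$ for a group with more than one end), resting on the normal subgroup structure from Theorem~\ref{thm:min_normal_subgroups}. The preliminary bookkeeping you supply (infiniteness, nonregularity, and the existence of a finite nontrivial suborbit with finite pair so that the standing hypotheses of Sections~\ref{section:proof_part_1} and \ref{section:proof_part_Not_AS} apply) is exactly what the paper leaves implicit.
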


Compared this with the following result of M\"{o}ller and Vonk.

\begin{theorem}[{\cite[Theorem 4.1]{moller_vonk}}] \label{thm:moller_vonk}
Let $\Sigma$ be a locally finite connected primitive graph with infinitely many ends. Then $\aut \Sigma$ has a topologically simple subgroup, and this subgroup is open of finite index.
\end{theorem}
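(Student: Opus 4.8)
The plan is to deduce this quickly from the structural machinery already in place, so the proof is essentially a synthesis of Proposition~\ref{prop:m_is_1_case} and Corollary~\ref{cor:m_bigger_than_1_implies_one_ended}. First I would check that the arbitrary $G$ of the statement falls under the standing hypotheses of Sections~\ref{section:proof_part_1}--\ref{section:proof_part_AS}. Any permutation group with more than one end is infinite, since the relevant (finite union of) orbital graph(s) is connected, locally finite and vertex-transitive, and such a graph has $0$ ends precisely when it is finite. A primitive \emph{regular} permutation group has trivial, hence maximal, point stabilisers and so is cyclic of prime order, in particular finite; therefore our $G$ is nonregular. Thus $G$ is a nonregular, closed, primitive, subdegree-finite permutation group of infinite degree, i.e.\ $G \in \mathcal{P}$ is infinite, and choosing any nontrivial suborbit (one exists because the connected orbital graph is infinite and locally finite, so vertices have neighbours) we are exactly in the situation fixed at the start of Section~\ref{section:proof_part_1}; by Remark~\ref{rem:set_is_countable}, $\Omega$ is countably infinite and all suborbits are finite.

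Next I would invoke Theorem~\ref{thm:min_normal_subgroups} to obtain the associated integer $m \geq 1$, namely the number of pairwise-isomorphic topologically simple factors $K_1, \dots, K_m$ whose direct product $\DirectProd$ is a dense normal subgroup of the unique minimal closed normal subgroup $\MinClosedNormal$ of $G$. By Remark~\ref{remark:Pg13.5} (equivalently Proposition~\ref{prop:m_is_1_case}), $G$ is almost topologically simple if and only if $m = 1$. So the whole statement reduces to ruling out $m > 1$.

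To rule out $m > 1$ I would argue by contradiction. If $m > 1$, then $G$ is not almost topologically simple, so the entire analysis of Section~\ref{section:proof_part_Not_AS} applies to $G$; in particular Corollary~\ref{cor:m_bigger_than_1_implies_one_ended} asserts that $G$ has precisely one end. This contradicts the hypothesis that $G$ has more than one end. Hence $m = 1$, and $G$ is almost topologically simple by Proposition~\ref{prop:m_is_1_case}.

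I do not expect a genuine obstacle here: the substantive content already lives in Corollary~\ref{cor:m_bigger_than_1_implies_one_ended}, whose proof combines the explicit product-action embedding $G \hookrightarrow H \Wr F$ of Section~\ref{section:proof_part_Not_AS} with the observation (Lemma~\ref{lemma:Wr_cartesian_product_is_graph_cartesian_product} and the paragraph following it) that a product action makes a suitable orbital graph a Cartesian product of infinite connected locally finite graphs, forcing one-endedness. The only thing requiring a little care is the bookkeeping of the first paragraph: verifying that the arbitrary group $G$ of the theorem genuinely satisfies the standing hypotheses and disposing of the degenerate finite/regular cases — both of which are immediate.
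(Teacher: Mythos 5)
Your argument is correct as far as it goes, but it proves the wrong theorem: it establishes only the \emph{first} half of the conclusion. Running $G = \aut\Sigma$ through Theorem~\ref{thm:min_normal_subgroups}, Remark~\ref{remark:Pg13.5} and Corollary~\ref{cor:m_bigger_than_1_implies_one_ended} shows that $G$ is almost topologically simple, i.e.\ that its monolith $\MinClosedNormal$ is a closed, normal, nonabelian, topologically simple subgroup. This is exactly the paper's own unnumbered theorem stated immediately \emph{before} Theorem~\ref{thm:moller_vonk} (``If $G$ is a closed and subdegree-finite primitive permutation group with more than one end, then $G$ is almost topologically simple''), which the paper derives by precisely your combination of Corollary~\ref{cor:m_bigger_than_1_implies_one_ended} and Proposition~\ref{prop:m_is_1_case}. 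The M\"{o}ller--Vonk statement asserts more: the topologically simple subgroup is \emph{open of finite index}, and nothing in your proposal addresses this. What your route delivers is that $\MinClosedNormal$ is closed and transitive, hence cocompact; but since $|G:\MinClosedNormal| = |G_\alpha : \MinClosedNormal_\alpha|$ with $G_\alpha$ compact, finite index is equivalent to openness of $\MinClosedNormal$, and cocompactness alone does not give this.

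The missing openness is not a bookkeeping detail that the paper's machinery can supply. The paper's own discussion of Theorem~\ref{thm:moller_vonk} makes clear that M\"{o}ller and Vonk obtain simplicity of $(\aut\Sigma)^{++}$ from property~H of the tree action, and obtain \emph{openness} from the independence of edge stabilisers on the two half-trees --- a feature special to the full automorphism group of a connectivity-one orbital (di)graph, not to an arbitrary group in $\mathcal{P}$. Indeed the paper only \emph{conjectures} property~H for general subdegree-finite primitive groups with more than one end, precisely because its structure theory does not recover these finer tree-theoretic facts. So your approach is a genuinely different (and for the existence part, perfectly valid) route, but to prove the cited theorem as stated you would still need the independence argument, or some substitute for it, to upgrade ``closed, normal, cocompact'' to ``open of finite index.''
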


Underpinning M\"{o}ller and Vonk's result is a type of independence called {\em property H}. Recall that if $T$ is a tree and $e$ is an edge in $T$ such that the two components of $T\setminus e$ are infinite, then these components are called {\em half-trees} of $T$. A group acting on $T$ is said to have {\em property H} if the pointwise stabiliser of every half-tree is nontrivial. If $Q \leq \aut T$ is closed, then $Q^{++}$ is the closure of the subgroup generated by all pointwise stabilisers in $Q$ of half-trees. If $Q$ is closed, has property H and leaves no proper nonempty subtree or end of $T$ invariant, then $Q^{++}$ is contained in every closed subgroup of $Q$ that is normalised by $Q^{++}$; in particular, $Q^{++}$ is topologically simple (\cite[Theorem 2.4]{moller_vonk}).

In their proof of Theorem~\ref{thm:moller_vonk}, M\"{o}ller and Vonk note that there is a faithful action of $\aut \Sigma$ on an infinite tree $T$ that leaves no proper nonempty subtree or end of $T$ invariant, and moreover with this action $\aut \Sigma$
has property H, and the pointwise stabiliser in $\aut \Sigma$ of any edge $e$ of $T$ acts independently on the two components of $T \setminus e$. Property H guarantees that $\aut \Sigma^{++}$ is topologically simple, and the independence of the edge stabilisers is used to show that $\aut \Sigma^{++}$ is open. Now $\aut \Sigma$ is primitive in its action on $V\Sigma$ and $\aut \Sigma^{++}$ is nontrivial and normal in $\aut \Sigma$; therefore it acts transitively on $V\Sigma$. Since $\aut \Sigma^{++}$ is closed and transitive, it is cocompact in $\aut \Sigma$. Since $\aut \Sigma^{++}$ is also open, it follows that $\aut \Sigma^{++}$ is finite index in $\aut \Sigma$.

In our situation, where $G$ is a subdegree-finite primitive permutation group with more than one end, Proposition~\ref{prop:m_is_1_case_and_inf_ended} (below) tells us that $G$ can be written as a box product $G = G_1 \boxtimes G_2$, and therefore $G$ has a faithful action on its structure tree $T$. Moreover, since $G_1$ and $G_2$ are transitive, it follows that $G$ leaves no proper nonempty subtree or end of $T$ invariant. Hence if $G$ has property H, then $G^{++}$ is topologically simple by \cite[Theorem 2.4]{moller_vonk}. Since we know already that $B$ is the unique minimal closed normal subgroup of $G$, it follows that $B = G^{++}$. This is a very pleasant description of $B$. With this in mind, we make the following conjecture.

\begin{conjecture} If $G$ is a subdegree-finite primitive permutation group with more than one end, then $G$ has property H.
\end{conjecture}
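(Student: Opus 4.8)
The plan is to reduce to the box‑product picture and then exhibit, for each half‑tree, a nontrivial element of $G$ supported on the other side of the dividing edge. By the results of Sections~\ref{section:proof_part_1} and~\ref{section:proof_part_AS}, a closed subdegree‑finite primitive $G$ with more than one end is of type OAS with $2^{\aleph_0}$ ends and, by Proposition~\ref{prop:m_is_1_case_and_inf_ended}, is a closed, primitive, fibrelobe‑full subgroup of a box product $G_1\boxtimes F$ with $F\le\sym(X_2)$ finite and transitive and $G_1\le\sym(X_1)$ primitive and \emph{not} regular (so $(G_1)_z\neq\langle 1\rangle$ for every $z\in X_1$); here $G$ acts faithfully on the structure tree $T$, the $(|X_1|,|X_2|)$‑biregular tree with parts $V_1$ (the lobes) and $V_2$ (the points). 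Since $G$ has $2^{\aleph_0}$ ends, removing any edge of $T$ leaves two infinite components, so property H amounts to: for every half‑tree $T_0$ the pointwise stabiliser $G_{(VT_0)}$ is nontrivial. Fixing $T_0$, the first reduction is that it suffices to produce a nontrivial $g\in G$ whose support (as an automorphism of $T$) lies in the complementary half‑tree $T_0^{c}$ and which fixes the unique ``gate'' vertex $\gamma'$ of $T_0^{c}$ through which every path from $T_0^{c}$ to $T_0$ passes; such a $g$ automatically fixes $T_0$ pointwise.

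The core of the argument is the construction of $g$ in the clean case $G=G_1\boxtimes F=\mathcal U(G_1,F)\big|_{V_2}$. Fix a legal colouring $\mathcal L$, choose a lobe‑vertex $\Lambda'$ of $T_0^{c}$ adjacent in $T$ to the gate, and let $z\in X_1$ be the $\mathcal L$‑colour of the arc from $\Lambda'$ towards the gate. Using nonregularity of $G_1$, pick $\rho\in(G_1)_z$ with $\rho\neq 1$ and set $\theta(g,\Lambda',\mathcal L)=\rho$, so $g$ fixes $\Lambda'$ and the gate but moves some neighbour of $\Lambda'$ inside $T_0^{c}$. Now extend $g$ outward through $T_0^{c}$ one vertex at a time, keeping it the identity outside the cone below $\Lambda'$: whenever $g$ has been decided to send a lobe‑vertex $\Lambda^{*}$ to $\Lambda^{**}$, their back‑arcs carry the $X_1$‑colours of their respective parents, which may differ, so choose a local action in $G_1$ carrying the first colour to the second (possible since $G_1$ is transitive); at a point‑vertex do the same using transitivity of $F$. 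Because the legal colouring is self‑similar — the restriction of $\mathcal L$ to a subtree is determined, up to a colour‑preserving isomorphism, by the colour of its entry arc — this recursion is consistent and produces a genuine automorphism $g$ all of whose local actions lie in $G_1$ or $F$; hence $g\in\mathcal U(G_1,F)$, $g\big|_{V_2}\neq 1$, and $g$ fixes $T_0$ pointwise. This proves property H when $G$ is the full box product, and with \cite[Theorem 2.4]{moller_vonk} identifies the monolith $B$ with $G^{++}$, as the paper anticipates.

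The main obstacle is passing from the full box product to an arbitrary closed, primitive, fibrelobe‑full subgroup $G\le G_1\boxtimes F$: the element built above need not lie in $G$, and there is no a priori Tits‑type independence property, so the local data $(G_1,F)$ does not determine $G$ — and property H is exactly the sort of statement that can fail for a general group with a prescribed local action. I would try to close the gap along one of two routes. The first is to show that such a $G$ satisfies Tits' independence property $(P)$ with respect to $T$ (exploiting that vertex stabilisers of $T$ in $G$ induce the primitive nonregular group $G_1$, resp.\ the transitive group $F$, that $G$ is closed, and that $G$ is primitive so point stabilisers are maximal); property H would then follow from $(P)$ together with the fact that the rigid stabiliser of an edge of $T$ in $G$ is nontrivial, which is where nonregularity of $G_1$ re‑enters. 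The second route is direct: starting from a point stabiliser $G_\alpha$, maximal by primitivity, form commutators of well‑chosen $G$‑conjugates of $G_\alpha$ strung along $T$ and argue that, for a suitable configuration, the result is nontrivial but fixes a half‑tree pointwise, the maximality of $G_\alpha$ forcing the conjugates to ``spread'' across the tree.

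I expect the genuinely hard step to be establishing the relevant independence or spreading — that a primitive fibrelobe‑full subgroup is ``large enough'' inside $G_1\boxtimes F$ to contain half‑tree stabilisers — because this is precisely the feature separating the full box product from its dense proper subgroups, and nothing in the present development controls it. A natural intermediate target worth proving first would be: any closed, primitive, fibrelobe‑full subgroup of $G_1\boxtimes F$ with $G_1$ primitive and nonregular has property $(P)$ on its structure tree; given this, the conjecture follows, and the remaining work is the comparatively routine verification sketched in the second paragraph.
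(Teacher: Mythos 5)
The statement you are addressing is stated in the paper only as a \emph{conjecture}; the paper offers no proof of it, so the question is simply whether your argument is complete, and it is not --- as you yourself concede in your final two paragraphs. What you actually prove is the special case where $G$ is the full box product: there the prescription-of-local-actions construction is sound (legality of the colouring lets you set $\theta(g,\Lambda',\mathcal L)=\rho$ for some $\rho\in(G_1)_z\setminus\{1\}$, extend by the identity towards the gate and by consistent choices below $\Lambda'$, and land in $\mathcal U(G_1,F)$), and nonregularity of $G_1$ supplies $\rho$; this is essentially the known fact that the universal-type group $\mathcal U(G_1,F)$ has nontrivial half-tree stabilisers. But Proposition~\ref{prop:m_is_1_case_and_inf_ended} only places $G$ as a primitive, fibrelobe-full subgroup of $H\boxtimes F$, possibly a proper (dense, or proper closed) subgroup, and the element you construct has no reason to lie in $G$. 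The two routes you sketch for closing this gap --- establishing a Tits-type independence property $(P)$ for closed primitive fibrelobe-full subgroups on the structure tree, or a commutator/``spreading'' argument from maximality of $G_\alpha$ --- are precisely the open content of the conjecture; neither is carried out, nothing in the paper or the cited literature supplies such an independence property for arbitrary fibrelobe-full subgroups, and it is exactly in this gap that a counterexample, if one exists, would live. So the proposal amounts to a correct treatment of an easy special case plus a restatement of the hard step, not a proof.

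Two smaller points. A group in $\mathcal P$ with more than one end is almost topologically simple but of type BP, not OAS --- OAS is by definition the one-ended case --- so your opening sentence misstates the classification even though you invoke the correct proposition. Also, the conjecture as stated does not assume $G$ closed, whereas your reduction does: for a non-closed subdegree-finite primitive group the pointwise stabiliser of a half-tree can be strictly smaller than that of its closure, so either this case needs a separate argument or the statement must be read as being about groups in $\mathcal P$.
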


When $G$ has more than one end, a great deal can be said about the structure of $G$. Our aim in this section is to prove the following.

\begin{proposition} \label{prop:m_is_1_case_and_inf_ended}\label{prop:Alternative_to_prop:m_is_1_case_and_inf_ended} 
Suppose $G \in \mathcal{P}$ is infinite. If $G$ has more than one end, then $G$ has $2^{\aleph_0}$ ends and there exists a nontrivial finite transitive group $F$ and a group $H \in \mathcal{P}$ with at most one end, such that $G$ is a primitive, fibrelobe-full subgroup of $H \boxtimes F$, with $\sd(H) < \sd(G)$. Moreover, if $\alpha \in \Omega$, then $F$ is the group induced by $G_\alpha$ acting on the set $\mathbb{L}(\alpha)$ of lobes of $H \boxtimes F$ that contain $\alpha$.
\end{proposition}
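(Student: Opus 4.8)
The plan is to follow the structure theory of locally finite vertex-transitive graphs with more than one end, realise $G$ as acting on a connectivity-one graph, and then recognise that action as a fibrelobe-full piece of a box product. First I would collect what is already forced. Since $G$ has more than one end, Corollary~\ref{cor:m_bigger_than_1_implies_one_ended} shows $G$ must be almost topologically simple, so by Proposition~\ref{prop:m_is_1_case} we have $m=1$ and $\MinClosedNormal=K_1=K$ is the unique minimal closed normal subgroup of $G$, infinite, nonabelian, closed and topologically simple, with $\MinClosedNormal\le G\le\aut\MinClosedNormal$; moreover $G$ is a compactly generated tdlc group, its point stabilisers being compact, open and maximal. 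Next I would determine the number of ends. As $G$ acts vertex-transitively on each of its connected, locally finite orbital graphs it has $0,1,2$ or $2^{\aleph_0}$ ends, and by hypothesis not $0$ or $1$. To rule out two ends I would use the known fact that a compactly generated tdlc group with exactly two ends has a compact open normal subgroup modulo which it is infinite cyclic or infinite dihedral; but primitivity forces every nontrivial closed normal subgroup of $G$ to be transitive, hence to contain an open subgroup of infinite index, hence to be non-compact, so that compact normal subgroup must be trivial --- forcing $G$ to be discrete and virtually cyclic, which is incompatible with the infinite, nonabelian, topologically simple subgroup $\MinClosedNormal$. Hence $G$ has $2^{\aleph_0}$ ends.

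The main work, and the step I expect to be the chief obstacle, is to produce a $G$-invariant graph of connectivity one. I would take $\Sigma$ to be a suitable finite union of orbital graphs of $G$, so that $\Sigma$ is a connected, locally finite Cayley--Abels graph whose ends are those of $G$, and then apply the accessibility of locally finite vertex-transitive graphs together with the rigidity coming from primitivity of $G$ (building on the analyses of Jung and Watkins, of Dunwoody and Thomassen--Woess, and the author's earlier work on primitive graphs with more than one end) to arrange that: $\Sigma$ has connectivity one; its lobes form a single $G$-orbit and are each either finite or one-ended; every vertex of $\Sigma$ lies in the same number $m\ge2$ of lobes; and $G_\alpha$ acts transitively on the set $\mathbb{L}(\alpha)$ of lobes containing $\alpha$. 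The delicate point here is to exclude the competing outcome in which the canonical decomposition leaves $\Sigma$ of connectivity at least two, since a primitive group cannot preserve the system of imprimitivity inherent in such a graph; once connectivity one is secured, the bound $m\ge2$ is automatic and the lobes are $2$-connected with at least three vertices by definition.

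Granted such a $\Sigma$, let $\Lambda$ be a lobe and let $T$ be the block-cut-vertex tree of $\Sigma$; this is the $(|V\Lambda|,m)$-biregular tree, and since every vertex of $\Sigma$ lies in at least two lobes and every lobe has at least two vertices, $G$ acts faithfully on $T$ preserving the bipartition of $VT$ into lobes and vertices. Write $H_0$ for the (transitive) permutation group induced on $V\Lambda$ by the setwise stabiliser $G_{\{V\Lambda\}}$, write $F\le S_m$ for the (transitive, nontrivial) permutation group induced by $G_\alpha$ on $\mathbb{L}(\alpha)$, and set $H:=\overline{H_0}$, the closure of $H_0$ in $\sym(V\Lambda)$. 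Since all lobes lie in one $G$-orbit and all vertices in one $G$-orbit, $G$ acting on $T$ is locally-$(H_0,F)$, so Proposition~\ref{prop:locally_MN} places $G$ inside $\mathcal{U}_{\mathcal{L}'}(H_0,F)$ for some legal colouring $\mathcal{L}'$, which by the monotonicity of the universal group in its arguments (Theorem~\ref{thm:box_product_summary}) lies inside $\mathcal{U}(H,F)$; restricting the action to the set of vertices of $\Sigma$, and identifying $\Omega$ with the points of $H\boxtimes F$ and the lobes of $\Sigma$ with the lobes of $H\boxtimes F$, this exhibits $G$ as a subgroup of $H\boxtimes F$. Transitivity of $G$ on vertices and on lobes of $\Sigma$, together with the definitions of $H$ and $F$, gives exactly conditions (i)--(iii) of Definition~\ref{def:fibrelobe_full}, so $G$ is fibrelobe-full in $H\boxtimes F$; and $G$ is primitive by hypothesis, as $\Omega$ is the point set of $H\boxtimes F$.

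It remains to establish the claimed properties of $H$ and $F$. By construction $H$ is closed and transitive on $V\Lambda$, and it is subdegree-finite because its suborbits are orbits of subgroups of point stabilisers of $G$, which are finite. Primitivity of $G$ forces $H\boxtimes F$ to be primitive, since any block system of the transitive overgroup $H\boxtimes F$ is a block system of the transitive subgroup $G$; hence by Theorem~\ref{thm:box_prim} the group $H$ is primitive and not regular and $F$ is transitive, so $H\in\mathcal{P}$. Since each lobe $\Lambda$ is finite or one-ended and is (as a union of orbital graphs of $H$) a Cayley--Abels graph for $H$, the group $H$ has at most one end. For the subdegrees, I would use that the point stabiliser of a primitive nonregular group fixes no point other than $\alpha$: an element of $G_\alpha$ carrying a lobe $\Lambda\ni\alpha$ to a distinct lobe moves every vertex of $\Lambda$ other than $\alpha$ outside $\Lambda$, and combining this with the $F$-transitivity of $G_\alpha$ on $\mathbb{L}(\alpha)$ shows that every nontrivial $G_\alpha$-suborbit meets each of the $m$ lobes through $\alpha$ in a set of size at least $\sd(H)$; hence $\sd(G)=m\cdot\sd(H)$, and since $m\ge2$ we obtain $\sd(H)<\sd(G)$. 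Finally, $F$ is by definition the group induced by $G_\alpha$ on $\mathbb{L}(\alpha)$, as required. The single genuinely hard ingredient in all of this is the construction of the connectivity-one graph $\Sigma$ in the second paragraph.
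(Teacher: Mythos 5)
Your overall route is the paper's own: you invoke the existence of a locally finite, connectivity-one orbital (di)graph whose lobes have at most one end (this is exactly Theorem~\ref{thm:me:orbitalgraphs}, quoted in the paper from the author's earlier work, so treating it as a black box is legitimate and is what the paper does), pass to the block-cut-vertex tree, observe that $G$ is locally-$(H_1,F)$, and use Proposition~\ref{prop:locally_MN} together with Theorem~\ref{thm:box_product_summary} to embed $G$ in $H\boxtimes F$, then check fibrelobe-fullness and that $H\in\mathcal{P}$ has at most one end. Two of your sub-arguments genuinely differ from the paper and are fine: you get primitivity and nonregularity of $H$ from primitivity of $G$ (any $H\boxtimes F$-invariant partition is $G$-invariant) plus Theorem~\ref{thm:box_prim}, where the paper instead quotes Theorem~\ref{thm:smith_locally_prim}; and you rule out two ends via the structure of two-ended compactly generated tdlc groups, where the paper simply reads off $2^{\aleph_0}$ ends from the tree-like connectivity-one graph once it exists (the latter is shorter and avoids importing an unproved external fact).

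There is, however, a genuine gap in your subdegree comparison. You assert that ``every nontrivial $G_\alpha$-suborbit meets each of the $m$ lobes through $\alpha$ in a set of size at least $\sd(H)$'' and conclude $\sd(G)=m\cdot\sd(H)$. The quoted claim is false: $G_\alpha$ preserves distance from $\alpha$ in the block-cut-vertex tree $T$, so the suborbit of any point at tree-distance greater than $2$ from $\alpha$ is disjoint from every lobe containing $\alpha$. Your argument therefore only bounds the suborbits of points lying in lobes through $\alpha$, and does not exclude the possibility that some suborbit of a more distant point is smaller than $m\cdot\sd(H)$; since $\sd(G)$ is a minimum over all nontrivial suborbits, the inequality $\sd(H)<\sd(G)$ is not yet established. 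The conclusion is true, but it needs the argument the paper gives: choose $\beta$ with $|\beta^{G_\alpha}|=\sd(G)$, let $\ell$ and $\gamma$ be the first two vertices after $\alpha$ on the geodesic $[\alpha,\beta]_T$ (so $\ell$ is a lobe containing $\alpha$ and $\gamma\in V\Lambda\setminus\{\alpha\}$), note that $G_{\alpha,\beta}$ fixes the geodesic pointwise so $G_{\alpha,\beta}\leq G_{\alpha,\ell,\gamma}$, and factor $|\beta^{G_\alpha}|=|G_\alpha:G_{\alpha,\ell}|\cdot|G_{\alpha,\ell}:G_{\alpha,\ell,\gamma}|\cdot|G_{\alpha,\ell,\gamma}:G_{\alpha,\beta}|\geq m\cdot|\gamma^{(H_1)_\alpha}|\geq 2\,\sd(H)$. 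With that replacement (which also yields your equality $\sd(G)=m\cdot\sd(H)$ if you want it), your proof goes through.
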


In order to prove this proposition, we must first introduce some algebraic graph and digraph theory.

In \cite{moller:prim_and_ends_of_graphs}, R\"{o}gnvaldur G.~M\"{o}ller uses the powerful theory of structure trees developed by Warren Dicks and M.~J.~Dunwoody in \cite{dicks_dunwoody} to show that primitive graphs with more than one end are closely related to graphs with connectivity one; M\"{o}ller's results are neatly summarised in his survey paper \cite{moller:groups_on_graphs}.

\begin{theorem}[{\cite[Theorem 15]{moller:groups_on_graphs}}]
\label{thm:gamma_related_to_conn1_graph}
Suppose that $\Sigma$ is a connected, locally finite, primitive graph with more than one end. Then there exist distinct vertices $\alpha', \beta'$ in $\Sigma$ such that the graph $\Sigma' := (V\Sigma, \{\alpha', \beta'\}^{\aut \Sigma})$ has connectivity one and each lobe $\Sigma'$ of has at most one end.
\end{theorem}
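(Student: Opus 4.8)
The plan is to build on the structure tree theory of Dicks and Dunwoody \cite{dicks_dunwoody}, which is the natural engine for results relating ends to connectivity-one graphs. Writing $G := \aut \Sigma$, the first step is to produce a $G$-invariant nested family of finite edge-cuts of $\Sigma$ (a ``tree set''). Such a family exists precisely because $\Sigma$ is connected, locally finite and has more than one end; local finiteness guarantees one may take the cuts to be of bounded (indeed minimal) size, and hence that $G$ has only finitely many orbits on them. This tree set gives rise to a structure tree $T$ on which $G$ acts without inversions, and $T$ is nontrivial exactly because $\Sigma$ has more than one end. The decisive property of $T$ I would use is that the end space of $\Sigma$ is carried by the ends of $T$: the ``thick'' regions of the decomposition each contain rays from at most one end of $\Sigma$.

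The second step is to record the $G$-equivariant relationship between $V\Sigma$ and $T$. Each vertex of $\Sigma$ lies in a unique minimal region, yielding a $G$-map $V\Sigma \to VT$; after choosing the cuts optimally (minimal among those separating a fixed pair of vertices into infinite pieces) one arranges that $T$ is bipartite, with one part identified $G$-equivariantly with $V\Sigma$ and the other part playing the role of the lobes of the graph $\Sigma'$ still to be constructed. In other words, the goal is to realise $T$ as the block-cut-vertex tree (in the sense of Section~\ref{subsection:connectivity_of_graphs}) of a suitable orbital graph on $V\Sigma$.

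The third step is to exploit primitivity to pin down $\Sigma'$ itself. Since $G$ is primitive on $V\Sigma$, every point stabiliser is maximal and every nontrivial $G$-invariant equivalence relation is universal. Choosing $\alpha'$ and $\beta'$ to be two vertices lying in a common region of $T$ (so that the edge $\{\alpha',\beta'\}$ does not cross any cut), set $\Sigma' := (V\Sigma, \{\alpha',\beta'\}^G)$. Connectivity of $\Sigma'$ is immediate from Higman's criterion (condition~\ref{item:higman} of Section~\ref{prelims:PermGroups}), since $G$ is primitive. Because every edge of $\Sigma'$ lies inside a region, the vertices of $\Sigma$ that the $G$-map sends to the separating vertices of $T$ become cut vertices of $\Sigma'$; hence $\Sigma'$ has connectivity one, and its lobes are precisely the subgraphs induced on the regions of $T$.

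The main obstacle, where the care must be concentrated, is twofold. First, one must choose the cut system (equivalently, the representative edge $\{\alpha',\beta'\}$) so that $\Sigma'$ has connectivity exactly one rather than being $2$-connected or disconnected; this is where the minimality of the cuts and the primitivity of $G$ interact, since a too-coarse choice collapses the structure tree while a too-fine one destroys the $2$-connectivity of the regions. Second, one must bound the ends of the lobes. Here I would argue by contradiction using the optimality of the tree set: a lobe with two or more ends would admit a finite edge-cut separating two of its rays, and translating this cut under $G$ would furnish a strictly finer $G$-invariant nested family, contradicting the maximality of the chosen structure tree (equivalently, contradicting that the ends of $\Sigma$ are already fully accounted for by the ends of $T$). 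Thus each lobe has at most one end, completing the proof.
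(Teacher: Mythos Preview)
The paper does not prove this theorem; it is quoted verbatim as \cite[Theorem~15]{moller:groups_on_graphs} and used as a black box. The surrounding text explicitly attributes the argument to M\"{o}ller's application of the Dicks--Dunwoody structure tree theory \cite{dicks_dunwoody, moller:prim_and_ends_of_graphs}, which is exactly the engine you have chosen. So at the level of strategy your proposal matches what the paper credits, even though there is no in-paper proof to compare against line by line.

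As a sketch of M\"{o}ller's argument your outline is broadly on target, but two of your steps are more delicate than you indicate. First, the claim that one can arrange $T$ to be bipartite with one part identified $G$-equivariantly with $V\Sigma$ is not automatic from the Dicks--Dunwoody machinery; M\"{o}ller has to work to show that the structure map $\phi: V\Sigma \to VT$ is injective, and this is precisely where primitivity is used (the fibres of $\phi$ form a $G$-invariant equivalence relation). Your account hides this inside ``choosing the cuts optimally.'' Second, your argument that a two-ended lobe would contradict the maximality of the tree set is the right idea but not quite the right invariant: the Dicks--Dunwoody tree set is not chosen to be maximal, it is chosen so that the cuts are \emph{tight} (of minimal coboundary subject to separating ends), and the one-endedness of lobes comes from the fact that any further end-separation inside a lobe would already appear among the nested cuts. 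These are fixable, but they are where the real content lies.
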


For our purposes, it will be more convenient to work with orbital digraphs rather than orbital graphs. One can use M\"{o}ller's result to prove the following (see \cite[Theorem 3.3]{smith:orbital_digraphs}).

\begin{theorem} 
\label{thm:me:orbitalgraphs}
If $H$ is a subdegree-finite, infinite, primitive permutation group with more than one end, then $H$ has a locally finite orbital digraph with connectivity one in which each lobe has at most one end.
\end{theorem}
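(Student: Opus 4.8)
The plan is to deduce the statement from M\"{o}ller's graph-theoretic result, Theorem~\ref{thm:gamma_related_to_conn1_graph}, by first passing to the full automorphism group of a suitable orbital graph of $H$, then converting from graphs to digraphs, and finally descending back to $H$. Since orbital digraphs, connectivity, lobes and ends are all unaffected by replacing $H$ by its closure (primitivity, subdegree-finiteness and the number of ends are preserved), we may assume $H$ is closed. Fix distinct $\alpha,\beta\in\Omega$ with $\{\alpha,\beta\}^H$ nontrivial and let $\Sigma:=(\Omega,\{\alpha,\beta\}^H)$. By Higman's criterion $\Sigma$ is connected, by subdegree-finiteness it is locally finite, and being a Cayley--Abels graph for $H$ it has the same (more than one) ends as $H$. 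Moreover $H\leq\aut\Sigma$, and $\aut\Sigma$ is again vertex-primitive (it contains the primitive group $H$ acting on $\Omega$) and subdegree-finite (a vertex stabiliser in $\aut\Sigma$ preserves each of the finite spheres about that vertex, hence has only finite orbits); it has the same more-than-one ends as $\Sigma$. Thus $\Sigma$ is a connected, locally finite, primitive graph with more than one end, and Theorem~\ref{thm:gamma_related_to_conn1_graph} furnishes distinct $\alpha',\beta'\in\Omega$ such that $\Sigma':=(\Omega,\{\alpha',\beta'\}^{\aut\Sigma})$ has connectivity one and each of its lobes has at most one end.

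\textbf{Passing to a digraph and descending to $H$.} The digraph $\Delta':=(\Omega,(\alpha',\beta')^{\aut\Sigma})$ has underlying graph $\Sigma'$; since the connectivity and the lobes of a digraph are by definition those of its underlying graph, $\Delta'$ has connectivity one with lobes of at most one end, and it is locally finite because $\aut\Sigma$ is subdegree-finite. Now the arc-set $(\alpha',\beta')^{\aut\Sigma}$ splits into a finite disjoint union of $H$-orbits on ordered pairs (finite because out-valencies are finite), one of which is $(\alpha',\beta')^H$; put $\Delta:=(\Omega,(\alpha',\beta')^H)$, an orbital digraph of $H$ with underlying graph $\Sigma'':=(\Omega,\{\alpha',\beta'\}^H)$. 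Then $\Sigma''$ is connected by Higman's criterion and is a spanning subgraph of $\Sigma'$. Any cut vertex $v$ of $\Sigma'$ remains a cut vertex of $\Sigma''$: the vertex classes of the components of $\Sigma'\setminus v$ carry no edges between them even in $\Sigma'$, hence none in $\Sigma''$, so $\Sigma''\setminus v$ is still disconnected. As $\Sigma''$ is connected, infinite and has a cut vertex, it has connectivity one; and since each $2$-connected subgraph lies in a unique block, each lobe of $\Sigma''$ is contained in a lobe of $\Sigma'$. Furthermore (excluding the trivial case in which $\Sigma''$ is a tree, where there are no lobes and the claim is immediate) $H$ is edge-transitive on $\Sigma''$, so it permutes the lobes of $\Sigma''$ transitively and all of them are isomorphic, and vertex- and edge-transitive.

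\textbf{The main obstacle.} The delicate point is the last one: that the lobes of $\Sigma''$ still have at most one end. This is not formal, since a $2$-connected spanning subgraph of a one-ended graph can have two ends (a ladder inside the grid), so one cannot simply transport the property along $\Sigma''\hookrightarrow\Sigma'$. I expect the resolution to reach into the Dicks--Dunwoody structure-tree machinery underlying Theorem~\ref{thm:gamma_related_to_conn1_graph}: one chooses the defining pair $\{\alpha',\beta'\}$ so that the edges it generates correspond to a tight, optimally nested family of minimal cuts, and then verifies that the $H$-suborbit $\{\alpha',\beta'\}^H$ already realises such a family, so that the lobes of $\Sigma''$ are bounded by minimal cuts and hence, as in M\"{o}ller's argument, have at most one end. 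Alternatively, using that all lobes of $\Sigma''$ are isomorphic, vertex-transitive and $2$-connected, one can invoke the end-trichotomy for vertex-transitive graphs to reduce to excluding a lobe with exactly two ends; a two-ended lobe would force the block-cut-vertex tree of $\Sigma''$ to mix finite and countably infinite vertex-degrees, and a contradiction should then follow from the one-endedness of the ambient lobe of $\Sigma'$ together with the transitivity properties of $H$. Once the lobes of $\Sigma''$ are known to have at most one end, $\Delta$ is the required locally finite orbital digraph of $H$ and the proof is finished.
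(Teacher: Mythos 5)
Your preliminary reductions are sound: $\Sigma$ is a connected, locally finite, vertex-primitive graph with more than one end, $\aut\Sigma$ is primitive and subdegree-finite, Theorem~\ref{thm:gamma_related_to_conn1_graph} produces $\Sigma'$, and the $H$-orbital graph $\Sigma''\subseteq\Sigma'$ is connected, locally finite, of connectivity one, with each of its lobes contained in a lobe of $\Sigma'$. But the statement you still need --- that the lobes of $\Sigma''$ have at most one end --- is exactly the content of Theorem~\ref{thm:me:orbitalgraphs} beyond M\"{o}ller's theorem, and you do not prove it; you say so yourself. Neither of your two sketches closes this. The first (``choose $\{\alpha',\beta'\}$ so that the $H$-suborbit already realises a tight nested family of minimal cuts'') is a restatement of what has to be shown, not an argument: the cuts realised by the $H$-orbit form only a subfamily of those realised by the $\aut\Sigma$-orbit, and M\"{o}ller's one-endedness argument for the lobes uses the full invariant cut system, so nothing transfers formally. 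The second sketch is flawed as a reduction: the lobes of $\Sigma''$ are vertex-primitive (via Theorem~\ref{thm:smith_locally_prim}), so the end alternatives are $0$, $1$ or $2^{\aleph_0}$, and ruling out two-ended lobes does not finish the job; nothing you say excludes a lobe of $\Sigma''$ with $2^{\aleph_0}$ ends sitting inside a one-ended lobe of $\Sigma'$ --- precisely the phenomenon (a subgraph having more ends than its host) that you correctly identified as the obstacle in the first place.

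For comparison: the paper does not prove this theorem at all; it is quoted from \cite[Theorem 3.3]{smith:orbital_digraphs}, and the way such a statement is obtained there is not by applying M\"{o}ller's result to $\aut\Sigma$ and descending, but by running the Dicks--Dunwoody structure-tree construction (\cite{dicks_dunwoody}, \cite{moller:prim_and_ends_of_graphs}) equivariantly for the group $H$ itself: the relevant tree sets of tight cuts of minimal order in a connected locally finite $H$-orbital graph are canonical, hence invariant under $H$, so the whole construction --- structure tree, choice of the defining pair of vertices, connectivity-one orbital digraph with one-ended lobes --- can be carried out with $H$-orbits in place of $\aut\Sigma$-orbits, and the descent problem you face never arises. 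If you want a self-contained proof rather than the citation, that is the argument to reproduce; as written, your proposal reduces the theorem to an unproved claim that is essentially equivalent to it.
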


The structure of primitive graphs with connectivity one was determined by H.~A.~Jung M.~E.~Watkins in \cite{jung_watkins}.

\begin{theorem}[{\cite[Theorem 4.2]{jung_watkins}}] \label{thm:jung_watkins}
A vertex transitive graph $\Sigma$ with connectivity one is primitive if and only if the lobes of $\Sigma$ are primitive, have at least three vertices, and are pairwise isomorphic.
\end{theorem}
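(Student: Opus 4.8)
The plan is to reduce everything to the structure theory of graphs of connectivity one: pass to the block-cut-vertex tree $T$ of $\Sigma$, identify $\Sigma$ up to isomorphism with one of the tree-like graphs $\Gamma(\Lambda,r)$ of Section~\ref{intro:box_prods}, so that $\aut\Sigma$ becomes a box product $(\aut\Lambda)\boxtimes S_r$, and then read off both implications from the box-product primitivity criterion, Theorem~\ref{thm:box_prim}. First I would dispose of degenerate cases: if $\Sigma$ is a tree it is connected and bipartite, hence imprimitive, so assume it is not. The bridges of $\Sigma$ form an $\aut\Sigma$-invariant set, and the spanning subgraph $\Sigma'$ of $\Sigma$ obtained by deleting them is $\aut\Sigma$-invariant; since $\Sigma$ is vertex-transitive and not a tree, every vertex meets a non-bridge edge, so every component of $\Sigma'$ has at least three vertices, and if $\Sigma$ had a bridge then its endpoints would lie in distinct components of $\Sigma'$ (a non-bridge path between them would put the bridge on a cycle), making those components a nontrivial $\aut\Sigma$-invariant partition of $V\Sigma$. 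Hence a primitive $\Sigma$ has no bridges, its lobes are exactly its maximal $2$-connected subgraphs, they cover $V\Sigma$, and each has at least three vertices; and in the reverse implication the hypothesis that all lobes have at least three vertices likewise rules out bridges, so throughout we deal with a genuine tree of $2$-connected lobes.

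For the forward direction I would next show that a primitive $\Sigma$ has pairwise isomorphic lobes. If not, fix one isomorphism type, let $\mathcal{A}$ be the (proper, nonempty, $\aut\Sigma$-invariant) set of lobes of that type, and let $\Sigma_{\mathcal{A}}$ be the spanning subgraph of $\Sigma$ with edge set $\bigcup_{\Lambda\in\mathcal{A}}E\Lambda$. Because $\Sigma$ has connectivity one, any path between vertices of distinct lobes is forced through the cut vertices lying on the corresponding path of $T$; using vertex-transitivity (so that every vertex lies on some lobe in $\mathcal{A}$) together with the existence of a lobe outside $\mathcal{A}$, one sees $\Sigma_{\mathcal{A}}$ is disconnected, and its components---each containing a full lobe, hence of size at least three and properly contained in $V\Sigma$---form a nontrivial $\aut\Sigma$-invariant partition, a contradiction. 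Thus all lobes are isomorphic to a single graph $\Lambda$, and $\Lambda$ is vertex-transitive (restricting $\aut\Sigma$-transitivity to $V\Lambda$, or by the homogeneity of the tree of lobes). Writing $r\ge 2$ for the constant number of lobes through a vertex, $T$ is the $(|V\Lambda|,r)$-biregular tree, and building an isomorphism lobe by lobe along $T$ yields $\Sigma\cong\Gamma(\Lambda,r)$.

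With this picture both directions close quickly. An automorphism of $\Gamma(\Lambda,r)$ induces a subgroup of $\aut\Lambda$ on each lobe and a subgroup of $S_r$ on the set of lobes through each vertex; conversely, since the branches of $\Gamma(\Lambda,r)$ hanging at the vertices of a given lobe (respectively at a given vertex) are pairwise isomorphic by homogeneity, these branches can be permuted freely, so $\aut\Gamma(\Lambda,r)$ is locally-$(\aut\Lambda,S_r)$ and therefore, by Proposition~\ref{prop:locally_MN} and the maximality built into the box product, $\aut\Sigma=\aut\Gamma(\Lambda,r)=(\aut\Lambda)\boxtimes S_r$. Now $S_r$ is transitive, so Theorem~\ref{thm:box_prim} says $\aut\Sigma$ is primitive if and only if $\aut\Lambda$ is primitive and not regular. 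In the forward direction this forces $\aut\Lambda$ primitive, i.e.\ the lobes are primitive, which together with the first two paragraphs gives all the stated conclusions; in the reverse direction, if the lobes are primitive, pairwise isomorphic and of size at least three, then $\Sigma\cong\Gamma(\Lambda,r)$ with $\aut\Lambda$ primitive (reading ``primitive'' as ``primitive and not regular''; see below), so Theorem~\ref{thm:box_prim} gives $\aut\Sigma$ primitive, hence $\Sigma$ primitive.

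I expect the main obstacle to be the structural reduction in the second paragraph: extracting from ``vertex-transitive of connectivity one'' the rigid conclusion that, once the lobes are known to be isomorphic, $\Sigma$ is literally $\Gamma(\Lambda,r)$ and $\aut\Sigma$ acts on it as fully as the box product does. This is precisely where the block-cut-vertex tree---and, in a more hands-on treatment, structure-tree theory (Dicks--Dunwoody, M\"{o}ller)---does the work. A second, smaller point needing care is the case in which $\aut\Lambda$ is primitive but \emph{regular}, necessarily of prime degree: then $(\aut\Lambda)\boxtimes S_r$ is imprimitive by Theorem~\ref{thm:box_prim}, so the reverse implication only holds once ``the lobes are primitive'' is read as ``primitive and non-regular'' (for the primitive graphs $\Sigma$ with more than one end to which this theorem is applied, this subtlety does not actually arise).
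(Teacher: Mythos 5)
A preliminary remark: the paper does not prove this statement at all — it is quoted from Jung and Watkins — so your attempt has to be judged on its own terms rather than against an in-paper argument. Your preliminary reductions are correct: the bridge/tree elimination works, and the argument via the components of the spanning subgraph $\Sigma_{\mathcal{A}}$ does show a primitive $\Sigma$ has pairwise isomorphic lobes (run with $\mathcal{A}$ a single $\aut\Sigma$-orbit of lobes it even gives transitivity on lobes). The converse direction, reducing to Theorem~\ref{thm:box_prim} once $\aut\Sigma=(\aut\Lambda)\boxtimes S_r$ is in hand, is also sound in outline, with the colour-coherent identification of $\Sigma$ with $\Gamma(\Lambda,r)$ being the detail you rightly flag.

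The genuine gap is in the forward direction, at the parenthetical step ``$\Lambda$ is vertex-transitive (restricting $\aut\Sigma$-transitivity to $V\Lambda$, or by the homogeneity of the tree of lobes)''. Vertex-transitivity of $\Sigma$ does not restrict to a lobe: an element carrying $u\in V\Lambda$ to $v\in V\Lambda$ may carry $\Lambda$ to a different lobe through $v$. Concretely, the connectivity-one graph in which every lobe is a copy of $K_{2,3}$ and every vertex lies in exactly two lobes, once on the $2$-side and once on the $3$-side, is vertex-transitive with all lobes pairwise isomorphic and of size five, yet its lobes are not vertex-transitive; so neither of your offered justifications is valid, and the needed transitivity must be extracted from \emph{primitivity} of $\Sigma$. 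That deduction (the lobe stabiliser acts transitively — in fact primitively and non-regularly — on the lobe) is precisely the nontrivial ``local'' content of the Jung--Watkins theorem; it is what Theorem~\ref{thm:smith_locally_prim} supplies in the digraph setting. Without it you cannot identify $\Sigma$ with $\Gamma(\Lambda,r)$ nor conclude $\aut\Sigma=(\aut\Lambda)\boxtimes S_r$, so the box-product reduction — the heart of your plan — never starts in the forward direction; and if you repair it by citing Theorem~\ref{thm:smith_locally_prim}, you are importing a result whose proof carries essentially the same weight as the statement being proved. Finally, your closing caveat about regular primitive lobes is unnecessary for (undirected) graphs: if $\aut\Lambda$ were primitive and regular it would be cyclic of prime order $p\geq 3$, so $\Lambda$ would be a circulant on $p\geq 3$ vertices, and inversion then gives a nontrivial automorphism fixing a vertex — hence a primitive lobe with at least three vertices automatically has non-regular automorphism group, and the theorem is correct exactly as stated (the exclusion of directed cycles is needed only in the digraph version, Theorem~\ref{thm:structure_of_con_one_digraphs}).
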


Recall from Section~\ref{intro:box_prods} the graph $\Gamma(\Lambda, m)$: this graph has connectivity one, and every vertex lies in precisely $m$ lobes and every lobe is isomorphic to $\Lambda$. Together, Theorems~\ref{thm:me:orbitalgraphs} and \ref{thm:jung_watkins} imply that any subdegree-finite, infinite, primitive permutation group $H$ with more than one end has an orbital graph $\Gamma(\Lambda, m)$ for some $m \geq 2$ and some primitive, connected graph $\Lambda$, where $\Lambda$ has at least three vertices and at most one end.

If $G, H \in \mathcal{P}$ are primitive on $\Omega$, with $H \leq G$, and the orbital graph $(\Omega, \{\alpha, \beta\}^G)$ is of the form $\Gamma(m, \Lambda)$, then $(\Omega, \{\alpha, \beta\}^H)$ must be of the form $\Gamma(m, \Lambda')$. Hence we have the following well-known fact.

\begin{lemma} \label{lem:ends_of_primitive_subgroups_preserved} Suppose $H, G \in \mathcal{P}$ are primitive on $\Omega$ and $H \leq G$. If $G$ has more than one end, then $H$ has more than one end.
\end{lemma}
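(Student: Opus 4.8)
The plan is to reduce the statement to a question about orbital graphs and then transfer the property ``having more than one end'' from a suitable orbital graph of $G$ to a subgraph of it that happens to be an orbital graph of $H$. Since $H \in \mathcal{P}$ is primitive, closed and subdegree-finite, every connected orbital graph of $H$ is a Cayley--Abels graph for $H$, so it suffices to produce one such graph with more than one end.

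First I would invoke the structural results just established: because $G \in \mathcal{P}$ has more than one end, Theorem~\ref{thm:me:orbitalgraphs}, Theorem~\ref{thm:jung_watkins} and the discussion following them give distinct $\alpha, \beta \in \Omega$ for which $\Gamma := (\Omega, \{\alpha,\beta\}^G)$ has the form $\Gamma(\Lambda, m)$ with $m \geq 2$ and $\Lambda$ a connected graph on at least three vertices. I would then set $\Sigma := (\Omega, \{\alpha,\beta\}^H)$, an orbital graph of $H$ on the same vertex set. Since $H \leq G$ we have $E\Sigma \subseteq E\Gamma$, and since $H$ is primitive, Higman's criterion (condition~\ref{item:higman}) makes $\Sigma$ connected; it is locally finite and vertex-transitive and $\Omega$ is countably infinite (Remark~\ref{rem:set_is_countable}). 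The task thereby becomes: show that this spanning connected subgraph $\Sigma$ of $\Gamma(\Lambda, m)$ has more than one end.

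The core step will be a separation argument carried out inside $\Gamma$. Fix a vertex $v$. Because $\Gamma = \Gamma(\Lambda, m)$ with $m \geq 2$ and each lobe is $2$-connected on at least three vertices, deleting $v$ disconnects $\Gamma$ into exactly $m$ pieces $C_1, \dots, C_m$ (one per lobe through $v$), each of which is infinite by the tree-like branching of $\Gamma(\Lambda, m)$, and no edge of $\Gamma$ --- hence no edge of $\Sigma$ --- joins two distinct $C_i$. As $\Sigma$ is connected and spans $\Omega$, and as $v$ is the only vertex through which $C_i$ communicates with the rest of $\Gamma$, the vertex $v$ must have a $\Sigma$-neighbour in each $C_i$ and the induced subgraph $\Sigma[\{v\} \cup C_i]$ is connected; it is also locally finite and infinite, so by König's Lemma it contains a ray, a tail of which lies entirely in $C_i$. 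Thus deleting the finite set $\{v\}$ from $\Sigma$ leaves at least $m \geq 2$ components, at least two of which contain rays, and by the characterization of graphs with more than one end recalled in Section~\ref{subsection:connectivity_of_graphs}, $\Sigma$ has more than one end. Since $\Sigma$ is a connected, locally finite orbital graph of $H$, the ends of $H$ are by definition (Section~\ref{subsection:one_ended}) the ends of $\Sigma$, completing the proof.

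The hard part will be the verification that $\Sigma[\{v\} \cup C_i]$ is connected and infinite for every $i$ --- that is, that each of the $m$ directions away from $v$ in $\Gamma$ still carries an infinite connected chunk of the sparser graph $\Sigma$. This rests on two points that must be spelled out: that $v$ is the unique cut vertex of $\Gamma$ separating $C_i$ from $\Omega \setminus (C_i \cup \{v\})$ (so that any $\Sigma$-path leaving $C_i$ must pass through $v$, whence $\Sigma[\{v\}\cup C_i]$ is connected), and that each $C_i$ is infinite, which uses the explicit branching structure of $\Gamma(\Lambda, m)$ for $m \geq 2$. A shorter but less self-contained route would be to appeal directly to the claim recorded immediately before the lemma --- that $(\Omega, \{\alpha,\beta\}^H)$ is itself of the form $\Gamma(m, \Lambda')$ --- and then observe that the block-cut-vertex tree of such a graph is an infinite biregular tree, which always has more than one end; the ends bookkeeping at the end of the argument is identical in either case.
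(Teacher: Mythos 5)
Your proposal is correct and follows essentially the same route as the paper, which simply records (in the sentence immediately preceding the lemma) that the $H$-orbital graph $(\Omega,\{\alpha,\beta\}^H)$, being a connected spanning subgraph of the connectivity-one graph $\Gamma(\Lambda,m)$ coming from Theorems~\ref{thm:me:orbitalgraphs} and \ref{thm:jung_watkins}, again has the connectivity-one form and hence more than one end, the lemma being treated as a well-known consequence. Your cut-vertex/ray separation argument just supplies, correctly, the elementary details behind that assertion (and your closing remark is precisely the paper's shorter route).
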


One consequence of this is the following: if $H \in \mathcal{P}$ and $F$ is nontrivial, transitive and finite, then any primitive subgroup of $H \boxtimes F$ has infinitely many ends.\\

Theorem~\ref{thm:jung_watkins} was extended to digraphs in \cite{smith:prim_digraphs}.

\begin{theorem}[{\cite[Theorem 3.3]{smith:prim_digraphs}}]
\label{thm:structure_of_con_one_digraphs}
If $\hat{\Sigma}$ is a vertex-transitive digraph with connectivity one, then it is primitive if and only if the lobes of $\hat{\Sigma}$ are primitive but not directed cycles, are pairwise isomorphic and each has at least three vertices.
\end{theorem}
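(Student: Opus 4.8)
The plan is to prove this by reducing to the box product and its primitivity criterion, Theorem~\ref{thm:box_prim}, following --- with the modifications forced by directedness --- the proof of the graph analogue, Theorem~\ref{thm:jung_watkins}. Since $\hat{\Sigma}$ is vertex-transitive, every vertex lies in the same number $m\geq 2$ of lobes; write $T$ for the block-cut-vertex tree of $\hat{\Sigma}$. Two standing observations: (a) if the lobes are pairwise isomorphic to a digraph $\Lambda$ on which the relevant lobe setwise-stabilisers act transitively, then $T$ is the $(|V\Lambda|,m)$-biregular tree and $\hat{\Sigma}\cong\Gamma(\Lambda,m)$; (b) the universal group $\mathcal{U}(\aut\Lambda,S_m)$ acts on $\Gamma(\Lambda,m)$ by digraph automorphisms --- it induces $\aut\Lambda$ on each lobe, and distinct lobes share at most one vertex so no arc joins two lobes --- whence $(\aut\Lambda)\boxtimes S_m\leq\aut\Gamma(\Lambda,m)$, with equality by maximality of the box product among locally-$(\aut\Lambda,S_m)$ groups.

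For the ``if'' direction I would assume the lobes are pairwise isomorphic to $\Lambda$, primitive (so the induced group, hence $\aut\Lambda$, is primitive on the $\geq 3$ points of $V\Lambda$), and not directed cycles. By (a)--(b), $(\aut\Lambda)\boxtimes S_m\leq\aut\hat{\Sigma}$. The key claim is that $\aut\Lambda$ is not regular: a primitive group acting regularly on $\geq 3$ points is cyclic of prime order $p$, so $\Lambda$ would be a circulant digraph on $p$ vertices; but a lobe of a connectivity-one digraph inherits arc-transitivity whenever the ambient digraph is arc-transitive (any automorphism-image of an arc of a lobe lies in the same lobe), and an arc-transitive circulant of prime order with regular automorphism group must have a singleton connection set, i.e.\ be a directed $p$-cycle --- a contradiction. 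Then Theorem~\ref{thm:box_prim}, with $G_1=\aut\Lambda$ primitive and non-regular and $G_2=S_m$ transitive, gives that $(\aut\Lambda)\boxtimes S_m$ is primitive; since any permutation group containing a primitive subgroup is itself primitive, $\aut\hat{\Sigma}$ is primitive, so $\hat{\Sigma}$ is a primitive digraph.

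For the ``only if'' direction I would assume $\hat{\Sigma}$ primitive. Because each vertex lies in $\geq 2$ lobes and each lobe has $\geq 3$ vertices, $\aut\hat{\Sigma}$ acts faithfully on $T$; and primitivity on $V\hat{\Sigma}$ forces $\aut\hat{\Sigma}$ to be transitive on the set of lobes, since otherwise the classes ``lies in a lobe from a fixed $\aut\hat{\Sigma}$-orbit of lobes'' would give a nontrivial invariant equivalence relation on $V\hat{\Sigma}$. Transitivity on lobes makes the lobes pairwise isomorphic, to a digraph $\Lambda$ whose setwise-stabilisers act transitively on $V\Lambda$. If the lobe setwise-stabiliser failed to act primitively on $V\Lambda$, a nontrivial block system on one lobe could be transported around $T$ by $\aut\hat{\Sigma}$ and amalgamated along the arc-paths of $T$ into a nontrivial $\aut\hat{\Sigma}$-invariant equivalence relation on $V\hat{\Sigma}$ --- the Jung--Watkins amalgamation argument, carried over with only notational changes --- contradicting primitivity; so the lobes are primitive. (If one adopts the convention allowing a $2$-vertex ``lobe'', then all lobes being such forces $\hat{\Sigma}$ to be a homogeneous tree, whose underlying bipartition is a nontrivial invariant block system, so this case is excluded and $|V\Lambda|\geq 3$ in any case.) Finally, if $\Lambda$ were a directed $n$-cycle then $\hat{\Sigma}\cong\Gamma(\Lambda,m)$ with $\aut\hat{\Sigma}=\mathbb{Z}_n\boxtimes S_m$ by (b), and since $\mathbb{Z}_n$ acts regularly, Theorem~\ref{thm:box_prim} shows $\mathbb{Z}_n\boxtimes S_m$ is imprimitive --- contradicting primitivity of $\hat{\Sigma}$; hence no lobe is a directed cycle.

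The step I expect to be the main obstacle is the amalgamation argument in the ``only if'' direction: turning an imprimitive (or directed-cycle, or degenerate) lobe into a global $\aut\hat{\Sigma}$-invariant partition of $V\hat{\Sigma}$ is a somewhat delicate induction over the block-cut-vertex tree. A second, genuinely new point beyond the graph case is verifying that ``directed cycle'' is exactly the correct exclusion --- equivalently, that the directed cycles are precisely the primitive lobes whose automorphism group is regular --- which is transparent once arc-transitivity of the lobes is available, and arc-transitivity of lobes holds automatically in the situation where the theorem is applied, namely when $\hat{\Sigma}$ is itself an arc-transitive (orbital) digraph.
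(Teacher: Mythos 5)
There is nothing in this paper to compare your argument against: Theorem~\ref{thm:structure_of_con_one_digraphs} is imported verbatim from \cite[Theorem 3.3]{smith:prim_digraphs} and is not proved here (the paper only ever uses the ``only if'' direction, and the lobe-primitivity part of that direction is separately quoted as Theorem~\ref{thm:smith_locally_prim}). Judged on its own terms, your proposal has a genuine gap in the ``if'' direction. Your reduction to Theorem~\ref{thm:box_prim} needs ``primitive and not a directed cycle'' to imply that $\aut\Lambda$ is not regular, and you obtain this only from arc-transitivity of the lobes, which you in turn justify by assuming $\hat\Sigma$ is arc-transitive --- a hypothesis that is not in the statement, which concerns vertex-transitive digraphs. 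Without it the implication you need is false. Take $\Lambda$ to be the tournament on $\Z_5$ with arcs $x\to x+1$ and $x\to x+2$: automorphism groups of tournaments have odd order, and the only odd-order subgroup of $S_5$ containing a $5$-cycle is $\Z_5$, so $\aut\Lambda=\Z_5$ is regular, yet $\Lambda$ is a primitive digraph on five vertices and is not a directed cycle. For this $\Lambda$ the digraph $\Gamma(\Lambda,m)$ has lobes that are primitive, pairwise isomorphic, not directed cycles and have at least three vertices; but every automorphism preserves the lobe decomposition and induces exactly $\aut\Lambda=\Z_5$ on each lobe and $S_m$ at each vertex (it contains the box product and can induce no more), so by Proposition~\ref{prop:locally_MN} it lies in a conjugate of $\Z_5\boxtimes S_m$, which is imprimitive by Theorem~\ref{thm:box_prim}. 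So the route you chose cannot deliver the ``if'' direction under the stated hypotheses: the exclusion your argument actually requires is ``lobes whose automorphism group is non-regular'', which coincides with ``not directed cycles'' only in the presence of arc-transitivity (as in the orbital-digraph setting of \cite{smith:prim_digraphs}). This is a real obstruction, not a detail to defer.

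The ``only if'' direction is also incomplete as written. Your lobe-transitivity argument does not work: by vertex-transitivity every vertex lies in a lobe from every $\aut\hat\Sigma$-orbit of lobes, so ``lies in a lobe from a fixed orbit'' is the universal relation. A relation that does work is: fix an orbit $O$ of lobes and declare $u\sim v$ when every lobe met by the geodesic from $u$ to $v$ in the block-cut-vertex tree belongs to $O$; this is invariant, its classes contain whole lobes, and it is proper whenever some lobe lies outside $O$. More seriously, the heart of this direction --- that primitivity of $\aut\hat\Sigma$ forces the lobe stabiliser to act primitively (indeed primitively and non-regularly, which also disposes of directed cycles, whose automorphism groups are regular) on a lobe --- is precisely the content of Theorem~\ref{thm:smith_locally_prim}, and you only gesture at it via ``the Jung--Watkins amalgamation argument, carried over with only notational changes''; since you yourself identify this as the main obstacle, the proposal does not yet contain a proof of it, and as it stands your ``only if'' direction amounts to citing \cite{smith:prim_digraphs} again.
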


The same paper contains the following characterisation of groups acting on primitive digraphs.

\begin{theorem}[{\cite[Theorem 2.6]{smith:prim_digraphs}}] \label{thm:smith_locally_prim}
Let $H$ be a vertex-transitive group of automorphisms of a connectivity-one digraph $\hat{\Sigma}$, whose lobes have at least three vertices. If $H$ acts primitively on $V\hat{\Sigma}$ and $\Lambda$ is some lobe of $\hat{\Sigma}$, then $H_{\{V\Lambda\}}$ is primitive and not regular on $V\Lambda$.
\end{theorem}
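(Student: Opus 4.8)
The plan is to run everything through the block-cut-vertex tree $T$ of $\hat{\Sigma}$, whose two bipartition classes are $V\hat{\Sigma}$ and the set $\mathbb{L}$ of lobes, with $\gamma\in V\hat{\Sigma}$ adjacent in $T$ to $\Lambda\in\mathbb{L}$ exactly when $\gamma\in V\Lambda$. Since $\hat{\Sigma}$ is vertex-transitive of connectivity one, every vertex is a cut vertex and hence lies in at least two lobes; together with the hypothesis that lobes have at least three vertices, this gives (as recorded in Section~\ref{subsection:connectivity_of_graphs}) a faithful action of $H$ on $T$ preserving the bipartition, transitive on $V\hat{\Sigma}$. Fix a lobe $\Lambda$ and write $K:=H_{\{V\Lambda\}}$. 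Deleting the vertex $\Lambda$ from $T$ partitions $V\hat{\Sigma}$ into branches $B_v$, one for each $v\in V\Lambda$, with $v\in B_v$ and $B_v^h=B_{v^h}$ for all $h\in K$. I will establish the conclusion in three steps: $K$ is transitive on $V\Lambda$, then primitive on $V\Lambda$, then not regular on $V\Lambda$.

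First I would show $H$ is transitive on $\mathbb{L}$, equivalently (a standard argument on the bipartite tree $T$, using transitivity of $H$ on $V\hat{\Sigma}$) that $H_v$ is transitive on the set $\mathbb{L}(v)$ of lobes through $v$, for each vertex $v$. If this failed, let $\mathcal{O}$ be one $H$-orbit on $\mathbb{L}$ and $\mathcal{O}':=\mathbb{L}\setminus\mathcal{O}$; both are nonempty and $H$-invariant. Declare $\gamma\approx\delta$ when every lobe lying on the $T$-geodesic from $\gamma$ to $\delta$ belongs to $\mathcal{O}'$. Using the median of three vertices in the tree $T$ one checks $\approx$ is an $H$-invariant equivalence relation; a lobe of $\mathcal{O}'$ furnishes three mutually $\approx$-related vertices, while a lobe of $\mathcal{O}$ separates $T$, so $\approx$ is neither the trivial nor the universal relation --- contradicting primitivity of $H$. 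Transitivity of $K$ on $V\Lambda$ then follows immediately: given $\alpha,\beta\in V\Lambda$ pick $g\in H$ with $\alpha^g=\beta$; since $\Lambda^g$ and $\Lambda$ both lie in $\mathbb{L}(\beta)$ there is $h\in H_\beta$ with $(\Lambda^g)^h=\Lambda$, and then $gh\in K$ sends $\alpha$ to $\beta$.

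Next, primitivity. Let $\mathcal{D}$ be a block system for $K$ acting on $V\Lambda$. Because $H$ is transitive on $\mathbb{L}$ and $\mathcal{D}$ is $K$-invariant, transporting $\mathcal{D}$ by elements of $H$ yields a well-defined $H$-invariant family $\{\mathcal{D}_{\Lambda'}\}_{\Lambda'\in\mathbb{L}}$ of partitions of the lobes. Now declare $\gamma\approx\delta$ when, along the $T$-geodesic $\gamma=v_0,\Lambda_1,v_1,\dots,\Lambda_n,v_n=\delta$, for every $j$ the vertices $v_{j-1},v_j$ lie in a common block of $\mathcal{D}_{\Lambda_j}$. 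A median argument again shows this is an $H$-invariant equivalence relation. Two distinct points of a single block of $\mathcal{D}$ are $\approx$-related, so if $\mathcal{D}$ has a block of size at least two then $\approx$ is not trivial; and if some block of $\mathcal{D}$ is proper, choosing $u,u'$ in different blocks one sees that no vertex of the branch $B_u$ is $\approx$-related to any vertex of $B_{u'}$, so $\approx$ is not universal. Primitivity of $H$ forces $\mathcal{D}$ to consist of singletons or to be universal, i.e.\ $K$ is primitive on $V\Lambda$.

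Finally, non-regularity, which I expect to be the most delicate step. Suppose $K$ acts regularly on $V\Lambda$; being also primitive it is cyclic of prime order $p=|V\Lambda|\ge 3$. Choose an arc $(\alpha,\beta)$ of $\hat{\Sigma}$ lying inside $\Lambda$ and form the orbital digraph $\hat{\Sigma}':=(V\hat{\Sigma},(\alpha,\beta)^H)$, which is connected by primitivity of $H$ and vertex-transitive. Every arc of $\hat{\Sigma}'$ lies within a single lobe of $\hat{\Sigma}$; the arcs of $\hat{\Sigma}'$ inside $\Lambda$ are precisely $(\alpha,\beta)^K$, and since $K$ induces the regular cyclic group on $V\Lambda$ this orbit is the arc set of a directed $p$-cycle. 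One checks that $\hat{\Sigma}'$ still has connectivity one and that $V\Lambda$ is one of its lobes; thus $\hat{\Sigma}'$ is a vertex-transitive connectivity-one digraph having a lobe that is a directed cycle, so by Theorem~\ref{thm:structure_of_con_one_digraphs} it is not vertex-primitive, whence its subgroup $H$ is imprimitive on $V\hat{\Sigma}$ --- a contradiction. Hence $K$ is not regular on $V\Lambda$, completing the proof. The main obstacles are exactly the two ``globalisation'' arguments: verifying that the relations $\approx$ really satisfy transitivity (the subtle point being the case where the tree-median of three vertices is a lobe rather than a vertex) and that the transported family of block systems is independent of the chosen transporting element and genuinely $H$-invariant; and, in the last step, checking carefully that the orbital digraph $\hat{\Sigma}'$ inherits connectivity one with $V\Lambda$ among its lobes.
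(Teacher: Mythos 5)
The paper gives no internal proof of Theorem~\ref{thm:smith_locally_prim} (it is quoted from \cite{smith:prim_digraphs}), so your argument has to stand on its own, and most of it does: the transport of a block system of $K:=H_{\{V\Lambda\}}$ to all lobes and the geodesic relation in your second step do yield primitivity (your median analysis, including the case where the median is a lobe-vertex, is exactly the point that needs checking and it works), and the reduction in your third step (the orbital digraph $\hat{\Sigma}'$ has connectivity one, with the spanning directed $p$-cycles of the lobes of $\hat{\Sigma}$ as its lobes) is correct. The genuine gap is in the first step. You prove that $H$ is transitive on $\mathbb{L}$ and then assert this is ``equivalent'', by a standard argument using only vertex-transitivity, to $H_v$ being transitive on $\mathbb{L}(v)$. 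That equivalence is false: vertex-transitivity plus lobe-transitivity do not imply flag-transitivity. For instance, glue triangles so that every vertex lies in exactly two lobes, decorate each lobe with a distinguished ``apex'' so that every vertex is the apex of exactly one of its two lobes, and let $H$ be the full group preserving the decoration; this $H$ is vertex- and lobe-transitive, yet $H_v$ fixes both lobes through $v$ setwise. (That example is imprimitive, as it must be, but it shows your stated justification proves nothing.) Since transitivity of $H_\beta$ on $\mathbb{L}(\beta)$ is precisely what your derivation of transitivity of $K$ on $V\Lambda$ uses, this step must be argued from primitivity. The repair is short and uses your own toolkit: colour each edge $\{w,\Lambda'\}$ of $T$ by the $H$-orbit of the flag $(w,\Lambda')$; if $H_v$ were intransitive on $\mathbb{L}(v)$, fix one orbit-colour $a$ and let $b$ be the union of the rest; by lobe-transitivity every lobe meets a constant number of $a$-edges and of $b$-edges, each at least one, summing to $|V\Lambda|\ge 3$, so one of the two colours occurs at least twice on every lobe while the other still occurs at least once; the relation ``every edge of the $T$-geodesic from $\gamma$ to $\delta$ carries the former colour'' is then an $H$-invariant equivalence relation that is neither trivial nor universal, contradicting primitivity of $H$.

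A separate caveat concerns your non-regularity step. The clause of Theorem~\ref{thm:structure_of_con_one_digraphs} you invoke --- that a vertex-primitive connectivity-one digraph has no directed-cycle lobes --- encodes exactly the regularity obstruction you are trying to establish: a directed-cycle lobe forces the induced lobe group to be (semi)regular, and the natural proof of that exclusion in \cite{smith:prim_digraphs} goes through the very statement you are proving, which in that paper precedes it. So, as a derivation from results this paper quotes as black boxes, your Step 3 is formally admissible and your reduction to it is carried out correctly; but as a from-scratch proof of the literature statement it is essentially circular, and the real content of ``not regular'' is imported rather than proved. A self-contained argument would have to show directly that a lobe stabiliser inducing a regular (hence cyclic of prime order) group on its lobe produces a proper nontrivial $H$-invariant equivalence relation on $V\hat{\Sigma}$, and none of the three relations you construct does that.
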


Using the above results, we can prove the main result of this section.

\begin{proof}[Proof of Proposition~\ref{prop:m_is_1_case_and_inf_ended}]
Suppose that $G$ has more than one end. By Theorem~\ref{thm:me:orbitalgraphs}, there is a locally finite orbital digraph $\Gamma$ with connectivity one in which each lobe has at most one end. This digraph is primitive because $G$ acts primitively on its vertex set, and so by Theorem~\ref{thm:structure_of_con_one_digraphs}, the lobes of $\Gamma$ are primitive but not directed cycles, are pairwise isomorphic, and each has at least three vertices. This tree-like graph clearly has $2^{\aleph_0}$ ends. Hence $G$ has $2^{\aleph_0}$ many ends. Let $\Lambda$ be a lobe in $\Gamma$ containing the vertex $\alpha$. Since $\Gamma$ is locally finite, vertex transitive and has connectivity one, there exists some integer $n \geq 2$ such that every vertex in $\Gamma$ lies in precisely $n$ distinct lobes.  Using our notation from Section~\ref{subsection:connectivity_of_graphs}, we can write $\Gamma = \Gamma(\Lambda, n)$.

Let $\mathbb{L}$ be the set of lobes of $\Gamma$, and for a vertex $\gamma$ let $\mathbb{L}(\gamma)$ denote the set of lobes in $\mathbb{L}$ that contain $\gamma$; this is a finite set with cardinality $n \geq 2$. Recall that the block-cut-vertex tree $T$ of $\Gamma = \Gamma(\Lambda, n)$ has vertex set $VT := V\Gamma \cup \mathbb{L}$ and edge set $ET := \{\{\gamma, \Lambda'\} : \gamma \in V\Gamma \text{ and } \Lambda' \in \mathbb{L}(\gamma)\}$. We again denote the neighbours in $T$ of a vertex $\gamma \in VT$ by $B(\gamma)$.

Let us pause momentarily, to make our notation easier to read. Since elements in $\mathbb{L}$ are on one hand lobes in $\Gamma$ and on the other hand vertices in $T$, we write $V_1 := \mathbb{L}$ and think of $V_1$ as vertices in $T$ and $\mathbb{L}$ as lobes in $\Gamma$. For our distinguished lobe $\Lambda$ of $\Gamma$, we write $\ell:=\Lambda$ and think of $\Lambda \in \mathbb{L}$ and $\ell \in V_1$. We do the same now for $V\Gamma$, writing $V_2 := V\Gamma$ and thinking of $V\Gamma$ as vertices in $\Gamma$ and $V_2$ as vertices in $T$.

Now we have $VT = V_1 \cup V_2$, and $V_1$ and $V_2$ are the two parts of the natural bipartition of $T$. As noted in Section~\ref{subsection:connectivity_of_graphs}, our group $G \leq \aut \Gamma$ induces a faithful action on $T$. Let $\hat{G}$ be the subgroup of $\aut T$ induced by this action of $G$. 

Now $G$ is transitive on the arcs of $\Gamma$. Since every lobe in $\Gamma$ contains an arc and no two lobes share any arc, it follows that $G$ is transitive on $\mathbb{L}$ and therefore $\hat{G}$ is transitive on $V_1$. Of course $\hat{G}$ is transitive on $V_2$ because $G$ is transitive $V\Gamma$.

Every arc in $\Gamma$ lies in a unique lobe in $\mathbb{L}$. Since $G$ is transitive on the arcs of $\Gamma$, it follows that $H_1 := G_{\{V\Lambda\}} \big|_{V\Lambda} $ must be arc transitive on $\Lambda$. Now, $B(\ell) = V\Lambda$, and so $\hat{G}_\ell = \hat{G}_{\{B(\ell)\}} =  \hat{G}_{\{V\Lambda\}}$.
Since $G_{\{V\Lambda\}}$ and $\hat{G}_{\{V\Lambda\}}$ induce the same permutation group on $V\Lambda$, we have that $\hat{G}_\ell$ induces $H_1$ on $B(\ell)$; that is, $H_1 = \hat{G}_\ell \big|_{B(\ell)}$. Hence, for all $\ell' \in V_1$, the permutation group induced by $\hat{G}_{\ell'}$ acting on $B(\ell')$ is permutation isomorphic to $H_1$. By Theorem~\ref{thm:smith_locally_prim}, $H_1$ is primitive but not regular on $V\Lambda$.

Let $F := G_\alpha \big|_{\mathbb{L}(\alpha)}$ so $F$ is a finite group induced by the action of $G_\alpha$ on the set of lobes that contain $\alpha$. Note that $G_\alpha \big|_{\mathbb{L}(\alpha)} = \hat{G}_\alpha \big|_{B(\alpha)}$.
 Since $G_{\{V\Lambda\}}$ is vertex- and arc-transitive on $V\Lambda$, we have that $A(\alpha) \cap A\Lambda$ is nonempty --- that is, there is an arc in $\Lambda$ whose origin is $\alpha$. 
 Hence the action of $G_\alpha$ on $\mathbb{L}(\alpha)$ is determined by the action of $G_\alpha$ on $A(\alpha)$. Since $G$ is arc transitive, $G_\alpha$ is transitive on $A(\alpha)$ and so (because $n \geq 2$) the action is nontrivial. Thus $F$ is finite, transitive and nontrivial.
 
The group $G$ is transitive on the vertices and lobes of $\Gamma$, therefore for all $\gamma \in V\Gamma$, the action of $G_\gamma$ on $\mathbb{L}(\gamma)$ is determined by $G_\gamma \big|_{A(\gamma)}$, and furthermore $G_\gamma \big|_{A(\gamma)}$ is permutation isomorphic to $G_{\alpha} \big|_{A(\alpha)}$. Hence $G_\gamma \big|_{\mathbb{L}(\gamma)} = \hat{G}_\gamma \big|_{B(\gamma)}$ is permutation isomorphic to $F$ for all $\gamma \in V_2$.
 
We have shown that $H_1$ and $F$ are transitive permutation groups of nontrivial degree and $\hat{G}$ is locally-$(H_1, F)$. Hence $\hat{G} \leq \mathcal{U}_{\mathcal{L}}(H_1, F)$ for some legal colouring $\mathcal{L}$ of $T$, by Proposition~\ref{prop:locally_MN}. Let $H$ be the closure of $H_1$ in $\sym(V\Lambda)$. Then $H_1 \leq H$, and so $\hat{G} \leq \mathcal{U}_{\mathcal{L}}(H_1, F) \leq \mathcal{U}_{\mathcal{L}}(H, F)$ by Theorem~\ref{thm:box_product_summary}. Looking at the permutation groups induced on $V_2$, we obtain $G \leq H \boxtimes_{\mathcal{L}} F$.

Now $H_1$ and $H$ have the same orbits on all finite tuples of $V\Gamma$, therefore $H$ has at most one end, has degree at least three, is closed and subdegree-finite and is primitive but not regular. Thus $H$ is a one-ended group in $\mathcal{P}$.

Since $\hat{G}$ is transitive on $V_1$, it transitively permutes the lobes of $H \boxtimes F$, but the lobes of $H \boxtimes F$ are subsets of $V_2$, so $G$ must also transitively permute the lobes of $H \boxtimes F$. 
Moreover, the set $V\Lambda = B(\ell)$ is a lobe of $H \boxtimes F$, and by definition $H$ is permutationally isomorphic to the closure of $G_{\{\Lambda\}} \big|_{\Lambda}$ in $\sym(\Lambda)$. Hence $G$ is a fibrelobe-full subgroup of $H \boxtimes F$. Of course $\Gamma$ is also an orbital graph of $H \boxtimes F$ and so the lobes of $\Gamma$ are the lobes of $H \boxtimes F$.

It remains to show that $\sd(H) < \sd(G)$. Indeed, because $G$ is transitive and $\hat{G}$ induces $G$ on $V_2$, there is some $\beta \in V_2 \setminus \{\alpha\}$ such that $|\beta^{\hat{G}_\alpha}| = \sd(G)$. Notice that the geodesic $[\alpha, \beta]_T$ between $\alpha$ and $\beta$ in $T$ contains a vertex in $V_2$ that is adjacent to $\alpha$; this vertex corresponds to a lobe containing $\alpha$. Without loss of generality, we can therefore assume that $\ell \in V_1$ is adjacent to $\alpha$ in $[\alpha, \beta]_T$. Let $\gamma$ be adjacent to $\ell$ in the geodesic $[\ell, \beta]_T$, and note that $\alpha, \gamma$ are distinct vertices in $\Lambda$.
 Hence $\hat{G}_{\alpha, \beta} \leq \hat{G}_\ell \cap \hat{G}_\gamma$, and so
\[|\beta^{\hat{G}_\alpha}| = |\hat{G}_\alpha : \hat{G}_{\alpha, \ell}| \cdot |\hat{G}_{\alpha, \ell} : \hat{G}_{\alpha, \ell, \gamma}| \cdot |\hat{G}_{\alpha, \ell, \gamma} : \hat{G}_{\alpha, \beta}|.\] 
Now $\hat{G}$ is locally-$(H_1, F)$ and $F$ is transitive with degree at least two, therefore $|\hat{G}_\alpha : \hat{G}_{\alpha, \ell}| > 2$. Hence $\sd(G) > |\hat{G}_{\alpha, \ell} : \hat{G}_{\alpha, \ell, \gamma}| = |\gamma^{(H_1)_{\alpha}}|$. Now $H$ is the closure of  $H_1$ in $\sym(V\Lambda)$, and so $|\gamma^{(H_1)_{\alpha}}| = |\gamma^{(H)_{\alpha}}| \geq \sd(H)$.
\end{proof}

\section{The proof part IV: Conclusion}
\label{section:proof_part_4}

In this section we draw together our arguments and prove the theorems stated in the introduction.

\begin{proof}[Proof of Theorem~\ref{thm:MainTheorem}]
Suppose the theorem is false, and $G \in \mathcal{P}$ 
is a minimal counterexample to the theorem with respect to $\sd(G)$. Since it is a counterexample, $G$ is infinite and is not of type OAS, so therefore either $G$ has more than one end, or $G$ is not almost topologically simple. We show that in both cases, $G$ cannot be a counterexample to the theorem.

First, let us suppose that $G$ has more than one end. By Proposition~\ref{prop:Alternative_to_prop:m_is_1_case_and_inf_ended}, $G$ has $2^{\aleph_0}$ many ends and there exists a nontrivial finite transitive group $F$ and a group $H \in \mathcal{P}$ with at most one end such that $G$ is a primitive, fibrelobe-full subgroup of $H \boxtimes F$ with $\sd(H) < \sd(G)$. If $H$ is finite, then $G$ satisfies the theorem. On the other hand, if $H$ is infinite then, because $\sd(H) < \sd(G)$, the group $H$ must satisfy the theorem. However, in this case $H$ has precisely one end, so it is of type OAS or PA. Hence if $H$ is infinite, then $G$ also satisfies the theorem. It follows then that our minimal counterexample $G$ must have precisely one end.

Since $G$ is not of type OAS and has precisely one end, it must be the case that $G$ is not almost topologically simple. By Proposition~\ref{prop:Alternative_to_prop:PA_case}, there exists a nontrivial finite transitive group $F$ and an infinite, almost topologically simple group $H \in \mathcal{P}$ such that $G$ is a primitive, fibrelobe-full subgroup of $H \Wr F$ acting via the product action, with $\sd(H) \leq \sd(G)$.

Since $H \in \mathcal{P}$ is infinite, it has precisely one end or it has infinitely many ends. If $H$ has precisely one end, then $H$ is OAS and $G$ satisfies the theorem. Since $G$ is a counterexample, it must be the case then that $H$ is infinitely ended.

By Proposition~\ref{prop:Alternative_to_prop:m_is_1_case_and_inf_ended} again, there exists a nontrivial finite transitive group $F_0$ and a group $H_0 \in \mathcal{P}$ with at most one end such that $H$ is a primitive, fibrelobe-full subgroup of $H_0 \boxtimes F_0$ with $\sd(H_0) < \sd(H) \leq \sd(G)$. If $H_0$ is finite then $H$ is of type BP, which implies that $G$ satisfies the theorem. Therefore $H_0$ must be infinite. However, since $G$ is a minimal counterexample and $\sd(H_0) < \sd(G)$, the group $H_0$ must satisfy the theorem. Since $H_0$ is infinite with at most one end, it must have precisely one end and is therefore of type OAS or PA. Hence $H$ is of type BP, which implies that $G$ is of type PA. In particular, $G$ satisfies the theorem.
\end{proof}

We proved Theorem~\ref{thm:min_normal_subgroups} in Section~\ref{section:proof_part_1}.

\begin{proof}[Proof of Corollary~\ref{cor:min_normal_subgroups_cor}]
In the statement of Theorem~\ref{thm:min_normal_subgroups}, we have that $m=1$ if and only if $G$ is almost topologically simple. If $G$ has precisely one end, then $G$ is of type OAS. If $G$ has more than one end, then $G$ is of type BP by Theorem~\ref{thm:MainTheorem}.

When $m > 1$ $G$ is not almost topologically simple, and so by  Proposition~\ref{prop:Alternative_to_prop:PA_case} we see that $K_1$ is the minimal closed normal subgroup of $H$.
\end{proof}

Our intention now is to prove Theorem~\ref{thm:main_product_statement}. To do this, we require the following two propositions. Throughout we use the notation of Theorem~\ref{thm:MainTheorem}.
 We begin with two lemmas.

\begin{lemma} \label{lem:Points_and_fibrelobes_coincide}
Suppose $G_i \leq \sym(Y_i)$ for $i =1,2,3,4$ are nontrivial transitive permutation groups.
If  $G_2$ is a fibrelobe-full subgroup of $G_4 \Wr G_3$, then the fibrelobes (resp. points) of $(G_4 \Wr G_3) \boxtimes G_1$ and the fibrelobes (resp. points) of $G_2 \boxtimes G_1$ are equal.

Similarly, if  $G_2$ is a fibrelobe-full subgroup of $G_4 \boxtimes G_3$, then the fibrelobes (resp. points) of $(G_4 \boxtimes G_3) \Wr G_1$ and the fibrelobes (resp. points) of $G_2 \Wr G_1$ are equal.
\end{lemma}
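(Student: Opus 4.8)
The plan is to unwind the definitions of "points" and "fibrelobes" for each of the two products and check they are literally the same sets of subsets of the underlying point set. First I would handle the case where the outer product is $\boxtimes$, i.e.\ compare $(G_4 \Wr G_3) \boxtimes G_1$ with $G_2 \boxtimes G_1$, under the hypothesis that $G_2$ is a fibrelobe-full subgroup of $G_4 \Wr G_3$. Write $M := G_4 \Wr G_3$ and recall that $M$ (and hence $G_2$, being fibrelobe-full in $M$) is transitive with the same point set, call it $Y_2$. By Remark~\ref{rem:intuitive_box} (the intuitive picture of the box product of transitive groups), the box product $N \boxtimes G_1$, for any transitive $N \leq \sym(Y_2)$, is built on the graph $\Gamma(K_{|Y_2|}, |Y_1|)$: its points are the vertices of that graph (so, copies of the set $Y_2$ assembled tree-like) and its lobes are the vertex sets of the complete-graph lobes of that graph. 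Crucially, this graph $\Gamma(K_{|Y_2|}, |Y_1|)$ depends only on the cardinals $|Y_2|$ and $|Y_1|$, not on the group $N$ acting on $Y_2$. Since $G_2$ and $M = G_4 \Wr G_3$ act on the same set $Y_2$, the two box products $G_2 \boxtimes G_1$ and $(G_4 \Wr G_3) \boxtimes G_1$ are (sub)groups of automorphisms of the \emph{same} graph $\Gamma(K_{|Y_2|}, |Y_1|)$, with the same structure tree; hence their points (the vertices) and their fibrelobes (the lobe vertex-sets) coincide. The only place the fibrelobe-full hypothesis is actually needed here is to guarantee that $G_2$ is transitive on $Y_2$ with $|Y_2|$ the right cardinal, so that the box-product construction applies to $G_2$ at all and produces the same underlying graph.

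Next I would handle the symmetric case, comparing $(G_4 \boxtimes G_3) \Wr G_1$ with $G_2 \Wr G_1$ where now $G_2$ is fibrelobe-full in $G_4 \boxtimes G_3$. Again $G_2$ and $N := G_4 \boxtimes G_3$ act on the same point set $Y_2$ (with $G_2$ transitive, by fibrelobe-fullness). The product action of any $N' \leq \sym(Y_2)$ in $N' \Wr G_1$ is on $Y_2^{|Y_1|}$, its fibres are the subsets obtained by fixing all but one coordinate, and its points are the elements of $Y_2^{|Y_1|}$ --- all of which depend only on the set $Y_2$ and the index set $Y_1$, not on $N'$. So the points of $(G_4 \boxtimes G_3) \Wr G_1$ and of $G_2 \Wr G_1$ are both $Y_2^{|Y_1|}$, and the fibres of each are the same coordinate-slices. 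This case is in fact more transparent than the first, since no structure tree is involved.

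I do not expect any serious obstacle; the content of the lemma is essentially that the \emph{shape} of a box or wreath product (its point set and its distinguished collection of fibrelobes) is a function only of the underlying point sets and their cardinalities, not of the specific transitive groups acting, so replacing $G_4 \boxdot G_3$ by a fibrelobe-full subgroup $G_2$ with the same point set changes nothing combinatorially. The one point requiring a small amount of care is verifying that "fibrelobe-full subgroup of $G_4 \boxdot G_3$" does guarantee that $G_2$ acts on the same point set with the same cardinality --- this is immediate from Definition~\ref{def:fibrelobe_full}(i), which says $G_2$ transitively permutes the points of $G_4 \boxdot G_3$ --- and that, in the $\boxtimes$ case, the graph $\Gamma(K_{n_1}, n_2)$ of Remark~\ref{rem:intuitive_box} really is determined by $(n_1, n_2)$ alone, which is how it was constructed in Section~\ref{intro:box_prods}.
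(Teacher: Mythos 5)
Your proposal is correct and follows essentially the same route as the paper: in both cases one observes that, since $G_2$ and $G_4 \boxdot G_3$ are transitive on the same point set $Y_2$, the underlying combinatorial object (the structure tree, equivalently the graph $\Gamma(K_{|Y_2|},|Y_1|)$, in the box case; the set $Y_2^{|Y_1|}$ with its coordinate fibres in the wreath case) depends only on $Y_2$ and $Y_1$, so the points and fibrelobes coincide. The paper phrases the first case directly in terms of the structure tree rather than via Remark~\ref{rem:intuitive_box}, but this is only a difference of presentation.
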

\begin{proof}
First suppose $G_2$ is a fibrelobe-full subgroup of $G_4 \Wr G_3$. Then both groups are transitive permutation groups on the same set of points, so this point set must equal $Y_2$. Hence the structure tree of $G_2 \boxtimes G_1$ is equal to the structure tree of $(G_4 \Wr G_3) \boxtimes G_1$. It follows immediately that the fibrelobes and points of 
$(G_4 \Wr G_3) \boxtimes G_1$ coincide with those of $G_2 \boxtimes G_1$.

Now suppose instead that $G_2$ is a fibrelobe-full subgroup of $G_4 \boxtimes G_3$. Then the set of points of $G_4 \boxtimes G_3$ is equal to $Y_2$. Therefore the points of $(G_4 \boxtimes G_3) \Wr G_1$ is equal to $Y_2^{|Y_1|}$, which is equal to the points of $G_2 \Wr G_1$. Of course this implies that the fibres of $(G_4 \boxtimes G_3) \Wr G_1$ and the fibres of $G_2 \Wr G_1$ also coincide.
\end{proof}

\begin{lemma} \label{lem:Decomp_of_leq_1_end}
If $G \in \mathcal{P}$ is of type PA, with $G \leq H \Wr F$, and $H$ is of type BP, then there is a nontrivial finite transitive permutation group $F_1$, and $H_0 \in \mathcal{P}$ with at most one end, such that $\sd(H_0) < \sd(G)$ and $G$ is a primitive subgroup of $(H_0 \boxtimes F_1) \Wr F$.
\end{lemma}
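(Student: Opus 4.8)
The plan is to chain together the two nested fibrelobe-full inclusions. Since $G \in \mathcal{P}$ is of type PA with $G \leq H \Wr F$ and $H$ is of type BP, the group $H$ is an infinite member of $\mathcal{P}$ with $2^{\aleph_0}$ ends, so in particular it has more than one end. First I would apply Proposition~\ref{prop:Alternative_to_prop:m_is_1_case_and_inf_ended} to $H$: this produces a nontrivial finite transitive group $F_1$ and a group $H_0 \in \mathcal{P}$ with at most one end such that $H$ is a primitive, fibrelobe-full subgroup of $H_0 \boxtimes F_1$ and $\sd(H_0) < \sd(H)$.

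Next I would dispatch the subdegree inequality. Being of type PA, $G$ satisfies $\sd(H) \leq \sd(G)$; combining this with $\sd(H_0) < \sd(H)$ gives $\sd(H_0) < \sd(G)$, which is one of the two required conclusions.

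It remains to exhibit $G$ as a primitive subgroup of $(H_0 \boxtimes F_1) \Wr F$. By Definition~\ref{def:fibrelobe_full}, $H$ being fibrelobe-full in $H_0 \boxtimes F_1$ means in particular that $H$ and $H_0 \boxtimes F_1$ act on the same point set, say $Y$, with $H \leq H_0 \boxtimes F_1$ inside $\sym(Y)$. Passing to product actions on $Y^m$, where $m$ is the degree of $F$, the containment $H \leq H_0 \boxtimes F_1$ upgrades to $H \Wr F \leq (H_0 \boxtimes F_1) \Wr F$: by Lemma~\ref{lem:Points_and_fibrelobes_coincide} the points of the two wreath products coincide, and restricting the base-group coordinates from $H_0 \boxtimes F_1$ down to $H$ identifies $H \Wr F$ with a subgroup of $(H_0 \boxtimes F_1) \Wr F$, which is immediate from the explicit formula for the product action recalled in Section~\ref{prelims:PermGroups}. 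Chaining this with $G \leq H \Wr F$ gives $G \leq (H_0 \boxtimes F_1) \Wr F$, and $G$ is primitive because it is of type PA; hence $G$ is a primitive subgroup of $(H_0 \boxtimes F_1) \Wr F$, as required.

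I do not anticipate a genuine obstacle here; the argument is essentially bookkeeping that combines Proposition~\ref{prop:Alternative_to_prop:m_is_1_case_and_inf_ended} with Lemma~\ref{lem:Points_and_fibrelobes_coincide}. The only mildly fiddly point is making sure the point sets and the product-action embedding line up correctly, which is precisely what Lemma~\ref{lem:Points_and_fibrelobes_coincide} is designed to handle.
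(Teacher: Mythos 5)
Your proposal is correct and follows essentially the same route as the paper: the paper reads the $H \leq H_0 \boxtimes F_1$ decomposition (with $\sd(H_0) < \sd(H) \leq \sd(G)$) straight off the definition of type BP rather than re-invoking Proposition~\ref{prop:Alternative_to_prop:m_is_1_case_and_inf_ended}, but that is the same underlying fact. The remaining steps --- chaining $G \leq H \Wr F \leq (H_0 \boxtimes F_1) \Wr F$ and citing Lemma~\ref{lem:Points_and_fibrelobes_coincide} to identify the point sets --- coincide with the paper's argument.
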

\begin{proof}
Suppose $G$ is of type PA, so $G$ is a primitive and fibrelobe-full subgroup of $H \Wr F$ with $\sd(H) \leq \sd(G)$. Suppose also that $H$ is of  type BP.
Thus $H$ is a fibrelobe-full subgroup of $H_0 \boxtimes F_1$, where $F_1$ is finite nontrivial and transitive, and $H_0 \in \mathcal{P}$ is of type FIN, OAS or PA and $\sd(H_0) < \sd(H) \leq \sd(G)$.

Now $G \leq H \Wr F \leq (H_0 \boxtimes F_1) \Wr F$, and by Lemma~\ref{lem:Points_and_fibrelobes_coincide}, the points of $H \Wr F$ and  $(H_0 \boxtimes F_1) \Wr F$ are equal. Hence $G$ is a primitive subgroup of $(H_0 \boxtimes F_1) \Wr F$.
\end{proof}

\begin{remark}
In the above lemma, because the fibrelobes of $H \Wr F$ and  $(H_0 \boxtimes F_1) \Wr F$ are equal, one might wonder if $G$ is fibrelobe-full in $(H_0 \boxtimes F_1) \Wr F$. In general, it is not, since on each fibrelobe $\Lambda$ the setwise stabiliser $G_{\{\Lambda\}}$ induces a group that is dense in $H$, but $H$ could be significantly smaller than the full box product $H_0 \boxtimes F_1$. 

Any fibrelobe $\hat{\Lambda}$ of $H_0$ is contained in some fibrelobe $\Lambda$ of $(H_0 \boxtimes F_1) \Wr F$. Now $\overline{G_{\{\Lambda\}} \big|_{\Lambda}} = H$, and $\overline{H_{\{\hat{\Lambda}\}} \big|_{\hat{\Lambda}}} = H_0$. In this way, one can trace the action of $G$ through nested sequences of fibrelobes. One can perform a similar analysis (with increasingly cumbersome notation) for the following two propositions, and indeed for Theorem~\ref{thm:main_product_statement}.
\end{remark}

\begin{proposition} \label{lem:Decomp_of_leq_1_end_v2}
If $G \in \mathcal{P}$ has at most one end, then there is a finite sequence $F_1, \ldots, F_n$ of finite transitive groups, with $F_i$ nontrivial for all $i > 1$, and a group $H_0 \in \mathcal{P}$ that is of type FIN or OAS, such that $G$ is a primitive   subgroup of
\[(((H_0 \Wr F_1 ) \boxtimes F_{2} ) \Wr \cdots \boxtimes F_{n-1}) \Wr F_n.\]
\end{proposition}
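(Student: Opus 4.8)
## Proof proposal for Proposition~\ref{lem:Decomp_of_leq_1_end_v2}

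The plan is to induct on $\sd(G)$, using Theorem~\ref{thm:MainTheorem} to peel off one layer of product at a time, and Lemma~\ref{lem:Points_and_fibrelobes_coincide} together with Lemma~\ref{lem:Decomp_of_leq_1_end} to keep the accumulating product in the required alternating normal form $(((H_0 \Wr F_1) \boxtimes F_2) \Wr \cdots) \Wr F_n$. First I would dispose of the base cases: if $G$ is of type FIN or OAS, take $n = 1$ and $F_1$ trivial (or simply $H_0 = G$), so $G$ is trivially a primitive subgroup of itself. Since $G$ has at most one end, Theorem~\ref{thm:MainTheorem} rules out type BP, so the only remaining case is that $G$ is of type PA.

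So suppose $G$ is of type PA, and fix $H \in \mathcal{P}$ infinite of type OAS or BP with $G$ a primitive, fibrelobe-full subgroup of $H \Wr F$, where $F$ is finite nontrivial transitive and $\sd(H) \le \sd(G)$. There are two subcases. If $H$ is of type OAS, then $H$ already has the desired (length-one) decomposition with seed group of type OAS, and $G \le H \Wr F$ is a primitive subgroup of $H \Wr F$, which is of the stated form with $n = 2$, $F_1 = H$'s trivial exponent... more precisely, writing the product as $(H \Wr F_1) \Wr F_2$ with $F_1$ trivial and $F_2 = F$ — but to keep $F_i$ nontrivial for $i > 1$ we simply take $H_0 = H$ and the sequence $F_1 = F$ of length $n = 1$ applied as $H_0 \Wr F_1$; here the ``$F_i$ nontrivial for $i > 1$'' constraint is vacuous. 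If instead $H$ is of type BP, I invoke Lemma~\ref{lem:Decomp_of_leq_1_end}: there is a finite nontrivial transitive $F_1$ and $H_0 \in \mathcal{P}$ with at most one end such that $\sd(H_0) < \sd(G)$ and $G$ is a primitive subgroup of $(H_0 \boxtimes F_1) \Wr F$. Now $H_0$ has at most one end and $\sd(H_0) < \sd(G)$, so by the inductive hypothesis $H_0$ is a primitive subgroup of some product $P := (((H_0' \Wr F_1') \boxtimes F_2') \Wr \cdots) \Wr F_n'$ in normal form with seed $H_0'$ of type FIN or OAS. The key point is then that $H_0$ being a primitive subgroup of $P$ — hence by the known primitivity conditions (Theorems~\ref{thm:box_prim},~\ref{prop:wr_prim}) fibrelobe-full, or at least with coinciding point sets — lets me apply Lemma~\ref{lem:Points_and_fibrelobes_coincide} repeatedly to conclude that the points (and fibrelobes) of $(H_0 \boxtimes F_1) \Wr F$ agree with those of $(P \boxtimes F_1) \Wr F$, and therefore $G$ is a primitive subgroup of $(P \boxtimes F_1) \Wr F$, which is again in the required alternating form (its seed is $H_0'$, of type FIN or OAS, and the new product string ends $\cdots \boxtimes F_1) \Wr F$, with all interior factors nontrivial).

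The main obstacle I anticipate is bookkeeping around the normal form and the ``$F_i$ nontrivial for $1 < i < n$'' (or $i>1$) constraint: when I splice $P$ into $(P \boxtimes F_1) \Wr F$ I must check that the interface — the transition from the last factor $F_n'$ of $P$ to the new $\boxtimes F_1$ — is legitimate, i.e. that two wreath (or two box) factors never end up adjacent, and that no interior factor becomes trivial. Since $P$ in normal form alternates $\Wr$ and $\boxtimes$ starting and (by construction, after possibly absorbing a trivial outermost factor) flexibly ending, and since I am appending $\boxtimes F_1$ followed by $\Wr F$ with both $F_1$ and $F$ nontrivial, the alternation is preserved provided $P$'s outermost operation is a $\Wr$; if it is a $\boxtimes$, I can first write $P = P \Wr (\text{trivial})$ and then the append gives $\cdots \boxtimes F_{n}') \Wr(\text{trivial}) \boxtimes F_1) \Wr F$, which still has only the (allowed) trivial factor in the outermost-but-a-few position — or, more cleanly, I note that Lemma~\ref{lem:Points_and_fibrelobes_coincide} also handles the $(\text{box})\Wr(\text{box})$-free rearrangement by letting the trivial wreath factor collapse. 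The second, more delicate point is ensuring the quantity $\sd$ strictly decreases at each box step so the recursion on $\sd(G) \in \N$ genuinely terminates: this is exactly the content of $\sd(H_0) < \sd(G)$ from Lemma~\ref{lem:Decomp_of_leq_1_end}, so the induction is well-founded. Everything else — that $G$ is primitive throughout, that the ambient product lies in $\mathcal{P}$ — follows from Theorems~\ref{thm:box_prim},~\ref{prop:wr_prim} and~\ref{thm:box_product_summary} as already noted in the partial-converse discussion.
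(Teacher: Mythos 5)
Your argument is essentially the paper's proof: an induction on $\sd(G)$ (the paper phrases it as a minimal counterexample), the same dispatch of the FIN/OAS cases and of the PA case with $H$ of type OAS, and in the remaining case the same use of Lemma~\ref{lem:Decomp_of_leq_1_end} followed by the inductive hypothesis applied to $H_0$ and a splice of the resulting product $P$ into $(\,\cdot\,\boxtimes F_1)\Wr F$. One step is misattributed, though: the containment $G \leq (P \boxtimes F_1) \Wr F$ does not follow from Lemma~\ref{lem:Points_and_fibrelobes_coincide}, which only records that point sets coincide, and a primitive subgroup of $P$ need not be fibrelobe-full; what is actually needed (and what the paper uses) is that $H_0$ and $P$ are permutation groups on the same set, so that Theorem~\ref{thm:box_product_summary}(v) gives $H_0 \boxtimes F_1 \leq P \boxtimes F_1$ on the same point set, whence $(H_0 \boxtimes F_1)\Wr F \leq (P \boxtimes F_1)\Wr F$ and $G$ sits primitively inside the latter. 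Finally, your worry about the outermost operation of $P$ being a box product is moot: the normal form in the statement always ends in $\Wr F_n$, so appending $\boxtimes F_1$ and then $\Wr F$ preserves the alternation and the nontriviality constraint (only the innermost factor of $P$ may be trivial) without any padding by trivial factors.
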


\begin{proof}
Suppose the result is false, and $G \in \mathcal{P}$ 
is a minimal counterexample to the corollary with respect to $\sd(G)$. Now $G$ has at most one end, and so by Theorem~\ref{thm:MainTheorem} $G$ is of type FIN, OAS or PA. If $G$ is of type FIN or OAS then $G$ satisfies the proposition (with $n = 1$ and $F_1$ trivial). Since this is impossible, $G$ must be of type PA. Thus we can write $G \leq H \Wr F_2$ for some nontrivial transitive and finite group $F_2$ and some $H \in \mathcal{P}$ of type OAS or BP. If $H$ is of type OAS then again $G$ satisfies the proposition which is impossible. Therefore $H$ is of type BP.
By Lemma~\ref{lem:Decomp_of_leq_1_end}, $G$ is a primitive subgroup of $(H _0 \boxtimes F_1) \Wr F_2$, where $F_1$ is nontrivial transitive and finite, and $H_0 \in \mathcal{P}$ has at most one end and $\sd(H_0) < \sd(G)$. 

Since $G$ is a minimal counterexample, $H_0$ must satisfy the corollary. Hence there exist finite transitive groups $J_1, \ldots, J_n$, with $J_i$ nontrivial for all $i > 1$, and there exists $\hat{H} \in \mathcal{P}$ that is of type FIN or OAS such that $H_0$ is a primitive subgroup of $G_0 := (((\hat{H} \Wr J_1 ) \boxtimes J_{2} ) \Wr \cdots \boxtimes J_{n-1}) \Wr J_n$. 

Since $H_0$ and $G_0$ are permutation groups on the same set, by Theorem~\ref{thm:box_product_summary} we have $H_0 \boxtimes F_1 \leq G_0 \boxtimes F_1$. Both of these groups are of course also permutation groups on the same set. Therefore $(H_0 \boxtimes F_1) \Wr F_2 \leq (G_0 \boxtimes F_1) \Wr F_2$. Hence $G$ is a primitive subgroup of $(G_0 \boxtimes F_1) \Wr F_2$, and therefore $G$ is a primitive subgroup of
\[(( (((\hat{H} \Wr J_1 ) \boxtimes J_{2} ) \Wr \cdots \boxtimes J_{n-1}) \Wr J_n) \boxtimes F_1) \Wr F_2.\]
Whence $G$ satisfies the corollary; this is impossible by assumption.
\end{proof}

\begin{proposition} \label{prop:Decomp_of_all_ends_v2}
If $G \in \mathcal{P}$ has more than one end, then there exists a finite sequence $F_1, \ldots, F_{n+1}$ of finite transitive groups, with $F_i$ nontrivial for all $i > 1$, and a group $H_0 \in \mathcal{P}$ that is either FIN or OAS, such that $G$ is a primitive subgroup of
\[((((H_0 \Wr F_1 ) \boxtimes F_{2} ) \Wr \cdots \boxtimes F_{n-1}) \Wr F_n) \boxtimes F_{n+1}.\]
\end{proposition}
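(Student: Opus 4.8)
The plan is to remove a single box-product layer from the top of $G$ via Proposition~\ref{prop:m_is_1_case_and_inf_ended}, leaving a group with at most one end, and then to apply Proposition~\ref{lem:Decomp_of_leq_1_end_v2} directly. Since $G \in \mathcal{P}$ has more than one end, Proposition~\ref{prop:m_is_1_case_and_inf_ended} supplies a nontrivial finite transitive group, which I shall call $F_{n+1}$, and a group $H \in \mathcal{P}$ with \emph{at most} one end, such that $G$ is a primitive (indeed fibrelobe-full) subgroup of $H \boxtimes F_{n+1}$; the inequality $\sd(H) < \sd(G)$ provided by that proposition is not needed for the present statement. In particular $G$ is a primitive subgroup of $H \boxtimes F_{n+1}$.

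I would then apply Proposition~\ref{lem:Decomp_of_leq_1_end_v2} to $H$, obtaining a finite sequence $F_1, \ldots, F_n$ of finite transitive groups with $F_i$ nontrivial for all $i > 1$, and a group $H_0 \in \mathcal{P}$ of type FIN or OAS, such that $H$ is a primitive subgroup of $G_0 := (((H_0 \Wr F_1) \boxtimes F_2) \Wr \cdots \boxtimes F_{n-1}) \Wr F_n$. Since $H \leq G_0$ as permutation groups, they act on the same point set and so have the same (nontrivial) degree; hence the box products $H \boxtimes F_{n+1}$ and $G_0 \boxtimes F_{n+1}$ are built on the same biregular structure tree $T$, and Theorem~\ref{thm:box_product_summary}(5) gives $\mathcal{U}(H, F_{n+1}) \leq \mathcal{U}(G_0, F_{n+1})$; passing to the induced action on $V_2$ yields $H \boxtimes F_{n+1} \leq G_0 \boxtimes F_{n+1}$ as permutation groups. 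Therefore $G$ is a primitive subgroup of $G_0 \boxtimes F_{n+1}$, which is precisely
\[((((H_0 \Wr F_1) \boxtimes F_2) \Wr \cdots \boxtimes F_{n-1}) \Wr F_n) \boxtimes F_{n+1},\]
with $F_i$ nontrivial for all $i > 1$ (the possibly-trivial member of the sequence is $F_1$, arising from the FIN/OAS base case of Proposition~\ref{lem:Decomp_of_leq_1_end_v2}) and $H_0$ of type FIN or OAS, which is exactly the claim.

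The argument is short precisely because the substantive work has already been done: the structural analysis is packaged into Propositions~\ref{prop:m_is_1_case_and_inf_ended} and \ref{lem:Decomp_of_leq_1_end_v2}, and, in contrast with the proof of the latter, no further induction on $\sd(G)$ is needed here. The only step requiring any care is the monotonicity of the box product --- that one may enlarge its first factor from $H$ to $G_0$, both acting on the same point set, and enlarge the box product accordingly --- which is furnished directly by Theorem~\ref{thm:box_product_summary}(5). I do not anticipate any genuine obstacle.
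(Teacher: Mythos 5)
Your proposal is correct and follows essentially the same route as the paper: peel off the outer box product (the paper cites the BP case of Theorem~\ref{thm:MainTheorem}, which is just Proposition~\ref{prop:m_is_1_case_and_inf_ended} repackaged, so your direct appeal to that proposition is an immaterial variation), apply Proposition~\ref{lem:Decomp_of_leq_1_end_v2} to the one-or-fewer-ended factor $H$, and then use monotonicity of $\boxtimes$ in its first argument (Theorem~\ref{thm:box_product_summary}(5)) to replace $H$ by $G_0$. All the side conditions (nontriviality of $F_2,\ldots,F_{n+1}$, $H_0$ of type FIN or OAS) are handled exactly as in the paper.
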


\begin{proof}[Proof of Proposition~\ref{prop:Decomp_of_all_ends_v2}]
Suppose $G \in \mathcal{P}$ has more than one end. By Theorem~\ref{thm:MainTheorem}, $G$ is of type BP and so there is a nontrivial finite transitive group $F$ and a group $H \in \mathcal{P}$ of type FIN, OAS or PA such that $G$ is a primitive, fibrelobe-full subgroup of $H \boxtimes F$. In particular, $H$ has at most one end.
By Proposition~\ref{lem:Decomp_of_leq_1_end_v2}, there is a finite sequence $F_1, \ldots, F_n$ of finite transitive groups, with $F_i$ nontrivial for all $i > 1$, and a group $H_0 \in \mathcal{P}$ that is either FIN or OAS, such that $H$ is a  primitive subgroup of
$G_0 := (((H_0 \Wr F_1 ) \boxtimes F_{2} ) \Wr \cdots \boxtimes F_{n-1}) \Wr F_n$. Since $H$ and $G_0$ are permutation groups on the same set, we again have $H \boxtimes F \leq G_0 \boxtimes F$. We have thus shown that $G$ is a primitive subgroup of $((((H_0 \Wr F_1 ) \boxtimes F_{2} ) \Wr \cdots \boxtimes F_{n-1}) \Wr F_n) \boxtimes F$.
\end{proof}

Theorem~\ref{thm:main_product_statement} now follows immediately from Propositions~\ref{lem:Decomp_of_leq_1_end_v2} and \ref{prop:Decomp_of_all_ends_v2}.\\

Now we prove the topological interpretations of our main theorems.

\begin{proof}[Proof of Theorem~\ref{thm:MainTheoremTDLC}] The proof of this theorem relies upon Theorem~\ref{thm:MainTheorem}. Because of this, we must be careful not to confuse the various types in the two theorems since we have not yet proved that they coincide. So, for this proof let us call the types in Theorem~\ref{thm:MainTheorem} pFIN, pOAS, pPA and pBP, and let us call the types in Theorem~\ref{thm:MainTheoremTDLC} tOAS, tPA, tBP.

As noted previously, $G^* := G \sslash V$ is a closed, primitive and subdegree-finite permutation group on the set $\Omega := G/V$, and so $G^* \in \mathcal{P}$. Since $V$ is compact and $G$ is non-compact, it follows that $|G:V|$ is infinite and therefore $G^*$ is infinite. Fix $\alpha \in \Omega$.

By Theorem~\ref{thm:min_normal_subgroups}, $G^*$ has a unique nontrivial minimal closed normal subgroup $B$, so in particular $G^*$ is monolithic, with monolith $B$ that is nonabelian, and there exist finitely many topologically simple, pairwise isomorphic, nonabelian infinite permutation groups $K_1, \ldots, K_m$ such that each $K_i$ is closed and normal in $B$ and $M:= K_1 \times \cdots \times K_m$ is a dense subgroup of $B$. In other words, $B$ is a quasiproduct of $m$ copies of $K := K_1$. The stabiliser $G^*_\alpha$ acts by conjugation on $B$, permuting the components of $M$, and this conjugation action induces a nontrivial transitive group $F$.

All nontrivial normal subgroups of $G^* \in \mathcal{P}$ are transitive, and point stabilisers in $G^*$ are compact and open. Therefore by Proposition~\ref{Proposition_A},  all nontrivial normal subgroups of $G^*$ are cocompact in $G^*$. Hence $B$ is cocompact. 

Point stabilisers in $G^*$ are maximal subgroups so $G^*$ is compactly generated. For some $\beta \in \Omega \setminus \{\alpha\}$ let $\Gamma$ be the orbital graph $(\Omega, \{\alpha, \beta\}^{G^*})$. By Higman's criterion, this graph is connected. It is locally finite with bounded valency because $G^*$ is transitive and subdegree-finite.
Since $G^*$ is closed, tdlc and transitive with compact open vertex stabilisers, $\Gamma$ is a Cayley--Abels graph for $G^*$. Since $B$ is closed, $B$ is tdlc with compact open point stabilisers. Furthermore, $B$ is transitive on the vertices of $\Gamma$. It follows then that $\Gamma$ is also a Cayley--Abels graph for $B$, so $B$ is compactly generated. Since $\Gamma$ is a Cayley--Abels graph for both $G^*$ and $B$, the two groups have precisely the same ends.

We can now apply Theorem~\ref{thm:MainTheorem} to $G^*$, and deduce that $G^*$ (as a permutation group) is of type pOAS, pPA or pBP. 

The circular nature of our decomposition theorem makes a direct proof that reconciles the types pOAS, pPA, pBP, tOAS, tPA and tBP difficult; instead we again use a minimum counterexample argument. Suppose $G^*$ is a counterexample to Theorem~\ref{thm:MainTheoremTDLC} that is minimal with respect to $\sd(G^*)$.

If $G^*$ is of type pOAS, then $G^*$ is one-ended and has a nonabelian, closed normal subgroup that is topologically simple, which of course must equal $B$. Therefore $B$ is topologically simple, and as abstract groups we have $B \leq G^* \leq \aut(B)$. 

If $G^*$ is discrete then $B$ is discrete. If $G^*$ is not discrete, then $G^*$ has an infinite point stabiliser. Every nontrivial normal subgroup of $G^*$ is transitive on $\Gamma$, therefore, by Lemma~\ref{lemma:TrivialQuasiCenter}, $G^*$ has no countable closed normal subgroups. Hence $B$ cannot be countable, and in particular is not discrete. Thus, $G^*$ (as a topological group) matches the description of type tOAS in Theorem~\ref{thm:MainTheoremTDLC}. This means that $G^*$ satisfies the theorem, which is counter to our assumption. Hence $G^*$ is not of type pOAS. 

Now we assume that $G^*$ is of type pBP. Then $B$ is topologically simple and $G^*$ is a fibrelobe-full, primitive subgroup of $H \boxtimes F'$ for some finite nontrivial transitive group $F'$ and $H \in \mathcal{P}$ is either a finite nonregular primitive permutation group, or $H$ is infinite of type pOAS or pPA. Since $H$ is a closed permutation group, we have that $H = H \sslash W$ where $W$ is any point stabiliser in $H$.
Our above argument shows that if $H$ is of type pOAS then $H$ is of type tOAS. In this case, $G^*$ would satisfy the theorem, which is not possible. Thus $H$ is of type pPA. Now $\sd(H) < \sd(G^*)$, and $G^*$ is a minimal counterexample, therefore $H$ satisfies the theorem. Since $H$ is of type pPA its monolith is not topologically simple. The only type in our theorem with this property is tPA. Whence, $H$ is of type tPA. However, this again implies that $G^*$ satisfies the theorem, which is impossible. Thus, we conclude that $G^*$ is not of type pBP.

Finally, we suppose that $G^*$ is of type pPA. Then $G^*$ is one-ended, primitive, fibrelobe-full subgroup of $H \Wr F$, where $F$ is the finite nontrivial transitive permutation group defined above and $H$ is some infinite group in $\mathcal{P}$ of type pOAS or pBP, with $\sd(H) \leq \sd(G^*)$. Since $H$ is a closed permutation group, we have that $H = H \sslash W$ where $W$ is any point stabiliser in $H$. By Proposition~\ref{prop:Alternative_to_prop:PA_case}, we have that $G^*$ is discrete if and only if $H$ is discrete.

The monolith of $H$ is $K$, which is a nonabelian, closed, transitive (and therefore cocompact) topologically simple, normal subgroup of $H$. From Theorem~\ref{thm:min_normal_subgroups} we have that the monolith of $G^*$ is $B$ and $B$ is the closure of $M \cong K^m$. Thus the monolith of $G^*$ is a quasiproduct of $m$ copies of $K$. Again from Theorem~\ref{thm:min_normal_subgroups}, the permutation group induced by the conjugation action of $G^*_\alpha$ on the components of $M$ is $F$.

Our above arguments show that if $H$ is of type pOAS, then it is of type tOAS and $G^*$ then satisfies the theorem. 
Since this cannot happen by assumption, we have that $H$ is in fact of type pBP. In particular $H$ has infinitely many ends. Therefore, if $H$ satisfies the theorem then $H$ must be of type tBP (since this is the only infinitely ended type). However, if $H$ is of type tBP then $G^*$ satisfies the theorem, which is not possible. Therefore $H$ must also be a minimal counterexample to the theorem, with $\sd(H) = \sd(G^*)$. However, our above argument shows that a minimal counterexample to the theorem cannot be of type pBP.

Since we have exhausted all possibilities, it must be the case that no minimum counterexample exists and the theorem stands.

Note that
discrete and nondiscrete examples exist for groups of type tOAS (see Section~\ref{section:examples}).
\end{proof}

Theorem~\ref{thm:topo_main_product_statement} requires no proof, since it is simply a restating of Theorem~\ref{thm:main_product_statement}.

\begin{proof}[Proof of Theorem~\ref{thm:detecting_simple_tdlc_groups}] Applying Theorem~\ref{thm:MainTheoremTDLC} (and adopting its notion) we see that $G \sslash V$ is of type OAS, PA or BP. If $G \sslash V$ is of type PA then 
$G \sslash V$ is permutationally isomorphic to a subgroup of $H \Wr F$, where $H \Wr F$ is acting via its product action on some set $X^m$, with $H \leq \sym(X)$ and $F \leq S_m$.

For each $i$, let $\pi_i : X^m \rightarrow X$ be the projection to the $i$-th coordinate.
Then $\Gamma_i := \{\pi_i^{-1}(y) : y \in X\}$ is a partition of $X^m$. Since $X$ is infinite and $m \geq 2$, one can easily verify that $\mathcal{E} := \{\Gamma_1, \ldots, \Gamma_m\}$ is a nontrivial homogeneous cartesian decomposition of $X^m$.
One can also easily verify that $\mathcal{E}$ is $(H \Wr F)$-invariant, and thus the corresponding nontrivial homogeneous cartesian decomposition on $G / V$ is
$(G \sslash V)$-invariant. Since this is impossible by assumption, $G \sslash V$ is not of type PA.

On the other hand, $G \sslash V$ is of type BP if and only if $G \sslash V$ has infinitely many ends. In this case, it is known that $G \sslash V$ splits non-trivially as an amalgamated free product over a compact open subgroup (see \cite{abels-74} or \cite[Theorem 3.18]{KronMoller}). Since this is impossible by assumption, $G \sslash V$ is not of type BP.

Hence, our nondiscrete group $G \sslash V$ is of type OAS. The monolith of $G \sslash V$ is thus nondiscrete, one-ended, topologically simple, compactly generated and nonabelian.
\end{proof}

\section{Examples}
\label{section:examples}

In this section we show that there are examples for each type occurring in Theorem~\ref{thm:MainTheorem}. 

\subsection{Discrete groups of type OAS and PA}
Discrete examples of infinite groups in $\mathcal{P}$ of type OAS can be found in \cite{smith:discrete_prim}. The paper contains examples in which the minimal closed normal subgroup is regular. These examples are built using Tarski-Ol'Shanski{\u\i} Monsters (see \cite{olshanski}). The paper also contains examples in which the minimal closed normal subgroup is nonregular. These examples are built using a result of V.~N.~Obraztsov (see \cite{obraztsov}).

Now that we have examples of discrete groups in $\mathcal{P}$ of type OAS, it is  trivial to construct discrete examples of groups of type PA: if $H \in \mathcal{P}$ is discrete of type OAS, then $H \Wr S_3$ lies in $\mathcal{P}$ and is discrete of type PA.

\subsection{Nondiscrete groups of type PA and BP}
Theorem~\ref{thm:MainTheorem} tells us that there are no discrete groups of type BP. Using basic properties of the box product, we see that $S_3 \boxtimes S_3$ is nondiscrete, lies in $\mathcal{P}$, and is of type BP.

Constructing a nondiscrete example of a group in $\mathcal{P}$ of type PA is now easy. Using basic properties of the wreath product in its product action, we see that $(S_3 \boxtimes S_3) \Wr S_3$ lies in $\mathcal{P}$, is nondiscrete, and is of type PA.\\

\subsection{Nondiscrete groups of type OAS}

We do not give an introduction to Kac--Moody groups here. Readers wishing for an introduction are referred to \cite{marquis:book}. We follow the notion of \cite{caprace:remy}.
Throughout this section, $(W, S)$ will be a Coxeter system with root system $\Phi$ and generalised Cartan matrix $A$.
Let $(\Lambda, \{U_\alpha\}_{\alpha \in \Phi})$ be the twin root datum of type $(W, S)$. Assume each root group is finite and that $W$ is infinite. The associated twin buildings are denoted $(B_+, B_-)$. The groups $\aut(B_+)$ and $\aut(B_-)$ are given the compact open topology, under which they are tdlc. The group generated by all root groups is $\Lambda^\dagger$ and $\Lambda^\dagger \unlhd \Lambda$. The kernel of the action of $\Lambda$ on $B_+$ is the centraliser $Z_\Lambda(\Lambda^\dagger)$ (see \cite[Section 1.1]{caprace:remy}). Let $\pi_+ : \Lambda \rightarrow \aut(B_+)$ be the natural homomorphism of the action of $\Lambda$ on $B_+$, and let $\Lambda_+^{\rm{eff}} := \pi_+(\Lambda) \leq \aut(B_+)$. The closure of $\Lambda_+^{\rm{eff}}$ in $\aut(B_+)$ is denoted $\overline{\Lambda}_+^{\rm{eff}}$, and is a completion of $\Lambda$. There is another completion of $\Lambda$ given in \cite{caprace:remy}, where $\Lambda$ is completed with respect to the so-called positive building measure; this completion is denoted $\overline{\Lambda}_+$.

\begin{proposition}[{\cite[Proposition 1]{caprace:remy}}] With the above notation we have the following.
\begin{enumerate}
\item
	$\overline{\Lambda}_+$ is tdlc under the (positive) building topology.
\item
	The map $\pi_+$ has a natural extension to a continuous surjective homomorphism $\overline{\pi}_+ : \overline{\Lambda}_+ \rightarrow \overline{\Lambda}_+^{\rm{eff}}$. The kernel of $\overline{\pi}_+$ is $Z_{\Lambda}(\Lambda^\dagger)$, and this is a discrete subgroup of $\overline{\Lambda}_+$.
\item
	The canonical map $\overline{\Lambda}_+ / Z_{\Lambda}(\Lambda^\dagger) \rightarrow \overline{\Lambda}_+^{\rm{eff}}$ is an isomorphism of topological groups
\end{enumerate}
\end{proposition}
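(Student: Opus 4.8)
The three assertions are precisely \cite[Proposition 1]{caprace:remy}, and the plan is to reproduce the standard R\'emy--Ronan style argument. For part (i), I would first exhibit a compact open subgroup of $\overline{\Lambda}_+$. The positive building topology on $\Lambda$ has as a neighbourhood basis of the identity the filtration subgroups of the positive unipotent subgroup $U_+ := \langle U_\alpha : \alpha \in \Phi_+ \rangle$ (equivalently, the fixators in $\Lambda$ of balls of increasing radius in $B_+$ about a fixed chamber $c$). Since every root group is finite, the building $B_+$ is locally finite, so each such ball is finite and the successive quotients of the filtration are finite groups. Hence the closure $\overline{U}_+$ of $U_+$ in $\overline{\Lambda}_+$ is the inverse limit of these finite quotients, that is, a profinite --- in particular compact --- group, and it is open by construction. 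A topological group with a compact open subgroup is locally compact, and since $\overline{U}_+$ itself admits a neighbourhood basis of the identity consisting of open subgroups, $\overline{\Lambda}_+$ is totally disconnected; this gives (i).

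For part (ii), I would produce $\overline{\pi}_+$ via the universal property of the completion. The natural map $\pi_+ : \Lambda \to \aut(B_+)$ is continuous for the building topology on $\Lambda$ and the compact-open topology (pointwise convergence on chambers) on $\aut(B_+)$: the $\pi_+$-preimage of the fixator of a ball of radius $n$ about $c$ contains the corresponding filtration subgroup, which is a basic neighbourhood of the identity. As $\aut(B_+)$ is complete and $\overline{\Lambda}_+$ is by definition the completion of $\Lambda$ for the building topology, $\pi_+$ extends uniquely to a continuous homomorphism $\overline{\pi}_+ : \overline{\Lambda}_+ \to \aut(B_+)$, whose image is the closure of $\pi_+(\Lambda) = \Lambda_+^{\rm{eff}}$, namely $\overline{\Lambda}_+^{\rm{eff}}$. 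Thus $\overline{\pi}_+ : \overline{\Lambda}_+ \to \overline{\Lambda}_+^{\rm{eff}}$ is continuous and surjective.

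It remains to compute the kernel and deduce (iii). By the general theory of twin root data, $\ker \pi_+$, the pointwise fixator of $B_+$ in $\Lambda$, equals $Z_\Lambda(\Lambda^\dagger)$, so a priori $Z_\Lambda(\Lambda^\dagger) \le \ker \overline{\pi}_+$. The key point is that $\Lambda^\dagger$, and hence $U_+$, acts faithfully on $B_+$, so on the open subgroup $\overline{U}_+$ the building topology agrees with the topology pulled back from $\aut(B_+)$; consequently $\overline{\pi}_+$ is injective on $\overline{U}_+$ and $\ker \overline{\pi}_+ \cap \overline{U}_+ = \{1\}$. A subgroup meeting an open subgroup trivially is discrete, and a discrete --- hence locally compact --- subgroup of a Hausdorff group is closed; combining this with the density of $\Lambda$ in $\overline{\Lambda}_+$, the normality of $\ker\overline{\pi}_+$, and the identity $\Lambda \cap \ker\overline{\pi}_+ = \ker\pi_+ = Z_\Lambda(\Lambda^\dagger)$ forces $\ker\overline{\pi}_+ = Z_\Lambda(\Lambda^\dagger)$, which is therefore a discrete subgroup of $\overline{\Lambda}_+$, proving the remaining clause of (ii). Finally, $\overline{\pi}_+$ is a continuous surjection of locally compact, second-countable groups ($\Lambda$ is countable, so $\overline{\Lambda}_+$ is second countable), hence open by the open mapping theorem; thus the canonical continuous bijection $\overline{\Lambda}_+ / Z_\Lambda(\Lambda^\dagger) \to \overline{\Lambda}_+^{\rm{eff}}$ is a homeomorphism, which is (iii).

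The only genuinely delicate step is identifying $\ker\overline{\pi}_+$ with $Z_\Lambda(\Lambda^\dagger)$ exactly --- rather than with some potentially larger closed normal subgroup --- together with its discreteness; everything else is routine bookkeeping with completions plus one invocation of the open mapping theorem. That delicate step rests entirely on the faithfulness of $\Lambda^\dagger$ on $B_+$ and on the compatibility of the building topology with the $\aut(B_+)$-topology on the open subgroup $\overline{U}_+$, i.e.\ on the fact that completing $\Lambda$ adds nothing to the part of $\Lambda$ that already acts faithfully on $B_+$.
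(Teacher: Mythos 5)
You should know at the outset that the paper contains no proof of this statement: it is quoted verbatim as \cite[Proposition 1]{caprace:remy} and used as a black box in the examples section, so there is no internal argument to compare yours against. What you have written is a reconstruction of Caprace and R\'emy's original proof, and in outline it does follow their line: exhibit the compact open subgroup $\overline{U}_+$, extend $\pi_+$ by continuity to the completion, use faithfulness of $U_+$ on $B_+$ to get discreteness of the kernel, and deduce (iii) from an open mapping argument (which is fine here, since $\Lambda$ is countable and $\overline{U}_+$ is compact open, so $\overline{\Lambda}_+$ is $\sigma$-compact).

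Two points need repair, and they are exactly at the step you yourself call delicate. First, the parenthetical claim that the basic identity neighbourhoods are ``equivalently the fixators in $\Lambda$ of balls'' is false, and in a way that matters: every fixator in $\Lambda$ of a ball contains $Z_\Lambda(\Lambda^\dagger)$, so with that basis the kernel could not possibly be discrete in $\overline{\Lambda}_+$. The entire content of the discreteness assertion in (ii) is that the basic neighbourhoods are taken \emph{inside} $U_+$, which meets $Z_\Lambda(\Lambda^\dagger)$ trivially; stating the two descriptions as equivalent erases the very point being proved. Second, the identification $\ker\overline{\pi}_+ = Z_\Lambda(\Lambda^\dagger)$ is asserted rather than proved: discreteness and normality of $\ker\overline{\pi}_+$, density of $\Lambda$, and the identity $\Lambda \cap \ker\overline{\pi}_+ = \ker\pi_+$ do not by themselves force $\ker\overline{\pi}_+ \subseteq \Lambda$, which is what you need. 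The missing argument is of the following kind: since $\Lambda$ is dense and $\overline{U}_+$ is open, $\overline{\Lambda}_+ = \Lambda\,\overline{U}_+$, so any $g \in \ker\overline{\pi}_+$ can be written $g = \lambda u$ with $\lambda \in \Lambda$, $u \in \overline{U}_+$; then $\overline{\pi}_+(u) = \pi_+(\lambda)^{-1}$ fixes the base chamber and lies in $\overline{\pi}_+(\overline{U}_+)$, and one must argue, using the structure of the chamber stabiliser coming from the twin root datum (this is where the finiteness of the torus and root groups enters, and it is what the source actually supplies), that this forces $u \in U_+$, hence $g \in \Lambda$ and $g \in \ker\pi_+ = Z_\Lambda(\Lambda^\dagger)$. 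Without some argument of this shape, the possibility that $\ker\overline{\pi}_+$ is a strictly larger closed normal subgroup --- the one thing the proposition rules out --- has not been excluded.
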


\begin{proposition}[{\cite[Proposition 4, Proposition 11 and Lemma 9]{caprace:remy}}] \label{prop:KacMoody-simplicity}
Suppose $W$ is irreducible and non-spherical, with soluble root groups, and $\Lambda$ is a split or almost split Kac--Moody group and is generated by its root subgroups. Then $\overline{\Lambda}_+^{\rm{eff}}$ is topologically simple.
\end{proposition}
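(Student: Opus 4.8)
The statement is precisely \cite[Proposition~4, Proposition~11 and Lemma~9]{caprace:remy}, so the plan is to recall the architecture of their proof rather than to reprove it from scratch; the only thing worth adding is an explicit identification of the role played by each hypothesis. By construction $\overline{\Lambda}_+^{\rm{eff}}$ is a closed subgroup of $\aut(B_+)$ that acts \emph{faithfully} on the positive building $B_+$ --- this faithfulness is the whole point of passing to the effective completion, since the kernel $Z_\Lambda(\Lambda^\dagger)$ of the $\Lambda$-action on $B_+$ has been factored out. First I would record that this action is strongly transitive, so that $\overline{\Lambda}_+^{\rm{eff}}$ carries a Tits system (BN-pair) with Weyl group $W$: take $B$ to be the stabiliser of the fundamental chamber (a compact open subgroup), $N$ the stabiliser of the fundamental apartment, $T = B \cap N$, and $U$ the closure of the product of the positive root groups, so that $B = U \rtimes T$. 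Since $\Lambda$ is generated by its root subgroups we have $\Lambda = \Lambda^\dagger$, so $\Lambda_+^{\rm{eff}} = \pi_+(\Lambda^\dagger)$ coincides with the little projective group and is a dense subgroup of $\overline{\Lambda}_+^{\rm{eff}}$.

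The second step is to invoke the classical simplicity criterion of Tits for groups with a BN-pair (see \cite[Section~1]{caprace:remy} and the references therein): if $D$ is a group with a BN-pair such that its Borel factors as $B_D = U_D \rtimes T_D$ with $U_D$ soluble, such that $D$ is generated by the conjugates of $U_D$, and such that the Coxeter diagram of $W$ is irreducible, then $D$ modulo the kernel of its action on $D/B_D$ is abstractly simple. Applying this to the abstract little projective group $\Lambda_+^{\rm{eff}}$: the irreducibility of $W$ is assumed; $D = \Lambda_+^{\rm{eff}}$ is generated by conjugates of its positive unipotent subgroup because $\Lambda = \Lambda^\dagger$; solubility of the relevant unipotent subgroup is supplied by the hypothesis that the root groups are soluble; and the kernel of the action of $\Lambda_+^{\rm{eff}}$ on the chambers is trivial because $\overline{\Lambda}_+^{\rm{eff}}$, and hence $\Lambda_+^{\rm{eff}}$, acts faithfully on $B_+$. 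One concludes that $\Lambda_+^{\rm{eff}}$ is abstractly simple. The \emph{non-spherical} hypothesis enters here: it forces $W$ to be infinite, so that $U \neq 1$ and $\Lambda_+^{\rm{eff}} \neq B$, keeping us in the genuinely infinite regime (and guaranteeing that $\overline{\Lambda}_+^{\rm{eff}}$ is non-discrete).

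The third step promotes abstract simplicity of the dense subgroup to topological simplicity of the completion. Let $M$ be a nontrivial closed normal subgroup of $\overline{\Lambda}_+^{\rm{eff}}$. If $M$ centralised $\Lambda_+^{\rm{eff}}$ it would, by density, centralise all of $\overline{\Lambda}_+^{\rm{eff}}$ and so act trivially on $B_+$; since the action is faithful and $M \neq 1$, this is impossible, so $M$ does not centralise $\Lambda_+^{\rm{eff}}$. Hence some commutator $[m, x]$ with $m \in M$ and $x \in \Lambda_+^{\rm{eff}}$ is a nontrivial element of the normal subgroup $M \cap \Lambda_+^{\rm{eff}}$ of $\Lambda_+^{\rm{eff}}$; abstract simplicity of $\Lambda_+^{\rm{eff}}$ then forces $\Lambda_+^{\rm{eff}} \leq M$, and taking closures gives $\overline{\Lambda}_+^{\rm{eff}} = \overline{\Lambda_+^{\rm{eff}}} \leq M$, i.e.\ $M = \overline{\Lambda}_+^{\rm{eff}}$. (Alternatively, one may note that $\overline{\Lambda}_+^{\rm{eff}}$ itself carries a refined Tits system whose little projective group is all of $\overline{\Lambda}_+^{\rm{eff}}$, and run Tits's criterion together with a topological normal-closure argument.) Therefore $\overline{\Lambda}_+^{\rm{eff}}$ is topologically simple.

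The main obstacle is the set-up in the first step: verifying that $\overline{\Lambda}_+^{\rm{eff}}$ really acts faithfully and strongly transitively on $B_+$ with the stated BN-pair, and --- crucially --- that in the \emph{almost split} (non-split) case one may still run the BN-pair machinery. For split Kac--Moody groups this is essentially the standard construction of the complete Kac--Moody group over a finite field, but for almost split forms one must descend from the split case by a Galois-cohomological argument, which is the technically heaviest ingredient and is exactly what Lemma~9 and the relevant Propositions of \cite{caprace:remy} provide. A secondary subtlety, also handled there, is checking that solubility of the root groups genuinely propagates to solubility of the unipotent subgroup to which Tits's criterion is applied; this is where the ``soluble root groups'' hypothesis is used and cannot be dropped.
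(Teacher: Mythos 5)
This proposition is not proved in the paper at all --- it is imported verbatim, with attribution, from \cite{caprace:remy} --- so the only question is whether your reconstruction of the cited argument is sound, and as written it is not. The decisive gap is your third step. From $m \in M$ and $x \in \Lambda_+^{\mathrm{eff}}$ you get $[m,x] \in M$ by normality, but there is no reason whatsoever that $[m,x]$ lies in $\Lambda_+^{\mathrm{eff}}$, so you cannot conclude that $M \cap \Lambda_+^{\mathrm{eff}}$ is nontrivial; a priori a nontrivial closed normal subgroup can meet the dense subgroup trivially without centralising it. There is no general principle that a dense abstractly simple subgroup forces topological simplicity of the ambient group, which is precisely why the proof in \cite{caprace:remy} works with the topological group $\overline{\Lambda}_+^{\mathrm{eff}}$ itself (its BN-pair with compact open $B$, a Tits-type transitivity argument showing that a closed normal subgroup $N$ acting nontrivially on $B_+$ satisfies $\overline{\Lambda}_+^{\mathrm{eff}} = NB$, pro-solubility of the closed unipotent part, and topological perfectness coming from generation by root groups with $W$ irreducible and non-spherical), rather than deducing anything from simplicity of a dense subgroup.

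Your second step is also unsound: Tits' simplicity criterion requires the subgroup $U$ appearing in $B = U \rtimes T$ to be \emph{soluble}, and while each root group is soluble by hypothesis, the group $U_+$ generated by all positive root groups is in general not soluble once $W$ is infinite. This is exactly why abstract simplicity of (the adjoint quotient of) a Kac--Moody group over a finite field is a genuinely hard theorem --- in \cite{caprace:remy} it requires additional hypotheses such as lower bounds on the order of the ground field --- and why the cited Proposition asserts only \emph{topological} simplicity of the completion: the hypothesis ``soluble root groups'' is used to make the \emph{closure} of $U_+$ pro-soluble, which suffices for the topological version of Tits' argument applied to closed normal subgroups, but not for the abstract criterion you invoke. (Abstract simplicity of these completions was obtained only later, by different methods, in \cite{marquis:paper}.) So the outline should be reorganised: drop the ``abstract simplicity of the dense lattice plus density'' route and run the Tits argument directly inside $\overline{\Lambda}_+^{\mathrm{eff}}$, using effectiveness to see that a nontrivial closed normal subgroup acts nontrivially on $B_+$, and using soluble root groups only through pro-solubility of the compact open unipotent part.
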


In \cite{caprace:marquis}, the authors investigate the number of open subgroups of locally compact groups. They say that a locally compact group $G$ 
{\em has few open subgroups} if every proper open subgroup of $G$ is compact, and $G$ is {\em Noetherian} if $G$ satisfies the ascending chain condition for open subgroups.

\begin{proposition}[{\cite[Corollary C]{caprace:marquis}}] \label{prop:Kac-Moody-Few-Open}
If $W$ is irreducible and $\Lambda$ is a Kac--Moody group over a finite field, then $\overline{\Lambda}_+$ has few open subgroups if and only if $W$ is of affine or of compact hyperbolic type.
\end{proposition}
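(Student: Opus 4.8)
This is \cite[Corollary C]{caprace:marquis}, so in the present paper one simply cites it; the plan for an independent proof runs as follows. Write $G = \overline{\Lambda}_+$. It acts continuously, properly and strongly transitively on the positive building $\Delta_+$ of type $(W,S)$, with the fixator $B$ of a chamber $c$ being a compact open subgroup. The dictionary to set up is: the standard parabolics $P_T = B W_T B$ ($T \subseteq S$) are open; $P_T \neq G$ whenever $T \subsetneq S$; and $P_T$ acts transitively on the chambers of the $T$-residue $R$ through $c$, where $R$ has finitely many chambers if and only if $W_T$ is finite (the field being finite), so that $|P_T : B|$ is finite --- equivalently $P_T$ is compact --- exactly when $W_T$ is finite. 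Together with the classical fact that an irreducible Coxeter system with $W$ infinite (which holds throughout this section) is of affine or of compact hyperbolic (Lann\'er) type if and only if every proper standard parabolic $W_T$, $T \subsetneq S$, is finite, the statement reduces to: \emph{$G$ has few open subgroups if and only if every proper $W_T$ is finite.}

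The direction $(\Rightarrow)$ is immediate: if $W_T$ is infinite for some $T \subsetneq S$, then $P_T$ is a proper, open, non-compact subgroup, so $G$ does not have few open subgroups. For $(\Leftarrow)$, assume every proper $W_T$ is finite and let $O \leq G$ be a proper open subgroup; I must show $O$ is compact. Being open, $O$ contains the fixator of some combinatorial ball in $\Delta_+$. The crux --- which I would import from building theory, following \cite{caprace:marquis} --- is that a non-compact open subgroup of $G$ necessarily stabilises a non-spherical residue of $\Delta_+$, and the stabiliser of a residue of type $T$ is a conjugate of $P_T$, hence compact exactly when $W_T$ is finite. Under our hypothesis the only non-spherical type is $T = S$, whose residue is all of $\Delta_+$ and whose stabiliser is $G$; so a non-compact $O$ would contain a conjugate of $P_S = G$, forcing $O = G$ and contradicting properness. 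Hence $O$ is compact, as required.

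It then remains to check that passing between $G = \overline{\Lambda}_+$ and $\overline{\Lambda}_+^{\mathrm{eff}} = G / Z_\Lambda(\Lambda^\dagger)$ is harmless: under the running hypotheses $Z_\Lambda(\Lambda^\dagger)$ is a finite discrete central subgroup, so $G$ has few open subgroups if and only if $\overline{\Lambda}_+^{\mathrm{eff}}$ does, and the residue-stabiliser analysis transfers verbatim.

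I expect the main obstacle to be the claim invoked in $(\Leftarrow)$, that every non-compact open subgroup of $G$ stabilises a non-spherical residue. In a bare CAT(0) setting a group with unbounded orbits need not stabilise any residue; the rigidity needed here comes from strong transitivity together with the Bruhat decomposition, and controlling \emph{arbitrary} open subgroups --- not merely those containing the Borel $B$ --- is precisely where the genuine work of \cite{caprace:marquis} lies.
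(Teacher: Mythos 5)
The paper offers no proof of this statement: it is quoted verbatim as \cite[Corollary C]{caprace:marquis}, which is exactly what you do in your first sentence, so your approach coincides with the paper's. Your additional sketch (reduce to ``every proper $W_T$ is finite'' and import the rigidity of open subgroups from Caprace--Marquis) is a fair outline of how the cited source argues, and you correctly flag that the crux you leave unproved is precisely the content of that reference.
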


Suppose $\overline{\Lambda}_+$ has few open subgroups. Given any open subgroup $U$ of $\overline{\Lambda}_+^{\rm{eff}}$, its preimage in $\overline{\Lambda}_+$ must also be open and therefore compact. The continuous image of a compact set is compact, so $U$ must also be compact in $\overline{\Lambda}_+^{\rm{eff}}$. Hence $\overline{\Lambda}_+^{\rm{eff}}$ must also have few open subgroups. 

\begin{proposition} \label{prop:Kac-Moody}
Suppose $W$ is irreducible and non-spherical, of affine or compact hyperbolic type,
with finite soluble root groups. Assume the generalised Cartan matrix $A$ is not of finite type.
 Suppose also that $\Lambda$ is a Kac--Moody group over a finite field, with $\Lambda$ split or almost split and generated by its root subgroups. Then the group $\overline{\Lambda}_+^{\rm{eff}}$ is a topologically simple, tdlc and nondiscrete subgroup of $\aut(B_+)$ that has a maximal subgroup $U$ such that $U$ is compact and open.
\end{proposition}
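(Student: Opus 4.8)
The plan is to derive the proposition almost entirely from the three results quoted above, supplying only a couple of short topological arguments. For \emph{topological simplicity} I would check that the hypotheses of Proposition~\ref{prop:KacMoody-simplicity} are met: $W$ is irreducible and non-spherical, the root groups are finite and soluble (in particular soluble), and $\Lambda$ is split or almost split and generated by its root subgroups; so Proposition~\ref{prop:KacMoody-simplicity} gives that $\overline{\Lambda}_+^{\rm{eff}}$ is topologically simple. That $\overline{\Lambda}_+^{\rm{eff}}$ is a \emph{tdlc subgroup of $\aut(B_+)$} is immediate: by construction it is the closure of $\Lambda_+^{\rm{eff}} = \pi_+(\Lambda)$ in $\aut(B_+)$, hence a closed subgroup of the tdlc group $\aut(B_+)$, and closed subgroups of tdlc groups are tdlc (equivalently, it is topologically isomorphic to the tdlc group $\overline{\Lambda}_+/Z_\Lambda(\Lambda^\dagger)$).

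Next I would assemble the ingredients needed to produce a compact open maximal subgroup. Since $W$ is irreducible of affine or compact hyperbolic type and $\Lambda$ is a Kac--Moody group over a finite field, Proposition~\ref{prop:Kac-Moody-Few-Open} gives that $\overline{\Lambda}_+$ has few open subgroups, and the argument recorded in the paragraph preceding this proposition (preimages under the continuous surjection $\overline{\pi}_+$ of open subgroups are open, hence compact, and have compact images) shows that $\overline{\Lambda}_+^{\rm{eff}}$ has few open subgroups too. I would then note that $\overline{\Lambda}_+^{\rm{eff}}$ is \emph{compactly generated}: it contains the dense subgroup $\Lambda_+^{\rm{eff}} = \pi_+(\Lambda)$, which is finitely generated because $\Lambda = \Lambda^\dagger$ is generated by the finitely many finite simple root groups $U_{\pm\alpha_s}$, $s \in S$. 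Finally $\overline{\Lambda}_+^{\rm{eff}}$ is \emph{non-compact}: it acts chamber-transitively (Bruhat decomposition) on the chamber set of $B_+$, which is infinite because $W$ is infinite, with compact open chamber fixator of infinite index, whence $\overline{\Lambda}_+^{\rm{eff}}$ is not compact.

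With these in hand, the maximal subgroup $U$ is produced by a Zorn's-lemma argument. Let $V_0$ be the fixator in $\overline{\Lambda}_+^{\rm{eff}}$ of a chamber $C$ of $B_+$, a compact open subgroup. Every subgroup strictly between $V_0$ and $\overline{\Lambda}_+^{\rm{eff}}$ is open, hence compact by the few-open-subgroups property; and the union of an ascending chain of such proper subgroups is again open, but it cannot equal $\overline{\Lambda}_+^{\rm{eff}}$, since a compact generating set would then lie inside a single member of the chain --- so the union is again a proper subgroup. Hence Zorn's lemma provides a subgroup $U$ maximal among the proper subgroups containing $V_0$; such a $U$ is a maximal subgroup of $\overline{\Lambda}_+^{\rm{eff}}$, is open (it contains $V_0$) and is compact (it is a proper open subgroup). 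For \emph{nondiscreteness}, I would observe that for every $n$ the fixator in $\overline{\Lambda}_+^{\rm{eff}}$ of the radius-$n$ ball around $C$ is nontrivial --- it contains the nontrivial root group $U_\alpha$ for any positive real root $\alpha$ whose wall lies outside that ball, and such $\alpha$ exist because $W$ is infinite --- so these fixators form a neighbourhood basis of $1$ consisting of nontrivial subgroups, whence $1$ is not isolated and $\overline{\Lambda}_+^{\rm{eff}}$ is nondiscrete. The step I expect to require the most care is this passage from ``few open subgroups'' to ``admits a compact open maximal subgroup'', namely verifying compact generation and running the chain argument cleanly; the rest is a direct appeal to the results already established.
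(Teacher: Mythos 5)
Your proposal is correct and follows the paper's proof in all essentials: topological simplicity comes from Proposition~\ref{prop:KacMoody-simplicity}, the few-open-subgroups property of $\overline{\Lambda}_+^{\rm{eff}}$ comes from Proposition~\ref{prop:Kac-Moody-Few-Open} together with the preimage-under-$\overline{\pi}_+$ argument, and the desired $U$ is then a proper open (hence compact) subgroup that is maximal among open subgroups. The only real divergence is the final existence step: the paper asserts that having few open subgroups makes $\overline{\Lambda}_+^{\rm{eff}}$ Noetherian and takes a subgroup maximal subject to being open, whereas you fix a compact open chamber fixator and run Zorn's lemma, using compact generation to show unions of chains of proper subgroups stay proper. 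Your version is slightly more self-contained --- it explicitly supplies the compact generation that the implication ``few open subgroups $\Rightarrow$ Noetherian'' tacitly requires (the implication fails without it, e.g.\ for $\mathbb{Q}_p$) --- and you also argue tdlc-ness and nondiscreteness directly where the paper simply cites Marquis; both routes are sound and yield the same statement.
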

\begin{proof}
It is known that $\overline{\Lambda}_+^{\rm{eff}} \leq \aut(B_+)$ is tdlc and nondiscrete (see \cite[Section 1, pp 713]{marquis:paper} for example). It is topologically simple by Proposition~\ref{prop:KacMoody-simplicity}, and has few open subgroups by Proposition~\ref{prop:Kac-Moody-Few-Open}. Since it has few open subgroups, $\overline{\Lambda}_+^{\rm{eff}}$ is clearly Noetherian. Hence there is a subgroup $U < \overline{\Lambda}_+^{\rm{eff}}$ that is maximal subject to being open. By the few open subgroups condition $U$ is also compact. Of course if $U \leq H < \overline{\Lambda}_+^{\rm{eff}}$, then $H$ is also open. Since $U$ is maximal subject to being open, we have $U = H$. Thus, $U$ is abstractly a maximal subgroup of $\overline{\Lambda}_+^{\rm{eff}}$. Moreover, $U$ is compact and open.
\end{proof}

\begin{corollary} Under the conditions of Proposition~\ref{prop:Kac-Moody}, the group $\overline{\Lambda}_+^{\rm{eff}} \sslash U$ is a nondiscrete topologically simple group in $\mathcal{P}$.
\end{corollary}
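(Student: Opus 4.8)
The plan is to deduce the corollary directly from Proposition~\ref{prop:Kac-Moody} together with the general facts about Schlichting completions recorded in Section~\ref{section:PermGpsAsTopoGroups} and in the discussion preceding Theorem~\ref{thm:MainTheoremTDLC}. Write $G := \overline{\Lambda}_+^{\rm{eff}}$ and let $U$ be the compact open maximal subgroup furnished by Proposition~\ref{prop:Kac-Moody}. First I would observe that $G$ is non-compact: a non-discrete topologically simple tdlc group cannot be compact, since a non-trivial profinite group always has a proper open (hence proper closed) normal subgroup, so would fail to be simultaneously topologically simple and infinite. As $U$ is open, it follows that $|G:U|$ is infinite, and consequently $G \sslash U$ is an infinite permutation group on $\Omega := G/U$.

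Next I would identify $G \sslash U$ with $G$ at the level of abstract groups. The kernel $\ker(\lambda)$ of the multiplication action $\lambda$ of $G$ on $\Omega$ is $\core_G(U) = \bigcap_{g \in G} U^g$, a closed normal subgroup of $G$; since $G$ is topologically simple and $U$ is a proper subgroup, $\ker(\lambda)$ is trivial. Hence $\lambda$ is faithful, $G \sslash U \cong G$ as abstract groups, and the point stabiliser $(G \sslash U)_U$ is a copy of $U$.

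I would then verify membership in $\mathcal{P}$ and non-discreteness. As a permutation group, $G \sslash U$ is closed and subdegree-finite (a general property of Schlichting completions); it is primitive because $U$ is maximal in $G$; and it is non-regular because its point stabiliser $U$ is non-trivial --- indeed $U$ is infinite, since a finite open subgroup of a Hausdorff topological group forces discreteness, contradicting the non-discreteness of $G$. Therefore $G \sslash U \in \mathcal{P}$. Being subdegree-finite with an infinite point stabiliser, $G \sslash U$ is moreover non-discrete.

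Finally, for topological simplicity I would exploit the relationship between topologies recalled before Theorem~\ref{thm:MainTheoremTDLC}: since $\ker(\lambda)$ is trivial, $G \sslash U$ with its permutation topology $\mathfrak{p}$ is isomorphic as a topological group to $(G, \mathfrak{p})$, and $\mathfrak{p} \subseteq \mathfrak{t}$, where $\mathfrak{t}$ is the given building topology on $G$. Any $\mathfrak{p}$-closed normal subgroup $N$ of $G$ is the complement of a $\mathfrak{p}$-open set, which is $\mathfrak{t}$-open, so $N$ is $\mathfrak{t}$-closed and normal in $(G, \mathfrak{t})$; by topological simplicity of $\overline{\Lambda}_+^{\rm{eff}}$ this forces $N$ trivial or $N = G$. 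Hence $G \sslash U$ is topologically simple, completing the proof. The only mild subtlety --- and the point I would take most care over --- is keeping the two topologies $\mathfrak{t}$ and $\mathfrak{p}$ straight, in particular the implication ``$\mathfrak{p}$-closed $\Rightarrow$ $\mathfrak{t}$-closed''; everything else is routine bookkeeping.
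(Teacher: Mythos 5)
Your proposal is correct, and it is essentially the argument the paper leaves implicit: the corollary is stated without proof as an immediate consequence of Proposition~\ref{prop:Kac-Moody} together with the general facts about $G \sslash V$ recorded in the introduction (closed, subdegree-finite, primitive iff $V$ maximal) and the topology comparison $\mathfrak{p} \subseteq \mathfrak{t}$ preceding Theorem~\ref{thm:MainTheoremTDLC}. Your handling of the key points --- triviality of $\ker(\lambda)$ via topological simplicity, infinitude of $U$ forcing nonregularity and nondiscreteness, and transferring simplicity through the coarser permutation topology --- matches what the paper intends.
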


Since $\overline{\Lambda}_+^{\rm{eff}} \sslash U$ lies in $\mathcal{P}$, it has a locally finite, connected orbital graph $\Gamma$ on which it acts vertex primitively. Hence $\overline{\Lambda}_+^{\rm{eff}}$ also acts vertex primitively on $\Gamma$, with compact open point stabilisers. It follows then that $\Gamma$ is a Cayley--Abels graph for  $\overline{\Lambda}_+^{\rm{eff}} \sslash U$ and for  $\overline{\Lambda}_+^{\rm{eff}}$. The ends of these two groups are thus the same. Now $\overline{\Lambda}_+^{\rm{eff}}$ acts transitively on the chambers of $B_+$, with compact open chamber stabilisers. Therefore the ends of $\overline{\Lambda}_+^{\rm{eff}}$ and the ends of $B_+$ coincide. The ends of right-angled buildings are investigated in \cite{caprace:only}.

\begin{proposition}[{\cite[Theorem 9.2, abridged]{caprace:only}}] \label{prop:CapraceEnds}
Let $B$ be a thick, semi-regular, locally finite, right-angled building of type $(W,S)$. Assume that $(W,S)$ is irreducible and non-spherical. Then the following are equivalent.
\begin{enumerate}
\item
	$B$ is one-ended;
\item \label{item:caprace:one}
	W is one-ended;
\item \label{item:caprace:two}
	W does not split as a free amalgamated product over a finite subgroup;
\item \label{item:caprace:three}
	There is no partition $S = S_0 \cup S_1 \cup S_2$ with $S_1$, $S_2$ non-empty, $m_{i,j} = 2$ for all $i,j \in S_0$ and $m_{i,j} = \infty$ for all $i \in S_1$ and $j \in S_2$.
\end{enumerate}
\end{proposition}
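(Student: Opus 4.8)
The plan is to establish the equivalences at the level of the Coxeter group $W$ first --- namely $(\ref{item:caprace:one})\Leftrightarrow(\ref{item:caprace:two})\Leftrightarrow(\ref{item:caprace:three})$ --- and then to bridge to the building via $(1)\Leftrightarrow(\ref{item:caprace:one})$. Since $S$ is finite, $W$ is finitely presented, and since $(W,S)$ is non-spherical $W$ is infinite; these are the standing hypotheses for the geometric group theory below. The equivalence $(\ref{item:caprace:one})\Leftrightarrow(\ref{item:caprace:two})$ is then essentially Stallings' ends theorem: a finitely generated infinite group is one-ended precisely when it does not split as an amalgamated free product or an HNN extension over a finite subgroup. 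The only point to check is that an irreducible non-spherical $W$ which is not one-ended is either two-ended --- in which case $(W,S)$ is the infinite dihedral system and $W=\langle s\rangle\ast\langle t\rangle$ is already a nontrivial amalgam over the trivial group --- or has infinitely many ends, where Stallings' splitting can be taken (replacing an HNN decomposition by the amalgam over its edge group if necessary) to be an amalgamated product over a finite subgroup.

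Next I would treat $(\ref{item:caprace:two})\Leftrightarrow(\ref{item:caprace:three})$, which is purely combinatorial. In a right-angled system the finite (equivalently, spherical) standard parabolic subgroups are exactly the $W_T$ with $T\subseteq S$ a clique of the commutation graph (all $m_{ij}=2$), and each such $W_T$ is a finite abelian $2$-group. A partition as in $(\ref{item:caprace:three})$ is precisely the data of a visual Bass--Serre splitting $W = W_{S_0\cup S_1}\ast_{W_{S_0}}W_{S_0\cup S_2}$ with $W_{S_0}$ finite, so $(\ref{item:caprace:three})$ gives $\neg(\ref{item:caprace:two})$. For the converse I would invoke accessibility of $W$ together with the theorem that any splitting of a Coxeter group over a finite subgroup is conjugate into a visual one (Mihalik--Tschantz; alternatively this can be argued directly from convexity of residues in the Coxeter complex): an abstract amalgam decomposition over a finite subgroup then yields, after conjugation, a clique $S_0$ separating $S$ into non-adjacent parts $S_1,S_2$, which is exactly $(\ref{item:caprace:three})$.

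Finally, $(1)\Leftrightarrow(\ref{item:caprace:one})$. For $\neg(\ref{item:caprace:one})\Rightarrow\neg(1)$: a visual splitting of $W$ over the finite clique $S_0$ lifts to a decomposition of $B$ as a tree of residues in which the $S_0$-residues act as separators. Since $W_{S_0}$ is finite and $B$ is semi-regular and locally finite, each $S_0$-residue is a finite convex subcomplex of $B$, and removing one of them disconnects $B$ by convexity of residues in the gallery metric; the resulting decomposition tree is infinite and thick, so $B$ has infinitely many ends. For $(\ref{item:caprace:one})\Rightarrow(1)$: given a finite set $F\subseteq B$, cover it by finitely many spherical residues, pass to the combinatorial convex hull, and use the retraction $\rho_{A,c}$ of $B$ onto a fixed apartment $A$ (isomorphic to the Coxeter complex, hence one-ended because $W$ is); since $\rho_{A,c}$ is $1$-Lipschitz and surjective on each residue, two points of $B\setminus F$ can be joined by first retracting toward $A$, then connecting their images inside $A$ while avoiding $\rho_{A,c}(F)$, then lifting the connecting path back residue by residue. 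Hence $B\setminus F$ has exactly one unbounded component, so $B$ is one-ended.

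The hard part is the implication $(\ref{item:caprace:one})\Rightarrow(1)$. A thick right-angled building is strictly ``fatter'' than its Coxeter group --- it has continuum-many thick directions --- and the retraction onto an apartment is only $1$-Lipschitz, not a quasi-isometry, so one-endedness does not transfer along it automatically. Making the lifting step precise requires the full combinatorial machinery of residues and projections (the gate property, convexity of residues, and control of how galleries enter and leave a residue), and this is where essentially all of the work in Caprace's Theorem 9.2 is concentrated. The promotion to a visual splitting in $(\ref{item:caprace:two})\Leftrightarrow(\ref{item:caprace:three})$ is a secondary, smaller obstacle, resting on structural results about splittings of Coxeter groups rather than on anything specific to buildings.
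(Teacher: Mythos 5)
You should first note what the paper itself does with this statement: it is imported verbatim (abridged) from Caprace, cited as Theorem~9.2 of that paper, and no proof is given here at all; the surrounding text even remarks that the equivalence of (ii)--(iv) is classical and readable off the Coxeter diagram. So there is no in-paper argument to compare against, and your attempt has to stand on its own. The group-level portion of your plan is sound as a citation-based argument: (ii)$\Leftrightarrow$(iii) is Stallings' theorem (with the two-ended case correctly isolated as the infinite dihedral system), and (iii)$\Leftrightarrow$(iv) is exactly the Mihalik--Tschantz visual-decomposition theorem together with the observation that in a right-angled system the finite special subgroups are the cliques of commuting generators; your sketch of $\neg$(iv)$\Rightarrow\neg$(i) via the tree of residues over the finite $S_0$-residues is also the standard route, modulo the gate-property details you wave at.

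The genuine gap is the implication (ii)$\Rightarrow$(i), and your own text concedes it: saying that this step ``requires the full combinatorial machinery'' and that ``essentially all of the work in Caprace's Theorem 9.2 is concentrated'' there is an admission that the key implication is being deferred to the very theorem you are proving. Moreover, the retraction sketch you do give does not work as stated. That $\rho_{A,c}$ is $1$-Lipschitz bounds $d(\rho(x),\rho(f))$ \emph{above} by $d(x,f)$, which is the wrong direction: you need points of $B\setminus F$ that are far from $F$ to have images far from $\rho(F)$, and a distance-decreasing map gives no such lower bound (it can collapse distant points onto $\rho(F)$). There is also no path-lifting statement available for $\rho_{A,c}$, and the step ``points far from $\rho(F)$ lie in the unique unbounded component of $A\setminus\rho(F)$'' needs a bound on the bounded components that depends on the actual set $\rho(F)$, hence on the apartment and base chamber you chose, which themselves depend on the two points being joined --- the quantifiers do not close up. A correct argument needs genuinely more (for instance, choosing a common apartment through the two chambers and exploiting that $\rho_{A,c}$ preserves distance to $c$, or Caprace's own analysis via wings and projections of residues); as written, the building-level direction that carries the content of the proposition is missing.
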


Under the conditions of Proposition~\ref{prop:Kac-Moody} we have that $B_+$ is a thick, regular, locally finite building of type $(W,S)$, where $(W,S)$ is irreducible and non-spherical (see \cite[Example 3.1]{Baumgartner_et_al}). 
If we also require that $(W,S)$ is a right-angled Coxeter system (i.e. $m_{i,j} = 2$ or $m_{i,j} = \infty$ for all $i \not = j$) then $B_+$ is also right-angled. Hence Proposition~\ref{prop:CapraceEnds} holds for $B_+$, so $B_+$ is one-ended if and only if $W$ is one-ended.

As noted in \cite{caprace:only}, the equivalence of (\ref{item:caprace:one})--(\ref{item:caprace:three}) is well-known, and this one-endedness condition can easily be read from the Coxeter diagram. Many such one-ended buildings exist, for example any Bourdon building (i.e. if $S = \{1,\ldots,r\}$ and $m_{i,j} = 2$ if and only if $|i-j|=1$ or $-1$). 

We summarise our analysis in this subsection with the following.

\begin{proposition} 
Let $(W, S)$ be be a right-angled Coxeter system with root system $\Phi$ and generalised Cartan matrix $A$.
Suppose $W$ is one-ended,
irreducible and non-spherical, of affine or compact hyperbolic type,
with finite soluble root groups. Assume the generalised Cartan matrix $A$ is not of finite type.
Suppose also that $\Lambda$ is a Kac--Moody group of type $(W,S)$
 over a finite field, with $\Lambda$ split or almost split and generated by its root subgroups. 
 
If the associated twin buildings are denoted $(B_+, B_-)$, then the completion
$\overline{\Lambda}_+^{\rm{eff}}$ is a topologically simple, tdlc and nondiscrete subgroup of $\aut(B_+)$ that has a maximal subgroup $U$ such that $U$ is compact and open. Moreover, $\overline{\Lambda}_+^{\rm{eff}} \sslash U \in \mathcal{P}$ is nondiscrete of type OAS.
\end{proposition}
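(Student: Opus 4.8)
The plan is to assemble the proposition from the ingredients already developed in this subsection, so that the only genuinely new work is identifying the type as OAS via an end count. First I would note that the standing hypotheses here are precisely those of Proposition~\ref{prop:Kac-Moody} together with the extra assumption that $(W,S)$ is right-angled (which is simply carried along). Hence Proposition~\ref{prop:Kac-Moody} gives directly that $\overline{\Lambda}_+^{\rm{eff}} \leq \aut(B_+)$ is topologically simple, tdlc and nondiscrete, and that it has a subgroup $U$ that is maximal, compact and open; and the Corollary immediately following Proposition~\ref{prop:Kac-Moody} gives that $\overline{\Lambda}_+^{\rm{eff}} \sslash U$ is a nondiscrete topologically simple group in $\mathcal{P}$. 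Since members of $\mathcal{P}$ are by definition nonregular and a transitive abelian group is regular, this group is nonabelian. It therefore remains only to show that $\overline{\Lambda}_+^{\rm{eff}} \sslash U$ has exactly one end, for then it is a nondiscrete, one-ended, almost topologically simple group (taking the topologically simple normal subgroup in the definition of OAS to be $\overline{\Lambda}_+^{\rm{eff}} \sslash U$ itself, so that trivially $B \leq \overline{\Lambda}_+^{\rm{eff}} \sslash U \leq \aut B$), i.e. of type OAS.

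For the end count I would compare ends through a common Cayley--Abels graph. Because $\overline{\Lambda}_+^{\rm{eff}} \sslash U \in \mathcal{P}$, by Remark~\ref{rem:set_is_countable} it has a locally finite, connected orbital graph $\Gamma$ on which it acts vertex-primitively, and $\overline{\Lambda}_+^{\rm{eff}}$ acts on $\Gamma$ with the same orbits and with compact open vertex stabilisers; hence $\Gamma$ is a Cayley--Abels graph for both $\overline{\Lambda}_+^{\rm{eff}} \sslash U$ and $\overline{\Lambda}_+^{\rm{eff}}$, and the two groups have the same space of ends. Similarly, $\overline{\Lambda}_+^{\rm{eff}}$ acts transitively on the chambers of $B_+$ with compact open chamber stabilisers, so its ends coincide with the ends of $B_+$. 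Under the hypotheses in force, $B_+$ is a thick, (semi-)regular, locally finite, right-angled building of the irreducible non-spherical type $(W,S)$, so Proposition~\ref{prop:CapraceEnds} applies and gives that $B_+$ is one-ended if and only if $W$ is one-ended. Since $W$ is assumed one-ended, $B_+$ is one-ended, and therefore so is $\overline{\Lambda}_+^{\rm{eff}} \sslash U$.

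Combining these observations, $\overline{\Lambda}_+^{\rm{eff}} \sslash U$ is a nondiscrete, one-ended, topologically simple (hence almost topologically simple) group in $\mathcal{P}$, which is exactly type OAS, completing the proof. I do not expect a serious obstacle: all the substantive work lives in the cited results, and the main thing to be careful about is purely bookkeeping, namely checking that the combination of right-angledness, affine-or-compact-hyperbolic type, and finite soluble root groups really does place $B_+$ in the class of buildings to which Proposition~\ref{prop:CapraceEnds} applies, and that the split/almost-split and ``generated by root subgroups'' hypotheses are what is needed to invoke Proposition~\ref{prop:Kac-Moody}.
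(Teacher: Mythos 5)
Your proposal is correct and follows essentially the same route as the paper: it combines Proposition~\ref{prop:Kac-Moody} and its corollary with the Cayley--Abels graph comparison of ends (through the orbital graph and the chamber action on $B_+$) and Proposition~\ref{prop:CapraceEnds} to conclude one-endedness, hence type OAS. The only addition is your explicit remark that the group is nonabelian because it is nonregular, a small detail the paper leaves implicit.
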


\section{Acknowledgements}

The author would like to thank Colin Reid for helpful conversations, and Pierre--Emmanuel Caprace for pointing out that certain complete Kac--Moody groups would yield examples of nondiscrete groups in $\mathcal{P}$ of type OAS.

%
%

\vspace{5mm}


\begin{thebibliography}{99}
\markright{}
\bibitem {abels-74} H.~Abels,
	`Specker-Kompaktifizierungen von lokal kompakten
topologische Gruppen',
	{\em Math.~Z.}
	135 (1973/74) 325--362.
%
\bibitem {aschbacher_scott} M.~Aschbacher and L.~Scott,
    `Maximal subgroups of finite groups',
    {\em J.~Algebra}
    92 (1985) 44--80.
%
\bibitem {Baumgartner_et_al} U.~Baumgartner, J.~Parkinson, J.~Ramagge
	`Scale and tidy subgroups for Weyl-transitive automorphism groups of buildings',
	{\em J.~Algebra}
	520 (2019) 460--478.
%
\bibitem {BurgerMozes} M.~Burger and S.~Mozes,
	`Groups acting on trees: from local to global structure',
	{Publications math\'{e}matiques de l'I.H.\'{E}.S.}
	tome 92 (2000) 113--150.
%
\bibitem {cameron:permutation_groups} P.~J.~Cameron,
    {\em Permutation groups},
    London Mathematical Society Student Texts 45
    (Cambridge University Press, Cambridge, 1999).
%
\bibitem {caprace:remy} P.-E.~Caprace and B.~R\'{e}my,
	`Simplicity and superrigidity of twin building lattices',
	{\em Invent.~math.}
	176 (2009) 169--221.
%
\bibitem {cm} P.-E.~Caprace and N.~Monod,
	`Decomposing locally compact groups into simple pieces',
	{Math.~Proc.~Camb.~Phil.~Soc.}
	150 (2011) 97--128.
%
\bibitem {caprace:marquis} P.-E.~Caprace and T.~Marquis,
	`Open subgroups of locally compact Kac--Moody groups',
	{\em Math.~Z.}
	(2013) 291--313.
%
\bibitem {caprace:only} P.-E.~Caprace,
	`Automorphism groups of right-angled buildings: simplicity and local splittings',
	{\em Fundam.~Math.},
	224 (2014).
%
\bibitem {NewDirectionsBook} P.~E.~Caprace and N.~Monod (Eds.),
	{\em New Directions in Locally Compact Groups},
	London Mathematical Society Lecture Note Series 447 
	(Cambridge University Press, Cambridge, 2019)
\bibitem {vanDantzig} D.~van Dantzig,
	`Zur topologischen Algebra III',
	{\em Brouwersche Und Cantorsche Gruppen.~Compos.~Math.}
	3 (1936) 408--426.
%
\bibitem {dicks_dunwoody} W.~Dicks and M.~J.~Dunwoody,
	{\em Groups acting on graphs},
	Cambridge Studies in Advanced Math.
	17 (Cambridge University Press, 1989).
%
\bibitem {dixon&mortimer} J.~Dixon and B.~Mortimer,
    {\em Permutation groups},
    Graduate Texts in Mathematics 163
    (Springer-Verlag, New York, 1996).
%
\bibitem{Evans87} D.~M.~Evans,
	`A note on automorphism groups of countably infinite structures',
	{\em Arch.~Math.}
	49 (1987) 479--483.
%
\bibitem {gelander_and_glasner:onan_scott} T.~Gelander and Y.~Glasner,
    `An Aschbacher--O'Nan--Scott theorem for countable linear groups',
    {\em J.~Algebra}
    378 (2013) 58--63. 
%
\bibitem {hewitt_ross} E.~Hewitt \& K.~A.~Ross,
	{\em Abstract Harmonic Analysis},
	Vol. 1.
	Springer, Berlin (1963).
%
\bibitem {d_g_higman} D.~G.~Higman,
	`Intersection matrices for finite permutation groups',
	{\em J.~Algebra}
	6 (1967) 22--42.
%
\bibitem {jung_watkins} H.~A.~Jung and M.~E.~Watkins,
	`On the structure of infinite vertex-transitive graphs',
	{\em Discrete Math.}
	18 (1977) 45--53.
%
\bibitem {kourovka19} E.~I.~Khukhro and V.~D.~Mazurov (Eds.)
	{\em The Kourovka Notebook},
	No.~19,
	\url{https://kourovka-notebook.org}.
%
\bibitem {kovacs1989} L.~G.~Kov\'{a}cs,
	`Wreath decompositions of  finite permutation groups', 
	{\em Bull. Austral. Math. Soc.}
	40 (1989) 255--279.
%
\bibitem {KronMoller} B.~Kr\"{o}n \& R.~G.~M\"{o}ller,
	`Analogues of Cayley graphs for topological groups',
	{\em Math.~Z.}
	(2008) 637--675.
%
\bibitem {liebeck&praeger&saxl:finite_onan_scott} M.~W.~Liebeck, C.~E.~Praeger, and J.~Saxl,
    `On the O'Nan--Scott Theorem for finite primitive permutation groups',
    {\em J. Austral. Math. Soc.}
    44 (1988) 389--396.
%
\bibitem {infinitary_versions} D.~Macpherson and C.~E.~Praeger,
    `Infinitary versions of the O'Nan--Scott Theorem',
    {\em Proc. London Math. Soc.}
    (3) 68 (1994) 518--540.
%
\bibitem {marquis:paper} T.~Marquis,
	`Abstract simplicity of locally compact Kac--Moody groups',
	{\em Compositio Math.,}
	150 (2014), 713--728.
%
\bibitem {marquis:book} T.~Marquis,
	{\em An Introduction to Kac--Moody Groups over Fields},
	EMS Textbooks in Mathematics,
	European Mathematical Society (2018).
%
\bibitem {moller:aut_gps_reg_trees} R.~G.~M\"{o}ller,
	`The automorphism groups of regular trees',
	{\em J. London Math. Soc}
	(2) 43 (1991) 236--252.
%
\bibitem {moller:prim_and_ends_of_graphs} R.~G.~M\"{o}ller,
	`Primitivity and ends of graphs',
	{\em Combinatorica},
	14 (1994) 477--484.
%
\bibitem {moller:groups_on_graphs} R.~G.~M\"{o}ller,
	`Groups acting on locally finite graphs---a survey of the infinitely ended case',
	In {\em Groups `93, Galway/St. Andrews,}
	vol. 2, London Math. Soc. Lecture Note Ser. 212
	(Cambridge University Press, 1995), 426--456.
%
\bibitem {moller:tdlc_via_graphs_and_perms} R.~G.~M\"{o}ller,
	`Structure theory of totally disconnected locally compact groups via graphs and permutations',
	{\em Canad. J. Math.}
	54 (2002) 795--827.
%
\bibitem {moller_vonk} R.~G.~M\"{o}ller and Jan Vonk
	`Normal subgroups of groups acting on trees and automorphism groups of graphs',
	{\em J.~Group Theory}
	15 (2012) 831--850.
%
\bibitem {obraztsov} V.~N.~Obraztsov,
	`Embedding into groups with well-described lattices of subgroups',
	{\em Bull. Austral. Math. Soc.}
	54 (1996) 221--240.
%
\bibitem {olshanski} A.~Yu.~Ol'Shanski{\u\i},
    {\em Geometry of defining relations in groups},
    Mathematics and its Applications (Soviet Series) 70
    (Kluwer Acad. Publ., Dordrecht, 1991).
%
\bibitem {PraegerNeumannSmith} Cheryl E.~Praeger, Peter M.~Neumann and Simon M.~Smith,
	`Some infinite permutation groups and related finite linear groups',
	{\em J. Aust. Math. Soc.}
	102 (2017) 136--149.
%
\bibitem {PraegerSchneider} Cheryl E.~Praeger and Csaba Schneider,
	{\em Permutation groups and cartesian decompositions},
	London Mathematical Society Lecture Note Series: 449,
	(Cambridge University Press, 2018).
%
\bibitem {reid_wesolek:homomorphisms} C.~D.~Reid \& P.~R.~Wesolek,
	`Homomorphisms into totally disconnected, locally compact groups with dense image',
	{\em Forum Mathematicum}
	31 (2019).
%
\bibitem {Scott:Onan_Scott} L.~Scott,
    `Representations in characteristic $p$',
    {\em Santa Cruz conference on finite groups},
    Proceedings of Symposia in Pure Mathematics 37
    (American Mathematical Society, Providence, R.I., 1980), 318--331.
%
\bibitem {smith:orbital_digraphs} S.~M.~Smith,
	`Orbital digraphs of infinite primitive permutation groups',
	{\em J. Group Th.}
	10 (2007).
%
\bibitem {smith:prim_digraphs} S.~M.~Smith,
	`Infinite primitive directed graphs',
	{\em J. Algebr. Comb.}
	31 (2010) 131--141.
%
\bibitem {smith:prim_subdegrees} S.~M.~Smith,
	`Subdegree growth rates of infinite primitive permutation groups',
	{\em J. London Math. Soc.}
	82 (2010) 526--548.
%
\bibitem {smith:discrete_prim} S.~M.~Smith,
	`A classification of primitive permutation groups with finite stabilizers',
	{\em J. Algebra}
	432 (2015) 12--21.
%
\bibitem {smith:product} S.~M.~Smith,
	`A product for permutation groups and topological groups',
	{\em Duke Math.~J.} 
	166 (2017) 2965--2999.
%
\bibitem {willis94} G.~Willis,
	`The structure of totally disconnected, locally compact groups',
	{\em Math.~Ann.}
	300 (1994) 341--363.
%
\bibitem {willis01} G.~Willis,
	`Further properties of the scale function on a totally disconnected group',
	{\em J. Algebra}
	237 (2001) 142--164.
%
\bibitem {Wilson:JustInfinite} J.~S.~Wilson,
   `Groups with every proper quotient finite',
   {\em Proc.~Camb.~Philos. Soc.}
   69 (1971) 373--391.
%
\bibitem {Woess} W.~Woess,
	`Topological groups and infinite graphs',
	{\em Discrete Math.}
	95 (1991) 373--384.
\end{thebibliography}
\end{document}